\documentclass[reqno,a4paper,12pt]{amsart}
\usepackage[utf8]{inputenc}
\usepackage[T1]{fontenc}
\usepackage{amsmath,amsfonts,amssymb,amsthm,mathtools}
\usepackage[dvipsnames]{xcolor}

\usepackage{tikz}
\usetikzlibrary{arrows.meta}
\definecolor{pbblue}{HTML}{000000}
\definecolor{pbteal}{HTML}{000000}
\definecolor{pbrust}{HTML}{000000}
\usepackage{enumitem}
\usepackage{booktabs}
\usepackage{array}
\usepackage{placeins}
\usepackage{aliascnt}
\usepackage{microtype}
\usepackage{hyperref}
\usepackage{cleveref}
\hypersetup{hidelinks}

\numberwithin{equation}{section}

\theoremstyle{plain}
\newtheorem{theorem}{Theorem}[section]
\newaliascnt{proposition}{theorem}
\newtheorem{proposition}[proposition]{Proposition}
\aliascntresetthe{proposition}
\newaliascnt{corollary}{theorem}
\newtheorem{corollary}[corollary]{Corollary}
\aliascntresetthe{corollary}
\newaliascnt{lemma}{theorem}
\newtheorem{lemma}[lemma]{Lemma}
\aliascntresetthe{lemma}

\theoremstyle{definition}
\newaliascnt{remark}{theorem}
\newtheorem{remark}[remark]{Remark}
\aliascntresetthe{remark}
\newaliascnt{definition}{theorem}

\aliascntresetthe{definition}

\crefname{section}{Section}{Sections}
\Crefname{section}{Section}{Sections}
\crefname{subsection}{Section}{Sections}
\Crefname{subsection}{Section}{Sections}
\crefname{theorem}{Theorem}{Theorems}
\Crefname{theorem}{Theorem}{Theorems}
\crefname{proposition}{Proposition}{Propositions}
\Crefname{proposition}{Proposition}{Propositions}
\crefname{lemma}{Lemma}{Lemmas}
\Crefname{lemma}{Lemma}{Lemmas}
\crefname{corollary}{Corollary}{Corollaries}
\Crefname{corollary}{Corollary}{Corollaries}
\crefname{remark}{Remark}{Remarks}
\Crefname{remark}{Remark}{Remarks}
\crefname{table}{Table}{Tables}
\Crefname{table}{Table}{Tables}

\newcommand{\R}{\mathbb R}

\newcommand{\C}{\mathbb C}
\newcommand{\Z}{\mathbb Z}
\newcommand{\T}{\mathbb T}
\newcommand{\ip}[2]{\left(#1,#2\right)}
\DeclareMathOperator{\dist}{dist}
\DeclareMathOperator{\supp}{supp} %

\DeclareMathOperator{\diag}{diag}
\DeclareMathOperator{\spanop}{span}
\DeclareMathOperator{\E}{\mathbb E}
\newcommand{\F}{\mathbb F}
\newcommand{\1}{\mathbf 1}
\newcommand{\norm}[1]{\left\lVert#1\right\rVert}
\newcommand{\pair}[2]{\left\langle#1,#2\right\rangle}

\title[Structured coordinate systems in $L^p(\mathbb{R})$]{Structured coordinate systems in $L^p(\R)$}

\author{Daniel~Freeman}
\address{Department of Mathematics and Statistics, St Louis University, St Louis, MO, USA.}
\email{daniel.freeman@slu.edu}
\author{Lukas~Liehr}
\address{Department of Mathematics, Bar-Ilan University, Ramat-Gan 5290002, Israel.}
\email{lukas.liehr@biu.ac.il}
\author{Mitchell~A.\ Taylor}

\address{Mathematical Institute, University of Oxford, Andrew Wiles Building, Radcliffe Observatory Quarter, Woodstock Road, Oxford, OX2 6GG, United Kingdom.}
\email{mitchell.taylor@maths.ox.ac.uk}

\subjclass[2020]{46B15 (primary); 46E30, 46B42 (secondary)}
\keywords{Schauder bases, non-negative functions, translates, weighted exponential bases, basis constant, $L^p$ spaces}

\begin{document}

\begin{abstract}
We construct Schauder bases of $L^p(\mathbb{R})$ with significant structural constraints on their elements. First, we construct non-negative Schauder bases of $L^p(\mathbb{R})$ for every $1 \leq p < \infty$. Although such bases cannot be unconditional, their basis constants can be made arbitrarily close to one. By duality, we also obtain bases with positive biorthogonal functionals and the same bounds for all $1\leq p<\infty$. Finally, we prove that $L^p(\mathbb{R})$ admits a Schauder basis consisting of translates of a single function for every $2\le p<\infty$. The generating function $f$ can be chosen to be the restriction to $\mathbb{R}$ of an entire function. For $p>2$, the translation parameters can be chosen from any prescribed unbounded subset of $\R$, a conclusion that fails for $p=2$.
\end{abstract}

\maketitle

\section{Introduction}\label{sec:intro}
In this article, we construct Schauder bases of $L^p(\R)$ whose
elements are either non-negative functions or translates of a single
function.
These problems are part of a broader search for useful coordinate systems
whose vectors have a prescribed structure. Wavelet bases, for example,
are built from translations and dilations of a single generating function,
so that their coefficients record behavior at different locations
and scales~\cite{Dau,Mallat}. Similarly, Gabor systems are built from time-frequency shifts~\cite{BHW,LW} of a single function. In both cases, the rigid structure
of the basis elements allows one to study properties of the
whole system via its generator.

Here, we go one step further and analyze coordinate systems for $L^p(\mathbb{R})$ consisting only of translates of a single
function. The question of whether a Schauder basis of translates exists was posed in the Hilbert space
setting by Olson and Zalik~\cite{OZ}, and was also recorded as an open problem by
Bruna, Olevskii, and Ulanovskii~\cite[p.~3]{BOU}. Such a basis must be \emph{conditional}~\cite{FOSZ,OSSZ,OZ}, i.e., there must exist rearrangements of the sequence which do not form a basis. Nevertheless, depending on the choice of $p$, there may exist unconditional
Schauder frames of translates.
Indeed, the first author, Odell, Schlumprecht, and Zs\'ak~\cite[Theorem~3.2]{FOSZ} constructed
unconditional Schauder frames of translates for $p>2$, using any
prescribed unbounded sequence of translation parameters.
Berasategui and Carando~\cite[Theorem~4.1]{BC} showed that, after passing to a
subsequence, the coefficient functionals can also be chosen with
norms bounded above and away from zero. On the other hand, Lev and
Tselishchev~\cite{LT-no} proved that unconditional Schauder frames
of translates do not exist for $1\le p\le2$.

 In \cite{NO}, S.~Nitzan and A.~Olevskii (see also the earlier paper \cite{NO07}) constructed sparse,
uniformly discrete systems of translates in $L^2(\R)$ with the following
approximation property: for every $s>2$, every function
$f\in L^2(\R)$ can be approximated arbitrarily well by finite
linear combinations with $\ell^s$-norms of the coefficients at most
$C(s)\norm{f}_2$. Powell and the first and third
authors~\cite[Corollary~4.3]{FPT} later constructed Schauder frames of translates
of a non-negative function for every $1\le p<\infty$, although their
construction allows the translation parameters to repeat.
Lev and Tselishchev~\cite{LT2,LTp} constructed Schauder frames of
distinct, uniformly separated translates in $L^2(\R)$ and, more
generally, for $p>(1+\sqrt5)/2$. In this range, the generator can
also be chosen to be non-negative~\cite[Theorem~5.1]{LTp}.
Related results on Schauder frames of translates in Orlicz spaces
and modulation spaces were obtained by Dharra and
Sivananthan~\cite{DS} and Yu~\cite{Yu}, respectively.
In the present article, we construct Schauder bases of translates for every
$2\le p<\infty$. For $p>2$, we further show that one can select the parameters from any
prescribed unbounded set.

Historically, developments on systems of translates have run in parallel with developments on non-negative coordinate systems. A non-negative basis $(f_n)_{n=1}^\infty$ allows one to infer that a function $f:=\sum_{n=1}^\infty a_nf_n$ is pointwise a.e.~non-negative whenever the coefficients $(a_n)_{n=1}^\infty$ are non-negative numbers. In applications, it is often the case that only non-negative functions have physical meaning, so the ability to read off the admissibility of a signal directly from its coefficients is desirable.

However, constructing non-negative bases is not easy, as non-negativity significantly alters the role of cancellation. Note that all non-constant Haar
functions have mean zero, and in fact all integrable orthonormal wavelets, as well as the non-constant elements of the standard trigonometric and Walsh bases, have this property. Thus, significant cancellation occurs within the basis functions themselves. On the other hand, for a non-negative basis, all sign changes in the represented function
must come from the coefficients rather than the basis functions themselves. This, in particular, quickly implies that such a basis cannot be unconditional, i.e., any non-negative basis of $L^p(\mathbb{R})$ can be rearranged so that it fails to be a basis \cite{NV,PS}. Further, every Schauder frame of non-negative functions must be conditional \cite{PS}. Thus, both non-negative bases and bases of translates are necessarily conditional.

In 2015, Johnson and Schechtman \cite{JS} constructed a non-negative Schauder basis of $L^1(\R)$.
Powell and the first and third authors \cite{FPT} subsequently constructed a non-negative Schauder basis of $L^2(\R)$. For general $1<p<\infty$,
they also obtained a non-negative \emph{basic sequence} whose closed span contains an isomorphic
copy of $L^p(\R)$. However, the existence of a non-negative basis remained open for $p\ne1,2$.  Indeed, the proof for $L^1(\mathbb{R})$ made crucial use of the fact that non-negative bumps translated to infinity do not converge weakly to zero, whereas the proof for $L^2(\R)$ crucially used that every closed subspace of a Hilbert space is $1$-complemented. Our first main theorem constructs a non-negative Schauder basis for $L^p(\R)$ for every $1\leq p<\infty$.
We remark that the choice of $\mathbb{R}$ with Lebesgue measure is not important, as once a non-negative basis of $L^p(\mathbb{R})$ is constructed, the classification of abstract $L^p$-spaces implies that every separable $L^p(\mu)$ has a non-negative basis.

\subsection{Main results}

In order to state the main results of the article, let us introduce some basic notions from Banach space theory and basis theory. For $1\le p\le\infty$, we write $q$ for the conjugate exponent,
defined by $1/p+1/q=1$. The scalar field $\F$ may be $\R$ or $\mathbb C$.
We use the bilinear pairing
$$
\pair{f}{g}=\int_{\R}fg,\quad f\in L^p(\R),\quad g\in L^q(\R).
$$
Over $\mathbb C$, this duality pairing contains no conjugation, and
adjoints in the non-negative basis construction are taken with respect
to this pairing. For the construction of a basis of translates in $L^2(\R)$, we will
use a Hermitian inner product, as specified in \Cref{h2:sec:weighted}.

For $1\le p<\infty$, a sequence $(f_n)$ is a \emph{Schauder basis}
of $L^p(\R)$ if every
$f\in L^p(\R)$ has a unique norm-convergent expansion
$f=\sum_n a_n(f)f_n$. Here, the coefficient maps $a_n$ are bounded
linear functionals, called the \emph{biorthogonal functionals}.
Since $p<\infty$, they are represented by functions $\gamma_n\in L^q(\R)$
through $a_n(f)=\pair{f}{\gamma_n}$.
We call the $\gamma_n$ the \emph{coefficient functions}.
For a basis $\mathcal B=(f_n)$, we define its \emph{partial sum projections}
and \emph{basis constant} by
\begin{equation}\label{eq:K}
P_nf=\sum_{j=1}^n a_j(f)f_j,\quad
K(\mathcal B)=\sup_n\norm{P_n}.
\end{equation}
Every nonzero projection has norm at least one, so $K(\mathcal B)\ge1$. A basis is called \emph{monotone} if $K(\mathcal B)=1$.

We say that a basis $(f_n)$ is \emph{non-negative} if every $f_n$ is non-negative
almost everywhere. A basis has \emph{positive biorthogonal functionals}
if every $\gamma_n$ is non-negative almost everywhere.
Such bases are related by duality.

%\subsection{Schauder basis of non-negative functions}

Our first main result constructs non-negative bases of $L^p(\mathbb{R})$ with basis constants arbitrarily close to the optimal lower bound of one for every $1\leq p<\infty$.

\begin{theorem}\label{thm:positive}
Let $1\leq p<\infty$ and $\varepsilon>0$. There is a non-negative Schauder
basis $(f_n)$ of $L^p(\R)$ with basis constant $K((f_n))<1+\varepsilon$.
Moreover, no non-negative Schauder basis $(f_n)$ of $L^p(\R)$ has $K((f_n))=1$.
\end{theorem}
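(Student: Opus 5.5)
We prove the two assertions separately. The negative one (no non-negative monotone basis) is short; the existence of a non-negative basis with constant close to one is the substance.

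\emph{Non-existence of a monotone non-negative basis.} Suppose $(f_n)$ is non-negative with $K=1$, so every $P_n$ and every $I-P_n$ is a norm-one projection onto a closed subspace. We use the structure of contractive projections in $L^p$.
\begin{itemize}
\item For $p=2$, $P_1$ is an orthogonal projection. The range of $I-P_1$ contains $f_2,f_3,\dots$, so $f_1\perp f_n$ for $n\ge2$.
\item For $p\neq2$, the Ando--Douglas--Bernau--Lacey description of contractive projections gives an analogous statement. The range of $P_1$ is $\spanop(f_1)$, and $I-P_1$ kills $f_1$. Testing $\norm{f_1+t g}\ge\norm{f_1}$ for $g$ in the range of $I-P_1$ forces $\int f_1^{p-1}\operatorname{sgn}(f_1)\,g=0$ for all such $g$. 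In particular $\int f_1^{p-1}f_n=0$ for $n\ge2$.
\end{itemize}
Since $f_1^{p-1}\ge0$ and $f_n\ge0$, this makes $f_n$ vanish on $\supp f_1$ for all $n\ge2$. This holds for $p=2$ and for all $p>1$; for $p=1$ we use the norming functional $\1_{\supp f_1}$ instead. Consequently every function in the closed span is a multiple of $f_1$ on $\supp f_1$. This contradicts $\overline{\spanop}(f_n)=L^p(\R)$, unless $\supp f_1$ is an atom, and Lebesgue measure has no atoms.

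\emph{Existence.} We follow the strategy suggested by \cite{FPT} and \cite{JS}: build a blocking on dyadic-type pieces and use non-negative perturbations of a monotone basis. The steps, in order:
\begin{enumerate}
\item Start from a monotone basis of $L^p[0,1]$ (the Haar system) and transfer it to disjoint intervals $[k,k+1]$ to get a monotone basis of $L^p(\R)$ consisting of mean-zero Haar functions $h_n$ together with indicators.
\item Replace each signed Haar function $h_n$ by a non-negative function $f_n=h_n+c_n g_n$. Here $g_n\ge0$ is a bump placed far away, on a fresh interval $J_n$, with $c_ng_n\ge h_n^-$ after the bump is shifted back, so that $f_n\ge 0$.
\item Keep the correction terms $c_ng_n$ small in the quotient sense. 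Concretely, choose $g_n$ so that its span is almost $1$-complemented and the corrections lie in a subspace that is itself spanned by later basis elements. The later elements then ``absorb'' the error.
\end{enumerate}

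The cleaner implementation of this is a perturbation/blocking argument. We write $L^p(\R)=\bigl(\bigoplus_k L^p(I_k)\bigr)_{\ell^p}$ with $|I_k|$ growing, and interleave:
\begin{itemize}
\item the Haar elements $h_n$ of $L^p(I_{k})$, made non-negative by adding $\lambda\1_{A}$, where $A\subset I_{k+1}$ is a set of huge measure and tiny height;
\item elements that span $\1_A$-type functions in $L^p(I_{k+1})$.
\end{itemize}
For $p>1$, spreading the added mass over a set of measure $M$ with $\|h_n^-\|_1$ fixed gives $L^p$-norm $\sim M^{-(1-1/p)}\to0$. So the perturbation is small in norm, and the small-perturbation principle (Krein--Milman--Rutman) gives a basis equivalent to the monotone one with $K<1+\varepsilon$. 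We also need density: the added functions $\1_A$ must eventually be re-expressed by the later Haar elements on $I_{k+1}$, and we order things so that each $P_n$ differs from a monotone projection by an operator of norm $<\varepsilon 2^{-n}$.

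For $p=1$ the spreading trick fails, since the $L^1$ norm of $h_n^-$ is preserved. There we instead adapt \cite{JS}: bumps translated to infinity behave like an $\ell^1$ direction, and we compensate using the non-weak-null property, again tuning the parameters to keep $\norm{P_n}<1+\varepsilon$.

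\emph{Main obstacle.} The main obstacle is the bookkeeping that simultaneously ensures (a) completeness of the perturbed system and (b) that the partial-sum projections at cut points in the middle of a block (between a Haar element and its compensating bump) stay near norm one. The perturbation estimate alone controls the isomorphism constant but not directly $K$. To get $K$ close to one, we must make the total perturbation $\sum\norm{f_n-h_n}\norm{h_n^*}<\varepsilon$, where $h_n^*$ are the biorthogonal functionals of the monotone basis, and then bound $\norm{P_n}\le 1+2\sum\norm{f_n-h_n}\norm{h_n^*}$.
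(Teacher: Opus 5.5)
Your existence argument does not work, and the obstruction is structural. Adding $c_ng_n$ or $\lambda\1_A$ supported on a fresh interval (or on $I_{k+1}$) leaves $h_n^-$ untouched on $\supp h_n$, so $f_n$ is not non-negative. A correction of tiny height also cannot dominate a fixed negative value pointwise; your bound $M^{-(1-1/p)}$ only offsets the $L^1$ mass of $h_n^-$. Any non-negative $f_n$ satisfies $\norm{f_n-h_n}_p\ge\norm{h_n^-}_p=2^{-1/p}\norm{h_n}_p$. Hence $\norm{f_n-h_n}_p\norm{h_n^*}_q\ge2^{-1/p}$ for every Haar function, and the Krein--Milman--Rutman sum diverges. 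It has to: a basis passing your smallness test relative to the Haar system would be equivalent to it, hence unconditional for $1<p<\infty$, whereas non-negative bases never are. Even for the paper's basis, the vector-by-vector sum relative to the uncorrected vectors $v_n$ is at least of order $L^{1/p}$ at each stage. So the smallness must come from an operator-level estimate.

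The missing ingredients are the following.
(i) Use unequal splittings $|B_j|=\theta|D_j|$, so that $\varphi_j=\1_{B_j}-\theta\1_{D_j}$ has negative part only $\theta$, while $L=1/(2\theta)$ cuts still shrink every interval by a factor $2/3$.
(ii) Recover $\varphi_j$ as the difference of $\kappa^{-1}s_j+\varphi_j$ and $\kappa^{-1}s_j$, with auxiliary $s_j$ on new unit intervals. The choice $\kappa\sim\theta^{-1/p}$ controls the partial sum stopping inside a pair. Every stage ends at an averaging projection of norm one, which replaces the orthogonal projections available only at $p=2$.
(iii) Remove the residual negativity by the single rank-one map $h\mapsto h+\kappa\theta\pair{h}{z_N}\1_A$. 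It adds a constant on the old support $A$, which is where the negativity lives and which lies in the old span. Hence it fixes the old vectors and preserves the new step-function space. Its norm is a constant times $\omega_N=N^{-1/q}\norm{z_N}_q$.

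That norm is small only because of \Cref{prop:aux}. Non-negative $s_j$ with overlapping supports (the lower triangular Toeplitz matrix with entries $\alpha/(i-j+1)$, controlled via Kaluza's criterion and the Hilbert matrix bound) admit $z_N\ge0$ with $\pair{s_j}{z_N}\ge1$ for all $j\le N$ but $\norm{z_N}_q=o(N^{1/q})$. This is impossible for disjointly supported $s_j$. At $p=1$, pointing to \cite{JS} gives no control of the basis constant. The paper instead appends the pair $e_C+e_J$, $e_B+e_J$, whose partial sums have norms exactly $1$ and $1+2\theta$.

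Your non-existence argument is correct for $1<p<\infty$ and is the paper's argument in another form. Smoothness of the norm at $f_1$ forces $\gamma_1$ to be a positive multiple of $f_1^{p-1}$; the paper obtains this from equality in H\"older's inequality. The structure theory of contractive projections is not needed. Also, $I-P_n$ need not be contractive when $p\ne2$, though you never use that.

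At $p=1$, however, the step fails. $L^1$ is not smooth, so a non-negative $f_1$ has many norming functionals, and $\norm{f_1+tg}_1\ge\norm{f_1}_1$ only gives $\bigl|\int_{\supp f_1}g\bigr|\le\int_{\{f_1=0\}}|g|$. No argument using $P_1$ alone can succeed there. In the paper's $L^1$ basis, $f_1=\1_{[0,1)}$ and $\norm{P_1}=1$, yet $f_2=e_{C_1}+e_{J_1}$ is positive on $C_1\subset[0,1)$. Non-attainment at $p=1$ requires contractivity of all partial sums; the paper deduces it from the Dor--Odell theorem on monotone bases \cite{DO}.
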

For $1<p<\infty$, the coefficient functionals of any Schauder basis of
$L^p(\R)$ form a Schauder basis of $L^q(\R)$ with the same basis
constant.
Applying this to the non-negative basis in \Cref{thm:positive}, with
$q$ in place of $p$, gives \Cref{thm:functionals} as an immediate consequence. When $p=1$, duality alone does not yield a basis of $L^1(\R)$ with positive biorthogonal functionals, but our argument will nevertheless establish this.
\begin{corollary}\label{thm:functionals}\label{prop:endpoint}
Let $1\leq p<\infty$ and $\varepsilon>0$. There is a Schauder basis
$(f_n)$ of $L^p(\R)$ with $K((f_n))<1+\varepsilon$ whose coefficient
functions $\gamma_n$ are non-negative.
Moreover, no basis with positive
biorthogonal functionals has basis constant one.
\end{corollary}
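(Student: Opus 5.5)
For $1<p<\infty$ I will deduce the existence statement directly from \Cref{thm:positive} by duality. Let $q$ be the conjugate exponent and apply \Cref{thm:positive} in $L^q(\R)$. This gives a non-negative basis $(g_n)$ with $K((g_n))<1+\varepsilon$. Since $L^q(\R)$ is reflexive, the biorthogonal functionals $(g_n^*)\subset L^p(\R)$ form a Schauder basis of $L^p(\R)$. Its partial sum projections are the adjoints $P_n^*$ of those of $(g_n)$, so it has the same basis constant. The coefficient functions of $(g_n^*)$, viewed as elements of $L^q$ via the bilinear pairing, are exactly the $g_n$, and these are non-negative. Taking $f_n=g_n^*$ finishes this case. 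One small check is that over $\mathbb C$ the pairing has no conjugation, so non-negativity is preserved without taking complex conjugates.

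For the non-existence part with $1<p<\infty$, I argue by contradiction. Suppose a basis $(f_n)$ of $L^p$ with non-negative coefficient functions $\gamma_n$ had $K=1$. Then $(\gamma_n)$ would be a monotone non-negative basis of $L^q$, since the projections are adjoints and so have the same norms. This contradicts the second half of \Cref{thm:positive} applied with $q$.

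The case $p=1$ does not follow from duality, because $L^\infty$ is not separable and the duality only runs one way. For non-existence I would argue directly. Suppose $(f_n)$ is a basis of $L^1$ with $\gamma_n\ge0$ and all $\norm{P_n}=1$. The one-dimensional projection $P_1 f=\pair{f}{\gamma_1}f_1$ has norm $\norm{\gamma_1}_\infty\norm{f_1}_1=1$. The plan is to exploit the fact that in $L^1$, norm-one projections onto finite-dimensional subspaces have a rigid conditional-expectation-type structure, and so do the differences $P_{n}-P_{n-1}$ compatible with monotonicity. I would try to show that $\gamma_2\ge0$, together with $\gamma_2\perp f_1$ and $\norm{P_2}=1$, forces a contradiction with completeness. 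Alternatively, I would mimic whatever argument the paper uses for the non-negative monotone case, interchanging the roles of $f_n$ and $\gamma_n$.

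For existence at $p=1$, the plan is to revisit the construction behind \Cref{thm:positive}. I expect that construction to be built blockwise from finite-dimensional pieces, such as Haar-like systems on dyadic intervals modified by small positive perturbations. At each stage I would replace the non-negative vectors by vectors whose dual functionals are non-negative. Concretely, I would use the finite-dimensional duality within each block and then check that the resulting functionals in $L^\infty$ still yield a basis of $L^1$ with constant close to $1$. This requires controlling the partial sum projections directly, since adjoints are no longer available. I expect this step to be the main obstacle. In particular, I need to verify that the perturbation errors, which in the original argument were summable in $L^q$, remain summable in the $L^1$ operator norm. I need this even for the dual construction, where the $\gamma_n$ are bounded but the $f_n$ must still span densely in $L^1$.
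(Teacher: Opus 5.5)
Your argument for $1<p<\infty$ is correct and is essentially the paper's: dualize \Cref{thm:positive} at $q$. For the lower bound, note that $(\gamma_n)$ would be a monotone non-negative basis of $L^q$. At $p=1$, however, neither half is proved.

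For the lower bound at $p=1$, your route through $\gamma_2$, $\norm{P_2}=1$ and the conditional-expectation structure of contractive projections in $L^1$ never identifies the sign information that drives the contradiction. It is also not needed, because contractivity of the \emph{first} projection already forces $f_1\ge0$. Indeed, with $h=\max\{\operatorname{Re}f_1,0\}$ and $\gamma_1\ge0$,
\[
1=\operatorname{Re}\int f_1\gamma_1\le\int h\gamma_1\le\norm{h}_1\norm{\gamma_1}_\infty\le\norm{f_1}_1\norm{\gamma_1}_\infty=1,
\]
so $\norm{h}_1=\norm{f_1}_1$, and $0\le h\le|f_1|$ gives $h=|f_1|$. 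Then $\int f_1\gamma_n=0$ with $\gamma_n\ge0$ makes \emph{every} $\gamma_n$, $n\ge2$, vanish on $B=\{f_1>0\}$. Hence each $f\in L^1$ supported in $B$ equals $\pair{f}{\gamma_1}f_1$, which is impossible because $L^1(B)$ is at least two-dimensional; this is \Cref{prop:lower}(ii). Your fallback of ``interchanging the roles'' works only if applied to this H\"older-equality argument. The paper's non-attainment for non-negative bases at $p=1$ rests on Dor and Odell \cite{DO}, so there is no direct $L^1$ argument there to copy.

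For existence at $p=1$ you give no construction. The obstacle you anticipate (no adjoints) is resolved by using \emph{preadjoints}: run the construction of \Cref{sec:finite,sec:batch,sec:assembly} verbatim at $p=\infty$, $q=1$.
\begin{itemize}
\item The non-negative step functions $v_n$ and $f_n=U^*v_n$ are regarded as elements of $L^\infty$, and the coefficient functions $\xi_n$ and $\gamma_n=U^{-1}\xi_n$ as elements of $L^1$.
\item The operators $R_ng=\sum_{j\le n}\pair{v_j}{g}\xi_j$ satisfy $R_n^*=P_n$, so $\norm{R_n}_{L^1\to L^1}=\norm{P_n}_{L^\infty\to L^\infty}\le K_0$.
\item At the end of each stage, $R_{\nu_k}=E_{\mathcal P_k}$ is an averaging operator, and these converge strongly to the identity on $L^1$. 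Hence $(\xi_n)$, and then $(U^{-1}\xi_n)$, are bases of $L^1$ (\Cref{lem:raw} and \Cref{prop:corrected}).
\item The correction operators $S_k$ act on $L^q=L^1$ from the outset, so the summability you worry about is built in.
\end{itemize}
The one genuinely new ingredient is the finite-dimensional block. The Hilbert-matrix bound of \Cref{lem:hilbert}, which controls the Toeplitz system, is unavailable on $\ell_\infty$. It is replaced by the tail system \eqref{eq:tail-system}, $s_j=\sum_{l\ge j}e_l$, $d_j=e_j-e_{j-1}$ (with $e_0=0$), $z_N=e_m$. Its partial sums are contractive on $\ell_\infty^m$ and $\omega_N=1/N$, so $\norm{S_A}\to0$ as $L\to\infty$. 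Renaming $(\gamma_n)$ as the basis and the non-negative $(f_n)$ as its coefficient functions gives the statement with basis constant below $1+\varepsilon$.
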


We remark that a basis with positive biorthogonal functionals has the equally useful property that if $f=\sum_n a_nf_n\geq 0$ then $a_n\geq 0$ for all $n$, i.e., the only way a function can be pointwise a.e.~non-negative is if all of the coefficients satisfy $a_n\geq 0$. It is easy to see that -- in a general Banach lattice -- the equivalence $f=\sum_n a_nf_n\geq 0$ if and only if $a_n\geq 0$ for all $n$ implies that the basis vectors $f_n$ are pairwise disjoint. Obviously, $L^p(\mathbb{R})$ does not have a basis with this property.

%\subsection{Schauder bases of translates}

Our next main result concerns bases of translates. For $\lambda\in\R$, we
write $T_\lambda f=f(\,\cdot-\lambda)$ for the translation of $f$ by $\lambda$.

\begin{theorem}\label{thm:translates}
For every $2\le p<\infty$, there is a nonzero real-valued function
$f\in L^p(\R)$ and an ordered sequence of real numbers
$(\lambda_n)$ such that $(T_{\lambda_n}f)$ is a Schauder basis of
$L^p(\R)$ over either scalar field.
% If $p>2$, the parameters $\lambda_n$ can be chosen from any prescribed
% unbounded subset of $\R$. At $p=2$, the basis constant can be taken
% to be at most $13$.
\end{theorem}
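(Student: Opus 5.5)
We will treat $p>2$ and $p=2$ separately, since the mechanisms differ. The abstract points to the $p=2$ route via weighted exponentials: on the Fourier side, $\widehat{T_\lambda f}(\xi)=e^{-2\pi i\lambda\xi}\hat f(\xi)$. So $(T_{\lambda_n}f)$ is a Schauder basis of $L^2(\R)$ exactly when $(e^{-2\pi i\lambda_n\xi}\hat f(\xi))$ is a Schauder basis of $L^2(\R)$. Equivalently, it is a basis when $(e^{-2\pi i \lambda_n \xi})$ is a basis of the weighted space $L^2(\R,|\hat f|^2d\xi)$.

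\textbf{Case $p=2$.} We will build the basis in a weighted space. Choose a weight $w=|\hat f|^2$ that is integrable and positive, and that also has Muckenhoupt-type ($A_2$-like) behaviour on each interval of a partition of $\R$. Decompose $\R=\bigcup_k I_k$ into intervals, and on each $I_k$ use a weighted exponential basis $(e^{2\pi i m\xi/|I_k|})_m$ of $L^2(I_k,w)$. This is a Schauder basis with uniform constant if $w|_{I_k}$ is an $A_2$ weight with uniform characteristic, by Hunt--Muckenhoupt--Wheeden. The difficulty is that one family of frequencies $\lambda_n$ must serve all of $\R$ at once, not one family per interval. We will handle this by an interlacing, Gram--Schmidt-type block construction, ordering the $\lambda_n$ in blocks. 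In block $k$ we add frequencies whose exponentials, weighted by $\hat f$, are small perturbations supported essentially on $I_k$. To get this, we let $\hat f$ decay very fast off $I_k$ relative to the chosen frequency scale, arranging the decay so that $f$ extends to an entire function (for instance $\hat f$ compactly supported). The tail mixing is then controlled by a small-perturbation (Paley--Wiener/Krein--Milman--Rutman) stability argument, which preserves the basis property with constant about $13$.

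\textbf{Case $p>2$.} Here we follow the frame construction of \cite{FOSZ}, upgraded from frame to basis. Fix the prescribed unbounded set and a generator $f$ that is a sum $\sum_k c_k g_k$ of spread-out bumps with rapidly decaying coefficients, chosen to be entire (for example Gaussians). The key point is that for $p>2$ disjointly supported normalized bumps translated far apart behave like the $\ell^p$ unit vector basis. This means a translate $T_\lambda f$ with $\lambda$ huge is, after subtracting already-spanned pieces, almost disjoint from the earlier vectors, and the error is summable in norm. We will build the basis as a small perturbation of a block basis. Start from a known basis of $L^p(\R)$ (Haar-type, blocked into finite-dimensional pieces $E_k$), and at stage $k$ choose finitely many translates whose span approximates $E_k$ plus a controlled $\ell^p$-negligible remainder far away. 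The remainder pieces form a sequence equivalent to a complemented $\ell^p$ sum, which is absorbed via the decomposition $L^p\simeq L^p\oplus\ell^p$ (Pełczyński).

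\textbf{Main obstacle.} In both cases the hardest step is uniform control of the partial-sum projections across blocks, not just within a block. For $p>2$ we will try first to make the "far away" leftover parts lie in a $1+\varepsilon$ complemented copy of $\ell^p$ that is disjoint from the main part. We then show the projections split as a direct sum of two uniformly bounded families. For $p=2$ the obstacle is obtaining a single $A_2$-uniform weight compatible with entireness of $f$.
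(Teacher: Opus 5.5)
\textbf{Case $p=2$.} This part fails at its central step. On the Fourier side every vector of the system is $e^{-2\pi i\lambda_n\xi}\widehat f(\xi)$, and all of them have the same modulus $|\widehat f|$. So no choice of frequencies can make a block of them ``supported essentially on $I_k$.'' Each one satisfies $\norm{e^{-2\pi i\lambda_n\xi}\widehat f\,\1_{\R\setminus I_k}}_2^2=\int_{\R\setminus I_k}|\widehat f|^2$, and for $k\ne1$ this is at least $\int_{I_1}|\widehat f|^2>0$. A single fixed $\widehat f$ cannot decay off every $I_k$ at once. The distances to the local bases $(e^{2\pi im\xi/|I_k|}\sqrt w\,\1_{I_k})_m$ are therefore bounded below, and no Paley--Wiener or Krein--Milman--Rutman perturbation argument applies.

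Taking $\widehat f$ compactly supported is also fatal. Completeness of translates in $L^2(\R)$ forces $\widehat f\ne0$ almost everywhere.

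The reduction to exponentials in $L^2(w)$ with $\widehat g=\sqrt w$ is the right one, and the paper uses it. There, however, the weight is built together with the frequencies, and no Muckenhoupt theory enters. Starting from a Gaussian, the paper multiplies the weight at each stage by an even positive trigonometric polynomial $W_n\le1$. This polynomial is designed on auxiliary tori and transferred to the line via $y=A\omega t$ and the Riemann--Lebesgue lemma. It is chosen so that the new characters reduce the distance to a target character by a fixed factor, stay almost orthogonal to the old span, and keep all partial-sum bounds. The losses $\int w_0(1-W_n)$ are summable, so the limit weight is positive almost everywhere and the finite bounds pass to the limit.

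\textbf{Case $p>2$.} Your sketch has the right ambient picture: a splitting $L^p=X\oplus Y$ with $Y$ spanned by disjoint far-away pieces. But it is missing the mechanism that lets one generator serve every target. It also never actually uses $p>2$, since disjoint normalized functions span $\ell^p$ for every $p$.

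Because $f\in L^p$ is fixed, a translate can carry a target $h$ only with a small coefficient $c$. Isolating $h$ from one translate then leaves a remainder of size about $c^{-1}$, which is not negligible. So the system is not a small perturbation of a blocked Haar basis; the paper uses perturbation only at the very end, to replace the reference vectors by translates of the limit generator.

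The paper's key step is to add $a\sum_{j=1}^NT_{-\lambda_j}h$ to the generator, with $a=dN^{-1/q}$. This gives $T_{\lambda_i}v'=d(e_i+N^{-1/q}h)$ with disjoint unit vectors $e_i$, while $\norm{v'-v}_p=dN^{2/p-1}\to0$; this is exactly where $p>2$ enters. The target is then recovered only approximately, as $\frac12\sum_iN^{-1/p}x_i=\frac12(h+u)$, and one iterates on the normalized residual $h_{n+1}=\beta(h_n-u_n)$.

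The resulting partial sums do not split into two uniformly bounded families. The part mapping $Y$ back to $\F f_j$ has norm only at most $3\beta^n$ after $n$ families. The uniform bound comes from three ingredients:
\begin{itemize}
\item revisiting every target often enough to make these cross terms summable;
\item Abel summation with nonincreasing coefficients $\alpha_j$;
\item an unconditional basis of $X$ with a lower $\ell_p$ estimate, which again uses $p\ge2$.
\end{itemize}

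Finally, Pe{\l}czy\'nski's isomorphism $L^p\simeq L^p\oplus\ell^p$ does not help here, because an abstract isomorphism does not preserve translates. The complement must be the kernel of an explicit contractive projection onto the span of the disjoint pieces.
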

Our construction of a basis of translates for $p=2$ is related to the weighted exponential approach of Olevskii~\cite{Ol97} and Lev and Tselishchev~\cite{LT2}. The additional estimates needed to obtain a Schauder basis control the coordinate projections of the individual exponentials throughout the construction, see \Cref{h2:sec:weighted}.

 In the case of non-negative bases, the principle of small perturbations
can be used to convert any non-negative basis into a non-negative basis with
whatever smoothness or decay properties one wishes for. Thus, the only interesting quantity to optimize is the
basis constant. However, in the case of bases of translates, there is much more freedom on what to optimize.
Some interesting ones are listed below.

\begin{theorem} \label{thm:translate-properties}
 The Schauder basis of translates $(T_{\lambda_n} f)$ in Theorem \ref{thm:translates} can be chosen with the following additional regularity and structural properties:
\begin{enumerate}
\item If $p=2$ then for every $\varepsilon>0$ one can choose $f$ which extends to an entire function and $(\lambda_n)$ so that the basis constant satisfies $K((T_{\lambda_n}f)) < 1+\varepsilon$.
\item If $p>2$ and $S\subset\R$ is unbounded, there are distinct parameters $(\lambda_n)\subset S$ and a non-negative function $f\in L^p(\R)$ extending to an entire function such that $(T_{\lambda_n}f)$ is a Schauder basis of $L^p(\R)$.
\end{enumerate}
\end{theorem}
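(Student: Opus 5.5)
For part (1), I would work on the Fourier side. By Plancherel, $(T_{\lambda_n}f)$ is a Schauder basis of $L^2(\R)$ with constant $K$ exactly when the weighted exponentials $(\hat f(\xi)e^{-2\pi i\lambda_n\xi})$ form a Schauder basis of $L^2(\R)$ with the same constant. This is the weighted exponential setting of \Cref{h2:sec:weighted}.

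The plan is an inductive block construction in the spirit of Olevskii and Lev--Tselishchev.
\begin{enumerate}
\item Fix an orthonormal basis $(e_k)$ of $L^2(\R)$ consisting of smooth compactly supported functions (or a tight exhaustion), and a sequence $\varepsilon_k$ with $\prod_k(1+\varepsilon_k)<1+\varepsilon$.
\item At stage $k$, append a finite block of exponentials $e^{-2\pi i\lambda\xi}$, with $\lambda$ chosen far from all previous parameters. Multiplied by the weight $\hat f$, this block should approximate the next target $e_k$ modulo the span already built, and should be almost orthogonal to the earlier blocks.
\item Enforce almost orthogonality by taking the new frequencies large. Then the cross inner products $\int|\hat f|^2e^{2\pi i(\lambda-\lambda')\xi}\,d\xi$ are Fourier coefficients of $|\hat f|^2$ at large arguments, which are small by Riemann--Lebesgue.
\item Since the full system is nearly block-orthogonal, every partial sum projection (including those ending mid-block) is a small perturbation of an orthogonal projection. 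This requires controlling the within-block projections, i.e.\ the ``coordinate projections of individual exponentials'' mentioned before the theorem. I would make each block a nearly orthogonal family as well.
\item For the entire extension, choose $\hat f$ compactly supported and in $L^2$. Then $f$ is entire by Paley--Wiener. One natural choice is $\hat f$ with $|\hat f|^2$ equal to a smooth bump, so that the cross terms decay fast.
\end{enumerate}

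The main obstacle is that a single fixed weight $\hat f$ with compact support cannot have $\hat f e^{-2\pi i\lambda\xi}$ approximate functions whose Fourier support lies outside $\supp\hat f$. So I would instead let $\hat f$ be nowhere vanishing, e.g.\ $\hat f(\xi)=e^{-\xi^2}$ times a slowly decaying factor. Then $f$ is still entire, by analytic continuation of the Fourier integral, provided $\hat f$ decays faster than any exponential. Approximability of $e_k/\hat f$ by trigonometric sums would then hold on growing intervals. Making this compatible with the $1+\varepsilon$ bound is the step I expect to be hardest. I would handle it by building $\hat f$ inductively as well, as $\hat f=\sum_k\phi_k$ with rapidly shrinking corrections, in the Olevskii manner.

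For part (2) ($p>2$), I would adapt the FOSZ unconditional-frame construction to a basis via a perturbation and blocking argument.
\begin{enumerate}
\item Take $f=g^2$, or more precisely $f=|g|^2$ with $g$ entire, for instance a Gaussian-type function. Then $f$ is non-negative and entire.
\item Choose the translates $\lambda_n\in S$ so far apart that blocks of translates behave like disjointly supported functions. For $p>2$, sums of far-apart bumps with coefficients from a null sequence converge to $0$ in $L^p$, since the $\ell^p$ norm of the coefficients is dominated by their $\ell^2$-type averages.
\item Use this to let blocks $\sum_{j\in B_k}c_jT_{\lambda_j}f$ approximate $h_k-h_k'$. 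Here the target $h_k$ is an element of a fixed Schauder basis of $L^p$, and the error $h_k'$ is a far-away averaging term of negligible $L^p$ norm.
\item Interleave the single translates with these blocks so that the sequence is a small perturbation of a known basis of the form (block basis)$\,\oplus\,$(nearly disjoint bumps), and conclude by the principle of small perturbations.
\end{enumerate}

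The key estimate is that $N$ far-apart translates averaged with weight $1/N$ have norm about $N^{1/p-1}$, which is small. Non-negativity prevents cancellation, so the basis structure must come from the coefficient signs. This is the step I expect to require the most care.
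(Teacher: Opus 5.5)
Both parts of your proposal have genuine gaps.

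\textbf{Part~(i).} Your steps~2--4 cannot hold together. Suppose each block is nearly orthonormal and, as in step~3, almost orthogonal to the earlier blocks, with errors made summable by pushing the frequencies out. Then the Gram matrix of the whole system is a small perturbation of $\|\hat f\|_2^2I$. A complete system of this kind would be a Riesz basis of translates, which does not exist~\cite{OZ}; every basis of translates is conditional.

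The same obstruction already breaks step~2. For a fixed weight such as yours, Riemann--Lebesgue (in a Plancherel--P\'olya form) makes the total correlation $\sum_i|\langle r,e_{\lambda_i}\hat f\rangle|^2$ of a separated, far-out set of frequencies with a fixed residual $r$ small. So a nearly orthonormal block of them captures almost nothing of $r$. Capturing a fixed fraction of $r$ with vectors of small total correlation forces the new families to be nearly linearly dependent. In the paper this happens because the weight changes only by a small mass. The real work is then to keep the prefix inequalities \eqref{h2:eq:gram-prefix}, including cuts inside a family, for such ill-conditioned families; this is what \Cref{h2:lem:split}, \Cref{h2:lem:copies} and \Cref{h2:lem:selective} do.

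Your closing sentence names the other missing ingredient but gives no mechanism for it. The weight must change at every stage, multiplicatively: $w_n=w_{n-1}W_n$, where $0<W_n\le1$ is an even trigonometric polynomial $W_n(t)=\mathcal W(t,A\omega t)$ designed on $\R\times\T^D$ and transferred to the line by \eqref{h2:eq:transfer}.
\begin{itemize}
\item The high-frequency coefficients of $W_n$, not those of a fixed weight, couple the far-out frequencies to $r$.
\item The convergence $J\to A$ with $AP_E=0$ keeps the new span almost orthogonal to the old one.
\item Because $w_n$ decreases to $w_\infty>0$ with small total loss, the finitely many prefix inequalities and the bounds $\dist_{w_\infty}(f,M)^2\le D_n(f)$ pass to the limit. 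Your additive scheme $\hat f=\sum_k\phi_k$ gives no such control.
\end{itemize}
Your argument for the entire extension is fine; in the paper $\hat g=\sqrt{w_\infty}\le\sqrt{w_0}$.

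\textbf{Part~(ii).} A fixed Gaussian-type generator cannot work, for two reasons. It lies in $L^1(\R)$, which is excluded for generators of bases of translates (\Cref{rem:translation-restrictions}). Moreover, a combination of far-apart translates of a fixed bump is a sum of bumps sitting near the $\lambda_j$, so it cannot approximate a target supported elsewhere.

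Pointing to FOSZ goes in the right direction, but its essential mechanism is missing. The generator itself must carry far-away copies of each target, $v'=v+a\sum_jT_{-\lambda_j}h$, so that $T_{\lambda_i}v'=d(e_i+N^{-1/q}h)$ as in \Cref{tr:lem:packet}. Your key estimate $N^{1/p-1}$ for $1/N$-averages is small for every $p>1$, so it is not where $p>2$ enters. What matters is the cost of inserting the $N$ copies into the generator, balanced against the averaging error. In the paper's normalization this cost is $\|v'-v\|_p=dN^{2/p-1}$, see \eqref{tr:eq:edit-cost}, which tends to zero exactly when $p>2$.

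In that normalization one family removes only a fixed fraction of the target. You therefore need the recursion of \Cref{tr:sec:branch}, the bounds of \Cref{tr:lem:branch-bounds} for cuts inside families, and the ordering of \Cref{tr:prop:assembly}. This reference system is the ``known basis'' to which the final perturbation $I+E$, with $\|E\|<12K\kappa$, is applied, and your proposal does not construct one.

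Finally, taking $f=|g|^2$ only yields non-negativity for a generator fixed in advance. Once the generator has to be built, the basis property does not survive squaring. Non-negativity must instead enter every ingredient:
\begin{itemize}
\item non-negative targets taken from non-negative local bases with a lower $\ell_p$ estimate (\Cref{tr:lem:positive-lower} and \Cref{tr:lem:positive-complement}, which rest on \Cref{thm:positive});
\item the sign-free recursion $h_{n+1}=u_n$, $x_{n,i}=e_{n,i}+2N_n^{-1/q}h_n$, in place of the signed residual $h_{n+1}=\beta(h_n-u_n)$;
\item non-negative smoothings $H_t$ and approximants $v_t$, so that the new coordinates are non-negative and every $G_t\ge0$.
\end{itemize}
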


\begin{remark} \label{rem:translation-restrictions}
There are several known restrictions on any basis of translates.
As mentioned above, such a basis must be
conditional. Another interesting fact from \cite{DH,OSSZ} is that the generator cannot
belong to $L^1(\R)$. In particular,
there is no basis of translates of $L^1(\R)$.

There are also several Fourier restrictions. For example, when $p=2$, completeness
requires $\widehat f\ne0$ almost everywhere~\cite[Theorem~1.1]{OSSZ}.
Moreover, for every finite $p$, the Fourier transform of $f$, interpreted as
a distribution when necessary, must have support equal to $\R$.
Interestingly, for $1<p<2$, Lev and Olevskii~\cite[Corollary~2]{LO} constructed two
functions in $L^1(\R)\cap L^p(\R)$ whose Fourier transforms have the same
zero set, but whose families of all real translates have different
completeness properties in $L^p(\R)$. Thus, in this range, the zero set
alone does not determine completeness of the family of all real translates.
\end{remark}

\begin{remark}\label{rem:translation-sets}
For a basis $(T_{\lambda_n}f)$, it is evident that the translation parameters $\lambda_n$ must be \emph{uniformly discrete}, meaning that
$\inf_{n\ne m}|\lambda_n-\lambda_m|>0$.
For $p>2$, \Cref{thm:translates} applies with the parameters $\lambda_n$ chosen from any prescribed
unbounded subset of $\R$. However, even for $p>2$, the whole
lattice $a\mathbb Z+b$ cannot be ordered as a Schauder basis of
translates~\cite[Proposition~3.2]{OSSZ}.

When $p=2$, our construction allows for rational parameters whose pairwise
separations tend to infinity;
see \Cref{h2:ss:transfer}. In particular, the translation set can have zero
upper uniform density, that is,
$$
\limsup_{R\to\infty}\sup_{x\in\R}
\frac{\#\{n:\lambda_n\in[x,x+R]\}}{R}=0.
$$
However, one cannot prescribe an arbitrary unbounded set at the endpoint $p=2$. Indeed,
it is a classical result of Atzmon and Olevskii~\cite{AO} that
no subset of $a\mathbb Z+b$ gives a complete system of
translates in $L^2(\R)$. Nonetheless, small nonzero perturbations of the integers can support
complete systems; see \cite{LT-sparse,Ol97,OU} for additional information.
\end{remark}

\subsection{Notation}

For $s\in L^p(\R)$ and $d\in L^q(\R)$, we define
$s\otimes d\colon L^p(\R)\to L^p(\R)$ by
$$
(s\otimes d)f=\pair{f}{d}s,\quad
\norm{s\otimes d}=\norm{s}_p\norm{d}_q,
$$
so that the first coordinate projection of a basis is
$f_1\otimes\gamma_1$.
We use the same notation in finite-dimensional sequence spaces.
Two families $(s_j)$ and $(d_j)$ are \emph{biorthogonal} if
$\pair{s_i}{d_j}=\delta_{ij}$ and an operator is \emph{positive} if it maps non-negative functions
to non-negative functions.

We write $\1_G$ for the indicator of a measurable set $G$ and $|G|$ for its Lebesgue measure. A \emph{step function} is a finite linear combination of indicator functions of bounded intervals.
A partition of a set into intervals is a collection of pairwise disjoint intervals whose union is that set.
In the non-negative construction, all partitions consist of
half-open intervals of positive length, and every unit interval belongs to
$\{[l,l+1):l\in\mathbb Z\}$.

A \emph{basic sequence} is a Schauder basis of its closed linear span, and its basis constant is the largest norm of its partial sum projections on its span. We use the classical fact that the normalized Haar system is an unconditional basis of $L^p(0,1)$ for $1<p<\infty$~\cite{LTclassic}.

\subsection{Formal verification in Lean and usage of LLMs}

Theorems~\ref{thm:positive}, \ref{thm:translates},
and~\ref{thm:translate-properties} have been formally verified in Lean~4. The corresponding Lean code is available in the accompanying GitHub repository:
\begin{center}
\url{https://github.com/lukasliehr/Structured-Coordinate-Systems}
\end{center}
The file \texttt{Showcase.lean} contains the formal statements of these theorems. The Lean verification has been conducted with Codex (GPT-6 Astra) and Claude Opus 5.5. The authors also used large language models as research
tools in developing the mathematical results presented in this article. This work involved sustained dialogue with GPT-5.4, GPT-5.5, GPT-5.6, and GPT-6 during the development of the results, combining input from these models with the authors’ previous research on these topics and their earlier attempts to prove the theorems presented in this paper.

\section{Outline of the non-negative construction}\label{sec:positive-outline}

We construct the basis inductively by adding finitely many functions at a time,
starting with $f_1=\1_{[0,1)}$. At each stage, the functions
already constructed form a non-negative basis of a finite-dimensional
space of step functions. More precisely, at each stage we have a finite union $A$ of unit
intervals and a partition $\mathcal P$ of $A$ into smaller intervals.
The functions constructed in the previous step span exactly the space of functions
that vanish outside of $A$ and are constant on each interval of
$\mathcal P$ (the constant values can be chosen independently
on the different intervals). We extend this space
by refining the partition and including new unit intervals, while
keeping all previously constructed basis functions unchanged.

There are two requirements for this procedure to give a Schauder basis of $L^p(\R)$. The finite spans must have dense union, and their partial sum projections must remain uniformly bounded. The first requirement is achieved by eventually including every interval between consecutive integers and refining the partitions so that the longest subinterval of each fixed unit interval has length tending to zero. The main work is to control the partial sums while keeping all basis functions non-negative. We explain one stage for $1<p<\infty$ below.

The reason that we were not able to execute the construction in \cite{FPT} for $p\neq 2$ is because of the lack of contractive projections onto closed subspaces of $L^p(\mathbb{R})$. We could not simply pass to the orthogonal complement of the first part of the construction and continue onward. However, now that we are working with entire spaces of step functions at each stage of the construction, we do have a contractive projection onto this space. Indeed, we may simply replace a function by its average on each interval of $\mathcal P$ and set it equal to zero outside of $A$. Thus, at every stage we have a projection of norm one onto the old span, even though the basis functions spanning it may have a
complicated form.

To refine the partition, we split an interval $D_j$ into a short left interval $B_j$
and a right remainder $C_j$, with $|B_j|=\theta|D_j|$. Here,
we choose an integer $L\ge2$ and set $\theta=1/(2L)$.
We then add the function
$$
\varphi_j=\1_{B_j}-\theta\1_{D_j},\quad
D_j=B_j\sqcup C_j.
$$
This function has value $1-\theta$ on $B_j$ and $-\theta$ on $C_j$, and
has integral zero. Together with the old functions, it allows us to
prescribe different constant values on the two pieces -- it plays
the role of a Haar function for this unequal splitting.
In each interval of $\mathcal P$, we make $L$ successive cuts.
The first cut divides the interval into a left piece of relative
length $\theta$ and a right piece of relative length $1-\theta$.
We then split the right piece in the same proportions and repeat
this procedure on each new right-hand piece. All left pieces remain
unchanged. After $L$ cuts, the original interval is divided into
$L+1$ pieces. Since we make $L$ cuts in each of the $R$ intervals
of $\mathcal P$, the total number of cuts is $N=RL$.
The functions $\varphi_1,\ldots,\varphi_N$ therefore provide the
additional functions needed to span the refined step-function space.
Each takes the value $-\theta$ on part of its support, so we must
modify the construction to obtain non-negative basis functions.

We will recover each $\varphi_j$ as a difference of two
non-negative functions. To do this while controlling the partial
sums, we use $2N$ new unit intervals disjoint from $A$, and denote
their union by $F$. In \Cref{sec:finite}, we construct a non-negative
basis $(s_j)_{j=1}^{2N}$ of the functions constant on these intervals
and zero outside of $F$. The norms $\norm{s_j}_p$ are bounded
independently of $N$, and the basis constant is at most $1+\delta$,
where $\delta>0$ can be arbitrarily small. For a parameter
$\kappa>0$, we extend the ordered list of functions already
constructed by adding
\begin{equation}\label{eq:outline-pairs}
s_1,\ \kappa^{-1}s_1+\varphi_1,\quad
s_2,\ \kappa^{-1}s_2+\varphi_2,\quad
\ldots,\quad
s_N,\ \kappa^{-1}s_N+\varphi_N,
\end{equation}
followed by $s_{N+1},\ldots,s_{2N}$. Thus, each $s_j$ is immediately
followed by $\kappa^{-1}s_j+\varphi_j$ for $1\le j\le N$.
Each pair allows us to recover the function $\varphi_j$, since
$$
\varphi_j
=\bigl(\kappa^{-1}s_j+\varphi_j\bigr)-\kappa^{-1}s_j.
$$
We will choose the scaling parameter $\kappa>0$ below to control
the partial sum projections that end between the two functions
of a pair.
The list in \eqref{eq:outline-pairs}, together with the remaining
$s_j$ and the old basis functions, spans all of the functions which are
constant on the refined partition of $A$ and on the new unit intervals
in $F$, and are zero outside of $A\cup F$. The second member of
each pair still takes negative values, which we will remove shortly.

Our next objective is to estimate the partial sum projections for this ordering.
For a function $h$, we define $E_jh$ by replacing $h$ on each
interval of the partition of $A$ after $j$ splittings by its
average on that interval, and setting it equal to zero outside of $A$.
To define $Q_jh$, we first average $h$ on each unit interval in $F$
and set it equal to zero outside of $F$. We then expand this step
function in the basis $(s_l)_{l=1}^{2N}$ and retain its first $j$ terms.

After adding the first $j$ complete pairs in \eqref{eq:outline-pairs},
the partial sum projection is $E_j+Q_j$. The operator $E_j$ uses
only the values of $h$ on $A$ and produces a function supported
in $A$. Similarly, $Q_j$ uses only the values on $F$ and produces
a function supported in $F$. Since $A$ and $F$ are disjoint, we have
$$
\norm{E_j+Q_j}
=\max\{\norm{E_j},\norm{Q_j}\}
\le1+\delta.
$$
Here, $\norm{E_j}=1$ because averaging is contractive, and
$\norm{Q_j}\le1+\delta$ due to the basis constant of $(s_l)$.
A partial sum ending between the members of the $j$-th pair differs
from $E_{j-1}+Q_j$ by the operator
$$
h\longmapsto-\kappa^{-1}s_j
\left(\frac{1}{|B_j|}\int_{B_j}h
 -\frac{1}{|C_j|}\int_{C_j}h\right).
$$
The expression in parentheses is the difference between the two
averages created by the splitting. We arrange the splittings so that
each $D_j$ has at least half the length of the shortest interval
in $\mathcal P$. For $h\in L^p(\R)$, H\"older's inequality gives
$$
\left|\frac{1}{|B_j|}\int_{B_j}h
 -\frac{1}{|C_j|}\int_{C_j}h\right|
\le \left(|B_j|^{-1/p}+|C_j|^{-1/p}\right)\norm{h}_p.
$$
Here, $|B_j|=\theta|D_j|$ and $|C_j|=(1-\theta)|D_j|$.
Using $\theta\le1/2$ and the fact that each $D_j$ has at least half
the length of the shortest interval in $\mathcal P$, we conclude that
$$
\left|\frac{1}{|B_j|}\int_{B_j}h
 -\frac{1}{|C_j|}\int_{C_j}h\right|
\le C\theta^{-1/p}\norm{h}_p,
$$
where $C$ depends only on $p$ and the shortest interval length
in $\mathcal P$, and is independent of $\theta$ and $j$. We may therefore take $\kappa$
proportional to $\theta^{-1/p}$, with a sufficiently large constant,
so that this additional operator has norm at most a prescribed
$\eta>0$. All partial sums ending among the new vectors then have
norm at most $1+\delta+\eta$. The calculation is given in
\Cref{ss:projections}.

We now make the new vectors non-negative. Along with
$s_1,\ldots,s_{2N}$, we construct a non-negative step function $z_N$
that vanishes outside of $F$ and satisfies
\begin{equation}\label{eq:outline-z}
\pair{s_j}{z_N}\ge1\quad(1\le j\le N),\quad
\omega_N:=N^{-1/q}\norm{z_N}_q\longrightarrow0.
\end{equation}
In the construction below, the lower bound on the pairings will ensure that the constant we
add removes the negative values and the limit will ensure that the
map adding this constant is close to the identity.

We define
$C_A\colon L^p(\R)\to L^p(\R)$ by
\begin{equation}\label{eq:outline-map}
C_Ah=h+\kappa\theta\pair{h}{z_N}\1_A.
\end{equation}
This map adds a constant on $A$, determined by the values of $h$
on the new intervals. It fixes every old function because those
functions vanish on $F$. On the second member of a pair, it gives
$$
C_A(\kappa^{-1}s_j+\varphi_j)
=\kappa^{-1}s_j+\varphi_j
 +\theta\pair{s_j}{z_N}\1_A\ge0,
$$
by \eqref{eq:outline-z} and $\varphi_j\ge-\theta\1_A$.
The map $C_A$ also preserves the non-negativity of every $s_j$.
Subtracting $\kappa^{-1}$ times the first transformed vector from
the second cancels the added constant, so
$$
C_A(\kappa^{-1}s_j+\varphi_j)-\kappa^{-1}C_As_j=\varphi_j.
$$
Thus, the span of the transformed vectors still contains each $\varphi_j$.
\Cref{fig:positive-pair} illustrates this cancellation.

We next use the limit in \eqref{eq:outline-z} to estimate
$\norm{C_A-I}$. With $\mathcal P$ fixed, the choice of
$\theta=1/(2L)$ gives $\theta=R/(2N)$, as in
\eqref{eq:batch-parameters}. Since
$\kappa$ is proportional to $\theta^{-1/p}$, we obtain
\begin{equation}\label{eq:outline-map-bound}
\norm{C_A-I}
=\kappa\theta|A|^{1/p}\norm{z_N}_q
\le C\omega_N\longrightarrow0,
\end{equation}
where $C$ is independent of $N$. The map $C_A$ is invertible for
every $N$, as shown in \eqref{eq:local-inverse}.
By \eqref{eq:outline-map-bound}, both $C_A$ and $C_A^{-1}$ have
norms tending to one as $N\to\infty$. The map
in \eqref{eq:outline-map} adds a multiple of $\1_A$, which belongs
to the old span, so it maps the enlarged step-function space onto
itself. Applying $C_A$ to the basis changes a partial sum projection
$T$ to $C_ATC_A^{-1}$. Its norm therefore increases by at most
the factor $\norm{C_A}\norm{C_A^{-1}}$, which tends to one.
This gives the non-negative extension in \Cref{lem:positive-extension}.

We repeat this construction in \Cref{sec:assembly}, choosing
the norms of the added operators $C_A-I$ to have a sufficiently
small sum. Together with
small choices of $\delta$ and $\eta$, this keeps every finite-dimensional basis
constant below $1+\varepsilon$. The supports eventually cover
$\R$, and the longest subinterval of the partition on each fixed
unit interval has length tending to zero. Hence, the finite spans have dense union, and the uniformly
bounded partial sums converge to every function in $L^p$.

This construction follows the scheme of \cite[Section~2]{FPT}. There, orthogonal projection onto the old
span separates the part of the next Haar function that still needs
to be approximated. Here, the span at each stage contains all step functions on a
finite partition, so the averaging operator in \eqref{eq:averaging}
provides a projection of norm one for every $p$. We also express
each $\varphi_j$ exactly as a linear combination of the new
non-negative functions. These choices ensure that the next stage
again starts with a full step-function space.

To obtain a non-negative basis of $L^1$ with basis constant at most $1+\epsilon$, we use the simpler extension
in \Cref{sec:positive-l1}. We fix $0<\theta<1/2$, split
one interval $D$ into $B,C$ with $|B|=\theta|D|$, and add
one new unit interval $J$. Writing $e_V=\1_V/|V|$, we
append $e_C+e_J$ followed by $e_B+e_J$. Their difference recovers
the required Haar function up to a scalar factor. The projection
that discards both new functions is contractive, and the projection
that stops between them has norm $1+2\theta$. Thus, later stages
preserve all earlier partial sum norms, while the same refinement
and density arguments as in the reflexive case give a basis. The fact that a non-negative basis in $L^1(\R)$ cannot be monotone is a consequence of \cite[Theorem~3.1]{DO}.

To obtain a basis with positive biorthogonal functionals in $L^1$, we run the
construction presented in \Cref{sec:finite,sec:batch,sec:assembly} with
$p=\infty$ and prove norm convergence for the coefficient functions
in $L^1$. \Cref{sec:duality} completes the proofs of the main
results, including the lower bounds on the basis constants in the
stated ranges.

\section{A finite-dimensional construction}\label{sec:finite}

We construct the vectors $s_j$ and $z_N$ used in the outline.
The vectors $s_j$ must be non-negative, have uniformly bounded
norms, and form a basis with constant at most $1+\delta$.
The conditions on $z_N$ in \eqref{eq:aux-smallness} below will
allow us to make the functions in \eqref{eq:outline-pairs} non-negative
by applying a map close to the identity.

Throughout \Cref{sec:finite,sec:batch,sec:assembly}, we allow
$1<p\le\infty$, so that $1\le q<\infty$. The case $p=\infty$
will give the bases with positive biorthogonal functionals in
\Cref{prop:endpoint}. We write $(e_j)_{j=1}^m$
for the coordinate basis of $\F^m$, equip $\ell_r^m$ with its usual
norm, and use the bilinear pairing $\pair{x}{y}=\sum_i x_i y_i$.
The finite-dimensional construction can now be stated as follows.

\begin{proposition}[Non-negative bases in finite dimensions]\label{prop:aux}
Fix $1<p\le\infty$ and $\delta>0$. There is a constant $M\ge1$
and, for every $N\ge1$, a pair of biorthogonal bases
$(s_j)_{j=1}^{2N}$ and $(d_j)_{j=1}^{2N}$ of $\F^{2N}$ and a vector
$z_N\in\F^{2N}$ such that
\begin{align}
&s_j\ge0,\quad \norm{s_j}_p\le M,\quad
\sup_{0\le r\le2N}
\left\|\sum_{j=1}^r s_j\otimes d_j\right\|_{\ell_p^{2N}\to\ell_p^{2N}}
\le1+\delta,\label{eq:aux-bounds}\\
&z_N\ge0,\quad
\pair{s_j}{z_N}\ge1\quad(1\le j\le N),\quad
\omega_N:=N^{-1/q}\norm{z_N}_q\longrightarrow0
\quad(N\to\infty).\label{eq:aux-smallness}
\end{align}
The constant $M$ is independent of $N$.
\end{proposition}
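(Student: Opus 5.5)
The plan is to reduce \eqref{eq:aux-smallness} to a condition on the $\ell_1$-mass of the first $N$ vectors, and then to build the $s_j$ as ``spread'' non-negative vectors arranged along a dyadic tree. If $z_N=t\,\1$ is a constant vector on the relevant coordinates, then $\omega_N\asymp t$, and $\pair{s_j}{z_N}=t\norm{s_j}_1$. So it suffices to have $\norm{s_j}_1\to\infty$ uniformly in $1\le j\le N$ while $\norm{s_j}_p\le M$. This forces each of the first $N$ vectors to be flat over a set of size $k=k(N)\to\infty$: a normalized indicator $k^{-1/p}\1_G$ has $\ell_p$-norm $1$ and $\ell_1$-norm $k^{1/q}$, which tends to infinity because $q<\infty$. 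We then take $t=k^{-1/q}$, so that $\omega_N\lesssim k^{-1/q}\to0$.

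The naive choices fail, and this dictates the construction. Taking $s_j=e_j+a\,w$ with one common flat vector $w$ gives partial sums containing $a\bigl(\sum_{j\le r}x_j\bigr)w$, whose norm grows like $a\,r^{1/q}$. Taking disjoint flat blocks is impossible, since there are only $2N$ coordinates for $N$ blocks of size $k$. I would therefore arrange the spreading along a dyadic tree, mimicking the non-negative Haar-type scheme of \Cref{sec:positive-outline} at the level of $\F^{2N}$:
\begin{enumerate}
\item Split the $2N$ coordinates into blocks of size $2k$ (assume $2k\mid 2N$).
\item On each block, let the first vector be the normalized flat vector $k^{-1/p}\1_{\text{block}}$ (up to a factor $2^{-1/p}$).
\item List the remaining vectors of the block as pairs ``flat vector on a sub-block, followed by $\kappa^{-1}$ times that flat vector plus an unequal-split Haar function,'' exactly as in \eqref{eq:outline-pairs}.
\item Make the second member of each pair non-negative by adding a small multiple of the block's flat vector, as with the map $C_A$ in \eqref{eq:outline-map}.
\end{enumerate}
Blocks are disjoint, so the basis constant is the maximum over blocks. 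The first half of the list, $j\le N$, can then be chosen to consist of flat-on-large-sets vectors, which carry $\ell_1$-mass $\gtrsim k^{1/q}$ because the added constants are spread over sets of size comparable to $k$.

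The estimates on the partial sum projections would follow the computation sketched in \Cref{sec:positive-outline}. Averaging onto the current step space within a block is contractive in $\ell_p$. Stopping between the members of a pair costs an operator of norm $\lesssim\kappa^{-1}\theta^{-1/p}$, which can be made $\le\eta$. The positivity correction conjugates the projections by a map $C$ with $\norm{C-I}\lesssim\kappa\theta\,k^{1/p}\norm{z}_q$ restricted to the block; this is small once $k$ is large and $t=k^{-1/q}$ is used. Biorthogonal $d_j$ come from the triangular structure, and the bound $\norm{s_j}_p\le M$ holds because every vector is a normalized flat vector plus a bounded correction.

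The hard step is balancing the parameters so that the correction map stays within $1+\delta$ uniformly in $N$ while the $\ell_1$-masses still diverge. Concretely, $\kappa\sim\theta^{-1/p}$ and $\theta\sim 1/L$ with $L\sim k$ must give $\kappa\theta\,\norm{z}_q\to0$. If the single-level scheme does not close, I would first try a multiscale version: geometric block sizes $k_i$ with corrections whose norms are summable, using the unconditionality of the Haar system in $L^p(0,1)$ to control sums of flat pieces.
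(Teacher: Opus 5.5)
There is a genuine gap. For $1<p<\infty$ the single-level scheme cannot be completed, whatever the details of the tree, the pairs and the correction.

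\textbf{The target mass is unattainable.} Your reduction to $\ell_1$-mass with a constant $z_N$ is sound; the paper uses exactly $z_N=\Lambda_N^{-1}\sum_l e_l$. What fails is the target of mass $k^{1/q}$ on disjoint blocks.

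With $z_N=k^{-1/q}\1$ and blocks $\beta$ of size $2k$, you have $\norm{z_N\1_\beta}_q=2^{1/q}$. By pigeonhole, one of the $N/k$ blocks contains $n\ge k$ of the first $N$ vectors, and each of these satisfies $\pair{s_j}{z_N\1_\beta}\ge1$. Since the $s_j$ and $d_j$ are block-supported, apply the transposed partial sums $\sum_{j\le R}d_j\otimes s_j$, which have norm at most $1+\delta$ on $\ell_q^{2N}$, to $z_N\1_\beta$. This gives
\[
\Bigl\|\sum_{j\le R,\ \supp s_j\subset\beta}\pair{s_j}{z_N\1_\beta}\,d_j\Bigr\|_q\le(1+\delta)2^{1/q}\quad\text{for all }R.
\]
Here every coefficient is at least one, and $\norm{d_j}_q\ge1/\norm{s_j}_p\ge1/M$.

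Uniformly bounded partial sums with coefficients bounded below are a summing-basis phenomenon. They occur in $\ell_1$, i.e.\ in the case $p=\infty$, where the paper's tail vectors $s_j=\sum_{l\ge j}e_l$ work. But $\ell_q$ is uniformly convex for $1<q<\infty$. By the theorem of Gurarii--Gurarii and James, the normalized functionals $d_j/\norm{d_j}_q$ of the block form a basic sequence with constant at most $1+\delta$, and so satisfy a lower $\ell_r$-estimate with $c>0$ and $r<\infty$ depending only on $\delta$ and $q$. Hence $c\,n^{1/r}/M\le(1+\delta)2^{1/q}$, so $k\le n$ stays bounded. This contradicts $k\to\infty$, which you need for $\omega_N\lesssim k^{-1/q}\to0$.

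The same count defeats any block-diagonal multiscale variant in which $z_N$ has bounded $\ell_q$-norm on blocks that host unboundedly many of the first $N$ vectors. The underlying error is the claim that large $\ell_1$-mass forces flatness. Only large support is forced, and the pairings must come from long-range overlaps.

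\textbf{Steps 3 and 4 are circular.} In the paper, the pairs \eqref{eq:outline-pairs} and the map $C_A$ of \eqref{eq:outline-map} \emph{consume} \Cref{prop:aux}. The $s_j$ there live on fresh coordinates $F$ disjoint from $A$, and they already form a non-negative basis with constant $1+\delta$. The correction is small only because $\omega_N\to0$: the norm $\norm{S_A}$ in \eqref{eq:common-estimate} is a fixed multiple of $\omega_{RL}$. Reproducing that mechanism inside $\F^{2N}$ therefore presupposes the statement you are proving.

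The paper instead proves the statement directly. For $1<p<\infty$ it takes $s_j=T_me_j$, where $T_m$ is lower triangular with ones on the diagonal and entries $\alpha/(i-j+1)$ below it.
\begin{itemize}
\item The columns have $\ell_p$-norm at most $q^{1/p}$.
\item Each of the first $N$ columns has $\ell_1$-mass at least $\Lambda_N\sim\alpha\log N$. So $\omega_N=2^{1/q}/\Lambda_N\to0$, but only logarithmically.
\item The partial sums $T_mD_rT_m^{-1}$ differ from the coordinate projection $D_r$ by the single block $B_rT_r^{-1}$.
\item That block has norm at most $2\alpha(p+q)$. This follows from Hilbert's inequality for the Hankel block $(\alpha/(u+v))$ and the bound $\norm{T_r^{-1}}\le2$ from Kaluza's sign criterion.
\end{itemize}
For $p=\infty$ it uses the tail vectors and $z_N=e_m$.
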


Here, the vectors $s_j$ and $d_j$ depend on $N$, although we omit this dependence from the notation. Note that the coordinate basis cannot satisfy \eqref{eq:aux-smallness}. Indeed, if $s_j=e_j$, then the first $N$ coordinates of $z_N$ must be at least one, giving $\norm{z_N}_q\ge N^{1/q}$. We therefore need the supports of the $s_j$ to overlap, so that a single coordinate of $z_N$ can contribute
to several pairings.

For $1<p<\infty$ we choose
$0<\alpha\le1$ with $2\alpha(p+q)\le\delta$ and define
\begin{equation}\label{eq:toeplitz}
T_m=(t_{ij})_{i,j=1}^m,\quad
t_{ij}=
\begin{cases}
0,&i<j,\\
1,&i=j,\\
\alpha/(i-j+1),&i>j.
\end{cases}
\end{equation}
This lower triangular matrix is invertible. For $m=2N$, we set
\begin{equation}\label{eq:toeplitz-systems}
s_j=T_m e_j,\quad d_j=T_m^{-\mathsf T}e_j,\quad
\Lambda_N=1+\alpha\sum_{r=2}^{N+1}\frac{1}{r},\quad
z_N=\frac{1}{\Lambda_N}\sum_{l=1}^{2N}e_l.
\end{equation}
Here, $T_m^{-\mathsf T}$ denotes the transpose of $T_m^{-1}$.
The columns $s_j$ have uniformly bounded $\ell_p$ norms, whereas
each of the first $N$ columns has coordinate sum at least
$\Lambda_N$. Since $\Lambda_N\to\infty$, this choice of $z_N$
gives the required pairings and makes $\omega_N$ tend to zero.
The remaining point is to control the partial sum projections.
For this, we first bound $T_m^{-1}$. We then bound the submatrix
formed by the last $m-r$ rows and the first $r$ columns of $T_m$
for $1\le r<m$.

\begin{lemma}\label{lem:inverse}
For $0<\alpha\le1$, the matrix in \eqref{eq:toeplitz} satisfies
$$
\norm{T_m^{-1}}_{\ell_r^m\to\ell_r^m}\le2
\quad(1\le r\le\infty).
$$
\end{lemma}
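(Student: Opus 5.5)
The plan is to exploit the lower triangular Toeplitz structure of $T_m$ and reduce everything to the coefficients of a single power series. Let $J$ denote the $m\times m$ down-shift matrix, $Je_j=e_{j+1}$ for $j<m$ and $Je_m=0$. Set $c_0=1$ and $c_k=\alpha/(k+1)$ for $k\ge1$. Then
$$
T_m=\sum_{k=0}^{m-1}c_kJ^k,
$$
since $J^m=0$. Lower triangular Toeplitz matrices multiply like truncated power series. So if $a(x)=\sum_{k\ge0}c_kx^k$ and $1/a(x)=\sum_{k\ge0}b_kx^k$ as formal power series (possible since $c_0=1$), then
$$
T_m^{-1}=\sum_{k=0}^{m-1}b_kJ^k .
$$
Each $J^k$ is a contraction on every $\ell_r^m$, so the triangle inequality gives $\norm{T_m^{-1}}_{\ell_r^m\to\ell_r^m}\le\sum_{k\ge0}|b_k|$, uniformly in $m$ and $r$. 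It therefore suffices to show $\sum_k|b_k|\le2$.

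For the coefficients $b_k$, I would invoke Kaluza's theorem. If $c_0=1$, $c_k>0$ and $(c_k)$ is log-convex, i.e.\ $c_k^2\le c_{k-1}c_{k+1}$ for $k\ge1$, then $b_0=1$ and $b_k\le0$ for all $k\ge1$. The log-convexity is checked directly. For $k=1$ it reads $\alpha^2/4\le\alpha/3$, which holds since $0<\alpha\le1$. For $k\ge2$ it reads $(k+1)^{-2}\le(k(k+2))^{-1}$, which is just $k(k+2)\le(k+1)^2$. This is exactly where the hypothesis $\alpha\le1$ is used. If one prefers not to cite Kaluza, the standard proof is an induction on the recursion $b_n=-\sum_{k=1}^nc_kb_{n-k}$ using log-convexity, and I would include it.

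It remains to compute $\sum_{k\ge1}|b_k|=-\sum_{k\ge1}b_k$. For $0<x<1$ the series $a(x)$ converges, and $a(x)\to\infty$ as $x\to1^-$ because the harmonic series diverges. The partial sums $\sum_{k\le n}b_k$ are nonincreasing in $n$, because $b_k\le0$ for $k\ge1$. Hence, by monotone convergence (an Abelian argument with nonpositive terms),
$$
\sum_{k\ge0}b_k=\lim_{x\to1^-}\frac{1}{a(x)}=0 .
$$
Thus $\sum_{k\ge1}|b_k|=b_0=1$ and $\sum_{k\ge0}|b_k|=2$, which gives the claimed bound.

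The only delicate point is the sign information on $b_k$. A naive Neumann series for $(I+\alpha S)^{-1}$ fails, since $\norm{\alpha S}$ grows like $\alpha\log m$. So the argument rests on Kaluza's theorem; if that step needed repair, I would fall back on proving $b_k\le0$ directly by the induction sketched above.
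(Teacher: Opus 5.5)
Your proof is correct and follows essentially the same route as the paper: you write $T_m$ as a polynomial in the shift, take reciprocal coefficients, and get their signs from Kaluza's criterion. The paper phrases your log-convexity condition as nondecreasing ratios $\mu_n/\mu_{n-1}$.

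The only difference is the final step. The paper obtains $\sum_{k=1}^n|b_k|\le1$ directly from the recursion, using that $(c_k)$ is decreasing, so it needs no analytic input. Your Abelian argument uses, without stating it, that $\sum_k b_kx^k$ converges to $1/a(x)$ on $(0,1)$. This is routine: the recursion gives $\sum_{j=1}^N|b_j|x^j\le a(x)-1$, and then the Cauchy product of the two absolutely convergent series gives the identity.
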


\begin{proof}
Let $V_m$ be the coordinate shift given by
$V_m e_j=e_{j+1}$ for $j<m$ and $V_m e_m=0$.
We set $\mu_0=1$ and $\mu_n=\alpha/(n+1)$ for $n\ge1$, so that
$$
T_m=\sum_{k=0}^{m-1}\mu_kV_m^k.
$$
To compute the inverse, we define $c_n$ recursively by
\begin{equation}\label{eq:renewal}
\mu_n=\sum_{k=1}^n c_k \mu_{n-k}\quad(n\ge1).
\end{equation}
Since $V_m^m=0$, this identity gives
$$
T_m^{-1}=I-\sum_{k=1}^{m-1}c_kV_m^k.
$$
Indeed, \eqref{eq:renewal} makes the coefficient of every positive
power of $V_m$ vanish in the product. Since each power of $V_m$
is contractive on $\ell_r^m$, it remains to prove that $c_n\ge0$
and $\sum_{k=1}^n c_k\le1$.

We use the elementary argument behind Kaluza's sign criterion
\cite[Theorem~1.3]{BVV}. The ratios $\mu_n/\mu_{n-1}$ are
$\alpha/2,2/3,3/4,\ldots$ and are therefore nondecreasing.
Since $c_1=\mu_1>0$, we can prove $c_n\ge0$ by induction.
If $c_1,\ldots,c_{n-1}\ge0$, then
$$
\sum_{k=1}^{n-1}c_k \mu_{n-k}
\le\frac{\mu_n}{\mu_{n-1}}
 \sum_{k=1}^{n-1}c_k \mu_{n-1-k}
=\mu_n.
$$
It follows from \eqref{eq:renewal} that $c_n\ge0$. Moreover,
since $(\mu_n)$ is decreasing, we have
$$
\mu_n=\sum_{k=1}^n c_k \mu_{n-k}
\ge \mu_n\sum_{k=1}^n c_k,
$$
which proves the bound on the sum. Consequently,
$$
\norm{T_m^{-1}}_{\ell_r^m\to\ell_r^m}
\le1+\sum_{k=1}^{m-1}c_k\le2.
$$
\end{proof}

The submatrix formed by the last $m-r$ rows and the first $r$
columns of $T_m$ has entries $\alpha/(i+j)$ after its columns
are reversed. The following estimate will bound its norm.

\begin{lemma}\label{lem:hilbert}
Let $1<r<\infty$ and $r'=r/(r-1)$.
The matrix $\mathcal H=(1/(i+j))_{i,j\ge1}$ defines a bounded
operator on $\ell_r$, with norm at most $r+r'$.
\end{lemma}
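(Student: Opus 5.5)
We will prove the bound with Schur's test, using the weight $w_i=i^{-1/(rr')}$. Equivalently, we run the classical Hardy--Hilbert argument and estimate the row sums by integrals. For sequences $x\in\ell_r$, $y\in\ell_{r'}$ with non-negative entries (which suffices, since $\mathcal H$ has non-negative entries), we bound the bilinear form
$$
\sum_{i,j\ge1}\frac{x_iy_j}{i+j}.
$$
We write
$$
\frac{x_iy_j}{i+j}=\left(\frac{x_i}{(i+j)^{1/r}}\Bigl(\frac{i}{j}\Bigr)^{a}\right)\left(\frac{y_j}{(i+j)^{1/r'}}\Bigl(\frac{j}{i}\Bigr)^{a}\right)
$$
with the exponent $a=1/(rr')$, and apply H\"older's inequality with exponents $r,r'$. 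This gives the bound
$$
\Bigl(\sum_i x_i^r\,S_r(i)\Bigr)^{1/r}\Bigl(\sum_j y_j^{r'}\,S_{r'}(j)\Bigr)^{1/r'},
$$
where
$$
S_r(i)=\sum_{j\ge1}\frac{1}{i+j}\Bigl(\frac{i}{j}\Bigr)^{ar}=\sum_{j\ge1}\frac{1}{i+j}\Bigl(\frac{i}{j}\Bigr)^{1/r'},
$$
and $S_{r'}(j)$ is defined symmetrically with exponent $1/r$.

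The key step is a uniform bound on $S_r(i)$. The summand $t\mapsto (i+t)^{-1}(i/t)^{1/r'}$ is decreasing in $t>0$. Hence the sum over $j\ge1$ is at most the integral over $(0,\infty)$. Substituting $t=iu$ turns that integral into
$$
\int_0^\infty\frac{u^{-1/r'}}{1+u}\,du=\frac{\pi}{\sin(\pi/r')}=\frac{\pi}{\sin(\pi/r)}.
$$
The same bound holds for $S_{r'}(j)$. We therefore obtain $\norm{\mathcal H}_{\ell_r\to\ell_r}\le \pi/\sin(\pi/r)$.

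To reach the stated constant $r+r'$ without computing the Beta integral exactly, we split the integral at $u=1$. On $(0,1)$ we bound the integral by $\int_0^1u^{-1/r'}\,du=r$. On $(1,\infty)$ we bound it by $\int_1^\infty u^{-1-1/r'}\,du=r'$. This gives the estimate $r+r'$ directly. We also note in passing that $\pi/\sin(\pi/r)\le r+r'$, so the two bounds are consistent.

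The main obstacle is the monotonicity needed to compare the sum with the integral. The summand is a product of the two decreasing functions $(i+t)^{-1}$ and $t^{-1/r'}$, so it is itself decreasing, and therefore $\sum_{j\ge1}g(j)\le\int_0^\infty g$. Finally, boundedness of the form on non-negative vectors yields boundedness of the operator on all of $\ell_r$, by duality and because $|\mathcal Hx|\le\mathcal H|x|$ entrywise.
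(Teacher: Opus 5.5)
Your proof is correct and is essentially the paper's argument. Both use a Schur-test form of H\"older's inequality with power weights $i^{-1/(rr')}$, compare the weighted row sums with $\int_0^\infty t^{-u}(i+t)^{-1}\,dt$, and split that integral at $t=i$ (your $u=1$) to get the constants $r$ and $r'$. The only differences are cosmetic: you apply H\"older to the bilinear form and finish by duality, whereas the paper applies it row by row and then sums, and you also identify the sharp value $\pi/\sin(\pi/r)$.
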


\begin{proof}
For $0<u<1$, comparison with the integral of a decreasing function gives
$$
\sum_{j\ge1}\frac{j^{-u}}{i+j}
\le\int_0^\infty\frac{t^{-u}}{i+t}\,dt
\le\left(\frac{1}{1-u}+\frac{1}{u}\right)i^{-u}.
$$
In the last step, we split the integral at $i$, and then bounded the
denominator below by $i$ on $(0,i)$ and by $t$ on $(i,\infty)$.
Taking either $u=1/r$ or $u=1/r'$ gives the same constant
$C=r+r'$.

For a finitely supported sequence $x$, H\"older's inequalty
and the estimate with $u=1/r$ yield
$$
\left|\sum_j\frac{x_j}{i+j}\right|^r
\le
\left(\sum_j\frac{j^{-1/r}}{i+j}\right)^{r-1}
\sum_j\frac{j^{1/r'}|x_j|^r}{i+j}
\le C^{r-1}i^{-1/r'}
\sum_j\frac{j^{1/r'}|x_j|^r}{i+j}.
$$
Summing in $i$, interchanging the sums, and applying the estimate with
$u=1/r'$ gives
$$
\sum_i\left|\sum_j\frac{x_j}{i+j}\right|^r
\le C^r\sum_j|x_j|^r.
$$
The result then follows by density.
\end{proof}

\begin{proof}[Proof of \Cref{prop:aux} for $1<p<\infty$]
We use the vectors in \eqref{eq:toeplitz-systems}. They are
biorthogonal because $T_m^{-1}T_m=I$, and the columns $s_j$ are
non-negative. Their norms satisfy
$$
\norm{s_j}_p^p
=1+\alpha^p\sum_{r=2}^{m-j+1}r^{-p}
\le1+\int_1^\infty t^{-p}\,dt
=q.
$$
Thus, we may take $M=q^{1/p}$. Since each of the first $N$ columns
has coordinate sum at least $\Lambda_N$, we also have
$$
\pair{s_j}{z_N}\ge1,\quad
\omega_N=\frac{2^{1/q}}{\Lambda_N}\longrightarrow0.
$$

It remains to estimate the partial sum projections. If $D_r$
denotes the projection onto the first $r$ coordinates, then the
corresponding projection for $(s_j)$ is $T_mD_rT_m^{-1}$.
For $1\le r<m$, we write
$$
T_m=
\begin{pmatrix}
T_r&0\\
B_r&T_{m-r}
\end{pmatrix}.
$$
Here, $B_r$ consists of the last $m-r$ rows and the first $r$
columns of $T_m$. Block multiplication gives
$$
T_mD_rT_m^{-1}
=
\begin{pmatrix}
I_r&0\\
B_rT_r^{-1}&0
\end{pmatrix}.
$$
Thus, we only need to bound $B_rT_r^{-1}$. Reversing the columns
of $B_r$ gives the entries $\alpha/(u+v)$, where
$1\le u\le m-r$ and $1\le v\le r$. Coordinate reversal
preserves the norm, so \Cref{lem:hilbert} gives
$\norm{B_r}_{\ell_p^r\to\ell_p^{m-r}}\le\alpha(p+q)$.
Combining this with \Cref{lem:inverse}, we obtain
$$
\norm{T_mD_rT_m^{-1}}_{\ell_p^m\to\ell_p^m}
\le1+\norm{B_r}\norm{T_r^{-1}}
\le1+2\alpha(p+q)\le1+\delta.
$$
The cases $r=0,m$ give the zero operator and the identity.
Since $T_mD_rT_m^{-1}=\sum_{j=1}^r s_j\otimes d_j$, this proves
\eqref{eq:aux-bounds}. The transposed projections on $\ell_q^m$
have the same norms.
\end{proof}

\begin{proof}[Proof of \Cref{prop:aux} for $p=\infty$]
Here, $q=1$. We set $m=2N$ and take
\begin{equation}\label{eq:tail-system}
s_j=\sum_{l=j}^m e_l,\quad
d_1=e_1,\quad d_j=e_j-e_{j-1}\quad(2\le j\le m),\quad z_N=e_m.
\end{equation}
The families are biorthogonal, and each $s_j$ is non-negative with
$\norm{s_j}_\infty=1$. For $1\le r\le m$, telescoping gives
$$
\sum_{j=1}^r\pair{x}{d_j}s_j
=\sum_{l<r}x_l e_l+x_r\sum_{l=r}^m e_l.
$$
This projection keeps the first $r$ coordinates and replaces the
remaining ones by $x_r$, so it is contractive on $\ell_\infty^m$.
Since $\pair{s_j}{z_N}=1$ for every $j$ and $\omega_N=1/N$,
the proposition follows with $M=1$.
\end{proof}

\section{One stage of the construction}\label{sec:batch}

We now use the vectors from \Cref{prop:aux} to extend a non-negative
basis of a given step-function space. The extension must span the whole
space on the refined partition and increase the basis constant by
an arbitrarily small amount. We first construct the new vectors
and estimate their partial sum projections. After this, we will remove their
negative values without changing the old vectors.

Throughout this section, we fix $1<p\le\infty$ and
$\delta,\eta>0$, and let $M$ be as in \Cref{prop:aux}. Here,
$\eta$ bounds the additional term in a partial sum ending between
the two vectors of a pair. We let $A$ be a nonempty finite union
of unit intervals and $\mathcal P$ a finite partition of $A$ into
intervals, each contained in one unit interval. We set
$$
R=\#\mathcal P,\quad \ell=\min_{D\in\mathcal P}|D|.
$$
Thus, $R$ is the number of intervals in $\mathcal P$ and $\ell$ is their minimum
length. We keep $A,\mathcal P,\delta,\eta$ fixed and choose an
integer $L\ge2$ large enough so that the operator
$S_A$ defined in \eqref{eq:local-S} has arbitrarily small norm. We then set
\begin{equation}\label{eq:batch-parameters}
\theta=\frac{1}{2L},\quad N=RL,\quad m=2N.
\end{equation}
Our strategy will be to split each interval of $\mathcal P$ $L$ times and represent
the vectors from \Cref{prop:aux} as step functions on $m$ new
unit intervals. Thus, $N$ is the total
number of splittings.

For any finite interval partition $\mathcal Q$, we write
\begin{equation}\label{eq:averaging}
E_{\mathcal Q}f
=\sum_{D\in\mathcal Q}
\left(\frac{1}{|D|}\int_D f\right)\1_D.
\end{equation}
This operator replaces $f$ by its average on each interval of $\mathcal Q$ and sets
it equal to zero outside their union. It is therefore a projection
onto the associated step-function space. For $1\le r<\infty$,
H\"older's inequality on each interval gives
$\norm{E_{\mathcal Q}f}_r\le\norm{f}_r$, and for $r=\infty$
the same bound follows directly from the definition of an average.
In particular, these projections are contractive for every $r$.

\subsection{Refining the partition}\label{ss:splits}

For each interval of $\mathcal P$, we split off its left proportion
$\theta$ and repeat the operation on the right remainder, making
$L$ splittings in total. We finish one interval of $\mathcal P$ before moving to
the next, and enumerate the resulting splittings as
$$
D_j=B_j\sqcup C_j,\quad |B_j|=\theta|D_j|,
\quad |C_j|=(1-\theta)|D_j|\quad(1\le j\le N),
$$
where $B_j$ is the left subinterval. Note that every interval
being split has length at least $\ell/2$, since
$$
(1-\theta)^r\ge1-r\theta\ge\frac{1}{2}
\quad(0\le r\le L).
$$
On the other hand, every resulting interval has at most $2/3$ of the
length of its original interval. Indeed, each piece split off has
relative length at most $\theta\le1/4$, while the final remainder
satisfies
$$
(1-\theta)^L\le(1+\theta)^{-L}
\le(1+L\theta)^{-1}=\frac{2}{3}.
$$
Thus, repeated refinement makes the length of the longest
subinterval of each fixed unit interval tend to zero.

Each splitting adds one dimension to the step-function space,
since functions may now take different values on $B_j$ and $C_j$.
To obtain the enlarged space, we add the function $\varphi_j$
defined below. Its coefficient is obtained by pairing with $H_j$,
which measures the difference between the averages on the two pieces.
We set
\begin{equation}\label{eq:haar}
\varphi_j=\1_{B_j}-\theta\1_{D_j},
\quad
H_j=\frac{\1_{B_j}}{|B_j|}-\frac{\1_{C_j}}{|C_j|}.
\end{equation}
Both functions have integral zero. Since $\varphi_j$ equals
$1-\theta$ on $B_j$ and $-\theta$ on $C_j$, we have
\begin{equation}\label{eq:haar-identities}
\pair{\varphi_j}{H_j}=1,\quad
\varphi_j\ge-\theta\1_{D_j},\quad
H_j=\frac{\varphi_j}{\theta(1-\theta)|D_j|}.
\end{equation}
The functions in \eqref{eq:haar} play the role of Haar functions
for this unequal splitting. The small negative value of $\varphi_j$
is the reason for choosing unequal pieces. Adding $\theta\1_{D_j}$
is enough to make $\varphi_j$ non-negative. At the same time, making $L$ splittings divides each interval of
$\mathcal P$ into pieces at most $2/3$ of its length, even as $\theta\to0$. The construction
is illustrated in \Cref{fig:unequal-splitting}.

\begin{figure}[htbp]
\centering
\begin{tikzpicture}[x=1cm,y=1cm,font=\small,line cap=round]
\node[anchor=west,text=black,inner sep=1pt] at (0.00000,3.80000) {Three splittings of one interval};
\node[anchor=west,text=black,inner sep=1pt] at (8.65000,3.80000) {The function $\varphi_j$};
\path[draw=pbblue,fill=pbblue!4,line width=0.6pt] (0.00000,2.80000) rectangle (6.00000,3.25000);
\node[anchor=center,text=black,inner sep=1pt] at (3.00000,3.02500) {$D_1$};
\draw[draw=gray,line width=0.6pt,-{Stealth[length=4pt,width=3pt]}] (5.70000,2.74000) -- (5.70000,2.48000);
\path[draw=pbblue,fill=pbblue!18,line width=0.6pt] (0.00000,1.95000) rectangle (1.00000,2.40000);
\node[anchor=center,text=black,inner sep=1pt] at (0.50000,2.17500) {$B_1$};
\path[draw=pbblue,fill=pbblue!4,line width=0.6pt] (1.00000,1.95000) rectangle (6.00000,2.40000);
\node[anchor=center,text=black,inner sep=1pt] at (3.50000,2.17500) {$C_1$};
\draw[draw=gray,line width=0.6pt,-{Stealth[length=4pt,width=3pt]}] (5.70000,1.89000) -- (5.70000,1.63000);
\path[draw=pbblue,fill=pbblue!18,line width=0.6pt] (0.00000,1.10000) rectangle (1.00000,1.55000);
\node[anchor=center,text=black,inner sep=1pt] at (0.50000,1.32500) {$B_1$};
\path[draw=pbblue,fill=pbblue!18,line width=0.6pt] (1.00000,1.10000) rectangle (1.83333,1.55000);
\node[anchor=center,text=black,inner sep=1pt] at (1.41667,1.32500) {$B_2$};
\path[draw=pbblue,fill=pbblue!4,line width=0.6pt] (1.83333,1.10000) rectangle (6.00000,1.55000);
\node[anchor=center,text=black,inner sep=1pt] at (3.91667,1.32500) {$C_2$};
\draw[draw=gray,line width=0.6pt,-{Stealth[length=4pt,width=3pt]}] (5.70000,1.04000) -- (5.70000,0.78000);
\path[draw=pbblue,fill=pbblue!18,line width=0.6pt] (0.00000,0.25000) rectangle (1.00000,0.70000);
\node[anchor=center,text=black,inner sep=1pt] at (0.50000,0.47500) {$B_1$};
\path[draw=pbblue,fill=pbblue!18,line width=0.6pt] (1.00000,0.25000) rectangle (1.83333,0.70000);
\node[anchor=center,text=black,inner sep=1pt] at (1.41667,0.47500) {$B_2$};
\path[draw=pbblue,fill=pbblue!18,line width=0.6pt] (1.83333,0.25000) rectangle (2.52778,0.70000);
\node[anchor=center,text=black,inner sep=1pt] at (2.18056,0.47500) {$B_3$};
\path[draw=pbblue,fill=pbblue!4,line width=0.6pt] (2.52778,0.25000) rectangle (6.00000,0.70000);
\node[anchor=center,text=black,inner sep=1pt] at (4.26389,0.47500) {$C_3$};
\node[anchor=center,text=black,inner sep=1pt] at (3.00000,-0.35000) {$L=3,\quad\theta=1/6$};
\path[draw=white,fill=pbblue!17,line width=0pt] (8.70000,0.95000) rectangle (9.70000,3.20000);
\path[draw=white,fill=pbrust!18,line width=0pt] (9.70000,0.50000) rectangle (14.70000,0.95000);
\draw[draw=gray,line width=0.6pt,-{Stealth[length=4pt,width=3pt]}] (8.56000,0.95000) -- (14.95000,0.95000);
\draw[draw=pbblue,line width=1.1pt] (8.70000,0.95000) -- (8.70000,3.20000) -- (9.70000,3.20000);
\draw[draw=gray,line width=0.6pt,densely dashed] (9.70000,3.20000) -- (9.70000,0.50000);
\draw[draw=pbrust,line width=1.1pt] (9.70000,0.50000) -- (14.70000,0.50000) -- (14.70000,0.95000);
\node[anchor=east,text=black,inner sep=1pt] at (8.52000,3.20000) {$1-\theta$};
\node[anchor=east,text=black,inner sep=1pt] at (8.52000,0.95000) {$0$};
\node[anchor=east,text=black,inner sep=1pt] at (8.52000,0.48000) {$-\theta$};
\node[anchor=center,text=black,inner sep=1pt] at (9.20000,0.10000) {$B_j$};
\node[anchor=center,text=black,inner sep=1pt] at (12.20000,0.10000) {$C_j$};
\node[anchor=center,text=black,inner sep=1pt] at (11.70000,-0.55000) {$\int_{D_j}\varphi_j=0$};
\end{tikzpicture}
\caption{Unequal splittings. On the left, each step removes the left
proportion $\theta$ of the remaining interval. The example uses $L=3$
and $\theta=1/6$. On the right, the positive and negative areas of
$\varphi_j$ are equal. Its negative value is only $-\theta$, so adding
$\theta\1_{D_j}$ makes it non-negative.}
\label{fig:unequal-splitting}
\end{figure}

We let $E_j$ be the operator in \eqref{eq:averaging} for the
partition obtained after the first $j$ splittings, with
$E_0=E_{\mathcal P}$. Comparing the averages on $B_j$ and $C_j$
with the old average on $D_j$ gives
\begin{equation}\label{eq:increment}
E_j-E_{j-1}=\varphi_j\otimes H_j.
\end{equation}
For example, on $B_j$ the right-hand side applied to $f$ is
$$
(1-\theta)\pair{f}{H_j}
=\frac{1}{|B_j|}\int_{B_j}f-\frac{1}{|D_j|}\int_{D_j}f,
$$
and the calculation on $C_j$ is the same.

The families $(\varphi_j)$ and $(H_j)$ are biorthogonal. Indeed,
two different splittings either have disjoint supports, or the later
one takes place on an interval where both earlier functions are
constant. In the latter case, the zero integrals of the later
functions make the pairings vanish in both orders. Each of the $H_j$ and
$\varphi_j$ is supported in a single interval of $\mathcal P$ and has
integral zero there. Consequently, it has integral zero on every interval of
$\mathcal P$ and on every unit interval.

To estimate the partial sums ending between the two members of a
pair, we need a bound on the norm of $H_j$. Formula \eqref{eq:haar}
gives
\begin{align}
\norm{H_j}_q
&=|D_j|^{-1/p}
 \left(\theta^{1-q}+(1-\theta)^{1-q}\right)^{1/q}\notag\\
&\le2\theta^{-1/p}\ell^{-1/p}.
\label{eq:haar-bound}
\end{align}
For the last inequality, we used $|D_j|\ge\ell/2$ and
$\theta\le1-\theta$. At $q=1$, we have $\norm{H_j}_1=2$.

\subsection{The new vectors and their coefficient functions}\label{ss:pairing}

We choose distinct unit intervals $J_1,\ldots,J_m$ outside of $A$
and write $F=\bigcup_{i=1}^m J_i$. We identify $\F^m$ with the
functions constant on these intervals and zero outside of $F$ by the map
$$
(x_1,\ldots,x_m)\longmapsto\sum_{i=1}^m x_i\1_{J_i}.
$$
Since $|J_i|=1$, this map preserves the $\ell_r$ and $L^r$ norms
for $1\le r\le\infty$, as well as the bilinear pairings.
We use the same symbols $s_j,d_j,z_N$ for the vectors of
\Cref{prop:aux} and their corresponding step functions.
These functions are supported in $F$, whereas $H_j$ and
$\varphi_j$ are supported in $A$.

We combine each $\varphi_j$ with a small positive multiple of $s_j$. To preserve biorthogonality, we subtraact the same multiple of $H_j$ from $d_j$. The size of this multiple is determined by
the bound on the partial sum ending between the two new vectors.
We choose
\begin{equation}\label{eq:a}
\kappa=\frac{2M}{\eta}\theta^{-1/p}\ell^{-1/p}.
\end{equation}
By \eqref{eq:haar-bound}, this gives
$\kappa^{-1}\norm{s_j}_p\norm{H_j}_q\le\eta$.
For each $j=1,\ldots,N$, we add the two vectors
\begin{equation}\label{eq:pairs}
v_{j,0}=s_j,\quad v_{j,1}=\kappa^{-1}s_j+\varphi_j,
\end{equation}
placing $v_{j,0}$ immediately before $v_{j,1}$. After all $N$
pairs, we add $v_{N+1},\ldots,v_m$, where $v_j=s_j$ for $j>N$.
The corresponding coefficient functions are
\begin{equation}\label{eq:coefficients}
\xi_{j,0}=d_j-\kappa^{-1}H_j,\quad
\xi_{j,1}=H_j,\quad
\xi_j=d_j\quad(j>N).
\end{equation}
The terms added to $v_{j,1}$ and subtracted from $\xi_{j,0}$
cancel in their pairing, since
$$
\pair{\kappa^{-1}s_j+\varphi_j}{d_j-\kappa^{-1}H_j}
=\kappa^{-1}-\kappa^{-1}=0.
$$
The diagonal pairings equal one, while pairings with different
indices vanish by the biorthogonality of $(s_j,d_j)$ and
$(\varphi_j,H_j)$ and the disjointness of $A$ and $F$. Thus, the
new vectors and their coefficient functions are biorthogonal.

Each function in \eqref{eq:pairs} and \eqref{eq:coefficients},
and each remaining $s_j$ or $d_j$, has integral zero on every
interval of $\mathcal P$. Thus, $E_0$ sends every new vector to
zero, and every new coefficient function pairs to zero with all
functions constant on the intervals of $\mathcal P$ and zero
outside of $A$. It follows that each new coordinate projection has zero
product with $E_0$ in either order. We can therefore add any initial
sum of these coordinate projections to $E_0$ and still obtain a
projection.

\subsection{The projection bounds}\label{ss:projections}

We set
$$
Q_j=\sum_{l=1}^j s_l\otimes d_l,\quad Q_0=0,
$$
where $s_l,d_l$ are the step functions from \Cref{ss:pairing}.
The operator $Q_j$ first averages $f$ on the unit intervals $J_i$
and then applies the finite partial sum projection to the vector
of averages. Since averaging is contractive, \eqref{eq:aux-bounds}
gives $\norm{Q_j}_{L^p\to L^p}\le1+\delta$. For $j=m$, the
finite projection is the identity, and hence
$$
Q_m f=\sum_{i=1}^m\left(\int_{J_i}f\right)\1_{J_i}.
$$

For each pair in \eqref{eq:pairs}, the sum of its two coordinate
projections simplifies to
\begin{equation}\label{eq:cancellation}
s_j\otimes(d_j-\kappa^{-1}H_j)
+(\kappa^{-1}s_j+\varphi_j)\otimes H_j
=s_j\otimes d_j+\varphi_j\otimes H_j.
\end{equation}
Summing \eqref{eq:cancellation} and using \eqref{eq:increment}
shows that, after $j$ complete pairs, the projection is $E_j+Q_j$.
Since $E_i$ and $Q_j$ act on the disjoint sets $A$ and $F$,
respectively, we have
$$
\norm{E_i+Q_j}\le\max\{\norm{E_i},\norm{Q_j}\}\le1+\delta
\quad(0\le i\le N,\ 0\le j\le m).
$$
After only the first vector of the $j$-th pair, the projection is
\begin{equation}\label{eq:intermediate}
E_{j-1}+Q_j-\kappa^{-1}s_j\otimes H_j.
\end{equation}
By \eqref{eq:a} and \eqref{eq:haar-bound}, the last term satisfies
$$
\norm{\kappa^{-1}s_j\otimes H_j}_{L^p\to L^p}
=\kappa^{-1}\norm{s_j}_p\norm{H_j}_q\le\eta.
$$
After all pairs have been inserted, the remaining vectors give
the projections $E_N+Q_j$ for $j>N$. Thus, every partial sum
projection during this stage has norm at most
\begin{equation}\label{eq:raw-constant}
K_0=1+\delta+\eta.
\end{equation}
The last projection is $E_N+Q_m$, which averages on the refined
partition of $A$ and on the new unit intervals. Its norm is one.
Thus, if we repeat this construction with the vectors in
\eqref{eq:pairs}, every stage again starts from an averaging
projection, and the bound $K_0$ in \eqref{eq:raw-constant} remains
the same at all stages.

\subsection{Making the new vectors non-negative}\label{ss:correction}

Only the functions $\kappa^{-1}s_j+\varphi_j$ can take negative
values, and each is bounded below by $-\theta\1_A$. We will make
them non-negative by adding a constant of at least $\theta$ on
$A$. To do this with a single linear map, we define
$S_A\colon L^q(\R)\to L^q(\R)$ by
\begin{equation}\label{eq:local-S}
S_Ag=\kappa\theta\left(\int_A g\right)z_N.
\end{equation}
Since $(L^q)^*=L^p$, its adjoint acts on $L^p(\R)$ by
\begin{equation}\label{eq:local-adjoint}
S_A^*f=\kappa\theta\pair{f}{z_N}\1_A.
\end{equation}
We will apply $I+S_A^*$ to the vectors in \eqref{eq:pairs}.
Formula \eqref{eq:local-adjoint} adds a constant on $A$, determined
by the values of the input on $F$. We work with the adjoint of an
operator on $L^q$ so that the same construction also applies when
$p=\infty$ and $q=1$. Note that both $S_A$ and $S_A^*$ are positive.
We set $\beta_j=\pair{s_j}{z_N}$,
so that $\beta_j\ge1$ for $j\le N$ and $\beta_j\ge0$ for every
$j$. Since $z_N$ is supported outside of $A$, we have
$S_A^*\varphi_j=0$, and therefore
\begin{equation}\label{eq:local-positivity}
(I+S_A^*)v_{j,1}
=\kappa^{-1}s_j+\varphi_j+\theta\beta_j\1_A
\ge\varphi_j+\theta\1_A\ge0.
\end{equation}
Every other new vector is an $s_j$ and becomes
$s_j+\kappa\theta\beta_j\1_A\ge0$. Thus, the map
$C_A:=I+S_A^*$ makes all new vectors non-negative. It also fixes
every function supported in $A$, so the old basis remains unchanged.
Moreover, $(S_A^*)^2=0$, since $S_A^*$ takes values in functions
supported in $A$ and vanishes on such functions. Hence,
\begin{equation}\label{eq:local-inverse}
C_A^{-1}=I-S_A^*,\quad
\norm{C_A}\norm{C_A^{-1}}\le(1+\norm{S_A})^2.
\end{equation}
Applying $C_A$ to the two vectors in \eqref{eq:pairs} gives
\begin{equation}\label{eq:corrected-pairs}
\widehat v_{j,0}=s_j+\kappa\theta\beta_j\1_A,\quad
\widehat v_{j,1}=\kappa^{-1}s_j+\varphi_j+\theta\beta_j\1_A.
\end{equation}
Subtracting $\kappa^{-1}$ times the first vector in
\eqref{eq:corrected-pairs} from the second cancels both the
$s_j$ term and the added constant, giving
$$
\widehat v_{j,1}-\kappa^{-1}\widehat v_{j,0}=\varphi_j.
$$
Thus, each new Haar function lies n the span of two of the
non-negative vectors, as shown in \Cref{fig:positive-pair}.

\begin{figure}[htbp]
\centering
\begin{tikzpicture}[x=1cm,y=1cm,font=\small,line cap=round]
\node[anchor=west,text=black,inner sep=1pt] at (0.00000,7.32000) {$\kappa^{-1}\widehat v_{j,0}$};
\draw[draw=gray,line width=0.6pt,-{Stealth[length=4pt,width=3pt]}] (-0.08000,5.40000) -- (6.95000,5.40000);
\draw[draw=gray,line width=0.6pt,-{Stealth[length=4pt,width=3pt]}] (8.15000,5.40000) -- (14.85000,5.40000);
\node[anchor=east,text=black,inner sep=1pt] at (-0.05000,5.20000) {$0$};
\path[draw=white,fill=pbblue!18,line width=0pt] (0.00000,5.40000) rectangle (6.30000,5.65000);
\draw[draw=pbblue,line width=1.1pt] (0.00000,5.40000) -- (0.00000,5.65000) -- (6.30000,5.65000) -- (6.30000,5.40000);
\node[anchor=west,text=black,inner sep=1pt] at (6.44000,5.65000) {$\theta$};
\path[draw=pbteal,fill=pbteal!16,line width=0.6pt] (10.40000,5.40000) rectangle (11.40000,6.48000);
\path[draw=pbteal,fill=pbteal!16,line width=0.6pt] (11.40000,5.40000) rectangle (12.40000,5.67000);
\path[draw=pbteal,fill=pbteal!16,line width=0.6pt] (12.40000,5.40000) rectangle (13.40000,5.58000);
\path[draw=pbteal,fill=pbteal!16,line width=0.6pt] (13.40000,5.40000) rectangle (14.40000,5.53500);
\node[anchor=center,text=black,inner sep=1pt] at (11.40000,6.88000) {$\kappa^{-1}s_j$};
\node[anchor=center,text=black,inner sep=1pt] at (3.15000,5.00000) {$A$};
\node[anchor=center,text=black,inner sep=1pt] at (11.40000,5.00000) {$F$};
\node[anchor=west,text=black,inner sep=1pt] at (0.00000,4.52000) {$\widehat v_{j,1}$};
\draw[draw=gray,line width=0.6pt,-{Stealth[length=4pt,width=3pt]}] (-0.08000,2.60000) -- (6.95000,2.60000);
\draw[draw=gray,line width=0.6pt,-{Stealth[length=4pt,width=3pt]}] (8.15000,2.60000) -- (14.85000,2.60000);
\node[anchor=east,text=black,inner sep=1pt] at (-0.05000,2.40000) {$0$};
\path[draw=white,fill=pbblue!18,line width=0pt] (0.00000,2.60000) rectangle (1.40000,2.85000);
\path[draw=white,fill=pbblue!18,line width=0pt] (1.40000,2.60000) rectangle (2.10000,4.10000);
\path[draw=white,fill=pbblue!18,line width=0pt] (5.60000,2.60000) rectangle (6.30000,2.85000);
\draw[draw=pbblue,line width=1.1pt] (0.00000,2.60000) -- (0.00000,2.85000) -- (1.40000,2.85000) -- (1.40000,4.10000) -- (2.10000,4.10000) -- (2.10000,2.60000) -- (5.60000,2.60000) -- (5.60000,2.85000) -- (6.30000,2.85000) -- (6.30000,2.60000);
\node[anchor=east,text=black,inner sep=1pt] at (1.30000,4.10000) {$1$};
\path[draw=pbteal,fill=pbteal!16,line width=0.6pt] (10.40000,2.60000) rectangle (11.40000,3.68000);
\path[draw=pbteal,fill=pbteal!16,line width=0.6pt] (11.40000,2.60000) rectangle (12.40000,2.87000);
\path[draw=pbteal,fill=pbteal!16,line width=0.6pt] (12.40000,2.60000) rectangle (13.40000,2.78000);
\path[draw=pbteal,fill=pbteal!16,line width=0.6pt] (13.40000,2.60000) rectangle (14.40000,2.73500);
\node[anchor=center,text=black,inner sep=1pt] at (11.40000,4.08000) {$\kappa^{-1}s_j$};
\node[anchor=center,text=black,inner sep=1pt] at (3.15000,2.20000) {$A$};
\node[anchor=center,text=black,inner sep=1pt] at (11.40000,2.20000) {$F$};
\node[anchor=west,text=black,inner sep=1pt] at (0.00000,1.72000) {$\widehat v_{j,1}-\kappa^{-1}\widehat v_{j,0}=\varphi_j$};
\draw[draw=gray,line width=0.6pt,-{Stealth[length=4pt,width=3pt]}] (-0.08000,-0.20000) -- (6.95000,-0.20000);
\draw[draw=gray,line width=0.6pt,-{Stealth[length=4pt,width=3pt]}] (8.15000,-0.20000) -- (14.85000,-0.20000);
\node[anchor=east,text=black,inner sep=1pt] at (-0.05000,-0.40000) {$0$};
\path[draw=white,fill=pbblue!18,line width=0pt] (1.40000,-0.20000) rectangle (2.10000,1.05000);
\path[draw=white,fill=pbrust!18,line width=0pt] (2.10000,-0.45000) rectangle (5.60000,-0.20000);
\draw[draw=pbblue,line width=1.1pt] (0.00000,-0.20000) -- (1.40000,-0.20000) -- (1.40000,1.05000) -- (2.10000,1.05000);
\draw[draw=gray,line width=0.6pt,densely dashed] (2.10000,1.05000) -- (2.10000,-0.45000);
\draw[draw=pbrust,line width=1.1pt] (2.10000,-0.45000) -- (5.60000,-0.45000) -- (5.60000,-0.20000) -- (6.30000,-0.20000);
\node[anchor=east,text=black,inner sep=1pt] at (1.28000,1.05000) {$1-\theta$};
\node[anchor=west,text=black,inner sep=1pt] at (5.74000,-0.48000) {$-\theta$};
\draw[draw=pbteal,line width=1.2pt] (8.40000,-0.20000) -- (14.40000,-0.20000);
\node[anchor=center,text=black,inner sep=1pt] at (11.40000,0.25000) {$0$};
\node[anchor=center,text=black,inner sep=1pt] at (1.75000,-0.85000) {$B_j$};
\node[anchor=center,text=black,inner sep=1pt] at (3.85000,-0.85000) {$C_j$};
\draw[draw=gray,line width=0.6pt] (1.40000,-1.10000) -- (1.40000,-1.20000) -- (5.60000,-1.20000) -- (5.60000,-1.10000);
\node[anchor=center,text=black,inner sep=1pt] at (3.50000,-1.41000) {$D_j$};
\node[anchor=center,text=black,inner sep=1pt] at (11.40000,-0.85000) {$F$};
\end{tikzpicture}
\caption{Recovering a Haar function from two non-negative vectors.
The first two rows show the corrected pair, with the first vector
scaled by $\kappa^{-1}$. Both contain the same addition
$\theta\beta_j\1_A$ and the same function $\kappa^{-1}s_j$ on $F$.
Subtracting cancels both terms and leaves $\varphi_j$.
The drawing uses $\theta=1/6$ and $\beta_j=1$, the smallest allowed
value. The auxiliary profile and interval lengths are schematic.}
\label{fig:positive-pair}
\end{figure}

We now estimate $\norm{C_A-I}=\norm{S_A^*}$.
The operators $S_A$ on $L^q$ and $S_A^*$ on $L^p$ have the same
norm, and \eqref{eq:a} and \eqref{eq:batch-parameters} give
\begin{align}
\norm{S_A^*}=\norm{S_A}
&=\kappa\theta|A|^{1/p}\norm{z_N}_q\notag\\
&=\frac{2M}{\eta}\theta^{1/q}
 \left(\frac{|A|}{\ell}\right)^{1/p}\norm{z_N}_q\notag\\
&=\frac{2M}{\eta}\left(\frac{R}{2}\right)^{1/q}
 \left(\frac{|A|}{\ell}\right)^{1/p}\omega_{RL}
\longrightarrow0\quad(L\to\infty).
\label{eq:common-estimate}
\end{align}
Here, only $\omega_{RL}$ depends on $L$, since
$p,A,\mathcal P,\delta,\eta$ have already been fixed. Thus, we
can make $\norm{S_A}$ arbitrarily small by increasing $L$, while
the bound in \eqref{eq:raw-constant} remains unchanged.

We collect the properties of this construction in the following
proposition. We write $(w_l)_{l=1}^{m+N}$ for the ordered list
\begin{equation}\label{eq:stage-order}
v_{1,0},v_{1,1},\ldots,v_{N,0},v_{N,1},v_{N+1},\ldots,v_m,
\end{equation}
and $\psi_l$ for the corresponding coefficient function from
\eqref{eq:coefficients}. We form $\mathcal P'$ by adjoining the unit intervals of $F$ to the
refined partition of $A$.

\begin{proposition}[One stage of the construction]\label{prop:stage}
For $A,\mathcal P,L$ as above, the following assertions hold.
\begin{enumerate}
\item The families $(w_l)$ and $(\psi_l)$ are biorthogonal step
functions supported in $A\cup F$ and constant on each interval of
$\mathcal P'$. Every $w_l$ and $\psi_l$ has integral zero on
each interval of $\mathcal P$.
\item The projections
$E_{\mathcal P}+\sum_{l=1}^n w_l\otimes\psi_l$ have norm at most
$K_0=1+\delta+\eta$ on $L^p$ for $0\le n\le m+N$.
The final projection is $E_{\mathcal P'}$.
\item Each interval of the refined partition of $A$ is contained in an
interval of $\mathcal P$ and has at most $2/3$ of its length.
\item The positive operator $S_A$ satisfies \eqref{eq:common-estimate}
and $(I+S_A^*)w_l\ge0$ for every $l$.
In particular, its norm tends to zero as $L\to\infty$ with
$A$ and $\mathcal P$ fixed.
\end{enumerate}
\end{proposition}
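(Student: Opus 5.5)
The proposition collects facts already established in \Cref{ss:splits,ss:pairing,ss:projections,ss:correction}, so the proof will be a verification of each item in turn, citing those computations.

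For (1), the $w_l$ are the vectors in \eqref{eq:pairs} together with $v_j=s_j$ for $j>N$. Their coefficient functions are \eqref{eq:coefficients}. Biorthogonality was checked in \Cref{ss:pairing}. The cross pairing of a pair cancels, diagonal pairings equal one, and all other pairings vanish by the biorthogonality of $(s_j,d_j)$ and $(\varphi_j,H_j)$ and by $A\cap F=\emptyset$. Every $s_j,d_j$ is constant on the unit intervals $J_i$. Every $\varphi_j,H_j$ is constant on $B_j$ and $C_j$, which are unions of intervals of the refined partition. Hence all these functions are constant on the intervals of $\mathcal P'$ and supported in $A\cup F$. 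The integral-zero property on intervals of $\mathcal P$ holds for two reasons. Intervals of $\mathcal P$ lie in $A$, where $s_j,d_j$ vanish. Each $\varphi_j,H_j$ is supported in one interval of $\mathcal P$ and has integral zero.

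For (2), I will first use the final observation of \Cref{ss:pairing}. Each $w_l\otimes\psi_l$ has zero product with $E_{\mathcal P}$ in both orders, so the sums are projections. I then evaluate them using \eqref{eq:cancellation} and \eqref{eq:increment}. After $j$ complete pairs the sum is $E_j+Q_j$. Midway through the $j$-th pair it is given by \eqref{eq:intermediate}. After all pairs it is $E_N+Q_j$ with $j>N$. The disjoint-support bound $\norm{E_i+Q_j}\le\max\{\norm{E_i},\norm{Q_j}\}\le1+\delta$ then applies, together with the estimate $\kappa^{-1}\norm{s_j}_p\norm{H_j}_q\le\eta$ from \eqref{eq:a} and \eqref{eq:haar-bound}. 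These give $K_0$. For $p=\infty$ the max bound is immediate, and for $p<\infty$ it follows by adding $p$-th powers. At $n=m+N$ the sum is $E_N+Q_m$, the averaging over $\mathcal P'$, i.e.\ $E_{\mathcal P'}$.

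Item (3) is the length computation in \Cref{ss:splits}. Each left piece has relative length at most $\theta\le1/4$, and the final remainder has relative length $(1-\theta)^L\le 2/3$. Item (4) follows from \eqref{eq:local-positivity}, the remark after it about $s_j\mapsto s_j+\kappa\theta\beta_j\1_A$, the positivity of $S_A$ (since $z_N\ge0$), and \eqref{eq:common-estimate}, in which only $\omega_{RL}$ depends on $L$, so $\norm{S_A}\to0$ by \eqref{eq:aux-smallness}.

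The only delicate point is checking in (2) that midway projections are really of the form \eqref{eq:intermediate}. This requires bookkeeping of which coordinate projections have been added, and I will handle it by writing the first member's projection as $s_j\otimes d_j-\kappa^{-1}s_j\otimes H_j$.
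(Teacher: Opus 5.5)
Your proposal is correct and matches the paper's proof. The paper likewise just assembles the biorthogonality check of \Cref{ss:pairing}, the identities \eqref{eq:cancellation}, \eqref{eq:increment}, \eqref{eq:intermediate} together with the disjoint-support bound of \Cref{ss:projections}, the length estimate of \Cref{ss:splits}, and \eqref{eq:local-positivity} with \eqref{eq:common-estimate}. Your extra bookkeeping (writing the midway projection as $E_{j-1}+Q_{j-1}+s_j\otimes d_j-\kappa^{-1}s_j\otimes H_j$, and noting $z_N\ge0$ for positivity of $S_A$) only makes explicit what those subsections already contain.
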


\begin{proof}
The first assertion follows from \eqref{eq:haar},
\eqref{eq:pairs}, and \eqref{eq:coefficients}, together with the
biorthogonality calculation in \Cref{ss:pairing}.
The projection and interval-length bounds were proved in
\Cref{ss:projections,ss:splits}. The last assertion follows from
\eqref{eq:common-estimate} and \eqref{eq:local-positivity}.
\end{proof}

We can now extend any non-negative basis of the old step-function
space. As usual, the basis constant of a finite ordered basis is
the supremum of the norms of its partial sum projections on its span.

\begin{lemma}\label{lem:positive-extension}
Suppose that $(f_i)_{i=1}^d$ is a non-negative basis of
$E_{\mathcal P}L^p$ with basis constant $K$.
Let $(w_l)_{l=1}^{m+N}$ be the list in \Cref{prop:stage}, set
$\widehat w_l=(I+S_A^*)w_l$, and write $\tau=\norm{S_A}$.
Then
$$
f_1,\ldots,f_d,\widehat w_1,\ldots,\widehat w_{m+N}
$$
is a non-negative basis of $E_{\mathcal P'}L^p$ with basis constant
at most $\max\{K,K_0\}(1+\tau)^2$.
For fixed $A,\mathcal P,\delta,\eta$, the number $\tau$ can be
made arbitrarily small by increasing $L$.
\end{lemma}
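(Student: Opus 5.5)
The plan is to work first with the uncorrected list $f_1,\dots,f_d,w_1,\dots,w_{m+N}$, bound its partial sum projections, and then conjugate by $C_A=I+S_A^*$. Non-negativity comes for free. Each $f_i$ is non-negative by hypothesis, and each $\widehat w_l\ge0$ by \Cref{prop:stage}(4).

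\emph{Step 1: span and biorthogonal system.} The old coefficient functionals are taken as $\gamma_i\circ E_{\mathcal P}$, where $(\gamma_i)$ are the functionals of $(f_i)$ on $E_{\mathcal P}L^p$. The new ones are $\psi_l$. By \Cref{prop:stage}(1), every $w_l$ has integral zero on each interval of $\mathcal P$, so $E_{\mathcal P}w_l=0$. Every $\psi_l$ also has integral zero there, so $\pair{f_i}{\psi_l}=0$. Hence the combined families are biorthogonal. By \Cref{prop:stage}(2), the full sum is $E_{\mathcal P}+\sum_l w_l\otimes\psi_l=E_{\mathcal P'}$. The list therefore spans $E_{\mathcal P'}L^p$, and a dimension count shows it is a basis.

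\emph{Step 2: partial sums before correction.} For $n\le d$, the $n$-th partial sum projection is $P_n^{\mathrm{old}}E_{\mathcal P}$. Its norm on $E_{\mathcal P'}L^p$ is at most $K\norm{E_{\mathcal P}}\le K$, since $E_{\mathcal P}$ is contractive. For $n=d+k$, the projection is $E_{\mathcal P}+\sum_{l\le k}w_l\otimes\psi_l$, which has norm at most $K_0$ by \Cref{prop:stage}(2). So the uncorrected basis constant is at most $\max\{K,K_0\}$.

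\emph{Step 3: conjugation.} $C_A$ fixes functions supported in $A$, so $C_Af_i=f_i$ and $C_Aw_l=\widehat w_l$. The range of $S_A^*$ is spanned by $\1_A\in E_{\mathcal P}L^p$, so $C_A$ maps $E_{\mathcal P'}L^p$ into itself. It is invertible there with inverse $I-S_A^*$ by \eqref{eq:local-inverse}. The image list $C_Af_1,\dots,C_Aw_{m+N}$ is therefore a basis of the same space. Its partial sum projections are $C_APC_A^{-1}$, whose norms are at most $\norm{C_A}\norm{C_A^{-1}}\norm{P}\le(1+\tau)^2\max\{K,K_0\}$. Here $\norm{S_A^*}=\norm{S_A}=\tau$ is used.

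The final claim, that $\tau$ can be made small for fixed data by increasing $L$, is \eqref{eq:common-estimate}. The only delicate point is Step 1: one must check that the new vectors pair to zero with the old coefficient functionals and conversely. This is exactly the integral-zero property on each interval of $\mathcal P$. Everything else is bookkeeping.
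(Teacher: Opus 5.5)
Your proposal is correct and follows the paper's proof essentially step for step. You first append the uncorrected $w_l$. You bound partial sums ending among the old vectors by $K$, as old partial sums composed with the contractive $E_{\mathcal P}$, and those ending among the new vectors by $K_0$ via \Cref{prop:stage}(ii). You then conjugate by $C_A=I+S_A^*$, which fixes the $f_i$ and preserves $E_{\mathcal P'}L^p$ because $S_A^*$ takes values in $\F\1_A$. The only cosmetic difference is that you check cross-biorthogonality explicitly with the functionals $\gamma_i\circ E_{\mathcal P}$, whereas the paper states the same point as the new span meeting the old span trivially.
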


\begin{proof}
We first add the vectors $w_l$ in the order \eqref{eq:stage-order},
before applying $I+S_A^*$. Biorthogonality
makes them linearly independent, and $E_{\mathcal P}w_l=0$ shows
that their span has zero intersection with the old span. Since
\Cref{prop:stage}(ii) gives
$$
E_{\mathcal P'}=E_{\mathcal P}
 +\sum_{l=1}^{m+N}w_l\otimes\psi_l,
$$
the combined sequence spans $E_{\mathcal P'}L^p$ and is therefore
a basis of that space. A partial sum ending among the old vectors
is an old partial sum composed with $E_{\mathcal P}$, so its norm
is at most $K$. A partial sum ending among the new vectors is
$E_{\mathcal P}+\sum_{l=1}^r w_l\otimes\psi_l$, whose norm is
at most $K_0$.

We now apply $C_A=I+S_A^*$. This fixes the old vectors and makes
the new ones non-negative. Since $S_A^*$ takes values in the old
span, both $C_A$ and $C_A^{-1}$ preserve $E_{\mathcal P'}L^p$.
Each partial sum projection $T$ becomes $C_ATC_A^{-1}$. By
\eqref{eq:local-inverse}, its norm is at most $(1+\tau)^2\norm{T}$. This proves the bound on the
basis constant, and \eqref{eq:common-estimate} shows that $\tau$
can be made arbitrarily small.
\end{proof}

\section{Assembling the basis}\label{sec:assembly}

We now repeat the extension from \Cref{lem:positive-extension}.
At each stage, the functions already constructed span all step
functions on a finite partition. The averaging operator in
\eqref{eq:averaging} is therefore the projection of norm one
needed to apply the lemma again.

We fix $\delta,\eta>0$ and $0<\rho<1$. The first two parameters
determine the bound $K_0$ in \eqref{eq:raw-constant}, while
$\rho$ bounds the sum of the norms of the operators $S_k$ used
to make the new functions non-negative. We choose their values in \Cref{ss:errors}. To ensure that
the supports eventually cover $\R$, we enumerate the unit intervals
$[l,l+1)$, $l\in\mathbb Z$, as $(I_n)_{n\ge1}$, with $I_1=[0,1)$.

\subsection{Choosing the stages}\label{ss:stages}

We start with the single function $f_1=v_*=\xi_*=\1_{I_1}$,
which spans the step-function space for $A_1=I_1$ and
$\mathcal P_1=\{I_1\}$. Suppose that $A_k$ and its finite interval
partition $\mathcal P_k$ have been constructed. We set
$$
R_k=\#\mathcal P_k,\quad
\ell_k=\min_{D\in\mathcal P_k}|D|.
$$
For a choice $L_k\ge2$, the parameters in
\eqref{eq:batch-parameters} and \eqref{eq:a} are
$$
\theta_k=\frac{1}{2L_k},\quad
N_k=R_kL_k,\quad m_k=2N_k,\quad
\kappa_k=\frac{2M}{\eta}\theta_k^{-1/p}\ell_k^{-1/p}.
$$
We take the first $m_k$ unused unit intervals in our enumeration
and denote their union by $F_k$. We write $z_k$ for the step
function obtained by placing $z_{N_k}$ on these intervals and
$S_k$ for the corresponding operator in \eqref{eq:local-S}.
The bound in \eqref{eq:common-estimate} is independent of the
locations of the new intervals. Thus, with $A_k$ and
$\mathcal P_k$ fixed, we can choose $L_k$ large enough that
\begin{equation}\label{eq:stage-smallness}
\norm{S_k}<\rho\,2^{-k}.
\end{equation}
We apply \Cref{prop:stage} with these choices and denote its
vectors and coefficient functions by
$(w_{k,l},\psi_{k,l})_{l=1}^{m_k+N_k}$, ordered as in
\eqref{eq:stage-order}.
Writing $\nu_k$ for the number of vectors before stage $k$, so
that $\nu_1=1$ and $\nu_{k+1}=\nu_k+m_k+N_k$, we append
the non-negative functions
\begin{equation}\label{eq:stage-functions}
f_{\nu_k+l}=(I+S_k^*)w_{k,l}
\quad(1\le l\le m_k+N_k).
\end{equation}
We set $A_{k+1}=A_k\cup F_k$ and obtain $\mathcal P_{k+1}$ by
refining $\mathcal P_k$ as above and adjoining the new unit intervals.
By \Cref{lem:positive-extension}, this extends the old basis to
a non-negative basis of $E_{\mathcal P_{k+1}}L^p$ without changing
any earlier vector. The change in the partition is illustrated
in \Cref{fig:successive-stages}.

\begin{figure}[htbp]
\centering
\begin{tikzpicture}[x=1cm,y=1cm,font=\small,line cap=round]
\node[anchor=center,text=black,inner sep=1pt] at (2.90000,4.20000) {Old support $A_k$};
\node[anchor=center,text=black,inner sep=1pt] at (11.40000,4.20000) {New intervals $F_k$};
\path[draw=pbblue,fill=pbblue!10,line width=0.6pt] (0.00000,3.20000) rectangle (1.40000,3.70000);
\path[draw=pbblue,fill=pbblue!10,line width=0.6pt] (1.40000,3.20000) rectangle (2.80000,3.70000);
\path[draw=pbblue,fill=pbblue!10,line width=0.6pt] (3.00000,3.20000) rectangle (4.40000,3.70000);
\path[draw=pbblue,fill=pbblue!10,line width=0.6pt] (4.40000,3.20000) rectangle (5.80000,3.70000);
\path[draw=pbteal,fill=pbteal!13,line width=0.6pt] (8.40000,3.20000) rectangle (9.60000,3.70000);
\node[anchor=center,text=black,inner sep=1pt] at (9.00000,3.45000) {$J_1$};
\path[draw=pbteal,fill=pbteal!13,line width=0.6pt] (9.95000,3.20000) rectangle (11.15000,3.70000);
\node[anchor=center,text=black,inner sep=1pt] at (10.55000,3.45000) {$J_2$};
\path[draw=pbteal,fill=pbteal!13,line width=0.6pt] (13.20000,3.20000) rectangle (14.40000,3.70000);
\node[anchor=center,text=black,inner sep=1pt] at (13.80000,3.45000) {$J_{m_k}$};
\node[anchor=center,text=black,inner sep=1pt] at (12.25000,3.45000) {$\cdots$};
\path[draw=pbblue,fill=pbblue!18,line width=0.6pt] (0.00000,1.30000) rectangle (0.35000,1.80000);
\path[draw=pbblue,fill=pbblue!18,line width=0.6pt] (0.35000,1.30000) rectangle (0.61250,1.80000);
\path[draw=pbblue,fill=pbblue!5,line width=0.6pt] (0.61250,1.30000) rectangle (1.40000,1.80000);
\path[draw=pbblue,fill=pbblue!18,line width=0.6pt] (1.40000,1.30000) rectangle (1.75000,1.80000);
\path[draw=pbblue,fill=pbblue!18,line width=0.6pt] (1.75000,1.30000) rectangle (2.01250,1.80000);
\path[draw=pbblue,fill=pbblue!5,line width=0.6pt] (2.01250,1.30000) rectangle (2.80000,1.80000);
\path[draw=pbblue,fill=pbblue!18,line width=0.6pt] (3.00000,1.30000) rectangle (3.35000,1.80000);
\path[draw=pbblue,fill=pbblue!18,line width=0.6pt] (3.35000,1.30000) rectangle (3.61250,1.80000);
\path[draw=pbblue,fill=pbblue!5,line width=0.6pt] (3.61250,1.30000) rectangle (4.40000,1.80000);
\path[draw=pbblue,fill=pbblue!18,line width=0.6pt] (4.40000,1.30000) rectangle (4.75000,1.80000);
\path[draw=pbblue,fill=pbblue!18,line width=0.6pt] (4.75000,1.30000) rectangle (5.01250,1.80000);
\path[draw=pbblue,fill=pbblue!5,line width=0.6pt] (5.01250,1.30000) rectangle (5.80000,1.80000);
\path[draw=pbteal,fill=pbteal!13,line width=0.6pt] (8.40000,1.30000) rectangle (9.60000,1.80000);
\node[anchor=center,text=black,inner sep=1pt] at (9.00000,1.55000) {$J_1$};
\path[draw=pbteal,fill=pbteal!13,line width=0.6pt] (9.95000,1.30000) rectangle (11.15000,1.80000);
\node[anchor=center,text=black,inner sep=1pt] at (10.55000,1.55000) {$J_2$};
\path[draw=pbteal,fill=pbteal!13,line width=0.6pt] (13.20000,1.30000) rectangle (14.40000,1.80000);
\node[anchor=center,text=black,inner sep=1pt] at (13.80000,1.55000) {$J_{m_k}$};
\node[anchor=center,text=black,inner sep=1pt] at (12.25000,1.55000) {$\cdots$};
\node[anchor=center,text=black,inner sep=1pt] at (2.90000,2.75000) {Refine the partition};
\node[anchor=center,text=black,inner sep=1pt] at (11.40000,2.75000) {Include the new intervals};
\draw[draw=pbblue,line width=0.6pt,-{Stealth[length=4pt,width=3pt]}] (2.90000,2.50000) -- (2.90000,1.94000);
\draw[draw=pbteal,line width=0.6pt,-{Stealth[length=4pt,width=3pt]}] (11.40000,2.50000) -- (11.40000,1.94000);
\node[anchor=center,text=black,inner sep=1pt] at (2.90000,0.95000) {Refinement of $\mathcal P_k$};
\node[anchor=center,text=black,inner sep=1pt] at (11.40000,0.95000) {Whole unit intervals};
\draw[draw=gray,line width=0.6pt] (0.00000,0.60000) -- (0.00000,0.42000) -- (14.40000,0.42000) -- (14.40000,0.60000);
\node[anchor=center,text=black,inner sep=1pt] at (7.20000,0.05000) {$\mathcal P_{k+1}\quad\hbox{on}\quad A_{k+1}=A_k\cup F_k$};
\end{tikzpicture}
\caption{One stage of the induction. We refine $\mathcal P_k$ and
adjoin the auxiliary unit intervals to obtain $\mathcal P_{k+1}$.
The resulting non-negative vectors, together with the unchanged old
vectors, span the entire space $E_{\mathcal P_{k+1}}L^p$.
The drawing uses two splittings per interval of $\mathcal P_k$ and
groups the intervals by their role. Repeating the construction both
exhausts $\R$ and refines the partition on each fixed unit interval
into arbitrarily short intervals.}
\label{fig:successive-stages}
\end{figure}

We also add the index $k$ to the functions used at stage $k$.
Thus, $H_{k,j}$ and $\varphi_{k,j}$ are the functions in
\eqref{eq:haar} at stage $k$ and are supported in $A_k$.
The functions $d_{k,j}$ and $s_{k,j}$ from \Cref{prop:aux} are
placed on the new unit intervals and are supported in $F_k$.

Since we always use the first unused intervals, each unit interval
is included after finitely many stages. At every subsequent stage,
the length of the longest subinterval in its partition decreases
by a factor of at most $2/3$. Consequently, the partitions on each
fixed unit interval consist of arbitrarily short intervals as
$k\to\infty$.

By \Cref{lem:positive-extension}, every $f_n$ is a non-negative
compactly supported step function, and
$$
\operatorname{span}\{f_n:n\le\nu_k\}=E_{\mathcal P_k}L^p
\quad(1<p\le\infty).
$$
We now prove convergence of the partial sums and identify the
coefficient functions.

\subsection{Density of the step-function spaces}\label{ss:positive-completeness}

We first show that $E_{\mathcal P_k}h\to h$ in $L^r(\R)$ for
every $1\le r<\infty$. Suppose that $h$ is continuous and compactly
supported. All unit intervals meeting its support eventually belong
to $A_k$, and on each interval of $\mathcal P_k$ the difference
between $h$ and its average is bounded by the oscillation of $h$
there. On the fixed unit intervals meeting the support of $h$,
the lengths of the intervals in $\mathcal P_k$ tend uniformly to zero.
Uniform continuity therefore makes the difference between $h$ and
$E_{\mathcal P_k}h$ tend uniformly to zero. Moreover, the difference is supported in
the fixed finite union of unit intervals meeting the support of
$h$. This proves convergence in $L^r$, and the general case follows
by density and contractivity of the averaging projections.

Thus, the step-function spaces have dense union in every $L^r(\R)$
with $1\le r<\infty$. To use the convergence of the averaging
projections, we first return to the vectors $w_{k,l}$ before applying
$I+S_k^*$ in \eqref{eq:stage-functions}. We will then recover $(f_n)$
by applying a single invertible operator to these vectors.

\subsection{The uncorrected vectors and their coefficients}\label{ss:raw}

We combine the ordered lists \eqref{eq:stage-order} from all
stages into one sequence. We set $v_1=v_*$, $\xi_1=\xi_*$,
$v_{\nu_k+l}=w_{k,l}$, and $\xi_{\nu_k+l}=\psi_{k,l}$.
For $(v_n)$ and $(\xi_n)$, the partial sum projections at the
ends of stages are exactly the averaging operators in
\eqref{eq:averaging}. This lets us prove
convergence on $L^q$, including when $q=1$.

\begin{lemma}\label{lem:raw}
If $1<p<\infty$, the sequence $(v_n)$ is a Schauder basis of
$L^p(\R)$ with coefficient functions $(\xi_n)$.
For the full range $1<p\le\infty$, the sequence $(\xi_n)$ is a
Schauder basis of $L^q(\R)$ with coefficient functions $(v_n)$.
The partial sum projections
\begin{equation}\label{eq:raw-projections}
P_nf=\sum_{j=1}^n\pair{f}{\xi_j}v_j,\quad
R_ng=\sum_{j=1}^n\pair{v_j}{g}\xi_j
\end{equation}
satisfy $\norm{P_n}_{L^p\to L^p}=\norm{R_n}_{L^q\to L^q}
\le K_0=1+\delta+\eta$.
\end{lemma}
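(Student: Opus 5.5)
The plan is to prove everything with a single argument on the uncorrected system, using the biorthogonality and projection bounds from \Cref{prop:stage}.

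\emph{Biorthogonality.} First, $(v_n)$ and $(\xi_n)$ are biorthogonal. Within a stage this is \Cref{prop:stage}(i). Across stages, take $k<k'$. Every vector and coefficient function of stage $k$, and $v_*=\xi_*=\1_{I_1}$, is supported in $A_{k+1}\subseteq A_{k'}$ and is constant on the intervals of $\mathcal P_{k+1}$. Since $\mathcal P_{k'}$ refines $\mathcal P_{k+1}$ on $A_{k+1}$, these functions are also constant on the intervals of $\mathcal P_{k'}$. Every $w_{k',l}$ and $\psi_{k',l}$ has integral zero on each interval of $\mathcal P_{k'}$, so all the cross pairings vanish, in both orders.

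\emph{Identifying the projections.} By induction on $k$, the projection $P_{\nu_{k+1}}$ equals $E_{\mathcal P_{k+1}}$. The base case is $P_1=\1_{I_1}\otimes\1_{I_1}=E_{\mathcal P_1}$, and the induction step is the final-projection statement in \Cref{prop:stage}(ii). For $\nu_k<n\le\nu_{k+1}$ we then have
$$P_n=E_{\mathcal P_k}+\sum_{l\le n-\nu_k}w_{k,l}\otimes\psi_{k,l},$$
so $\norm{P_n}_{L^p\to L^p}\le K_0$ by \Cref{prop:stage}(ii). This holds for all $1<p\le\infty$, since \Cref{prop:stage} was proved in that range.

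\emph{Duality.} With the bilinear pairing, $R_n$ is the restriction of $P_n^*$ to $L^q$. Equivalently, $P_n$ is the adjoint of $R_n$, because $\pair{P_nf}{g}=\pair{f}{R_ng}$ for $f\in L^p$ and $g\in L^q$. Since $(L^q)^*=L^p$ for $1\le q<\infty$, this gives $\norm{R_n}_{L^q\to L^q}=\norm{P_n}_{L^p\to L^p}\le K_0$. Note that this works even when $p=\infty$, where we do not use $L^p$ as a predual. The subtle point is exactly this: the bound is obtained on $L^\infty$ and transferred to $L^1$ via duality, rather than through convergence in $L^\infty$.

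\emph{Convergence.} The same identity $P_n=R_n^*$ shows that $R_{\nu_{k+1}}$ is the adjoint of $E_{\mathcal P_{k+1}}$. This is again $E_{\mathcal P_{k+1}}$, now acting on $L^q$, because averaging is self-adjoint under the bilinear pairing. By \Cref{ss:positive-completeness}, $E_{\mathcal P_k}g\to g$ in $L^q$ for every $g$ and every $1\le q<\infty$.

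For general $n$, suppose $g$ lies in the dense union of the step-function spaces, say $g\in E_{\mathcal P_{k_0}}L^q$. For $n\ge\nu_{k_0}$ we get $R_ng=R_nR_{\nu_{k_0}}g=R_{\nu_{k_0}}g=g$, because $R_nR_{\nu_{k_0}}=R_{\nu_{k_0}}$ by biorthogonality. The uniform bound $\norm{R_n}\le K_0$ and a $3\varepsilon$ argument then give $R_ng\to g$ for every $g\in L^q$.

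Uniqueness of the expansion follows from biorthogonality. Since $R_n$ is bounded, $\pair{R_ng}{v_j}\to\pair{g}{v_j}$, and for $n\ge j$ this pairing is exactly the $j$-th coefficient. Hence $(\xi_n)$ is a Schauder basis of $L^q$ with coefficient functions $(v_n)$.

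For $1<p<\infty$ the argument is symmetric. We use $P_n$, the same density of the step-function spaces in $L^p$, and the bound $\norm{P_n}\le K_0$, which shows that $(v_n)$ is a Schauder basis of $L^p$ with coefficient functions $(\xi_n)$.
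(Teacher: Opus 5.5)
Your proposal is correct and follows essentially the same route as the paper. It uses the same four ingredients: cross-stage biorthogonality from the integral-zero property on the intervals of $\mathcal P_k$; the inductive identification $P_{\nu_k}=E_{\mathcal P_k}$ with the bound from \Cref{prop:stage}(ii); the relation $R_n^*=P_n$, which needs only $(L^q)^*=L^p$ and so covers $p=\infty$; and convergence from $E_{\mathcal P_k}\to I$ together with $T_nT_{\nu_k}=T_{\nu_k}$ and the uniform bound. The differences are cosmetic: you phrase the last step as a density argument instead of the paper's one-line estimate $\norm{h-T_nh}\le(1+K_0)\norm{h-E_{\mathcal P_k}h}$, and your uniqueness sentence would be cleaner if it paired an \emph{arbitrary} norm-convergent expansion with $v_j$ rather than the particular partial sums $R_ng$.
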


\begin{proof}
Each stage gives a biorthogonal system by \Cref{prop:stage}(i).
For $n\le\nu_k$, the functions $v_n$ and $\xi_n$ are supported
in $A_k$ and constant on each interval of $\mathcal P_k$, whereas
every $w_{k,l}$ and $\psi_{k,l}$ has integral zero on these intervals.
Thus, the pairings between different stages vanish in both orders,
and the full system is biorthogonal.

The first projection is $P_1=E_{\mathcal P_1}$.
If $P_{\nu_k}=E_{\mathcal P_k}$, then the projections during
stage $k$ are
$$
P_{\nu_k+r}=E_{\mathcal P_k}
 +\sum_{l=1}^r w_{k,l}\otimes\psi_{k,l}
\quad(0\le r\le m_k+N_k).
$$
By \Cref{prop:stage}(ii), these projections have norm at most
$K_0$, and the last one equals $E_{\mathcal P_{k+1}}$.
Induction therefore gives
$$
P_{\nu_k}=E_{\mathcal P_k},\quad \sup_n\norm{P_n}\le K_0.
$$
Since $q<\infty$, we have $(L^q)^*=L^p$ and $R_n^*=P_n$, so
$\norm{R_n}=\norm{P_n}$. Moreover, the averaging projections
satisfy $\pair{E_{\mathcal P_k}f}{g}=\pair{f}{E_{\mathcal P_k}g}$.
It follows that $R_{\nu_k}=E_{\mathcal P_k}$ on $L^q$ as well.

We have already proved that $E_{\mathcal P_k}h\to h$ in $L^r$
for every $h\in L^r$ and every $1\le r<\infty$. To pass from the
ends of stages to all partial sums, we let $T_n$ denote $P_n$ on
$L^p$ when $p<\infty$, or $R_n$ on $L^q$, and write $L^r$ for
the corresponding space. For $n\ge\nu_k$, biorthogonality gives
$T_nT_{\nu_k}=T_{\nu_k}$, and hence
$$
\norm{h-T_nh}_r
=\norm{(I-T_n)(h-E_{\mathcal P_k}h)}_r
\le(1+K_0)\norm{h-E_{\mathcal P_k}h}_r.
$$
The right-hand side tends to zero as $k\to\infty$, uniformly
for $n\ge\nu_k$. Thus, the full sequence of partial sums converges,
and biorthogonality gives uniqueness of the expansions.
\end{proof}

\subsection{Identifying the coefficient functions}\label{ss:global}

We now sum the operators $S_k$ from \eqref{eq:local-S}.
We will show that the adjoint of $I+\sum_k S_k$ takes each $v_n$
to the corresponding non-negative function $f_n$. The inverse
of $I+\sum_k S_k$ will then give the coefficient functions. We set
\begin{equation}\label{eq:global-S}
S=\sum_{k=1}^\infty S_k,\quad
S_kg=\kappa_k\theta_k\left(\int_{A_k}g\right)z_k.
\end{equation}
By \eqref{eq:stage-smallness}, the series converges in operator
norm on $L^q$ and satisfies $\norm{S}\le\rho<1$. Consequently,
$U=I+S\colon L^q(\R)\to L^q(\R)$ is invertible, with
\begin{equation}\label{eq:U-bounds}
U^{-1}=\sum_{r=0}^\infty(-S)^r,\quad
\norm{U}\le1+\rho,\quad
\norm{U^{-1}}\le\frac{1}{1-\rho}.
\end{equation}
Its adjoint $U^*=I+S^*$ is therefore invertible on $L^p(\R)$.
We will show that the non-negative functions already constructed
and their coefficient functions are given by
\begin{equation}\label{eq:final-basis}
f_n=U^*v_n,\quad \gamma_n=U^{-1}\xi_n.
\end{equation}

\begin{proposition}[The non-negative basis]\label{prop:corrected}
Every $f_n$ is a non-negative compactly supported step function.
If $1<p<\infty$, the sequence $(f_n)$ is a Schauder basis of
$L^p(\R)$ with coefficient functions $(\gamma_n)$ and
\begin{equation}\label{eq:final-bound}
K((f_n))\le(1+\delta+\eta)\frac{1+\rho}{1-\rho}.
\end{equation}
At $p=\infty$, the sequence $(\gamma_n)$ is a Schauder basis
of $L^1(\R)$ with coefficient functions $(f_n)$ and the same
upper bound for its basis constant.
\end{proposition}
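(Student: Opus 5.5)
The plan is to transfer \Cref{lem:raw} through the isomorphism $U$ of \eqref{eq:U-bounds}. The first step is to check the identity $f_n=U^*v_n$ in \eqref{eq:final-basis}, which requires $S_i^*v_n=0$ whenever $i\ne k$, where $v_n$ belongs to stage $k$. By \eqref{eq:global-S}, $S_i^*f=\kappa_i\theta_i\pair{f}{z_i}\1_{A_i}$ with $z_i$ supported in $F_i$.
\begin{itemize}
\item If $i>k$, then $v_n$ is supported in $A_{k+1}\subseteq A_i$, which is disjoint from $F_i$. Hence $\pair{v_n}{z_i}=0$.
\item If $i<k$, then $z_i$ is constant on the unit intervals of $F_i\subseteq A_{i+1}\subseteq A_k$. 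These intervals lie in $\mathcal P_k$, because unit intervals of $F_i$ are never refined, or, if refined later, $z_i$ is still constant on each interval of $\mathcal P_k$. Since $v_n$ has integral zero on every interval of $\mathcal P_k$ by \Cref{prop:stage}(i), again $\pair{v_n}{z_i}=0$.
\end{itemize}
Therefore $U^*v_n=(I+S_k^*)v_n=f_n$ by \eqref{eq:stage-functions}. For the first vector, $v_1=\1_{I_1}$ is supported in $A_1$ and every $z_i$ vanishes there, so $f_1=v_1$. Non-negativity and compact support then follow from \Cref{prop:stage}(iv) and \eqref{eq:stage-functions}.

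Biorthogonality is then immediate:
$$
\pair{f_n}{\gamma_m}=\pair{U^*v_n}{U^{-1}\xi_m}=\pair{v_n}{UU^{-1}\xi_m}=\delta_{nm},
$$
using the adjoint relation between $U$ on $L^q$ and $U^*$ on $L^p$.

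For the projections, the new partial sum operator is
$$
\widetilde P_n f=\sum_{j\le n}\pair{f}{\gamma_j}U^*v_j=U^*P_n(U^*)^{-1}f,
$$
because $\pair{f}{U^{-1}\xi_j}=\pair{(U^{-1})^*f}{\xi_j}$ and $(U^{-1})^*=(U^*)^{-1}$. Hence
$$
\norm{\widetilde P_n}\le\norm{U}\norm{U^{-1}}\norm{P_n}\le K_0\frac{1+\rho}{1-\rho},
$$
which is \eqref{eq:final-bound}. For $1<p<\infty$, convergence follows because $\widetilde P_nf=U^*P_n(U^*)^{-1}f$ converges to $U^*(U^*)^{-1}f=f$ by \Cref{lem:raw}, and biorthogonality gives uniqueness of the expansions.

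The case $p=\infty$, $q=1$ is handled dually. The partial sums of $(\gamma_n)$ are
$$
\widetilde R_n g=\sum_{j\le n}\pair{f_j}{g}\gamma_j=\sum_{j\le n}\pair{v_j}{Ug}U^{-1}\xi_j=U^{-1}R_nUg,
$$
which lives entirely in $L^1$ and never needs $(L^\infty)^*$. \Cref{lem:raw} covers $q=1$, so $\widetilde R_ng\to g$ and the same norm bound holds. The $\widetilde R_n$ are projections by biorthogonality, so $(\gamma_n)$ is a Schauder basis of $L^1$ with coefficient functions $(f_n)$.

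I expect the main obstacle to be the vanishing $S_i^*v_n=0$ for $i<k$. It relies on $z_i$ being constant on intervals of $\mathcal P_k$, which holds because $\mathcal P_k$ refines the unit intervals of $F_i$. A secondary technical point is that $S=\sum_k S_k$ converges in operator norm, which is what makes the adjoint identities valid.
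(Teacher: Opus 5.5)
Your proof is correct and follows the paper's argument. You verify $U^*v_n=(I+S_k^*)v_n$, then conjugate the partial sums of \Cref{lem:raw} to $U^*P_n(U^*)^{-1}$ for $p<\infty$ and to $U^{-1}R_nU$ at $p=\infty$. The only difference is in how you check $S_i^*v_n=0$: you split by index ($i>k$ via disjoint supports, $i<k$ via zero integrals on $\mathcal P_k$), whereas the paper splits $v_n$ into its $s_{k,j}$ and $\varphi_{k,j}$ parts. Note that the unit intervals of $F_i$ \emph{are} refined at later stages, so only your second justification holds: $z_i$ is still constant on each interval of $\mathcal P_k$.
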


\begin{proof}
We first show that $S_l^*v_n=0$ when $v_n$ is a vector from
stage $k$ and $l\ne k$. The adjoint of \eqref{eq:global-S}
is given by
$$
S^*f=\sum_{l=1}^\infty
\kappa_l\theta_l\pair{f}{z_l}\1_{A_l},
$$
with convergence in $L^p$, including when $p=\infty$.
Each $z_l$ is constant on every unit interval, while
$\varphi_{k,j}$ has integral zero on every such interval by
\eqref{eq:haar}. Hence, $\pair{\varphi_{k,j}}{z_l}=0$ for every $l$. The function
$s_{k,j}$ is supported in $F_k$, and the sets $F_l$ are pairwise
disjoint and disjoint from $I_1$. Therefore,
\begin{equation}\label{eq:global-identities}
S^*\varphi_{k,j}=0,\quad
S^*s_{k,j}=S_k^*s_{k,j},\quad S^*v_*=0.
\end{equation}
Every vector from stage $k$ is either $s_{k,j}$ or
$\kappa_k^{-1}s_{k,j}+\varphi_{k,j}$. Since
$S_k^*\varphi_{k,j}=0$ as well, \eqref{eq:global-identities}
shows that each such vector $v_n$ satisfies
\begin{equation}\label{eq:positive-functions}
f_n=(I+S^*)v_n=(I+S_k^*)v_n\ge0
\end{equation}
by the construction at stage $k$. This also proves the first
identity in \eqref{eq:final-basis}. Formula \eqref{eq:positive-functions} also shows that $f_n$
is a step function supported in $A_k\cup F_k$, because $v_n$
is such a function and $S_k^*v_n$ is a multiple of $\1_{A_k}$. The initial vector remains $f_1=\1_{I_1}$.
In particular, every $f_n$ has compact support despite the
infinite sum defining $S$.

The formula $\gamma_n=U^{-1}\xi_n$ preserves biorthogonality,
since
$$
\pair{U^*v_n}{U^{-1}\xi_l}=\pair{v_n}{\xi_l}=\delta_{nl}.
$$
For $f\in L^p(\R)$, \eqref{eq:final-basis} gives
$\pair{f}{\gamma_j}=\pair{f}{U^{-1}\xi_j}
=\pair{(U^*)^{-1}f}{\xi_j}$.
Thus, for $p<\infty$, the partial sum projections are obtained
from $P_n$ in \eqref{eq:raw-projections} by
$$
\sum_{j=1}^n f_j\otimes\gamma_j
=U^*P_n(U^*)^{-1}.
$$
By \Cref{lem:raw}, these partial sums converge in $L^p$ to the
function being expanded. Biorthogonality gives uniqueness, and
the equalities $\norm{U^*}=\norm{U}$ and
$\norm{(U^*)^{-1}}=\norm{U^{-1}}$, together with
\eqref{eq:U-bounds}, give \eqref{eq:final-bound}.

When $p=\infty$, we use $R_n$ from \eqref{eq:raw-projections},
which acts on $L^1$. The partial sum projections for $(\gamma_n)$
are
$$
\sum_{j=1}^n\gamma_j\otimes f_j=U^{-1}R_nU.
$$
For every $g\in L^1$, \Cref{lem:raw} gives $R_nUg\to Ug$
in $L^1$. Applying the bounded operator $U^{-1}$ proves convergence
of these partial sums to $g$. Their norms satisfy the same bound,
and biorthogonality again gives uniqueness. This proves the
endpoint assertion.
\end{proof}

\subsection{Choice of the parameters}\label{ss:errors}

Given $\varepsilon>0$, we choose
$$
t=\frac{\min\{\varepsilon,1\}}{8},\quad
\delta=\eta=\frac{t}{2},\quad \rho=t.
$$
For $1<p<\infty$, \eqref{eq:final-bound} yields
$$
K((f_n))\le\frac{(1+t)^2}{1-t}
=1+\frac{3t+t^2}{1-t}<1+4t<1+\varepsilon,
$$
where we used $0<t\le1/8$. When $p=\infty$, the same estimate
holds for the basis $(\gamma_n)$ of $L^1$.

Note that the parameters
$\delta,\eta,\rho$, and hence the constants in \Cref{prop:aux},
are fixed before the induction begins. At stage $k$, the set
$A_k$ and partition $\mathcal P_k$ have already been constructed.
We then choose $L_k$ using \eqref{eq:common-estimate} to obtain
\eqref{eq:stage-smallness}. Thus, each choice depends only on
the fixed parameters and the preceding stages.

\section{Non-negative bases in \texorpdfstring{$L^1(\R)$}{L1(R)}}\label{sec:positive-l1}

We now prove the existence assertion of \Cref{thm:positive} at
$p=1$. We again extend a basis of an entire step-function space
by refining its partition and adding new unit intervals. Here,
one new interval and two new functions suffice for each splitting.
The key point is that the projection discarding both new functions
is contractive. Every earlier partial sum therefore keeps the same
norm when the basis is extended. Consequently, we only need to estimate the
partial sums that stop between the two new functions.

\subsection{Adding two functions}

For a bounded interval $V$ of positive length, we write
$$
e_V=\frac{\1_V}{|V|},\quad \norm{e_V}_1=1.
$$
 We suppose that $\mathcal P$ partitions a
finite union $A$ of unit intervals and that we have chosen a
non-negative basis of $E_{\mathcal P}L^1$. We fix $0<\theta<1/2$ and split one interval
$D\in\mathcal P$ into a left interval $B$ and a right interval
$C$, with
$$
|B|=\theta|D|,\quad |C|=(1-\theta)|D|.
$$
We also choose a unit interval $J$ disjoint from $A$ and let
$\mathcal P'$ be the partition obtained by replacing $D$ with
$B,C$ and adjoining $J$.

\begin{lemma}\label{lem:l1-extension}
Let the notation be as above. Appending the two non-negative functions
\begin{equation}\label{eq:l1-pair}
u=e_C+e_J,\quad v=e_B+e_J,
\end{equation}
in this order, extends the old basis to a basis of
$E_{\mathcal P'}L^1$. On this enlarged space, let $R$ retain
the old vectors but discard $u,v$, and let $S$ retain the old
vectors and $u$ but discard $v$. Then we have
\begin{equation}\label{eq:l1-extension-bounds}
\norm{R}=1,\quad \norm{S}=1+2\theta.
\end{equation}
Moreover, every partial sum projection ending among the old vectors has
the same norm on the enlarged space as on $E_{\mathcal P}L^1$.
\end{lemma}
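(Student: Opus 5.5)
The plan is to write down the new coefficient functionals explicitly and read off $R$ and $S$ as concrete averaging-type operators on $E_{\mathcal P'}L^1$. Every $h$ in this space is a step function on $\mathcal P'$. We write $h_B,h_C,h_J$ for its constant values on $B,C,J$, and $h_D=\theta h_B+(1-\theta)h_C$ for its average on $D$. The functions $u,v$ have the same average on $D$ and on $J$; only their $B/C$ profiles differ. So $v-u=e_B-e_C$ is the unequal Haar direction, and $u+v$ is not in the old span either. Three facts follow from this:
\begin{itemize}
\item[(a)] $E_{\mathcal P}u=E_{\mathcal P}v=e_D+e_J$. Hence $u-E_{\mathcal P}u$ and $v-E_{\mathcal P}v$ are supported in $D\cup J$, since $e_J\notin E_{\mathcal P}L^1$ but $e_J$ averages to itself.
\item[(b)] $\dim E_{\mathcal P'}L^1=\dim E_{\mathcal P}L^1+2$.
\item[(c)] The old span together with $u,v$ is everything, because $e_J=v-e_B$ and $e_B,e_C$ can be solved from $u,v$ together with $\1_D$ and $e_J$.
\end{itemize}
One caution: $e_J$ is not in the old span. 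The concrete step is to solve $h=g+a\,u+b\,v$ with $g\in E_{\mathcal P}L^1$. Comparing values on $J$ gives $a+b=h_J$, since $g=0$ on $J$. The $B/C$ difference gives $b/|B|-a/|C|=h_B-h_C$. This determines $a,b$, and then $g=h-au-bv$.

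Next I would compute $R$ and $S$. The operator $R$ is $h\mapsto g$. Its value on $D$ is $h_D-(a+b)/|D|=h_D-h_J/|D|$, it vanishes on $J$, and it agrees with $h$ elsewhere. One checks that $|D|\,|h_D-h_J/|D||\le |D||h_D|+|h_J|$, and the latter is at most the $L^1$ mass of $h$ on $D\cup J$. So $\norm{R}\le1$, and $\norm R=1$ because $R$ is a nonzero projection. The formula for $a$ comes from solving the $2\times2$ system. I expect $a=(1-\theta)h_J|D|^{-1}\cdot(\ldots)$; more precisely, $a=|C|\,(\,h_J/|D|\cdot\theta^{-1}\ldots)$, to be computed. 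Then $S=R+a\,u$, which changes only the values on $D\cup J$. Its norm is a finite-dimensional extremal problem over the three values $h_B,h_C,h_J$, and since $L^1$ is at issue it suffices to test extreme points. By homogeneity and the structure of the operator, we reduce to inputs $e_B$, $e_C$, $e_J$, because the norm of an operator on $\ell_1$-type spaces is the maximum over normalized atoms. This is exactly why $p=1$ is easy. Computing $S e_B$, $S e_C$, $S e_J$ explicitly, I expect the worst case to give $1+2\theta$ (likely from $e_C$ or $e_J$). For example, $S e_J$ picks up $u=e_C+e_J$ together with a negative correction $-e_D$-type term on $D$, and the total mass comes out $1+2\theta$. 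The lower bound is attained at that atom.

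Finally, take a partial sum $P$ ending among the old vectors. On the enlarged space it equals $P_{\mathrm{old}}\circ R$, since the coefficients of the old vectors are those of $g=Rh$. So $\norm{P}\le\norm{P_{\mathrm{old}}}\norm R=\norm{P_{\mathrm{old}}}$. The reverse inequality holds because $P$ restricts to $P_{\mathrm{old}}$ on $E_{\mathcal P}L^1$. The main obstacle is the exact solution of the $2\times2$ system and the atom computation giving exactly $1+2\theta$. The reduction to atoms uses that $E_{\mathcal P'}L^1$ is isometric to a weighted $\ell_1$ with the normalized indicators $e_V$ as extreme points.
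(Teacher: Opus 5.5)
Your framework is the same as the paper's. You write $h=g+au+bv$ with $g$ in the old space, use that $\norm{\sum_V c_Ve_V}_1=\sum_V|c_V|$ on $E_{\mathcal P'}L^1$ so operator norms are maxima over the atoms $e_V$, and write an old partial sum on the enlarged space as $P_{\mathrm{old}}R$. Several parts are correct:
\begin{itemize}
\item Your $2\times2$ system is set up correctly and gives spanning. The sentence in (c) is circular as written, but the explicit solve repairs it.
\item Your estimate $\norm{R}\le1$ is correct.
\item The final assertion about old partial sums is correct.
\end{itemize}
Claim (a) is false. Since $E_{\mathcal P}$ vanishes off $A$ and $J\cap A=\emptyset$, you have $E_{\mathcal P}u=E_{\mathcal P}v=e_D$. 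You never use (a), but note that $R\ne E_{\mathcal P}$; indeed $Re_J=-e_D$.

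The genuine gap is the identity $\norm{S}=1+2\theta$, the substantive content of the lemma, which you leave ``to be computed.'' Your tentative formulas for $a$ are wrong, and the one concrete claim you make is false. Solving your system gives
\[
a=(1-\theta)h_J-\theta(1-\theta)|D|(h_B-h_C).
\]
Then $S=R+a\,u$ yields
\[
Se_B=\theta e_B-(1-\theta)e_J,\quad
Se_C=\theta e_B+e_C+\theta e_J,\quad
Se_J=-\theta e_B+(1-\theta)e_J,
\]
and $S$ fixes all other atoms. So $Se_J$ has norm $1$, not $1+2\theta$: the $-(1-\theta)e_C$ part of $-e_D$ cancels exactly against the $(1-\theta)e_C$ coming from $(1-\theta)u$. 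The only atom whose image has norm above one is $e_C$. There $a=\theta$ and $Se_C=e_D+\theta u$ has norm $1+2\theta$, which gives both the upper bound and its attainment. Without this computation neither half of $\norm{S}=1+2\theta$ is proved, since both depend on identifying the extremal atom. These are the formulas in the paper's proof, so once you carry out this computation your argument coincides with it.
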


\begin{proof}
The old span contains $e_D$. Since
\begin{equation}\label{eq:l1-span}
e_D=\theta e_B+(1-\theta)e_C,\quad
e_J=(1-\theta)u+\theta v-e_D,
\end{equation}
the enlarged list spans $e_J$ and hence also
$e_B=v-e_J$ and $e_C=u-e_J$. It therefore spans the whole
space $E_{\mathcal P'}L^1$. This space has two more dimensions
than the old space, so the enlarged list is a basis.

The functions $e_V$, $V\in\mathcal P'$, have disjoint supports
and norm one. Thus, for arbitrary real or complex coefficients, we have
$$
\left\|\sum_{V\in\mathcal P'}a_Ve_V\right\|_1
=\sum_{V\in\mathcal P'}|a_V|.
$$
It follows that the norm of any operator on this space is the
largest of the norms of its values on these functions. The map
$R$ fixes every unaffected $e_V$ and satisfies
\begin{equation}\label{eq:l1-old-projection}
Re_B=e_D,\quad Re_C=e_D,\quad Re_J=-e_D.
\end{equation}
Indeed, these formulas fix $e_D$ and annihilate $u,v$.
Every image has norm one, so $\norm{R}=1$.

The map $S$ also fixes the unaffected $e_V$. On the three
remaining functions, it is given by
\begin{equation}\label{eq:l1-intermediate}
\begin{aligned}
Se_B&=\theta e_B-(1-\theta)e_J,\\
Se_C&=\theta e_B+e_C+\theta e_J,\\
Se_J&=-\theta e_B+(1-\theta)e_J.
\end{aligned}
\end{equation}
These formulas imply that $Se_D=e_D$, $Su=u$, and $Sv=0$, as required.
The three images have norms $1$, $1+2\theta$, and $1$,
respectively. Hence, $\norm{S}=1+2\theta$.

Finally, if $P$ is an old partial sum projection, its extension
to the enlarged space is $PR$. Since $R$ is contractive,
$\norm{PR}\le\norm{P}$. Restricting $PR$ to the old space
gives the reverse inequality. Thus, its norm is unchanged.
\end{proof}

The pair in \eqref{eq:l1-pair} recovers the same unequal Haar
difference used in \Cref{ss:splits}:
\begin{equation}\label{eq:l1-haar-recovery}
\varphi:=\1_B-\theta\1_D
=\theta(1-\theta)|D|(v-u).
\end{equation}
Thus, the two constructions use pairs of non-negative functions
to separate the values on $B$ and $C$. At $p=1$, the exact
calculation in \eqref{eq:l1-extension-bounds} also ensures that
later extensions do not increase any earlier projection norm.

\subsection{Repeating the extension}

\begin{proposition}\label{prop:positive-l1}
For every $0<\theta<1/2$, there is a non-negative Schauder basis
of $L^1(\R)$ consisting of compactly supported step functions
whose partial sum projections satisfy
\begin{equation}\label{eq:l1-exact-projections}
\norm{P_{2n-1}}=1,\quad
\norm{P_{2n}}=1+2\theta\quad(n\ge1).
\end{equation}
The construction works over either scalar field.
\end{proposition}

\begin{proof}
We enumerate the unit intervals as $(I_n)_{n\ge1}$, with
$I_1=[0,1)$, and start with $f_1=\1_{I_1}$. At each splitting,
we use the first unit interval in this enumeration that has not
yet been included. We organize the splittings into finite rounds.
At the beginning of a round, we list the intervals of the current
partition in increasing order of their left endpoints. We split
each listed interval once and append the pair from
\Cref{lem:l1-extension}. The two pieces of a split interval and
the newly added unit interval are first split in the next round.

We number the splittings consecutively and denote the intervals
at splitting $n$ by $D_n,B_n,C_n,J_n$. The basis vectors added
at this step are
\begin{equation}\label{eq:l1-basis}
f_{2n}=e_{C_n}+e_{J_n},\quad
f_{2n+1}=e_{B_n}+e_{J_n}.
\end{equation}
By \Cref{lem:l1-extension}, the first $2n+1$ vectors form a
non-negative basis of the full step-function space for the
partition after $n$ splittings. When a partial sum ending after
a complete pair is first formed, it is the identity on that
space and has norm one. A partial sum ending between the two
functions of a pair has norm $1+2\theta$. Every later extension
preserves these norms, by the last assertion of the lemma.
Consequently, \eqref{eq:l1-exact-projections} holds on every
finite span large enough to contain the corresponding pair.

It remains to prove that the finite spans have dense union.
Every unit interval is eventually included. At each subsequent
round, every interval of its partition is replaced by two
intervals, each of length at most $1-\theta$ times the old
length. The longest interval of the partition on each fixed
$I_n$ therefore has length tending to zero. As in
\Cref{ss:positive-completeness}, uniform continuity gives
approximation of continuous compactly supported functions by
these step-function spaces. Their union is consequently dense
in $L^1(\R)$.

The partial sum maps agree on the smaller spans and are uniformly
bounded by $1+2\theta$. They therefore extend to bounded operators
$P_m$ on $L^1$. To see that $P_mh\to h$, choose a finite linear
combination $h_0$ of the constructed vectors. Once $m$ includes
its last index, we have
$$
\norm{h-P_mh}_1
\le(2+2\theta)\norm{h-h_0}_1.
$$
Density proves convergence. The differences $P_m-P_{m-1}$
give bounded coefficient functionals and hence uniqueness of
the expansions. Thus, the sequence is a Schauder basis.

The operator norms on the dense union are preserved under
extension to $L^1$, so the equalities in
\eqref{eq:l1-exact-projections} continue to hold. In particular,
the lower bound for $P_{2n}$ is attained on $e_{C_n}$ by
\eqref{eq:l1-intermediate}. All the functions are real and
non-negative, and the norm calculations apply to real and
complex coefficients alike.
\end{proof}

Since $\theta$ can be arbitrarily small, the basis constants in
\Cref{prop:positive-l1} are arbitrarily close to one. In this
construction the first coordinate projection is contractive.

\section{Duality and the conclusions}\label{sec:duality}

The constructions give non-negative bases for $1\le p<\infty$
and the asserted upper bounds. We now prove the strict lower bound
for non-negative bases when $1<p<\infty$ and for bases with positive
biorthogonal functionals when $1\le p<\infty$. We then complete
the proofs of the main results.

\begin{samepage}
\subsection{Why the first projection cannot be contractive}\label{ss:lower}

For a non-negative basis of $L^p$ with $1<p<\infty$, a contractive
first coordinate projection would give equality in H\"older's
inequality. This would force the first coefficient function to be
non-negative as well.
Biorthogonality would then make every later basis vector vanish
on a set of positive measure, which is incompatible with completeness.

\begin{proposition}[No contractive first coordinate projection]\label{prop:lower}
Let $(f_n)$ be a Schauder basis of $L^p(\R)$ with coefficient
functions $(\gamma_n)$.
\begin{enumerate}
\item If $1<p<\infty$ and every $f_n$ is non-negative, then
$f_1\otimes \gamma_1$ is not contractive.
\item If $1\le p<\infty$ and every $\gamma_n$ is non-negative, then
$f_1\otimes \gamma_1$ is not contractive.
\end{enumerate}
In either case, $K((f_n))>1$.
\end{proposition}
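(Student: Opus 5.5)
We argue by contradiction in both cases, using the structure of contractive rank-one projections. Suppose $P=f_1\otimes\gamma_1$ is contractive on $L^p(\R)$, with $\pair{f_1}{\gamma_1}=1$. Since $\norm{P}=\norm{f_1}_p\norm{\gamma_1}_q$, contractivity gives $\norm{f_1}_p\norm{\gamma_1}_q\le1=\pair{f_1}{\gamma_1}\le\norm{f_1}_p\norm{\gamma_1}_q$. Hence equality holds in H\"older's inequality. The final claim $K((f_n))>1$ is then immediate, because $P_1=f_1\otimes\gamma_1$ and $K\ge\norm{P_1}$.

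For (i), take $1<p<\infty$ and $f_1\ge0$. Equality in H\"older forces $\gamma_1=c\,f_1^{p-1}$ a.e. with $c>0$; to get $c>0$ rather than merely a unimodular constant, we use that $\pair{f_1}{\gamma_1}=1$ is real and positive. In particular $\gamma_1\ge0$ and $\{\gamma_1>0\}=\{f_1>0\}=:G$, a set of positive measure. Biorthogonality gives $\int f_n\gamma_1=0$ for $n\ge2$. Since $f_n\ge0$ and $\gamma_1\ge0$, this forces $f_n=0$ a.e. on $G$ for all $n\ge2$. Therefore every $h$ in the closed span of $\{f_n\}_{n\ge1}$ restricted to $G$ is a scalar multiple of $f_1|_G$. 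Hence that span is not all of $L^p$: take $h=\1_{G'}$ for a subset $G'\subset G$ with $0<|G'|<|G|$ chosen so that $\1_{G'}$ is not proportional to $f_1$ on $G$. Such a $G'$ exists since $G$ has positive measure and Lebesgue measure is nonatomic. This contradicts completeness.

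For (ii), take $1\le p<\infty$ and $\gamma_n\ge0$ for all $n$. When $1<p<\infty$ the H\"older equality case is symmetric: $f_1=c'\,\gamma_1^{q-1}$ with $c'>0$, so $f_1\ge0$ with the same support $G$ as $\gamma_1$. Biorthogonality $\pair{f_1}{\gamma_n}=0$ for $n\ge2$, together with $f_1,\gamma_n\ge0$, gives $\gamma_n=0$ a.e. on $G$. Consequently every function $h$ supported in $G$ has $\pair{h}{\gamma_n}=0$ for $n\ge2$, so $h=\pair{h}{\gamma_1}f_1$. This again fails for an indicator of a subset of $G$ that is not proportional to $f_1$.

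The main obstacle is $p=1$, $q=\infty$, where the equality case of H\"older is weaker. Here we only get $|\gamma_1|=\norm{\gamma_1}_\infty$ a.e. on $\{f_1\ne0\}$, and $f_1\gamma_1=|f_1|\norm{\gamma_1}_\infty$ there. Since $\gamma_1\ge0$, this forces $\gamma_1\equiv\norm{\gamma_1}_\infty=:c>0$ on $G:=\{f_1\ne0\}$, and $f_1\ge0$. Then $\int_G\gamma_n f_1=0$ still gives $\gamma_n=0$ a.e. on $G$ for $n\ge2$, so the same argument applies: any $h=\1_{G'}$ with $G'\subset G$ must equal $c|G'|f_1$, which is impossible for a suitable choice of $G'$. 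The care needed is to verify the equality case of H\"older at $q=\infty$ precisely. The other steps are routine applications of positivity and biorthogonality.
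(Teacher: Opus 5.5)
Your proof is correct and follows essentially the paper's route. In each case, H\"older equality together with $\pair{f_1}{\gamma_1}=1$ forces the partner function to be non-negative. Positivity and biorthogonality then make every later $f_n$ (resp.\ $\gamma_n$) vanish on a set $G$ of positive measure, which contradicts completeness because $L^p(G)$ is not one-dimensional.

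The differences are minor:
\begin{itemize}
\item The paper gets the sign information from $h=\max\{\operatorname{Re}\gamma_1,0\}$ instead of quoting the full equality case of H\"older.
\item For (ii) with $p>1$, the paper passes to the dual basis of $L^q$ and applies (i), whereas you repeat the argument with the roles of $f_1$ and $\gamma_1$ swapped.
\end{itemize}
One detail to tidy: your test set $G'\subset G$ must have finite positive measure with $|G\setminus G'|>0$, since $G$ itself may have infinite measure.
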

\end{samepage}

\begin{proof}
To prove (i), suppose that the first coordinate projection is
contractive. Its norm must then equal one, so
$\norm{f_1}_p\norm{\gamma_1}_q=1$.
We set $h=\max\{\operatorname{Re}\gamma_1,0\}$.
Since $f_1\ge0$, H\"older's inequality gives
$$
1=\operatorname{Re}\int f_1\gamma_1
\le\int f_1h\le\norm{f_1}_p\norm{h}_q
\le\norm{f_1}_p\norm{\gamma_1}_q=1.
$$
Since $0\le h\le|\gamma_1|$ and $q<\infty$, equality of their norms
forces $h=|\gamma_1|$ almost everywhere. Consequently, $\gamma_1$ is real
and non-negative, even when the scalar field is complex.

The set $B=\{\gamma_1>0\}$ has positive measure. For $n\ge2$,
non-negativity and biorthogonality give $\int f_n\gamma_1=0$, so
$f_n=0$ almost everywhere on $B$. Restricting the basis expansion
of a function $f\in L^p$ supported in $B$ therefore gives
$$
f=\pair{f}{\gamma_1}f_1\1_B.
$$
This would make $L^p(B)$ at most one-dimensional, whereas $B$
contains two disjoint subsets of positive finite measure whose
indicators are linearly independent. This contradiction proves (i).

For (ii) with $p>1$, duality gives a non-negative Schauder basis
$(\gamma_n)$ of $L^q(\R)$ with coefficient functions $(f_n)$.
Part (i) gives $\norm{\gamma_1\otimes f_1}>1$. This operator is the
adjoint of $f_1\otimes \gamma_1$, so they have the same norm.

For $p=1$, we argue directly. If the first projection were
contractive, we would have
$\norm{f_1}_1\norm{\gamma_1}_\infty=1$.
Setting $h=\max\{\operatorname{Re}f_1,0\}$ and using $\gamma_1\ge0$,
we obtain
$$
1=\operatorname{Re}\int f_1\gamma_1
\le\int h\gamma_1\le\norm{h}_1\norm{\gamma_1}_\infty
\le\norm{f_1}_1\norm{\gamma_1}_\infty=1.
$$
As in (i), equality forces $f_1$ to be real and non-negative.
On the set $B=\{f_1>0\}$, which has positive measure, every
$\gamma_n$ with $n\ge2$ vanishes by non-negativity and biorthogonality.
Thus, a function $f\in L^1$ supported in $B$ would have all but
its first basis coefficient equal to zero and would satisfy
$f=\pair{f}{\gamma_1}f_1$. Choosing two disjoint subsets of $B$ of positive finite measure
gives two linearly independent indicator functions. They cannot
both be multiples of $f_1$, which proves (ii).

\end{proof}

\subsection{The non-negative basis and its dual consequence}\label{ss:proofs}

\begin{proof}[Proof of \Cref{thm:positive}]
For $1<p<\infty$, we carry out the construction with the
parameters in \Cref{ss:errors}. By \Cref{prop:corrected},
$(f_n)$ is a non-negative Schauder basis of $L^p(\R)$ consisting
of compactly supported step functions, with $K((f_n))<1+\varepsilon$.

For $p=1$, we apply \Cref{prop:positive-l1} with
$$
\theta=\frac{\varepsilon}{2(1+\varepsilon)}\in(0,1/2).
$$
The resulting basis has constant
$1+2\theta=1+\varepsilon/(1+\varepsilon)<1+\varepsilon$.
This proves the existence assertion over either scalar field.

Since $\varepsilon>0$ is arbitrary, the infimum of the basis
constants is one for every $1\le p<\infty$. For $1<p<\infty$,
\Cref{prop:lower}(i) shows that the first coordinate projection
of a non-negative basis cannot be contractive, so the infimum
is not attained in this range.
For $p=1$, nonattainment follows from the description of
monotone bases in $L^1$ due to Dor and Odell
\cite[Theorem~3.1]{DO}.
\end{proof}

\begin{proof}[Proof of \Cref{thm:functionals} when \texorpdfstring{$1<p<\infty$}{1<p<\infty}]
\begin{samepage}
We apply \Cref{thm:positive} with $q$ in place of $p$ and denote
the resulting non-negative basis of $L^q(\R)$ by $(\gamma_n)$.
By duality, its coefficient functions $(f_n)$ form a
Schauder basis of $L^p(\R)$ with the same basis constant. The
coefficient functions of $(f_n)$ are the original $\gamma_n$,
which are non-negative compactly supported step functions.
Thus, this basis has all the asserted properties, and
\Cref{prop:lower}(ii) shows that its basis constant is strictly
greater than one. The argument applies over either scalar field.

\end{samepage}
\end{proof}

\subsection{Positive biorthogonal functionals in \texorpdfstring{$L^1$}{L1}}\label{ss:endpoint}

\begin{proof}[Proof of \Cref{prop:endpoint} when \texorpdfstring{$p=1$}{p=1}]
We run the construction with $p=\infty$ and $q=1$, using the
vectors in \eqref{eq:tail-system}. By \Cref{prop:corrected},
$(\gamma_n)$ is a Schauder basis of $L^1(\R)$ whose coefficient
functions $(f_n)$ are non-negative compactly supported step functions.
The choice in \Cref{ss:errors} makes its basis
constant less than $1+\varepsilon$.

Interchanging the names of the basis vectors and coefficient
functions gives the notation in the statement. The first coordinate
projection cannot be contractive by \Cref{prop:lower}(ii).
The dual expansions converge in $\sigma(L^\infty,L^1)$, meaning
convergence after pairing with any function in $L^1$. For $g=\1_{\R}$,
convergence in norm fails because each partial sum is compactly supported.
All arguments apply over either scalar field.

\end{proof}

\section{Outline of the construction of a basis of translates for \texorpdfstring{$p>2$}{p>2}}\label{tr:sec:outline}

We now outline the construction of a Schauder basis of translates
for $2<p<\infty$. The case $p=2$ is treated separately in
\Cref{h2:sec:weighted}. Until then, we fix $2<p<\infty$ and write $q=p/(p-1)$.
We construct the generator as a limit of entire functions
$G_0,G_1,\ldots$, starting with a Gaussian smoothing of a real
function $e_0\in C_c^\infty((0,1))$ with $\norm{e_0}_p=1$. At each stage, we also choose a smooth compactly supported reference function $V_t$ and finitely many of its translates. We keep these reference vectors fixed when we proceed to the next stage. We first prove that the resulting sequence is a Schauder basis. We then replace
the generator used for each vector by the single limit
 $g=\lim_tG_t|_\R$. The total change will be small enough to preserve
the basis property.

The main step makes several new translates contain a copy of a
function that we want to approximate. Suppose that the current
 reference function $v$ and a real smooth compactly supported function $h$ with $\norm{h}_p=1$
are supported in $I=[-R,R]$. We choose large positive translation parameters $\lambda_1,\ldots,\lambda_N$ and add small copies of $h$ far to
the left of $I$
$$
v'=v+a\sum_{j=1}^N T_{-\lambda_j}h.
$$
Translating $v'$ by $\lambda_i$ brings the $i$-th copy back onto
$I$. The other terms remain outside $I$. We choose the parameters
so that all these remaining terms have disjoint supports, including
terms belonging to different translates. We choose $a$ and a
factor $d>0$ so that, after division by $d$, each remaining part
has norm one and the coefficient of $h$ is $N^{-1/q}$. Thus,
$$
d^{-1}T_{\lambda_i}v'=e_i+N^{-1/q}h,
\quad \norm{e_i}_p=1,
$$
where the $e_i$'s have disjoint supports outside $I$. The change to
the generator satisfies
$$
\norm{v'-v}_p=dN^{2/p-1}\longrightarrow0.
$$
Here $d$ tends to $\norm{v}_p$ as $N\to\infty$. This is where
we use $p>2$. The parameters only need to be chosen successively
far enough apart. This can be done due to the unboundedness of the prescribed
set, after the reduction to positive parameters at the start of
\Cref{tr:sec:packet}; see \Cref{tr:lem:placement,tr:lem:packet}.

We next explain how to approximate $h$ using these functions. Write
$x_i=e_i+N^{-1/q}h$ and $u=N^{-1/p}\sum_i e_i$. Note that
$\norm{u}_p=1$, $u$ and $h$ have disjoint supports, and
$$
\frac{1}{2}\sum_{i=1}^N N^{-1/p}x_i=\frac{1}{2}(h+u),\quad
\left\|h-\frac{1}{2}\sum_{i=1}^N N^{-1/p}x_i\right\|_p
=2^{-1/q}<1.
$$
We divide the error by its norm and apply the same construction to the resulting function of norm one, using new disjoint supports. Repeating this procedure gives approximations to $h$ with errors tending to zero. Once $h$ is in the closed span, so is every
$e_i=x_i-N^{-1/q}h$. In \Cref{tr:sec:branch}, we carry out this
iteration and estimate every partial sum, including those that stop
before all $N$ vectors of one family have been added.

There is also a completeness issue on the new supports. A single
function $e_i$ does not span all functions supported on
 a reserved finite union of open intervals $A_i$ containing its support. We therefore use \Cref{tr:lem:local-complement} to choose a basis
of a complementary subspace to $\F e_i$ in $L^p(A_i)$.
\Cref{tr:lem:local-complement} obtains such a basis from the Haar
system. We include these functions, together with smooth approximations to Haar functions
on the intervals between the new regions, among the functions to
be approximated at later stages. The bounded intervals used in
the construction eventually cover $\R$, and every one of these
functions is selected after finitely many stages.

To control the partial sums when these approximations are combined,
we must specify how often each function is used. Before starting with the next function $f_{j+1}$, we perform
sufficiently many steps for every earlier function, ensuring that
the number of completed steps for $f_1,\ldots,f_j$ is non-increasing
in this order. The relevant estimate in equation \eqref{tr:eq:four-bounds} shows that the part of a partial sum mapping the added coordinates back into the direction $f_j$ tends to zero with the number of steps. We make these bounds summable in $j$, and then continue to return to every $f_j$ infinitely often. This gives the uniform basis bound in \Cref{tr:prop:assembly}.

Finally, we let $M_{t-1}$ be the number of vectors chosen before stage $t$, and let $N_t$ be the number chosen at that stage. We choose the entire functions $G_t$ and the smooth compactly
supported reference functions $V_t$ so that
$$
 \norm{G_0-e_0}_p+
 \sum_{t=1}^\infty N_t\norm{G_t-V_t}_p+
\sum_{t=1}^\infty M_{t-1} \norm{G_t-G_{t-1}} _p
$$
is sufficiently small. The second term accounts for the new vectors
 and the third for changes to earlier vectors. We also ensure that the series of increments $G_t-G_{t-1}$
converges absolutely and uniformly on every compact subset of $\C$. The estimate in \eqref{tr:eq:total-error} then allows us to
replace the reference vectors by translates of one entire function,
 using an invertible operator close to the identity.

\section{A finite family of translates}\label{tr:sec:packet}

We first prove the finite construction described in the outline. To do so, let $\Lambda_0\subset\R$ be the prescribed unbounded set. After applying a reflection if necessary, we may suppose that it is
unbounded above. We choose $b\in\Lambda_0$ and set
$$
\Gamma=\{0\}\cup\{\lambda-b:\lambda\in\Lambda_0,\ \lambda>b\}.
$$
We will construct the basis using parameters in $\Gamma$. Translating
the entire basis by $b$ will then put every parameter in $\Lambda_0$.
Thus, throughout this part, we will assume that $\Gamma\subset[0,\infty)$ is unbounded
and contains zero.

To make the copies outside $I$ disjoint, we need to separate both
the parameters $\lambda_i$ and their differences $\lambda_i-\lambda_j$.
The following lemma achieves this using only the fact that $\Gamma$ is unbounded.

\begin{lemma}\label{tr:lem:placement}
Let $I=[-R,R]$, where $R\ge1$, and let $N\ge1$. There are
$\lambda_1<\cdots<\lambda_N$ in $\Gamma$, larger than any prescribed
finite set of old parameters, such that the intervals $I+c$, for
$$
c\in\{0\}\cup\{\lambda_i,-\lambda_i:1\le i\le N\}
\cup\{\lambda_i-\lambda_j:i\ne j\},
$$
are pairwise disjoint.
\end{lemma}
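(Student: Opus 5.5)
Two translates $I+c$ and $I+c'$ are disjoint exactly when $|c-c'|>2R$. So it suffices to choose the $\lambda_i$ so that every two distinct elements of the set
$$
\mathcal C=\{0\}\cup\{\pm\lambda_i\}\cup\{\lambda_i-\lambda_j:i\ne j\}
$$
are more than $2R$ apart. I would choose $\lambda_1<\lambda_2<\cdots<\lambda_N$ greedily in $\Gamma$, using only that $\Gamma$ is unbounded above. Each new parameter must be much larger than everything already chosen. Concretely, pick $\lambda_1\in\Gamma$ with $\lambda_1>4R$ and larger than all prescribed old parameters. Then, inductively, pick $\lambda_{k}\in\Gamma$ with $\lambda_k>2\lambda_{k-1}+4R$. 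This is possible because $\Gamma$ is unbounded.

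Next I would check the separation of $\mathcal C$. Split its nonzero elements into positive and negative ones; the set is symmetric under $c\mapsto -c$. The positive elements are $\lambda_i$ for all $i$ and $\lambda_i-\lambda_j$ with $j<i$. Order them by their largest index $i$. For fixed $i$, the positive elements of $\mathcal C$ with top index $i$ are
$$
\lambda_i,\quad \lambda_i-\lambda_j\ (j<i).
$$
These lie in $[\lambda_i-\lambda_{i-1},\lambda_i]$. They are pairwise separated by the gaps $\lambda_j-\lambda_{j'}$ (for $j'<j<i$) and $\lambda_j$, all of which exceed $4R$ by the growth condition, since $\lambda_j-\lambda_{j'}\ge\lambda_j-\lambda_{j-1}>\lambda_{j-1}+4R$.

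Elements with different top indices $i'<i$ are also separated. The group for $i'$ lies below $\lambda_{i'}\le\lambda_{i-1}$, while the group for $i$ lies above $\lambda_i-\lambda_{i-1}>\lambda_{i-1}+4R$. So the gap exceeds $4R$.

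Every positive element is at least $\lambda_1-0$ or $\lambda_i-\lambda_{i-1}$, and both exceed $4R$. Hence every positive element is more than $2R$ from $0$, and every positive element is more than $4R$ from every negative one. By symmetry, the negative elements are separated in the same way.

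Finally, the elements of $\mathcal C$ are distinct as listed: the separation argument shows that no two listed values coincide. So all the translates $I+c$, $c\in\mathcal C$, are pairwise disjoint.

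The main point requiring care is the bookkeeping within one group: the differences $(\lambda_i-\lambda_j)-(\lambda_i-\lambda_{j'})=\lambda_{j'}-\lambda_j$ must be large. This is where the doubling condition $\lambda_k>2\lambda_{k-1}+4R$ is used. Everything else is routine, and the lower bound relative to the old parameters is built into the choice of $\lambda_1$.
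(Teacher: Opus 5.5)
Your proof is correct. It reaches the conclusion by a different verification from the paper's.

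Both arguments choose the parameters greedily using only that $\Gamma$ is unbounded. The paper imposes the stronger superincreasing condition $\lambda_j>4\sum_{i<j}\lambda_i+4R$ and then argues uniformly. Any two distinct centers differ by $\sum_i a_i\lambda_i$ with $a_i\in\{-2,\dots,2\}$, not all zero. If $j$ is the largest index with $a_j\ne0$, then $|\sum_i a_i\lambda_i|\ge\lambda_j-2\sum_{i<j}\lambda_i>2R$. That argument needs no sorting, no symmetry reduction and no separate treatment of positive and negative centers. It also applies verbatim to any finite family of centers given by integer combinations of the $\lambda_i$ with bounded coefficients.

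Your argument instead exploits the specific shape of the centers: at most two parameters, with coefficients $\pm1$. Setting $\lambda_0:=0$, the positive centers are $\lambda_i-\lambda_j$ with $0\le j<i$. You lay them out explicitly in the increasing, disjoint windows $[\lambda_i-\lambda_{i-1},\lambda_i]$, with consecutive gaps $\lambda_j-\lambda_{j-1}$. This works under the weaker doubling condition $\lambda_k>2\lambda_{k-1}+4R$. Under that condition the paper's leading-term estimate need not hold, since $2\sum_{i<k}\lambda_i$ can be close to $4\lambda_{k-1}$ while $\lambda_k$ is only slightly above $2\lambda_{k-1}$. Your route also yields separation greater than $4R$ rather than $2R$. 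The price is more bookkeeping.
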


\begin{proof}
Since $\Gamma$ is unbounded above, we may choose its elements
successively so that
\begin{equation}\label{tr:eq:placement}
\lambda_j>4\sum_{i<j}\lambda_i+4R,
\end{equation}
with $\lambda_1$ also larger than all the old parameters. The centers are given by $0$, $\pm\lambda_i$, and
$\lambda_i-\lambda_j$ with $i\ne j$. Any two of these
expressions differ in the coefficient of at least one $\lambda_i$. Subtracting two different expressions therefore gives $\sum_i a_i\lambda_i$, where $a_i\in\{-2,-1,0,1,2\}$
and not all $a_i$ are zero.
If $j$ is the largest index with $a_j\ne0$, then
$$
\left|\sum_i a_i\lambda_i\right|
\ge\lambda_j-2\sum_{i<j}\lambda_i>2R.
$$
This proves the required disjointness.
\end{proof}

We now use the translation parameters provided by
\Cref{tr:lem:placement} to add copies of the target function
to the generator. The common norm of the remaining parts determines the coefficient of each copy.

\begin{lemma}\label{tr:lem:packet}
Let $v,h$ be real smooth compactly supported functions supported in $I=[-R,R]$, with
$v\ne0$ and $\norm{h}_p=1$. Given $\varepsilon>0$ and finitely many
old parameters, there are a real smooth compactly supported function $v'$, new parameters
$\lambda_1<\cdots<\lambda_N$ in $\Gamma$ larger than all the old
parameters, a number $d>0$, and smooth compactly supported functions $e_1,\ldots,e_N$ of norm one with disjoint supports outside $I$
such that
\begin{equation}\label{tr:eq:exact-packet}
T_{\lambda_i}v'=d\bigl(e_i+N^{-1/q}h\bigr),\quad
\norm{v'-v}_p\le\varepsilon.
\end{equation}
Moreover, $v'-v$ is supported to the left of $I$.
\end{lemma}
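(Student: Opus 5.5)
We take $v'=v+a\sum_{j=1}^N T_{-\lambda_j}h$, with the parameters $\lambda_1<\cdots<\lambda_N$ supplied by \Cref{tr:lem:placement} for the interval $I$ and the given old parameters. The scalar $a>0$ and the integer $N$ are chosen at the end. Since $\lambda_j>4R$ by \eqref{tr:eq:placement}, each $T_{-\lambda_j}h$ is supported in $I-\lambda_j$, which lies to the left of $I$. Hence $v'$ is real, smooth and compactly supported, and $v'-v$ is supported to the left of $I$. Translating gives
$$
T_{\lambda_i}v'=ah+T_{\lambda_i}v+a\sum_{j\ne i}T_{\lambda_i-\lambda_j}h .
$$
The first term is supported in $I$. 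The second is supported in $I+\lambda_i$ and the remaining terms in $I+(\lambda_i-\lambda_j)$. By \Cref{tr:lem:placement}, all these intervals are pairwise disjoint, across all $i$ and $j$, and disjoint from $I$.

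We set
$$
r_i=T_{\lambda_i}v+a\sum_{j\ne i}T_{\lambda_i-\lambda_j}h .
$$
Because its summands have disjoint supports and $\norm{h}_p=1$,
$$
\norm{r_i}_p^p=\norm{v}_p^p+(N-1)a^p,
$$
which is independent of $i$. We call this common value $d^p$ and put $e_i=r_i/d$. Each $e_i$ is then smooth, compactly supported, of norm one, and supported outside $I$. The supports of different $e_i$ are disjoint, because the intervals $I+\lambda_i$ and $I+\lambda_i-\lambda_j$ are distinct for distinct $i$. We then have $T_{\lambda_i}v'=d\bigl(e_i+(a/d)h\bigr)$, so \eqref{tr:eq:exact-packet} requires exactly
$$
a=dN^{-1/q}.
$$

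The only real step is to solve the two conditions simultaneously. Substituting $a=dN^{-1/q}$ into $d^p=\norm{v}_p^p+(N-1)a^p$ gives
$$
d^p\bigl(1-(N-1)N^{-p/q}\bigr)=\norm{v}_p^p .
$$
Since $p>2$, we have $p/q=p-1>1$, so $(N-1)N^{-p/q}\le N^{2-p}<1$ for every $N\ge2$. Hence there is a unique solution $d>0$, and $d\to\norm{v}_p$ as $N\to\infty$.

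It remains to bound the perturbation. The translates $T_{-\lambda_j}h$ have disjoint supports, so
$$
\norm{v'-v}_p=aN^{1/p}=dN^{1/p-1/q}=dN^{2/p-1}\longrightarrow0,
$$
again because $p>2$. Taking $N$ large enough therefore gives $\norm{v'-v}_p\le\varepsilon$. Once $N$ is fixed, the parameters are chosen by \Cref{tr:lem:placement}. The condition $v\ne0$ only ensures $d>0$ and is not otherwise used.
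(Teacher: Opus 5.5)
Your proposal is correct and is essentially the paper's proof: the same edit $v'=v+a\sum_{j}T_{-\lambda_j}h$ with parameters from \Cref{tr:lem:placement}, the same disjoint-support computation $\norm{w_i}_p^p=\norm{v}_p^p+(N-1)a^p$, the same scaling $d^p=\norm{v}_p^p/\bigl(1-(N-1)N^{-(p-1)}\bigr)$ with $a=dN^{-1/q}$, and the same cost $\norm{v'-v}_p=dN^{2/p-1}$, with $N$ fixed before the parameters are chosen. The only difference is cosmetic: you derive the formula for $d$ by solving the two consistency conditions, whereas the paper states it up front and then verifies it.
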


\begin{proof}
For a sufficiently large integer $N$, we set
\begin{equation}\label{tr:eq:packet-scales}
d^p=\frac{\norm{v}_p^p}{1-(N-1)N^{-(p-1)}},\quad
a=dN^{-1/q}.
\end{equation}
The denominator tends to one because $p>2$. We choose the parameters
by \Cref{tr:lem:placement} and define
\begin{equation}\label{tr:eq:generator-edit}
v'=v+a\sum_{j=1}^N T_{-\lambda_j}h.
\end{equation}
The summands added to $v$ have disjoint supports to the left of
$I$, since $\lambda_j>4R$.
After translation by $\lambda_i$, the term with $j=i$ is $ah$,
supported in $I$. Separating this term gives
$T_{\lambda_i}v'=ah+w_i$, where
$$
w_i=T_{\lambda_i}v+a\sum_{j\ne i}T_{\lambda_i-\lambda_j}h.
$$
All the terms in these expressions have disjoint supports, both
within one $w_i$ and between different $w_i$. They also vanish on
$I$. The placement is illustrated in \Cref{fig:translate-placement}. Hence,
$$
\norm{w_i}_p^p=\norm{v}_p^p+(N-1)d^pN^{-(p-1)}=d^p.
$$
Taking $e_i=w_i/d$ gives the first identity in
\eqref{tr:eq:exact-packet}. Disjointness also gives
$\norm{v'-v}_p^p=Na^p$, and hence
\begin{equation}\label{tr:eq:edit-cost}
\norm{v'-v}_p=dN^{2/p-1}\longrightarrow0\quad(N\to\infty).
\end{equation}
Here, $d\to\norm{v}_p$, and the exponent $2/p-1$ is negative.
We first choose $N$ to make this norm at most $\varepsilon$, and
then choose the parameters from $\Gamma$.
\end{proof}

\begin{figure}[htbp]
\color{black}
\centering
\begin{tikzpicture}[x=1cm,y=1cm,font=\small,line cap=round,
 every node/.style={text=black,inner sep=1pt}]
\node[anchor=west] at (0,4.65) {Copies and their translates};
\node at (7.8,4.65) {On $I$};
\path[fill=black!4] (7.1,0.45) rectangle (8.5,4.25);
\draw[draw=black,line width=0.6pt,densely dashed] (7.1,0.45) -- (7.1,4.25);
\draw[draw=black,line width=0.6pt,densely dashed] (8.5,0.45) -- (8.5,4.25);
\foreach \y in {3.35,2.05,0.75}{
 \draw[draw=black,line width=0.6pt,-{Stealth[length=4pt,width=3pt]}]
 (1.0,\y) -- (14.8,\y);
}
\node[anchor=east] at (0.7,3.6) {$v'$};
\node[anchor=east] at (0.7,2.3) {$T_{\lambda_1}v'$};
\node[anchor=east] at (0.7,1.0) {$T_{\lambda_2}v'$};
\foreach \x in {1.8,5.8}{
 \path[draw=black,fill=black!10,line width=0.6pt]
 (\x-0.6,3.35) rectangle (\x+0.6,3.85);
 \node at (\x,3.6) {$ah$};
}
\path[draw=black,fill=black!4,line width=0.6pt] (7.2,3.35) rectangle (8.4,3.85);
\node at (7.8,3.6) {$v$};
\foreach \x in {3.8,7.8}{
 \path[draw=black,fill=black!10,line width=0.6pt]
 (\x-0.6,2.05) rectangle (\x+0.6,2.55);
 \node at (\x,2.3) {$ah$};
}
\path[draw=black,fill=black!4,line width=0.6pt] (9.2,2.05) rectangle (10.4,2.55);
\node at (9.8,2.3) {$v$};
\foreach \x in {7.8,11.8}{
 \path[draw=black,fill=black!10,line width=0.6pt]
 (\x-0.6,0.75) rectangle (\x+0.6,1.25);
 \node at (\x,1.0) {$ah$};
}
\path[draw=black,fill=black!4,line width=0.6pt] (13.2,0.75) rectangle (14.4,1.25);
\node at (13.8,1.0) {$v$};
\node at (1.8,0.05) {$-\lambda_2$};
\node at (3.8,0.05) {$\lambda_1-\lambda_2$};
\node at (5.8,0.05) {$-\lambda_1$};
\node at (7.8,0.05) {$0$};
\node at (9.8,0.05) {$\lambda_1$};
\node at (11.8,0.05) {$\lambda_2-\lambda_1$};
\node at (13.8,0.05) {$\lambda_2$};
\end{tikzpicture}
\caption{Placement of the copies, shown for $N=2$. Each rectangle
represents a containing interval $I+c$ and is labelled by the function
copied there. Translation by $\lambda_i$ moves one copy of $ah$ onto
$I$. The other rectangles in that row form $w_i$. Their intervals
are disjoint from $I$ and from those forming the other remainder.
The centers are ordered as on the line, but their spacings are
schematic. The proof uses large $N$ to make the change to the
generator small.}
\label{fig:translate-placement}
\end{figure}

\section{Repeated approximation of one function}\label{tr:sec:branch}

We now repeatedly apply \Cref{tr:lem:packet} to approximate
a function $f$ of norm one. At each step, we apply the lemma
to the difference between $f$ and its current approximation,
normalized to have $L^p$-norm one. For the moment, we use abstract disjoint coordinates in place of the functions supported outside the old interval. This lets us estimate the partial sums independently of
where those supports are placed.

We set
\begin{equation}\label{tr:eq:parameters}
q=\frac{p}{p-1},\quad \beta=2^{-1/p},\quad r=2^{-1/q},
\end{equation}
so that $\beta^p=r^q=1/2$ and $\beta r=1/2$. We work in the space
$$
Z_f=\F f\oplus_p Y_f,\quad
Y_f=\left(\bigoplus_{n=0}^\infty\ell_p^{N_n}\right)_p.
$$
An element is $tf+\sum_n y_n$, with $y_n\in\ell_p^{N_n}$ and
$$
\left\|tf+\sum_n y_n\right\|^p
=|t|^p+\sum_n\norm{y_n}_p^p.
$$
Thus, each summand supplies a new finite set of coordinates,
disjoint from $f$ and from all earlier coordinates. The positive
integers $N_n$ may be chosen arbitrarily. We write $e_{n,i}$ for
the coordinate vectors in the $n$-th summand and define
\begin{align}
u_n&=N_n^{-1/p}\sum_{i=1}^{N_n}e_{n,i},&
m_n(y_n)&=N_n^{-1/q}\sum_{i=1}^{N_n}y_{n,i},
\label{tr:eq:mean}\\
h_0&=f,&h_{n+1}&=\beta(h_n-u_n),
\label{tr:eq:residual}\\
x_{n,i}&=e_{n,i}+N_n^{-1/q}h_n.
&&\label{tr:eq:branch-atoms}
\end{align}
Thus, one clearly has $\norm{u_n}_p=\norm{m_n}=m_n(u_n)=1$.
The vectors $h_n$ and $u_n$ have disjoint coordinates, so induction
gives $\norm{h_n}=1$. More explicitly, one has the identities
\begin{equation}\label{tr:eq:h-formula}
h_n=\beta^nf+H_n,\quad
H_n=-\sum_{l<n}\beta^{n-l}u_l,\quad
\norm{H_n}_p^p=1-2^{-n}\le1.
\end{equation}
We add the functions in the order
$$
x_{0,1},\ldots,x_{0,N_0},\quad
x_{1,1},\ldots,x_{1,N_1},\quad\ldots.
$$
The first family uses $h_0=f$. Each subsequent family uses the normalized error $h_n$ defined in \eqref{tr:eq:residual}.

The closed span of the $x_{n,i}$ contains every coordinate of $Z_f$. Indeed, \eqref{tr:eq:residual} gives
\begin{equation}\label{tr:eq:recovery}
h_n=\frac{1}{2}\sum_{i=1}^{N_n}N_n^{-1/p}x_{n,i}+r h_{n+1}.
\end{equation}
After $L$ repetitions, the remainder is $r^Lh_{n+L}$, whose norm
tends to zero and therefore every $h_n$, including $f=h_0$, belongs to
the closed span. The same holds for
$e_{n,i}=x_{n,i}-N_n^{-1/q}h_n$.

It remains to bound the partial sums. Let $z=tf+\sum_n y_n$,
and suppose that, after subtracting the terms corresponding
to the first $n$ families, the remainder has the form
$$
t_nh_n+y_n+\sum_{l>n}y_l.
$$
We choose the coefficient vector $c_n$ so that subtracting the $n$-th family leaves a multiple of $h_{n+1}$ together with the later coordinates. The required formulas are
\begin{equation}\label{tr:eq:coefficient-recursion}
t_0=t,\quad t_{n+1}=r\bigl(t_n-m_n(y_n)\bigr),\quad
c_n=y_n+\frac{1}{2}\bigl(t_n-m_n(y_n)\bigr)u_n.
\end{equation}
Each coefficient in these formulas depends linearly and continuously
on $t,y_0,\ldots,y_n$. They also give the identity
\begin{equation}\label{tr:eq:elimination-identity}
t_nh_n+y_n=\sum_{i=1}^{N_n}c_{n,i}x_{n,i}+t_{n+1}h_{n+1}.
\end{equation}
Indeed, if $b=t_n-m_n(y_n)$, the two terms on the right are
$y_n+(b/2)u_n+(m_n(y_n)+b/2)h_n$ and $(b/2)(h_n-u_n)$.
Their sum is the left-hand side.

The coefficient maps are biorthogonal to the $x_{n,i}$. To check
this, we first apply \eqref{tr:eq:coefficient-recursion} to $h_k$.
For $n<k$, its input in the $n$-th summand is
$-\beta^{k-n}u_n$, and the recursion gives $t_n=\beta^{k-n}$,
$c_n=0$, and $t_k=1$. For the input $x_{k,i}$, the earlier
coefficients are therefore zero, while at the $k$-th step
$t_k=m_k(e_{k,i})=N_k^{-1/q}$. Hence, $c_k=e_{k,i}$ and
$t_{k+1}=0$. Every later coefficient is zero as well.

Let $P_{n,k}$ retain the complete families numbered
$0,\ldots,n-1$, followed by the first $k$ vectors of family $n$, where $0\le k\le N_n$. We write $E_{n,k}$ for the projection onto the first $k$
coordinates of $\ell_p^{N_n}$ and set
$\theta=k/N_n$. Telescoping
\eqref{tr:eq:elimination-identity} therefore gives
\begin{align}
P_{n,k}z&=tf+\sum_{l<n}y_l+E_{n,k}c_n-(t_n-\ell)h_n,
\label{tr:eq:prefix-formula}\\
\ell&=m_n(E_{n,k}y_n)+\frac{\theta}{2}\bigl(t_n-m_n(y_n)\bigr),
&t_n&=r^nt-\sum_{l<n}r^{n-l}m_l(y_l).
\label{tr:eq:ell}
\end{align}

The complete families correspond to $k=0$ or $k=N_n$ and the
intermediate values of $k$ account for every possible partial sum.
We record separately the parts of $P_{n,k}$ acting between
$\F f$ and $Y_f$. This is needed when we combine the constructions for different functions. In particular, the bound from $Y_f$
back to $\F f$ must tend to zero with $n$.

\begin{lemma}\label{tr:lem:branch-bounds}
With respect to $Z_f=\F f\oplus_p Y_f$, write
$$
P_{n,k}=\begin{pmatrix}\alpha_{n,k}&A_{n,k}\\
B_{n,k}&D_{n,k}\end{pmatrix}.
$$
Here, $A_{n,k}$ maps $Y_f$ to $\F f$ and $B_{n,k}$ maps
$\F f$ to $Y_f$. Uniformly in all the integers $N_l$, we have
\begin{equation}\label{tr:eq:four-bounds}
\alpha_{n,k}=1-2^{-n}(1-\theta/2)\in[0,1],\quad
\norm{A_{n,k}}\le3\beta^n,\quad
\norm{B_{n,k}} \le\frac{1}{2},\quad \norm{D_{n,k}}\le6.
\end{equation}
\end{lemma}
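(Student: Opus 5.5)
The plan is to read all four bounds directly off the closed formula \eqref{tr:eq:prefix-formula}--\eqref{tr:eq:ell}, substituting $h_n=\beta^nf+H_n$ from \eqref{tr:eq:h-formula}. This splits $P_{n,k}z$ into an $\F f$-component and a $Y_f$-component:
\begin{align*}
\text{($\F f$-part)}\quad & t-(t_n-\ell)\beta^n,\\
\text{($Y_f$-part)}\quad & \sum_{l<n}y_l+E_{n,k}c_n-(t_n-\ell)H_n .
\end{align*}
Two elementary estimates will be used repeatedly. First, $\norm{m_n}=1$ and $\norm{m_nE_{n,k}}\le1$. Second, H\"older's inequality with $r^q=1/2$ gives $\sum_{l<n}r^{q(n-l)}\le1$. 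For inputs in $Y_f$ (that is, $t=0$) this yields
$$|t_n|\le\Bigl(\sum_{l<n}\norm{y_l}^p\Bigr)^{1/p}\le\norm{y}.$$

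For $\alpha_{n,k}$ and $B_{n,k}$ I take the input $z=f$, so $t=1$ and $y=0$. Then $t_n=r^n$, $\ell=\tfrac{\theta}{2}r^n$ and $c_n=\tfrac12 r^nu_n$. Using $\beta r=1/2$, the $f$-coefficient is $1-2^{-n}(1-\theta/2)$, which is the stated value of $\alpha_{n,k}$ and clearly lies in $[0,1]$.

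For $B_{n,k}$, the $Y_f$-part is $\tfrac12 r^nE_{n,k}u_n-r^n(1-\theta/2)H_n$. The first term lives in family $n$ and $H_n$ in families $<n$, so the two terms have disjoint coordinates. Using $\norm{E_{n,k}u_n}^p=\theta$ and $\norm{H_n}^p=1-2^{-n}$, the $p$-th power of the norm is
$$r^{np}\Bigl(2^{-p}\theta+(1-\theta/2)^p(1-2^{-n})\Bigr).$$
For $n=0$ this equals $2^{-p}\theta\le2^{-p}$. For $n\ge1$ I use three facts: convexity in $\theta$ bounds the bracket by $1-2^{-n}$ times $\max\{1,2^{1-p}\}=1$ (up to the harmless rearrangement); $r^p=2^{1-p}$; and $p\ge2$. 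Together these give at most $2^{-p}$ in the cases $n=1$ and $n\ge2$. This case check, where the constant $1/2$ is genuinely tight at $n=1$, is the only step I expect to need care.

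For $A_{n,k}$ and $D_{n,k}$ I take $t=0$. From the formula for $\ell$,
$$|\ell|\le\norm{y_n}+\tfrac12\bigl(|t_n|+\norm{y_n}\bigr),$$
and hence $|t_n-\ell|\le\tfrac32|t_n|+\tfrac32\norm{y_n}\le3\norm{y}$. The $\F f$-part is $-(t_n-\ell)\beta^n$, which gives $\norm{A_{n,k}}\le3\beta^n$. For $D_{n,k}$ I bound the three $Y_f$-terms separately:
\begin{align*}
\Bigl\|\sum_{l<n}y_l\Bigr\|&\le\norm{y},\\
\norm{E_{n,k}c_n}&\le\norm{y_n}+\tfrac12\bigl(|t_n|+\norm{y_n}\bigr)\le2\norm{y},\\
\norm{(t_n-\ell)H_n}&\le3\norm{y}.
\end{align*}
Summing these gives $\norm{D_{n,k}}\le6$. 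None of these bounds involves the integers $N_l$, so all four estimates are uniform in them, as the lemma requires.
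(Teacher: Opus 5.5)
Your proof is correct and follows essentially the same route as the paper. You substitute $h_n=\beta^nf+H_n$ into the prefix formula and use convexity in $\theta$ for $B_{n,k}$. For $A_{n,k}$ and $D_{n,k}$ you use H\"older's inequality for $|t_n|\le\|y\|_p$, together with $|\ell|\le 2\|y\|_p$ and $\|E_{n,k}c_n\|_p\le 2\|y\|_p$; your grouping of $-(t_n-\ell)H_n$ as one term gives the same constant $6$. One small bookkeeping correction in the $B_{n,k}$ step: at $\theta=1$ the bracket is $2^{-p}(2-2^{-n})$, whose ratio to $1-2^{-n}$ is $3\cdot 2^{-p}$ at $n=1$ (not $2^{1-p}$); this is still at most $3/4<1$ since $p\ge2$, so your bound of the bracket by $1-2^{-n}$ and the conclusion $\|B_{n,k}\|\le 1/2$ stand.
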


\begin{proof}
For input $tf$, \eqref{tr:eq:ell} gives $t_n=r^nt$ and
$\ell=(\theta/2)r^nt$. By \eqref{tr:eq:h-formula}, the $f$
coordinate of \eqref{tr:eq:prefix-formula} is $\alpha_{n,k}t$,
and its $Y_f$ component is
$$
r^nt\left[\frac{1}{2}E_{n,k}u_n-(1-\theta/2)H_n\right].
$$
This proves the assertion about $\alpha_{n,k}$. The two terms in
brackets have disjoint supports. With $s=2^{-n}$, we therefore have
$$
\norm{B_{n,k}}^p
=s^{p-1}\bigl[\theta2^{-p}+(1-\theta/2)^p(1-s)\bigr].
$$
This expression is convex in $\theta$, so it suffices to check the
endpoints. At $\theta=0$, it is zero for $n=0$; for $n\ge1$,
$s\le1/2$ and $s^{p-1}(1-s)$ increases on $[0,1/2]$, with maximum
$2^{-p}$. At $\theta=1$, it is $2^{-p}s^{p-1}(2-s)$, which
increases on $[0,1]$ to $2^{-p}$. Thus $\norm{B_{n,k}}\le1/2$.

For input $y\in Y_f$, H\"older's inequality gives
$$
|t_n|\le
\left(\sum_{a=1}^n r^{aq}\right)^{1/q}
\left(\sum_{l<n}\norm{y_l}_p^p\right)^{1/p}
\le\norm{y}_p.
$$
Also, $|m_n(y_n)|\le\norm{y}_p$, $|\ell|\le2\norm{y}_p$, and
$\norm{E_{n,k}c_n}_p\le2\norm{y}_p$.
The $f$ coordinate is $-\beta^n(t_n-\ell)$, which proves the
bound on $A_{n,k}$. The $Y_f$ component is
$\sum_{l<n}y_l-t_nH_n+E_{n,k}c_n+\ell H_n$.
The sum of the norms of these four terms is at most $6\norm{y}_p$.
\end{proof}

\section{Combining the approximations}\label{tr:sec:assembly}

We next apply the construction to a sequence of functions $f_j$.
The new coordinate functions assigned to different $f_j$ will
have disjoint supports, whereas the supports of the $f_j$ need
not be disjoint. Each $f_j$ has support disjoint from the new
coordinate functions assigned to it, but its support may intersect
those of coordinate functions assigned to earlier targets. We will choose the $f_j$
to form an unconditional basis of a complementary space $X$.
A contractive projection onto the closed span of the disjoint
coordinates will give the decomposition used to estimate partial sums.
 We will choose the normalized basis $(f_j)$ so that its initial
projections $Q_m$ satisfy, for a fixed $D>1$,
\begin{equation}\label{tr:eq:multiplier}
\norm{Q_m}\le D.
\end{equation}
We will also require the coefficient estimate
\begin{equation}\label{tr:eq:lower-lp}
\left(\sum_j|t_j|^p\right)^{1/p}
\le D\norm{\sum_jt_jf_j}_p.
\end{equation}
\Cref{tr:lem:local-complement} supplies both estimates.

The decay of $A_{n,k}$ in \eqref{tr:eq:four-bounds} determines
the order in which we use the functions $f_j$. We make enough
steps for each one before starting the next, so that the
contributions from the added coordinates back into the earlier
directions $f_j$ have summable bounds. The following proposition
gives the resulting estimate.

\begin{proposition}\label{tr:prop:assembly}
Suppose $L^p(\R)=X\oplus Y$, where $Y$ is the range of a
contractive projection $P_Y$ and $X=\ker P_Y$.
Suppose $X$ has a normalized unconditional basis $(f_j)$ satisfying
\eqref{tr:eq:multiplier} and \eqref{tr:eq:lower-lp}. Let $Y$ be spanned by disjoint
functions $e_0$ and $e_{j,n,i}$ of unit norm, and suppose each $f_j$ has support
disjoint from all the functions $e_{j,n,i}$ assigned to it.
For each $j$, define $h_{j,n}$ and $x_{j,n,i}$ by
\eqref{tr:eq:residual} and \eqref{tr:eq:branch-atoms}.

 Fix $\tau>0$ and choose strictly increasing integers $L_j$, with
 $L_0=0$, such that $3\beta^{L_j}\le\tau2^{-j}$.
Begin with $e_0$. At round $j$, all earlier targets have completed
$L_{j-1}$ families. Introduce $f_j$ and construct its first
$L_{j-1}$ families. Then, for each level from $L_{j-1}$ to $L_j-1$,
append one family for each of $f_1,\ldots,f_j$, in that order.
Within each family, retain the coordinate order.
If every $f_j$ is introduced, the resulting sequence is a Schauder
basis of $L^p(\R)$ with basis constant at most
\begin{equation}\label{tr:eq:K}
K= 3D + 9+\tau.
 \end{equation}
\end{proposition}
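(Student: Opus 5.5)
Each target $f_j$ together with its coordinates $e_{j,n,i}$ sits isometrically inside a copy of $Z_{f_j}=\F f_j\oplus_p Y_{f_j}$. The plan is to show that biorthogonality and completeness follow from the one-target analysis in \Cref{tr:sec:branch}, and then to bound every partial sum by decomposing it along $L^p(\R)=X\oplus Y$. We use coordinates $z=\sum_j t_jf_j+c_0e_0+\sum_j y_j$, where $\sum_j t_jf_j=(I-P_Y)z$ and $y_j$ collects the $e_{j,\cdot,\cdot}$-coordinates of $P_Yz$. Since $P_Y$ is contractive and the $e$'s are disjoint, $\norm{P_Yz}_p^p=|c_0|^p+\sum_j\norm{y_j}_p^p$. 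We also have $\norm{(I-P_Y)z}\le2\norm{z}$, and \eqref{tr:eq:lower-lp} gives $(\sum_j|t_j|^p)^{1/p}\le 2D\norm{z}$.

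The coefficient functionals for $x_{j,n,i}$ are the maps $c_{n}$ of \eqref{tr:eq:coefficient-recursion}, applied to the pair $(t_j,y_j)$ and composed with the bounded maps $z\mapsto t_j$ and $z\mapsto y_j$. Biorthogonality across different $j$ holds because these coefficients see only the $j$-th data. Within one $j$, it is the computation in \Cref{tr:sec:branch}; that computation is legitimate in $L^p$ because $f_j$ is disjoint from its own $e_{j,n,i}$, so norms computed in $Z_{f_j}$ are correct. For completeness, \eqref{tr:eq:recovery} puts each $f_j$ and each $e_{j,n,i}$ in the closed span, and $e_0$ is the first vector. Density of finite spans then follows from $X\oplus Y=L^p(\R)$, so it suffices to prove the uniform bound.

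A general partial sum $P$ stops at some round $j_0$. Each introduced $f_j$ is at a stage $(n_j,k_j)$ of its own construction, and the ordering guarantees $n_j\ge L_{j-1}$ for $j<j_0$, with at most one $j$ caught midway through a family. For each $j$, $P$ acts on $t_jf_j+y_j$ as the matrix of \Cref{tr:lem:branch-bounds}, and $P$ kills everything with index $j>j_0$. We therefore write
\[
Pz=c_0e_0+\sum_{j}\alpha_{j}t_jf_j+\sum_j A_{j}y_j\,f_j+\sum_j\bigl(B_{j}t_j+D_{j}y_j\bigr),
\]
and estimate the four groups as follows.
\begin{itemize}
\item The $D$-terms lie in disjoint coordinate blocks, so their total norm is at most $6(\sum_j\norm{y_j}^p)^{1/p}\le6\norm{z}$.
\item The $B$-terms are also disjointly supported, so they contribute at most $\frac12(\sum_j|t_j|^p)^{1/p}\le D\norm{z}$.
\item The $A$-terms contribute at most $\sum_j3\beta^{n_j}\norm{y_j}\le\sum_j\tau2^{-j}\norm{z}+3\norm{z}$. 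The term $3\norm{z}$ covers the targets not yet past $L_{j-1}$, namely the one just introduced, which has $n=L_{j_0-1}$ and so is handled by the same bound $3\beta^{L_{j_0-1}}$ anyway.
\item The $\alpha$-terms: $\alpha_j\in[0,1]$, and the $\alpha_j$ take a constant value $\alpha^{(L)}$ on blocks of consecutive $j$ sharing the same level, with at most three distinct values (completed level, current partial level, and the newly introduced target). Unconditionality alone would give a bound, but to obtain exactly $D$ we use \eqref{tr:eq:multiplier}. Each block is an interval of indices, so the sum is a difference of initial projections $Q_m$, each of norm at most $D$, applied to $(I-P_Y)z$.
\end{itemize}

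The main obstacle is this $\alpha$-term. One must check carefully that, within a level, targets $1,\dots,j$ are processed in order, so that the set of $j$ with $n_j=L$ is an initial segment and the set with $n_j=L-1$ is the next segment. Then
\[
\sum_j\alpha_jt_jf_j=\alpha Q_{a}x+\alpha'(Q_b-Q_a)x+\alpha''(Q_c-Q_b)x .
\]
Writing this as $\alpha''Q_c+(\alpha'-\alpha'')Q_b+(\alpha-\alpha')Q_a$, with coefficient differences of total variation at most $2$, gives a bound of about $3D\cdot 2\norm{z}$. After absorbing constants this matches the shape $3D+9+\tau$ of \eqref{tr:eq:K}; I would tune which of the estimates carries the factor $2$ from $\norm{I-P_Y}$ to reach the stated constant. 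Finally, the partial sums are uniformly bounded and converge on the dense set of finite combinations, so they converge everywhere, which gives the basis property.
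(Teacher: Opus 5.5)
\textbf{Genuine gap: the $\alpha$-term does not give the stated constant.} Your architecture is the paper's: you split $F=x+y$ with $x=(I-P_Y)F$, apply \Cref{tr:lem:branch-bounds} to each pair $(t_j,y_j)$, and estimate the four groups separately. Your biorthogonality, completeness, $D$-, $B$- and $A$-estimates are essentially as in the paper, and your argument does show the sequence is a Schauder basis with \emph{some} uniform constant.

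It does not give the stated constant $K=3D+9+\tau$, and the proposed ``tuning'' cannot repair this. The only available bound is $\norm{x}_p\le2\norm{F}_p$. Your other estimates already spend $6\norm{y}_p$, $(3+\tau)\norm{y}_p$ and $\frac D2\norm{x}_p$. To reach $3D+9+\tau$, the $X$-component arising from $x$ must therefore be bounded by exactly $D\norm{x}_p$. Abel coefficients of total variation up to $2$ give at best $2D\norm{x}_p$, hence a constant like $5D+9+\tau$ or worse.

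\textbf{The missing observation: the multipliers are monotone in $j$.} One has $\alpha_1\ge\alpha_2\ge\cdots\ge0$, with $\alpha_j=0$ for targets not yet introduced. Consider a cut inside the family of target $i$ at level $n$ with fraction $\theta$. The targets before $i$ have completed $n+1$ families, and those after $i$ (up to the newest) have completed $n$, so
\[
\alpha_{i-1}=1-2^{-(n+1)}\ge1-2^{-n}(1-\theta/2)=\alpha_i\ge1-2^{-n}=\alpha_{i+1}.
\]
During the catch-up phase of a new target $j_0$, the earlier targets have completed $L_{j_0-1}$ families, strictly more than $j_0$ has, so the same monotonicity holds.

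Consequently, in your own block formula $\alpha\ge\alpha'\ge\alpha''\ge0$. The coefficients $\alpha-\alpha'$, $\alpha'-\alpha''$, $\alpha''$ are then nonnegative and sum to $\alpha\le1$. This is the paper's Abel summation $\sum_j\alpha_jt_jf_j=\sum_m(\alpha_m-\alpha_{m+1})Q_mx$, and it gives the bound $D\norm{x}_p$. Together with the $B$-term this yields $\frac32D\norm{x}_p\le3D\norm{F}_p$.

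\textbf{Smaller corrections.}
\begin{itemize}
\item On the $y$ side, absorb $y_0e_0$ into the $6\norm{y}_p$ bound by disjointness rather than counting it separately.
\item In the $A$-estimate, what you need (and what holds) is $n_j\ge L_{j_0-1}\ge L_j$ for $j<j_0$, not $n_j\ge L_{j-1}$.
\item The newly introduced target can have $n<L_{j_0-1}$ during its catch-up, so it is not ``handled by $3\beta^{L_{j_0-1}}$''. It is covered only by the crude bound $\norm{A_{n,k}}\le3$, which your extra $3\norm{z}$ term does supply.
\end{itemize}
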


\begin{proof}
We write $F=x+y$, where $x=(I-P_Y)F$ and $y=P_YF$. Then
$$
\norm{x}_p\le2\norm{F}_p,\quad \norm{y}_p\le\norm{F}_p.
$$
We expand $x=\sum_jt_jf_j$ and
$y=y_0e_0+\sum_jy_j$, with $y_j$ supported in the coordinates
assigned to $f_j$. At any partial sum, only finitely many of the
constructions have begun, and at most one stops inside a finite
family. We apply the appropriate operator $P_{n,k}$ to each
$(t_j,y_j)$ and retain $y_0e_0$.

The number of completed families is non-increasing in $j$. While a family associated with $f_j$, $j>1$, is being added,
at least one more family has been completed for $f_{j-1}$
than for $f_j$. Hence the coefficients $\alpha_j$ in
\eqref{tr:eq:four-bounds}, taken to be zero for unstarted targets,
are nonincreasing. Indeed, if target $j$ has completed $n$ families
and the current fraction is $\theta$, then
$$
\alpha_{j-1}\ge1-2^{-(n+1)}
\ge1-2^{-n}(1-\theta/2)=\alpha_j.
$$
The comparison with $\alpha_{j+1}$ follows from the ordering
of the numbers of completed families. A partial sum during one round is shown in
\Cref{fig:translate-order}. Abel summation now gives
$$
\sum_j\alpha_jt_jf_j
=\sum_m(\alpha_m-\alpha_{m+1})Q_mx.
$$
These nonnegative coefficients sum to $\alpha_1\le1$, so the
$X$ component arising from $x$ has norm at most $D\norm{x}_p$.
By disjointness, \eqref{tr:eq:four-bounds}, and
\eqref{tr:eq:lower-lp}, the $Y$ component arising from $x$
has norm at most
$\frac{1}{2}\norm{(t_j)}_{\ell_p}\le\frac{D}{2}\norm{x}_p$.
The $Y$ component arising from $y$ has norm at most $6\norm{y}_p$.

For the part mapping $Y$ to $X$, every construction except possibly
the most recently introduced one has completed at least $L_j$
families. By \eqref{tr:eq:four-bounds}, their contributions have
total norm at most
$$
\sum_j \tau2 ^{-j}\norm{y_j}_p\le \tau \norm{y}_p.
$$
The one possible exception contributes at most $3\norm{y}_p$.
Thus every partial sum $S$ satisfies
\begin{equation}\label{tr:eq:global-bound}
\norm{SF}_p \le\frac{3}{2}D \norm{x}_p+ (9+\tau) \norm{y}_p\le K\norm{F}_p.
\end{equation}

The decomposition and \eqref{tr:eq:coefficient-recursion} give
bounded coefficient functionals. They are biorthogonal within
each construction by the calculation in \Cref{tr:sec:branch},
and between different constructions by the decomposition $X\oplus Y$.
Equation \eqref{tr:eq:recovery} shows that the closed span contains
every $f_j$ and $e_{j,n,i}$, as well as $e_0$. It is therefore
all of $L^p(\R)$. The uniformly bounded partial sums fix each
finite linear combination once its terms have appeared, so density
gives norm convergence for every $F$. Biorthogonality gives
uniqueness.
\end{proof}

\begin{figure}[htbp]
\color{black}
\centering
\begin{tikzpicture}[x=1cm,y=1cm,font=\small,line cap=round,
 every node/.style={text=black,inner sep=1pt}]
\node at (3.65,4.2) {First $n$ families};
\node at (7.1,4.2) {Next family};
\node[anchor=west] at (9.25,4.2) {Coefficient on $f_i$};
\foreach \y/\i in {3.3/1,2.35/2,1.4/3,0.45/4}{
 \node[anchor=east] at (0.9,\y) {$f_{\i}$};
 \path[draw=black,fill=black!10,line width=0.6pt]
 (1.25,\y-0.25) rectangle (6.05,\y+0.25);
 \path[draw=black,fill=black!4,line width=0.6pt]
 (6.3,\y-0.25) rectangle (7.9,\y+0.25);
}
\foreach \y in {3.3,2.35}{
 \path[draw=black,fill=black!18,line width=0.6pt]
 (6.3,\y-0.25) rectangle (7.9,\y+0.25);
}
\path[draw=black,fill=black!18,line width=0.6pt] (6.3,1.15) rectangle (7.26,1.65);
\draw[draw=black,line width=0.6pt,densely dashed] (7.26,0.95) -- (7.26,1.9);
\node at (6.78,1.4) {$\theta$};
\node[font=\scriptsize] at (7.1,0.45) {not begun};
\node[anchor=west] at (9.25,3.3) {$\alpha_1=1-2^{-(n+1)}$};
\node[anchor=west] at (9.25,2.35) {$\alpha_2=1-2^{-(n+1)}$};
\node[anchor=west] at (9.25,1.4) {$\alpha_3=1-2^{-n}(1-\theta/2)$};
\node[anchor=west] at (9.25,0.45) {$\alpha_4=1-2^{-n}$};
\draw[draw=black,line width=0.6pt,-{Stealth[length=4pt,width=3pt]}]
 (8.55,3.5) -- (8.55,0.25);
\node at (4.55,-0.25) {Order within one level of round $4$};
\node[anchor=west] at (9.25,-0.25) {$\alpha_1\ge\alpha_2\ge\alpha_3\ge\alpha_4$};
\end{tikzpicture}
\caption{The order of the families. At the illustrated partial sum,
each of $f_1,\ldots,f_4$ has completed $n$ families. At the next
level, the families for $f_1$ and $f_2$ are complete, a fraction
$\theta$ of the family for $f_3$ has appeared, and the family for
$f_4$ has not begun. Thus the coefficients $\alpha_i$ are
nonincreasing. This is the order needed for the nonnegative
coefficients in Abel summation.}
\label{fig:translate-order}
\end{figure}

The support assumption in the proposition is needed to make
$\norm{h_{j,n}}_p=1$ throughout the construction for $f_j$.
No disjointness is required between $f_j$ and the coordinates
assigned to other targets. Their interaction is handled by $P_Y$.

\section{Constructing the functions on the line}\label{tr:sec:physical}

We now construct the decomposition required by
\Cref{tr:prop:assembly} together with the generators $G_t$.
Whenever a new function of norm one $e$ appears in
\eqref{tr:eq:exact-packet}, we reserve its one-dimensional span
for $Y$. On a reserved finite union of open intervals $A$ containing
its support, we choose a basis of $\ker\varphi_e\subset L^p(A)$,
where $\varphi_e$ is the norming functional defined in
\Cref{tr:lem:local-complement}. These functions belong to $X$ and will be selected as later
targets $f_j$. Thus, the functions to be approximated are chosen
as the new supports appear. We first establish the required estimates for the Haar basis.
We then use them in \Cref{tr:lem:local-complement} to construct
the complementary bases with bounds independent of $e$.

\begin{lemma} \label{tr:lem:haar}
Let $p\ge2$. The normalized dyadic Haar basis of $L^p(0,1)$,
ordered with the constant first and every parent before its
children, has contractive initial projections and satisfies
\begin{equation}\label{tr:eq:haar-lower}
\left(\sum_j|t_j|^p\right)^{1/p}
\le\norm{\sum_jt_jh_j}_p.
\end{equation}
The same assertions hold for the Haar basis of the mean-zero
subspace, with the constant omitted. They are preserved by
isometries, by disjoint $L^p$ sums, and by interleavings that
preserve the order within each summand.
\end{lemma}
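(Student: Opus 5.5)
The plan is to prove the two assertions separately for the full Haar basis, then deduce the mean-zero case and the stability claims. For the contractive initial projections, the key observation is that with the constant first and every parent before its children, every initial segment of the ordering spans the step functions of a finite partition of $[0,1)$ into dyadic intervals. Adding $h_I$ after its parent splits $I$, as in the $E_{\mathcal Q}$ formalism of \eqref{eq:averaging}. I will prove this by induction: adding $h_I$ when $I$ is an atom of the current partition replaces $I$ by its two halves. The initial projection is then exactly the averaging operator $E_{\mathcal Q}$, which is contractive on every $L^r$ by H\"older's inequality, as already noted after \eqref{eq:averaging}.

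For the lower estimate \eqref{tr:eq:haar-lower} with $p\ge2$, I would argue by induction on the number of terms, again in the parent-before-children order. Let $F$ be the current finite sum and add $t h_I$, where $h_I=|I|^{-1/p}(\1_{I^+}-\1_{I^-})$ and $I$ is an atom on which $F$ is constant, say equal to $c$. The key step is the pointwise two-point inequality
$$
\tfrac12\bigl(|c+s|^p+|c-s|^p\bigr)\ge |c|^p+|s|^p\qquad(p\ge2),
$$
valid for real or complex $c,s$. Integrating it over $I$ with $s=t|I|^{-1/p}$ gives $\norm{F+th_I}_p^p\ge\norm{F}_p^p+|t|^p$. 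Induction together with $\norm{t_0\1}_p=|t_0|$ then yields the estimate for finite sums, and density gives it in general.

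I expect the two-point inequality to be the main obstacle, especially over $\C$. My first attempt is to reduce to $|c|=1$ and use that $x\mapsto|x|^{p/2}$ is convex and superadditive on $[0,\infty)$ for $p\ge2$. Averaging gives
$$
\tfrac12\bigl(|c+s|^p+|c-s|^p\bigr)\ge\Bigl(\tfrac{|c+s|^2+|c-s|^2}{2}\Bigr)^{p/2}=\bigl(|c|^2+|s|^2\bigr)^{p/2}\ge|c|^p+|s|^p,
$$
by the parallelogram law and superadditivity. This settles the inequality over both scalar fields.

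For the mean-zero subspace, omitting the constant leaves each initial projection equal to $E_{\mathcal Q}-E_{\{[0,1)\}}$ restricted to mean-zero functions. On that subspace this operator coincides with $E_{\mathcal Q}$, which is still contractive. The lower estimate is just the special case $t_0=0$. Isometries obviously preserve both properties. For a disjoint $L^p$ sum with an interleaving that preserves the order within each summand, every initial projection is the direct sum of initial projections of the summands, so its norm is the maximum of their norms. The lower estimate adds up because $p$-th powers of norms add over disjoint supports.
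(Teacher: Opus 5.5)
Your proposal is correct and follows essentially the same route as the paper. You identify each initial projection with averaging onto a finite dyadic partition, and you prove the lower estimate by integrating $\tfrac12(|a+b|^p+|a-b|^p)\ge(|a|^2+|b|^2)^{p/2}\ge|a|^p+|b|^p$ over the atom being split; the mean-zero case and the stability under isometries, disjoint sums and order-preserving interleavings are handled just as in the paper.
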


\begin{proof}
A finite initial Haar segment consists of the constant and the
differences obtained by successively splitting atoms of a finite
dyadic partition. Its projection is conditional expectation
onto that partition, hence is contractive. On the mean-zero
subspace the constant coefficient is zero, so the same statement
holds after omitting the constant.

For complex numbers $a,b$, and therefore also for real numbers,
\begin{equation}\label{tr:eq:clarkson}
\frac{|a+b|^p+|a-b|^p}{2}
\ge (|a|^2+|b|^2)^{p/2}
\ge |a|^p+|b|^p.
\end{equation}
Indeed, the average of $|a+b|^2$ and $|a-b|^2$ is
$|a|^2+|b|^2$. The first inequality follows from the convexity of
$u\mapsto u^{p/2}$, and the second from $p/2\ge1$. Just before a Haar function is added, the preceding partial sum is constant on its support. The new Haar function takes opposite
values of equal modulus on the two halves of that support.
Integrating~\eqref{tr:eq:clarkson} there, and using its
normalization, shows that adding $t_jh_j$ increases the $p$th
powerr of the norm by at least $|t_j|^p$.
Induction proves~\eqref{tr:eq:haar-lower} for finite sums.
The infinite version follows from convergence of the partial sums.

An initial projection in an order-preserving interleaving is the
direct sum of initial projections on its summands, some possibly
zero. Its norm is at most one. Summing
\eqref{tr:eq:haar-lower} to the $p$-th power over the disjoint
summands gives the remaining assertion.
\end{proof}

\begin{lemma} \label{tr:lem:local-complement}
Let $A$ be a bounded finite union of open intervals, let
$e\in C_c^\infty(A)$ be real with $\norm{e}_p=1$, and let $D>1$.
Define
$$
\varphi_e(F)=\int_A F e|e|^{p-2}.
$$
Then $\norm{\varphi_e}=\varphi_e(e)=1$, and
$\ker\varphi_e\subset L^p(A)$ has a normalized unconditional
basis of real functions in $C_c^\infty(A)$ satisfying
\eqref{tr:eq:multiplier} and \eqref{tr:eq:lower-lp}.
The same conclusion holds for $L^p(A)$ itself. The unconditional bound may be chosen independently of $A$ and $e$.
\end{lemma}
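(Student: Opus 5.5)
The plan is to reduce to a step-function model in which the kernel is a space of martingale differences, and then move back to $C_c^\infty(A)$ by the principle of small perturbations.

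\textbf{Step 1: the functional.} First, $\norm{\varphi_e}=\varphi_e(e)=1$ is immediate. Indeed, $\int e\cdot e|e|^{p-2}=\norm{e}_p^p=1$, and $\norm{e|e|^{p-2}}_q^q=\int|e|^{(p-1)q}=1$, so H\"older gives the norm. Since $D>1$ is arbitrary, all estimates must be nearly isometric. The case of $L^p(A)$ itself follows from \Cref{tr:lem:haar}: write $A$ as a disjoint union of intervals and transport the Haar system of $L^p(0,1)$ to each interval by affine isometries. Then interleave the pieces, and replace each Haar function by a nearby real $C_c^\infty(A)$ function. For the kernel, the multiplicative errors can later be absorbed into $D$.

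\textbf{Step 2: a step-function model of the kernel.} Fix a small $\epsilon>0$. Since $e$ is smooth, choose a fine dyadic partition of $A$ into atoms $\Delta_1,\dots,\Delta_k$. Let $e'$ be a step function on these atoms with $\norm{e'-e}_p<\epsilon$, and let $\varphi'$ be its norming functional, so that $\norm{\varphi_e-\varphi'}$ is small. We then split $\ker\varphi'$ into two pieces.
\begin{itemize}
\item The first piece consists of functions with mean zero on every atom. It is spanned by the mean-zero Haar systems of the atoms, which by \Cref{tr:lem:haar} satisfy \eqref{tr:eq:multiplier} and \eqref{tr:eq:lower-lp} with constant one and are unconditional with a constant depending only on $p$.
\item The second piece is finite dimensional. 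It consists of the step functions on the atoms annihilated by $\varphi'$, which is the kernel of a functional on a weighted $\ell_p^k$.
\end{itemize}
After multiplying by the sign of $e'$, the second piece becomes the space of functions on the atoms that have mean zero with respect to the measure $\mu(\Delta_i)\propto|e'|^{p-1}|\Delta_i|$.

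\textbf{Step 3: the finite-dimensional piece (main obstacle).} For the second piece I would use a generalized Haar basis, that is, martingale differences along a binary splitting tree of the atoms, with the tree placed first in the order. Its initial projections are conditional expectations for a partition, hence contractive on the relevant subspace. The hard part is the lower estimate \eqref{tr:eq:lower-lp}, because the Clarkson argument in \Cref{tr:lem:haar} needs the two children of each split to have equal $L^p$-mass and equal $\mu$-mass. Unequal splits cost a factor that does not tend to one.
\begin{itemize}
\item My first attempt is to refine the atoms further and choose the splits so that, at every node, the two children carry $\mu$-masses and Lebesgue-weighted $L^p$-masses equal up to a factor $1+\epsilon$. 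This is possible because $|e'|$ is nearly constant on tiny atoms, and atoms with tiny weight can be grouped.
\item Then the Clarkson-type inequality holds with a factor $(1+O(\epsilon))$ at each step. Since the increments are disjointly nested, these factors should combine into a bound of order $1+O(\epsilon)$ rather than compound along the depth of the tree, but this needs checking.
\item On the small region where $e'$ is nearly zero, the sign factor is irrelevant, and one can simply use ordinary Haar functions.
\end{itemize}

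\textbf{Step 4: transfer and smoothing.} The operator $I-e\otimes\varphi_e$ maps $\ker\varphi'$ onto $\ker\varphi_e$ and differs from the identity there by at most $\norm{e}_p\norm{\varphi_e-\varphi'}=O(\epsilon)$. It therefore carries the basis from Steps 2 and 3 to a basis of $\ker\varphi_e$ whose initial-projection and lower-estimate constants increase by a factor $1+O(\epsilon)$, and whose unconditional constant stays bounded in terms of $p$. We then renormalize, and replace each vector by a real $C_c^\infty(A)$ function in $\ker\varphi_e$, obtained by perturbing and then reapplying $I-e\otimes\varphi_e$. The perturbations are chosen with summable errors, so by the principle of small perturbations they give a further factor $1+O(\epsilon)$. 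Choosing $\epsilon$ small in terms of $D$ gives \eqref{tr:eq:multiplier} and \eqref{tr:eq:lower-lp}, and the unconditional bound depends only on $p$, uniformly in $A$ and $e$.
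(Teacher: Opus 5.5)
There is a genuine gap in Step~3, and it appears before the lower estimate you flag. After multiplying by $\operatorname{sgn}e'$, your finite-dimensional piece is normed in $L^p(dx)$, while the kernel condition is a mean for $\mu=w\,dx$ with $w=|e'|^{p-1}$.

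Martingale differences for $\mu$ have initial projections equal to $\mu$-conditional expectations. On a cell $Q$, the map $f\mapsto\bigl(\mu(Q)^{-1}\int_Qf\,d\mu\bigr)\1_Q$ has norm $|Q|^{1/p}\norm{w\1_Q}_q/\int_Qw$ on $L^p(Q)$. By H\"older this exceeds one unless $w$ is constant on $Q$. Cells near the root see the whole range of $|e'|$, however finely you refine the atoms.

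Concretely, suppose $|e|$ is close to two very different constants on two regions of equal $L^p$-mass, and let $Q$ be their union. Any split of $Q$ balanced in both Lebesgue measure and $\mu$ must halve each region. Testing the corresponding initial projection on $e'(\1_{S_1}-\1_{S_2})\in\ker\varphi'$ gives norm about $2^{1/q}$. So ``conditional expectations, hence contractive'' is false here, and no constant $D$ near one can come out of this basis.

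Switching to ordinary Lebesgue Haar functions does not rescue the argument. Such a function lies in $\ker\varphi'$ only if its split balances the average density $\mu(S)/|S|$ exactly, and this cannot hold down to single atoms when $|e'|$ varies. With a tolerance $1+\epsilon$ per node, the tolerances must accumulate along the depth to bridge the range of $|e'|$ at the leaves. Your proposal gives no way to correct the resulting kernel defects, and this is exactly the point you left as ``needs checking''.

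The missing idea is to factor out $e$ itself rather than its sign. On $E=\{e\ne0\}$, writing $F=e\psi$ gives $\norm{F}_p^p=\int|\psi|^p|e|^p$ and $\varphi_e(F)=\int\psi|e|^p$. The norm and the functional then see the same measure $|e|^p\,dx$, and both difficulties disappear at once.

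The paper pushes this measure to Lebesgue measure on $(0,1)$ via $C(x)=\int_{E\cap(-\infty,x]}|e|^p$. Then $V\psi=e\,\psi\circ C$ is an onto isometry $L^p(0,1)\to L^p(E)$ with $\varphi_e(V\psi)=\int_0^1\psi$. Hence $\ker\varphi_e$ is the $\ell_p$-sum of $V$ applied to the mean-zero subspace of $L^p(0,1)$ and of $L^p(A\setminus E)$. The transported Haar systems give a basis with contractive initial projections and lower $\ell_p$ constant exactly one by \Cref{tr:lem:haar}, uniformly in $A$ and $e$.

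After that, only a smoothing perturbation like the one in your Step~4 is needed, using $\psi-\varphi_e(\psi)e$ to stay in the kernel. The step-function model $e'$, the transfer by $I-e\otimes\varphi_e$, and the balanced splitting trees all become unnecessary.
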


\begin{proof}
 In order to prove the lemma, we put $E=\{e\ne0\}$ and $Z=A\setminus E$. On $E$, the cumulative
distribution
 $$
C(x)=\int _{E\cap(-\infty,x]} |e(t)|^p\,dt
$$
 identifies the nonatomic probability measure $|e|^p\,dx$ with
 Lebesgue measure on $(0,1)$, up to null sets. Consequently
 $V\psi=e\,\psi\circ C$ is an onto isometry from $L^p(0,1)$
to $L^p(E)$, and $\varphi_e(V\psi)=\int_0^1\psi$.
 Thus
$$
\ker\varphi_e
=(\ker\varphi_e\cap L^p(E))\oplus_p L^p(Z)
$$
has a normalized real Haar basis with both bounds in
\Cref{tr:lem:haar}. Here the second summand, if nonzero, is
 identified isometrically with $L^p(0,1)$ using normalized Lebesgue
measure. Fix an interleaving preserving the order in each summand.
 H\"older's inequality gives
$\norm{\varphi_e}=\varphi_e(e)=1$.

Smooth compactly supported functions are dense in the kernel. Indeed,
for $F\in\ker\varphi_e$ and $\psi\in C_c^\infty(A)$,
$$
\psi-\varphi_e(\psi)e\in C_c^\infty(A)\cap\ker\varphi_e,
\quad
\norm{\psi-\varphi_e(\psi)e-F}_p\le2\norm{\psi-F}_p.
$$
Let $(u_n)$ be the normalized Haar basis just obtained. Choose
normalized real smooth functions $v_n$ in the kernel so close that
$$
\sum_n\norm{u_n^*}\,\norm{v_n-u_n}_p<\delta<1.
$$
The operator $Uu_n=v_n$ then satisfies $\norm{U-I}<\delta$.
Its inverse has norm at most $(1-\delta)^{-1}$, and the initial
projections of $(v_n)$ have norms at most
$(1+\delta)/(1-\delta)$. Moreover,
$$
\left(\sum_n|t_n|^p\right)^{1/p}
\le\norm{\sum_nt_nu_n}_p
\le(1-\delta)^{-1}\norm{\sum_nt_nv_n}_p.
$$
Choosing $(1+\delta)/(1-\delta)\le D$ proves the two estimates.
The unconditional bound is at most $D$ times the Haar multiplier
bound. This comparison is on the whole kernel; the approximations
need not preserve the separate supports $E$ and $Z$.
For the full space, start with a Haar basis of $L^p(A)$ and use
the density of $C_c^\infty(A)$ in the same way.
\end{proof}

For $F\in L^p(A)$, the decomposition supplied by the lemma is
$$
F=\varphi_e(F)e+\bigl(F-\varphi_e(F)e\bigr).
$$
The first term is a multiple of $e$, and the second belongs to
$\ker\varphi_e$. Since $\norm{\varphi_e}=\norm{e}_p=1$, the
map taking $F$ to its first component is contractive.

Fix $\varepsilon>0$. Choose $D>1$ sufficiently close to one
and $\tau>0$ and $0<\rho<1$ sufficiently small so that,
with $K=3D+9+\tau$,,
\begin{equation}\label{tr:eq:final-constant}
K\frac{1+\rho}{1-\rho}<12+\varepsilon.
\end{equation}
We aim to use \Cref{tr:lem:local-complement} with this $D$ on every new
region. The reference functions will be compactly supported and
 smooth. We construct a separate sequence of entire functions and
compare its translates with the exact reference vectors.

\subsection{Gaussian smoothing}

For $\sigma>0$, let
 $$
 \phi_\sigma(z) = \frac{1}{\sqrt{\pi}\sigma} e ^{-z^2/\sigma^2},
\quad
 H_\sigma(z) = \int_\R h(u)\phi_\sigma(z-u)\,du.
 $$
 Since $h$ is compactly supported, $H_\sigma$ is entire,
real-valued on $\R$, and
 $\norm{H_\sigma-h}_p\to0$ as $\sigma\downarrow0$.
It and all its derivatives are rapidly decreasing on the real line. For every fixed $r$,
 \begin{equation}\label{tr:analytic:eq:complex-tail}
\sup_{|z|\le r}|H_\sigma(z+\lambda)|\longrightarrow0
\quad(\lambda\to+\infty).
\end{equation}
For example, if $\supp h\subset[-R_h,R_h]$ and
$\lambda>R_h+r$, the left side is bounded by
$$
\frac{\norm{h}_1}{\sqrt{\pi}\sigma}
\exp\!\left(\frac{r^2-(\lambda-r-R_h)^2}{\sigma^2}\right).
$$

The analytic increment associated with the finite family is
\begin{equation}\label{tr:analytic:eq:analytic-increment}
\Delta G=a\sum_{i=1}^N T_{-\lambda_i}H_\sigma
=\left(a\sum_{i=1}^N T_{-\lambda_i}h\right)*\phi_\sigma
\quad\text{on }\R.
\end{equation}
The same Gaussian is used for every term in this family.
 By Young's inequality and the disjoint supports of
$T_{-\lambda_i}h^{(k)}$,
 \begin{align}
\norm{(\Delta G)^{(k)}}_s
&\le dN^{1/s-1/q}\norm{h^{(k)}}_s,
&&1\le s<\infty,\label{tr:analytic:eq:derivative-s}\\
\norm{(\Delta G)^{(k)}}_\infty
&\le dN^{-1/q}\norm{h^{(k)}}_\infty.\label{tr:analytic:eq:derivative-infty}
\end{align}
These estimates are independent of $\sigma$.
In particular,
 \begin{equation}\label{tr:analytic:eq:increment-p}
\norm{\Delta G}_p\le dN^{2/p-1}.
\end{equation}
The exponent in \eqref{tr:analytic:eq:derivative-s} is negative
precisely when $s>q$. Since the estimates are independent of
$\sigma$, we may first choose $N$ large enough to satisfy any
prescribed finite collection of bounds on these derivative norms,
and then choose $\sigma$ to obtain the required approximation.

\subsection{Choosing the stages}

We start by putting
$$
0<\kappa<\min\{1/100,\rho/(12K)\},\quad s_j=q+1/j\quad(j\ge1).
$$
Choose a real normalized $e_0\in C_c^\infty((0,1))$.
Set $A_0=(0,1)$ and $I_0=(-1,1)$. Apply
\Cref{tr:lem:local-complement} to choose a basis of
$\ker\varphi_{e_0}\subset L^p(A_0)$ and a basis of $L^p(-1,0)$,
in each case consisting of smooth compactly supported functions.
 Take an entire Gaussian smoothing $G_0$ of $e_0$ with
\begin{equation}\label{tr:analytic:eq:initial}
\norm{G_0-e_0}_p<\kappa.
\end{equation}
The initial reference vector is $b_0=e_0$, with parameter zero.

 At stage $t\ge1$, the order in \Cref{tr:prop:assembly} selects a
known smooth compactly supported residual $h=h_{j,n}$ of
 norm one. Let
$$
M_{t-1}=1+\sum_{r<t}N_r
$$
be the number of existing reference vectors. The entire function
 $G_{t-1}$ is also already known. Set
\begin{equation}\label{tr:analytic:eq:budgets}
\eta_t=\frac{\kappa2^{-t}}{M_{t-1}}.
\end{equation}
Make the choices in the following order.

\begin{enumerate}
\item Choose $N=N_t$ so large that $1-(N-1)N^{-(p-1)}\ge1/2$ and
\begin{equation}\label{tr:analytic:eq:Np}
4N^{2/p-1}\le\eta_t.
\end{equation}
Increase $N$, if needed, to ensure that for $0\le k\le t$
and $1\le j\le t$,
\begin{equation}\label{tr:analytic:eq:Ns}
4N^{1/s_j-1/q}\norm{h^{(k)}}_{s_j}\le2^{-t},
\quad
4N^{-1/q}\norm{h^{(k)}}_\infty\le2^{-t}.
\end{equation}
Every exponent here is negative, and all the norms of $h$
are finite. These finitely many inequalities therefore hold
for all sufficiently large $N$.

\item Set
$$
\rho_t=\frac{\kappa2^{-t}}{N_t}.
$$
Choose a real $v=v_t\in C_c^\infty(\R)$ satisfying
\begin{equation}\label{tr:analytic:eq:v-approx}
\norm{v_t-G_{t-1}}_p<\rho_t/2.
\end{equation}
Enlarge the current open interval $I=(-R,R)$ to contain its support,
 the support of $h$, and all previously assigned regions,
compactly in its interior. Assign smooth full bases to the
 newly included gaps. These are simply additional targets
for later stages.

\item Calculate $d_t,a_t$ from~\eqref{tr:eq:packet-scales}.
 The induction gives
 $$
 1-2\kappa\le\norm{G_{t-1}}_p\le1 + 2\kappa,
 $$
 because $\norm{G_r-G_{r-1}}_p\le\eta_r$ and
$\sum_r\eta_r\le\kappa$.
 Together with~\eqref{tr:analytic:eq:v-approx}, this implies
$$
\frac{1}{2}\le\norm{v_t}_p\le2,\quad
\frac{1}{2}\le d_t<4.
$$
Consequently the uniform factor $4$ used when choosing
$N_t$ is justified.

\item Choose a Gaussian smoothing $H_t=h*\phi_{\sigma_t}$
so accurate that
\begin{equation}\label{tr:analytic:eq:H-approx}
\norm{H_t-h}_p<\frac{\rho_t}{2a_tN_t}.
\end{equation}
Making $\sigma_t$ smaller does not affect the estimates~\eqref{tr:analytic:eq:derivative-s}--\eqref{tr:analytic:eq:derivative-infty}.

\item Finally choose $\lambda_{t,1},\ldots,\lambda_{t,N_t}$
in $\Gamma$, beyond all old parameters, to satisfy the
separation in \Cref{tr:lem:placement} and
\begin{equation} \label{tr:analytic:eq:disk}
a _t \sum_{i=1}^{N_t}
\sup_{|z|\le t}|H_t(z+\lambda_{t,i})|\le2^{-t}.
\end{equation}
This is possible by~\eqref{tr:analytic:eq:complex-tail} and the
unboundedness of $\Gamma$.
\end{enumerate}

The calculation in \Cref{tr:lem:packet} applies with the chosen
$v_t,h,N_t$ and parameters. Define the compactly supported reference function
and the entire generator by
\begin{align}
V_t&=v_t+a_t\sum_{i=1}^{N_t}T_{-\lambda_{t,i}}h,
\label{tr:analytic:eq:Vt}\\
G_t&=G_{t-1}+a_t\sum_{i=1}^{N_t}T_{-\lambda_{t,i}}H_t.
\label{tr:analytic:eq:Gt}
\end{align}
For each new coordinate reserve the whole set
\begin{equation}\label{tr:eq:regions}
A_{t,i}=(I+\lambda_{t,i})\cup
\bigcup_{l\ne i}(I+\lambda_{t,i}-\lambda_{t,l}).
\end{equation}
These are pairwise disjoint finite unions of open intervals, outside
$I$, and $e_{j,n,i}\in C_c^\infty(A_{t,i})$.
Append the reference vectors
\begin{equation}\label{tr:eq:reference-vectors}
b_{t,i}=T_{\lambda_{t,i}} V _t
=d_t\bigl(e_{j,n,i}+N _t ^{-1/q}h_{j,n}\bigr).
\end{equation}
 Set $h_{j,n+1}=\beta(h_{j,n}-u_{j,n})$.
 Enlarge the covered interval to $I_t=(-R_t,R_t)$, containing the closures of all
 new regions~\eqref{tr:eq:regions} and $\supp V_t$, compactly
in its interior, and make its radius at least $t$.
 Reserve those new regions, assign their smooth complement bases,
and fill the remaining new gaps with smooth full bases.

 The estimates above give, simultaneously,
 \begin{align}
\delta_t:=\norm{G_t-G_{t-1}}_p&\le\eta_t,
\label{tr:analytic:eq:delta}\\
\sigma_t':=\norm{G_t-V_t}_p&<\rho_t,
\label{tr:analytic:eq:sigma}\\
\sup_{|z|\le t}|G_t(z)-G_{t-1}(z)|&\le2^{-t}.
\label{tr:analytic:eq:entire-budget}
\end{align}
For~\eqref{tr:analytic:eq:sigma}, use~\eqref{tr:analytic:eq:v-approx},
\eqref{tr:analytic:eq:H-approx}, and the triangle inequality:
$$
\norm{G_t-V_t}_p
\le\norm{G_{t-1}-v_t}_p
+a_tN_t\norm{H_t-h}_p<\rho_t.
$$

We organize the steps into rounds. Each round introduces one
new target and advances the earlier targets as in \Cref{tr:prop:assembly}. To decide
which function is introduced next, we label the assigned sets
by consecutive positive integers and enumerate the basis on each
set. The pair $(a,b)$ specifies the $b$-th basis function on
the $a$-th set. We order these pairs first by $a+b$ and then
by $a$.

At round $j$, we choose the first available unused pair and call
its function $f_j$. We construct its first $L_{j-1}$ families.
We then add one family for each of $f_1,\ldots,f_j$, in this
order, and repeat until each function has $L_j$ completed families. Every function used
in this round was available before its first approximation step.

Every round has finitely many steps. Every available function is
eventually chosen, since only finitely many pairs precede its
pair, including pairs belonging to sets not yet assigned.
By the end of round $j$, we have constructed $L_j$ families
for $f_j$. Infinitely many further families are constructed
for $f_j$ in subsequent rounds. This is precisely the order required in
\Cref{tr:prop:assembly}. At each step, the residual and the count
$M_{t-1}$ are known before $N_t$ is chosen.

Since $R_t\ge t$, the assigned sets eventually partition all of
$\R$, up to endpoints.
Let $\mathcal A$ be the collection of sets assigned to the
pairwise disjoint functions $e_A$ of norm one, including the set
assigned to $e_0$, and define
\begin{equation}\label{tr:eq:PY}
P_YF=\sum_{A\in\mathcal A}\varphi_{e_A}(F\1_A)e_A.
\end{equation}
Disjointness and H\"older's inequality give
$$
\norm{P_YF}_p^p
=\sum_{A\in\mathcal A}|\varphi_{e_A}(F\1_A)|^p
\le\sum_{A\in\mathcal A}\norm{F\1_A}_p^p\le\norm{F}_p^p.
$$
Thus, the series converges in $L^p$, and $P_Y$ is a contractive
projection onto the closed span $Y$ of these coordinates.
Its kernel $X$ is the $\ell_p$ sum of the spaces
$\ker\varphi_{e_A}$ and of the full $L^p$ spaces on the remaining
intervals. The available bases, taken together in the order
$(f_j)$, therefore form a normalized unconditional basis of $X$
with a common unconditional bound. Since the order preserves each
local order, its initial projections satisfy \eqref{tr:eq:multiplier}.
Summing the local coefficient estimates to the $p$-th power gives
\eqref{tr:eq:lower-lp}. Indeed, the multiplier bound follows by
summing its $p$-th powers on the disjoint sets, and density follows
by first retaining finitely many sets and then approximating
within each of them.

All the hypotheses of \Cref{tr:prop:assembly} now hold. Hence,
$e_0$ and the $x_{j,n,i}$ form a Schauder basis with constant
at most $K$. Replacing each $x_{j,n,i}$ by $d_t x_{j,n,i}$
does not change any partial sum projection. Consequently,
 $b_0=e_0$ and the vectors in \eqref{tr:eq:reference-vectors}
also form a basis with constant at most $K$.

\section{Passing to a single generator}\label{tr:sec:limit}

Equation~\eqref{tr:analytic:eq:entire-budget} gives locally uniform convergence of $G_t$ on $\C$ to an entire function $G$. Equation~\eqref{tr:analytic:eq:delta} gives convergence in $L^p(\R)$ to a function $g$. These limits agree on the real line: on every bounded interval, local uniform convergence also implies $L^p$ convergence. Thus $g=G|_\R$, with its continuous representative.

Enumerate all reference vectors as $(b_m)_{m\ge0}$ with their
chosen parameters, including $\lambda_0=0$.
For a vector introduced at stage $t$,
 $$
\norm{T_{\lambda_{t,i}}g-b_{t,i}}_p
\le \sigma_t'+\sum_{r>t}\delta_r.
$$
The initial vector contributes at most
$\norm{G_0-e_0}_p+\sum_{r\ge1}\delta_r$.
 Summing these nonnegative estimates and counting which
vectors precede each later increment gives
 \begin{align}
\sum _{m\ge0} \norm{T_{\lambda_m}g-b_m}_p
&\le \norm{G_0-e_0}_p+
 \sum _{t\ge1}N _t \sigma_t'
+ \sum _{t\ge1} M_{t-1}\delta_t \notag\\
&< \kappa +
\sum_{t\ge1}\kappa2^{-t}
+\sum_{t\ge1}\kappa2^{-t}
=3\kappa.\label{tr:eq:total-error}
\end{align}
The factor $N_t$ accounts for the approximation at stage $t$,
and $M_{t-1}$ accounts for the effect of the change at that stage
on every earlier vector. The choices of $\rho_t$ and $\eta_t$ give
$N_t\sigma_t'<\kappa2^{-t}$ and
$M_{t-1}\delta_t\le\kappa2^{-t}$.

 The reference basis has constant at most $K$. Its vectors satisfy
$\norm{b_0}_p=1$ and $\norm{b_{t,i}}_p=\norm{V_t}_p\ge1/2$,
 by disjointness and the bound on $\norm{v_t}_p$.
The difference of consecutive partial sums therefore gives
 $\norm{b_m^*}\le4K$. It follows that
 $$
 Ef =\sum _{m\ge0} b_m^*( f )
 \bigl( T_{\lambda_m}g-b_m \bigr)
 $$
defines a bounded operator with
 $$
\norm{E}<12K\kappa<\rho<1.
$$
Thus $I+E$ is invertible and sends every $b_m$ to
$T_{\lambda_m}g$. Consequently, the basis $(T_{\lambda_m}g)$
has basis constant at most $K(1+\rho)/(1-\rho)<12+\varepsilon$
by \eqref{tr:eq:final-constant}.
 All functions used are real on $\R$, so the same construction
works over either scalar field. Translation by $b$ and undoing
 any initial reflection restore the original set $\Lambda_0$
 and preserve the existence of an entire extension.

\begin{remark}\label{rem:translate-basis-constant}
For every $2<p<\infty$, $\varepsilon>0$, and unbounded set
$S\subset\R$, the preceding construction gives a real-valued
function $g\in L^p(\R)$ extending to an entire function and
distinct parameters $(\lambda_n)\subset S$ such that
$(T_{\lambda_n}g)$ is a Schauder basis of $L^p(\R)$ with
$$
K((T_{\lambda_n}g))<12+\varepsilon.
$$
Indeed, the bound $K=3D+9+\tau$ in \eqref{tr:eq:K}, together
with \eqref{tr:eq:final-constant}, gives this estimate by taking
$D>1$ sufficiently close to one and $\tau,\rho>0$ sufficiently
small. No non-negativity condition is imposed on $g$ here.
\end{remark}

\subsection{The derivatives of the generator}

Fix $k\ge0$ and $j\ge1$. For
$t\ge\max\{k,j,1\}$, equations~\eqref{tr:analytic:eq:Ns} and
\eqref{tr:analytic:eq:derivative-s}--\eqref{tr:analytic:eq:derivative-infty} give
$$
\norm{(G_t-G_{t-1})^{(k)}}_{s_j}\le2^{-t},
\quad
\norm{(G_t-G_{t-1})^{(k)}}_\infty\le2^{-t}.
$$
Every finite-stage derivative is in $C_0(\R)$ and in every
$L^s(\R)$. Hence the derivatives converge uniformly on
$\R$, and in $L^{s_j}(\R)$.
Locally uniform convergence of the entire functions also gives
locally uniform convergence of their complex derivatives, so
these limits are exactly $G^{(k)}|_\R=g^{(k)}$.
It follows that $g^{(k)}\in C_0(\R)\cap L^{s_j}(\R)$
for all $j$.
For any $s>q$, choose $j$ with $s_j<s$ and use
$$
\int_\R |g^{(k)}|^s
\le \norm{g^{(k)}}_\infty^{\,s-s_j}
\int_\R |g^{(k)}|^{s_j}<\infty.
$$
This proves \Cref{thm:translates} for $p>2$, with a generator
that extends to an entire function and translation parameters
in the prescribed unbounded set.

\subsection{A non-negative generator}\label{tr:ss:positive}

We now prove part (ii) of \Cref{thm:translate-properties}. We use
non-negative functions as targets, and replace the signed residual
by the positive average of the new coordinates. The local complements
must also have non-negative bases. We first record the coefficient
estimate needed for these bases.

\begin{lemma}\label{tr:lem:positive-lower}
For $2\le p<\infty$, there is a normalized non-negative Schauder
basis $(r_n)_{n\ge0}$ of $L^p(\R)$, with $r_0=\1_{[0,1)}$, and
a constant $C_p<\infty$ such that
\begin{equation}\label{tr:eq:positive-lower}
\left(\sum_n|a_n|^p\right)^{1/p}
\le C_p\norm{\sum_n a_nr_n}_p
\end{equation}
for every finitely supported scalar sequence.
\end{lemma}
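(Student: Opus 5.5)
We plan to take for $(r_n)$ the normalized version of the non-negative basis $(f_n)$ of $L^p(\R)$ built in \Cref{sec:finite,sec:batch,sec:assembly}. By \Cref{prop:corrected} it is a Schauder basis, every $f_n$ is non-negative, and its first vector is $f_1=\1_{I_1}=\1_{[0,1)}$, which already has norm one. Set $r_n=f_n/\norm{f_n}_p$. The coefficient of $r_n$ in an expansion is then $\norm{f_n}_p\pair{F}{\gamma_n}$. Hence \eqref{tr:eq:positive-lower} reduces to two facts: the norms $\norm{f_n}_p$ are bounded above, and
$$
\sum_n|\pair{F}{\gamma_n}|^p\le C\norm{F}_p^p
\quad(F\in L^p(\R)).
$$
Since $\gamma_n=U^{-1}\xi_n$ and $\pair{F}{U^{-1}\xi_n}=\pair{(U^*)^{-1}F}{\xi_n}$ with $\norm{(U^*)^{-1}}\le(1-\rho)^{-1}$, it suffices to prove the same inequality for the uncorrected coefficient functions $\xi_n$ of \Cref{lem:raw}. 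The upper bound on $\norm{f_n}_p$ follows from $\norm{U^*}\le1+\rho$ together with $\norm{v_n}_p\le M+1$. Here we use $\norm{s_j}_p\le M$, $\norm{\varphi_j}_p\le1$, and the fact that $\kappa_k^{-1}\le1$ once $L_k$ is large.

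For the uncorrected system, we split the coefficient functions into three groups.

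\emph{The functions $\xi_{k,j,1}=H_{k,j}$.} The operators $E_{k,j}-E_{k,j-1}=\varphi_{k,j}\otimes H_{k,j}$ are the martingale differences of $F$ for the filtration generated by the successive finite interval partitions. These partitions are nested, since every later partition refines the earlier ones on $A_k$ and only adds new unit intervals. Each $\varphi_{k,j}$ equals $1-\theta_k$ on $B_{k,j}$ and $-\theta_k$ on $C_{k,j}$. We therefore want the lower bound
$$
|\pair{F}{H_{k,j}}|\le c\,\norm{(E_{k,j}-E_{k,j-1})F}_p
$$
with $c$ uniform. If this fails for small $\theta$, we instead compare with the coefficient of a normalized $\varphi$, which is all that is needed after normalization. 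For $p\ge2$, Burkholder's square function inequality gives
$$
\sum\norm{d_i}_p^p\le\Bigl\|\Bigl(\sum|d_i|^2\Bigr)^{1/2}\Bigr\|_p^p\le C_p^p\norm{F}_p^p
$$
for martingale differences $d_i$, using $\ell_2\subset\ell_p$ pointwise. This controls this group.

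\emph{The functions $\xi_{k,j}=d_{k,j}$ on $F_k$.} Here the coefficients are those of $T_m^{-1}$ applied to the vector of averages of $F$ on the unit intervals of $F_k$. By \Cref{lem:inverse} their $\ell_p$ norm is at most $2\norm{F\1_{F_k}}_p$. Summing $p$-th powers over the disjoint sets $F_k$ gives a bound $2^p\norm{F}_p^p$.

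\emph{The extra terms $-\kappa_k^{-1}H_{k,j}$ in $\xi_{k,j,0}$.} These are bounded by the first group, since $\kappa_k^{-1}\le1$.

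The step we expect to be the main obstacle is the first group. The difficulty is that $\norm{H_j}_q\norm{\varphi_j}_p$ is not uniformly bounded as $\theta\to0$. So the coefficient $\pair{F}{H_{k,j}}$ must be weighted by the true norm $\norm{v_{k,j,1}}_p\approx\norm{\varphi_{k,j}}_p$ rather than bounded crudely. The identity $\norm{\pair{F}{H_j}\varphi_j}_p=\norm{(E_j-E_{j-1})F}_p$ is exactly what makes the weighted coefficient a martingale-difference norm. We will therefore keep track of the normalized coefficient $\norm{v_n}_p\pair{F}{\xi_n}$ throughout, rather than of $\pair{F}{\xi_n}$ alone. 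We also expect the vectors $v_{k,j,1}$ to have norms bounded below, since $\varphi_j$ equals $1-\theta$ on $B_j$ and $\kappa^{-1}s_j$ is small. With this, the norms of the $v_n$ are comparable from above and below, and the perturbation by $U^*$ costs only the factor $\frac{1+\rho}{1-\rho}$.
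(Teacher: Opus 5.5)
Your architecture is the paper's. You normalize the basis from \Cref{prop:corrected} and prove a lower $\ell_p$ estimate for the uncorrected system $(v_n,\xi_n)$. You split it into the auxiliary-block coefficients (handled by \Cref{lem:inverse} and disjointness of the $F_k$), the unequal-Haar coefficients $\pair{F}{H_{k,j}}$, and the correction terms $\kappa_k^{-1}\pair{F}{H_{k,j}}$, and then transfer through $U^*$. The paper telescopes the elementary inequality $|a+b|^p\ge|a|^p+p|a|^{p-2}\operatorname{Re}(\overline a b)+c_p|b|^p$ over the splittings. Replacing this by Burkholder's square-function inequality together with $\ell_2\subset\ell_p$ is a legitimate substitute. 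You must apply it on each unit interval separately: this is a probability space on which the restricted partitions are nested, and the average at inclusion is the first difference. Then sum the $p$-th powers. It imports a deep theorem where a two-line convexity estimate suffices.

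The gap is in the weights.
\begin{itemize}
\item The reduction in your first paragraph to $\sum_n|\pair{F}{\gamma_n}|^p\le C\norm{F}_p^p$ is false. For $F=\1_{B_{k,j}}|B_{k,j}|^{-1/p}$ one has $\norm{F}_p=1$ but $\pair{F}{H_{k,j}}=|B_{k,j}|^{-1/p}\to\infty$. So even individual functionals are unbounded, and likewise for the $\gamma_n$, since $U$ is an isomorphism. For the same reason, no uniform $c$ exists in $|\pair{F}{H_{k,j}}|\le c\norm{(E_{k,j}-E_{k,j-1})F}_p$.
\item Your closing claim that $\norm{v_{k,j,1}}_p$ is bounded below is also false. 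Indeed $\norm{\varphi_{k,j}}_p^p\le2\theta_k|D_{k,j}|\to0$, so weighted and unweighted coefficients are not comparable.
\item This breaks your treatment of the correction terms. The bound $\kappa_k^{-1}\le1$ does not control $\norm{s_{k,j}}_p\kappa_k^{-1}|\pair{F}{H_{k,j}}|$ by your first-group quantity $\norm{\varphi_{k,j}}_p|\pair{F}{H_{k,j}}|=\norm{(E_{k,j}-E_{k,j-1})F}_p$, precisely because $\norm{\varphi_{k,j}}_p\to0$.
\end{itemize}

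The missing estimate is $\kappa_k^{-1}\norm{s_{k,j}}_p\le\eta\norm{\varphi_{k,j}}_p$. It follows from $\kappa^{-1}\norm{s_j}_p\norm{H_j}_q\le\eta$, which is the choice \eqref{eq:a}, together with $1=\pair{\varphi_j}{H_j}\le\norm{\varphi_j}_p\norm{H_j}_q$. It also gives the bound $\norm{v_{k,j,1}}_p\le(1+\eta)\norm{\varphi_{k,j}}_p$ that your first-group weighting silently needs.

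With it, prove the weighted estimate $\sum_n\norm{v_n}_p^p|\pair{F}{\xi_n}|^p\le C\norm{F}_p^p$. The transfer to $r_n=f_n/\norm{f_n}_p$ then needs only $\norm{f_n}_p\le(1+\rho)\norm{v_n}_p$ and $\norm{(U^*)^{-1}}\le(1-\rho)^{-1}$. No lower bound on any norms is required, and the argument then coincides with the paper's.
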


\begin{proof}
Use the construction in \Cref{sec:assembly} with fixed
$\delta,\eta,\rho$. We first estimate its uncorrected vectors
$(v_n)$. Let $E$ average on the integer unit intervals. For a
finite linear combination $F$ of these vectors, put $G=EF$ and
$H=F-G$. Write the contribution of each pair in \eqref{eq:pairs} to $F$
as $a_js_j+b_j(\kappa^{-1}s_j+\varphi_j)$.
Here we suppress the dependence of $\kappa$ on the stage. The coefficient on $s_j$ in
$G$ is $a_j+\kappa^{-1}b_j$, while
$H=\sum_jb_j\varphi_j$. The remaining columns and the initial
indicator contribute only to $G$.

For real or complex scalars $a,b$, there is $c_p>0$ such that
$$
|a+b|^p\ge |a|^p+p|a|^{p-2}\operatorname{Re}(\overline a b)
+c_p|b|^p.
$$
Indeed, the second derivative of $|a+tb|^p$ is at least
$p|a+tb|^{p-2}|b|^2$. If $b\ne0$, on a subset of $[0,1/2]$
of length at least $1/4$ we have $|a/b+t|\ge1/8$.
Taylor's formula therefore gives $c_p=p8^{1-p}$.
Apply this inequality in the actual order of the unequal
splittings, inserting zero coefficients for omitted differences.
Before each splitting, the preceding sum is constant on $D_j$,
and $\int_{D_j}\varphi_j=0$. The linear term integrates to zero,
so telescoping gives
$$
\sum_j|b_j|^p\norm{\varphi_j}_p^p\le c_p^{-1}\norm H_p^p.
$$

The auxiliary blocks occupy disjoint unit intervals.
By \Cref{lem:inverse} and $\norm{s_j}_p\le M$, the weighted
$\ell_p$ norm of their combined coefficients in $G$, including
the initial coefficient, is at most $2M\norm G_p$.
Moreover, the identities in  \eqref{eq:haar-identities} and the estimate following equation \eqref{eq:a} imply
$\kappa^{-1}\norm{s_j}_p\le\eta\norm{\varphi_j}_p$.
Since $s_j$ and $\varphi_j$ have disjoint supports, the triangle
inequality in the coefficient space gives, for $F=\sum_n t_nv_n$,
\begin{align*}
\left(\sum_n|t_n|^p\norm{v_n}_p^p\right)^{1/p}
&\le 2M\norm G_p
+(1+2\eta^p)^{1/p}c_p^{-1/p}\norm H_p\\
&\le \bigl(2M+2(1+2\eta^p)^{1/p}c_p^{-1/p}\bigr)\norm F_p.
\end{align*}
For the first inequality, the difference on a paired column has
weighted norm $|b_j|(2\kappa^{-p}\norm{s_j}_p^p+
\norm{\varphi_j}_p^p)^{1/p}$. The last inequality uses
$\norm E\le1$ and $\norm{I-E}\le2$.
Apply the bounded invertible operator $U^*$ from
\eqref{eq:final-basis} and normalize its images. The estimate
changes by at most the factor $\norm{U^*}\norm{(U^*)^{-1}}$.
These images are non-negative, and the initial indicator is fixed
by \eqref{eq:global-identities}. This proves the lemma.
\end{proof}

\begin{lemma}\label{tr:lem:positive-complement}
Fix $2\le p<\infty$. There are constants $B,D<\infty$ such that, for every
bounded finite union of open intervals $A$ and every non-negative
function of norm one $e\in C_c^\infty(A)$, the space $L^p(A)$ has a
normalized non-negative basis $e,f_1,f_2,\ldots$ in $C_c^\infty(A)$
with basis constant at most $B$ and lower $\ell_p$ coefficient
constant at most $D$. If $\phi$ is its first coordinate functional,
then $\norm\phi\le B$, and $(f_j)_{j\ge1}$ is a basis of
$\ker\phi$ with the same bounds. The full space $L^p(A)$ has
such a basis without specifying its first vector as well.
\end{lemma}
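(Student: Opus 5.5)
The plan is to transport the non-negative basis $(r_n)_{n\ge0}$ of $L^p(\R)$ from \Cref{tr:lem:positive-lower} into $L^p(A)$ by a positive onto isometry. That basis already has $r_0=\1_{[0,1)}$, a lower $\ell_p$ estimate with constant $C_p$, and a basis constant $K_*$ (a fixed number from \Cref{sec:assembly}). We then correct its first vector so that it becomes exactly $e$, and finally smooth the remaining vectors. Every constant will depend only on $p$, $K_*$, $C_p$ and the norm $\norm{r_0^*}\le 2K_*$, never on $A$ or $e$.

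\emph{Step 1: the isometry.} Put $E=\{e>0\}$ and $Z=A\setminus E$, and let $C(x)=\int_{E\cap(-\infty,x]}e^p$, as in the proof of \Cref{tr:lem:local-complement}. Fix a small $\epsilon\in(0,1)$ and set $E_1=\{x\in E:C(x)<1-\epsilon\}$. The set $E\setminus E_1$ has positive measure. We build a positive onto isometry $\Phi\colon L^p(\R)\to L^p(A)$ of the form $\Phi F=(F\circ\sigma)\,w^{1/p}$, where $\sigma$ is a nonsingular point map. On $E_1$ we take $\sigma=(1-\epsilon)^{-1}C$, which maps $E_1$ onto $[0,1)$, with the Jacobian weight chosen so that $\Phi\bigl((1-\epsilon)^{1/p}\1_{[0,1)}\bigr)=e\1_{E_1}$. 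The set $(E\setminus E_1)\cup Z$ is a nonatomic set of positive measure, so it can be identified, with a density, with $\R\setminus[0,1)$. We choose this identification so that $e\1_{E\setminus E_1}$ pulls back to a non-negative function $u'$ supported off $[0,1)$. Then $\Phi^{-1}e=u$, where $u=(1-\epsilon)^{1/p}\1_{[0,1)}+u'$, $u'\ge0$ and $\norm{u'}_p=\epsilon^{1/p}$. This construction works whether or not $|Z|=0$. That matters, because $e$ may be nonzero almost everywhere on $A$, and then $\Phi^{-1}e$ cannot be an indicator function.

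\emph{Step 2: replacing the first vector.} Define $T=I+(u-r_0)\otimes r_0^*$. Then $Tr_0=u$ and $Tr_n=r_n$ for $n\ge1$, so $T$ keeps every later vector non-negative. We have $\norm{T-I}\le\norm{r_0^*}\bigl(1-(1-\epsilon)^{1/p}+\epsilon^{1/p}\bigr)$. Fixing $\epsilon$ once and for all (depending only on $p$ and $K_*$) makes this at most $1/2$, so $T$ is invertible. Consequently $(u,r_1,r_2,\ldots)$ is a normalized non-negative basis of $L^p(\R)$ with basis constant at most $K_*\norm{T}\norm{T^{-1}}$. Its lower $\ell_p$ constant is at most $C_p\norm{T^{-1}}$. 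Applying $\Phi$ gives the basis $e,\Phi r_1,\Phi r_2,\ldots$ of $L^p(A)$. All of these vectors are non-negative, normalized, and satisfy the same bounds.

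\emph{Step 3: smoothness and the remaining assertions.} The vectors $\Phi r_n$ are non-negative but need not be smooth. Using density of non-negative $C_c^\infty(A)$ functions in the positive cone of $L^p(A)$, we replace each $\Phi r_n$, $n\ge1$, by a normalized non-negative $f_n\in C_c^\infty(A)$ with $\sum_n\norm{b_n^*}\norm{f_n-\Phi r_n}_p<\delta$. We keep $e$ unchanged. The small perturbation argument in the proof of \Cref{tr:lem:local-complement} then multiplies the basis constant and the lower constant by at most $(1+\delta)/(1-\delta)$, which yields $B$ and $D$. For the first coordinate functional, $\phi\otimes e$ is the first partial sum projection and $\norm{e}_p=1$, so $\norm{\phi}\le B$. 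Since $(f_j)_{j\ge1}$ is the tail of a basis, it is a basis of $\ker\phi$ with the same partial sum and coefficient bounds. For the full-space version, use any positive onto isometry $L^p(\R)\to L^p(A)$ and skip Step 2.

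The main obstacle is Step 1, in particular the requirement that $\Phi^{-1}e$ be within a small \emph{uniform} distance of a multiple of $r_0$. Without this, the rank-one correction $T$ in Step 2 would not be uniformly invertible. The proposed choice of $\sigma$, which puts most of the $e^p$ mass onto $[0,1)$ with $u$ constant there and sends the remaining $\epsilon$ mass off $[0,1)$, is what makes the constants independent of $A$ and $e$.
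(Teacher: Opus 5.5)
Your proof is correct. Its structure is the paper's (transport $(r_n)$ into $L^p(A)$ by a lattice isometry, then smooth the later vectors with summable errors while keeping $e$ fixed). The one difference is the extra correction step, and the reason you give for needing it is mistaken.

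Under the hypotheses, $e$ cannot be nonzero almost everywhere on $A$. Since $A$ is a nonempty bounded open set and $\supp e$ is a compact subset of it, $A\setminus\supp e$ is a nonempty open set. So $Z=A\setminus\{e>0\}$ always has positive measure.

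The paper uses exactly this. It maps $L^p(0,1)$ onto $L^p(\{e>0\})$ by $\psi\mapsto e\,\psi\circ C$, and maps $L^p(\R\setminus[0,1))$ onto $L^p(Z)$ by any lattice isometry. This sends $r_0=\1_{[0,1)}$ to $e$ exactly. Hence your $\epsilon$-splitting in Step~1 and the rank-one operator $T$ in Step~2 are unnecessary, and the ``main obstacle'' you identify never arises.

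Your detour does buy some generality. Splitting off an $\epsilon$ portion of the $e^p$-mass and correcting by $T=I+(u-r_0)\otimes r_0^*$ works for any non-negative unit vector $e\in L^p(A)$, including one positive almost everywhere, and the constants stay uniform in $A$ and $e$. The price is an extra factor $\norm{T}\norm{T^{-1}}\le 3$ in the basis constant and $\norm{T^{-1}}\le 2$ in the lower $\ell_p$ constant, which the paper's exact isometry avoids.

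A minor point: you already have $\norm{r_0^*}\le K_*$, since the first partial sum projection is $r_0\otimes r_0^*$ and $\norm{r_0}_p=1$.
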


\begin{proof}
Put $E_e=\{e>0\}$ and $Z=A\setminus E_e$. The latter set has
positive measure since $e$ has compact support in $A$.
On $E_e$, the map
$$
\psi\longmapsto e(x)\psi\left(\int_{E_e\cap(-\infty,x]}e(t)^p\,dt\right)
$$
is a lattice isometry from $L^p(0,1)$ onto $L^p(E_e)$, as in
\Cref{tr:lem:local-complement}. Combine it with a lattice isometry
from $L^p(\R\setminus[0,1))$ onto $L^p(Z)$. Such an isometry
exists since both measure spaces are separable and nonatomic.
This gives a lattice isometry from $L^p(\R)$ onto $L^p(A)$
sending $r_0$ in \Cref{tr:lem:positive-lower} to $e$.

Transport that basis and keep its first vector fixed. Approximate
each remaining vector by a normalized non-negative function in
$C_c^\infty(A)$, using non-negative cutoffs and mollification.
Choose the errors so that their products with the original
coefficient-functional norms have sum less than $1/4$.
The resulting change is an invertible operator whose distance
from the identity is less than $1/4$. Thus both bounds remain
uniform in $A,e$. The first coordinate projection has norm at
most $B$. On its kernel, the remaining initial projections are
restrictions of the full initial projections, and the coefficient
estimate also restricts. The same argument without fixing the
first vector proves the full-space assertion.
\end{proof}

\begin{proof}[Proof of \Cref{thm:translate-properties}(ii)]
Fix $p>2$, and enlarge $D$ so that $D\ge B$.
We first describe the replacement for \Cref{tr:sec:branch}.
For a non-negative function $f$ of norm one, let $e_{n,i}$ be non-negative
function of norm ones on disjoint supports, also disjoint from $f$.
In the notation of that section, set
$$
u_n=N_n^{-1/p}\sum_i e_{n,i},\quad
h_0=f,\quad h_{n+1}=u_n,\quad
x_{n,i}=e_{n,i}+2N_n^{-1/q}h_n.
$$
These functions are non-negative. They form a basis of
$\F f\oplus_pY_f$, where $Y_f$ is the closed span of the
$e_{n,i}$, with basis constant at most $9$, independently of
the integers $N_n$.

To verify this, write an input as $tf+\sum_n y_n$, and put
$$
m_n(y_n)=N_n^{-1/q}\sum_i y_{n,i},\quad
b_n=m_n(y_n),\quad y_n^0=y_n-b_nu_n.
$$
Its coefficient vector in the $n$-th family is
\begin{equation}\label{tr:eq:positive-coefficients}
C_n=y_n^0+a_nu_n,\quad
a_0=t/2,\quad a_{n+1}=(b_n-a_n)/2.
\end{equation}
Indeed, synthesis sends $(v_n)$ to
$$
2m_0(v_0)f+\sum_{n\ge0}
\bigl(v_n+2m_{n+1}(v_{n+1})u_n\bigr)
$$
and has norm at most $3$. On the mean coordinates this is
$2I+R$, where $R$ is the unilateral right shift, so its inverse
has norm at most one. The mean-zero parts have norm at most
twice the input norm. Thus synthesis is invertible with inverse
norm at most $3$, proving the assertion and the coefficient
formulas.

We need more precise bounds when these families are combined.
Write an initial projection, relative to $\F f\oplus_pY_f$, as
$\left(\begin{smallmatrix}\alpha&A'\\ B'&D'\end{smallmatrix}\right)$.
During the first family, after $k$ of its $N_0$ vectors,
$\alpha=k/N_0$ and $\norm{A'}\le4$. After that family is
complete, $\alpha=1$ and $A'=0$, since no later vector contains
$f$. Also $\norm{D'}\le9$ and $\norm{B'}\le1/2$ at every
partial sum. For the latter estimate, input $tf$ gives
$a_n=(-1)^n2^{-n-1}t$. In family $n\ge1$, with
$\theta=k/N_n$, the remaining coordinate part is
$$
(1-\theta)a_{n-1}u_{n-1}+a_nE_ku_n,
$$
where $E_k$ retains the first $k$ coordinates. Its norm to the
$p$-th power is
$2^{-np}|t|^p((1-\theta)^p+2^{-p}\theta)\le2^{-p}|t|^p$.
The inequality follows by convexity in $\theta$ and its endpoint
values. In the first family the estimate is immediate.

Now let $Y$ be the closed span of all the disjoint coordinates,
including $e_0$, and suppose that a projection $P_Y$ onto $Y$
has norm at most $B$. Use the local bases from
\Cref{tr:lem:positive-complement} to give $X=\ker P_Y$ a
normalized non-negative basis $(f_j)$, with initial projection
and lower coefficient bounds at most $D$. Start the reference
list with $e_0$. Introduce the targets in their basis order,
completing the first family for each before introducing the next.
Return to every target infinitely often, always completing one
family before starting another.

For $F=x+y$, where $x=(I-P_Y)F=\sum_jt_jf_j$ and
$y=P_YF$, the $X$ component arising from $x$ is a convex
combination of two successive initial projections of $(f_j)$.
Its norm is at most $D\norm x_p$. The $Y$ component arising
from $x$ has norm at most
$\frac12(\sum_j|t_j|^p)^{1/p}\le\frac D2\norm x_p$.
The $Y$ component arising from $y$ has norm at most $9\norm y_p$.
Only the target whose first family is currently incomplete can
contribute an $X$ component from $y$, of norm at most
$4\norm y_p$. Consequently every initial projection has norm
at most
\begin{equation}\label{tr:eq:positive-constant}
K=\frac32D(1+B)+13B.
\end{equation}
The coefficient formulas \eqref{tr:eq:positive-coefficients}
are bounded and biorthogonal on $X\oplus Y$. Completeness follows
by iterating
$$
h_n=\frac12\sum_iN_n^{-1/p}x_{n,i}-\frac12u_n:
$$
after $L$ iterations the remainder has norm $2^{-L}$.
Thus every $f_j$ and every $u_{j,n}$ belongs to the closed span;
the identities defining $x_{j,n,i}$ then recover every coordinate.
Together with the uniform projection bound this proves the
reference basis property.

We realize these functions on the line by the same placement
as in \Cref{tr:lem:packet}. For non-negative $v,h$, replace
\eqref{tr:eq:packet-scales} by
$$
d^p=\frac{\norm v_p^p}{1-2^p(N-1)N^{-(p-1)}},\quad
a=2dN^{-1/q},\quad
v'=v+a\sum_{i=1}^NT_{-\lambda_i}h.
$$
For large $N$ the denominator is positive. The same disjointness
calculation gives non-negative remainders $e_i$ of norm one and
$$
T_{\lambda_i}v'=d(e_i+2N^{-1/q}h),\quad
\norm{v'-v}_p=2dN^{2/p-1}\longrightarrow0.
$$
Reserve the sets in \eqref{tr:eq:regions}, using the positive
local bases beginning with $e_i$ on them and positive full bases
on the gaps. The first coordinate functionals $\phi_A$ of these
local bases replace the norming functionals in \eqref{tr:eq:PY}.
Their disjoint supports give $\norm{P_Y}\le B$. Its kernel is
the $\ell_p$ sum of their kernels and the full spaces on the gaps.
An interleaving preserving each local order therefore gives the
required bounds $D$ for $(f_j)$, by summing the local estimates
to the $p$-th power. Every target is disjoint from the new
coordinates subsequently assigned to it.

At round $j$, choose the next available target by the enumeration
of pairs used in \Cref{tr:sec:physical}, give it its first family,
and then give one further family to each of the first $j$ targets.
Every available function is eventually chosen, and every chosen
target receives infinitely many families. The expanding intervals
and their gaps exhaust $\R$ as before. This realizes all the
hypotheses used to obtain \eqref{tr:eq:positive-constant}.

Finally we retain the entire smoothing construction. Choose $e_0$
non-negative and let $G_0$ be its Gaussian smoothing. At each
stage, $h$ is non-negative; choose the approximation $v_t$ to
$G_{t-1}$ non-negative as well. Such compactly supported smooth
approximations exist by cutoff and mollification. The Gaussian
smoothing $H_t=h*\phi_{\sigma_t}$ is non-negative on $\R$.
Thus \eqref{tr:analytic:eq:Gt} preserves non-negativity of $G_t$.
Use the value of $K$ in \eqref{tr:eq:positive-constant}, choose
$0<\rho<1$, and take
$0<\kappa<\min\{1/100,\rho/(12K)\}$ as before. In the choice
of $N_t$, require
$1-2^p(N_t-1)N_t^{-(p-1)}\ge1/2$ and replace the factor $4$
in \eqref{tr:analytic:eq:Np}--\eqref{tr:analytic:eq:Ns} by $8$.
The subsequent bounds $1/2\le\norm{v_t}_p\le2$ and $d_t<4$
remain valid, and $2d_t<8$ justifies these choices.
The estimates for the approximation in \(L^p\) and the locally uniform convergence on \(\C\) therefore remain valid. The argument in \eqref{tr:eq:total-error} gives
translates of one entire function, with basis constant at most
$K(1+\rho)/(1-\rho)$. Their generator is non-negative as the
pointwise limit of the non-negative $G_t$.
The derivative estimates also remain valid with the factor $2$
in the increments. Translation of the parameters and reflection
preserve non-negativity. This proves (ii).
\end{proof}

The smallness in \eqref{tr:eq:edit-cost} is the point at which
$p>2$ is essential. At $p=2$, the denominator in
\eqref{tr:eq:packet-scales} is $1/N$, so the norm of the change
is $\sqrt N\norm{v}_2$. We therefore use a different construction
at the Hilbert-space endpoint.

\section{The weighted Fourier construction}\label{h2:sec:weighted}

We now prove \Cref{thm:translates,thm:translate-properties} for $p=2$. We use the Fourier
transform
$$
\widehat g(t)=\int_\R g(x)e^{-2\pi ixt}\,dx,
$$
extended unitarily to $L^2(\R)$, and write $e_\mu(t)=e^{2\pi i\mu t}$.
If $w>0$ almost everywhere is integrable, multiplication by $\sqrt w$
is a unitary map from $L^2(w):=L^2(\R,w(t)\,dt)$ onto $L^2(\R)$.
An exponential basis $(e_{\mu_n})$ in $L^2(w)$ therefore gives a
basis $(e_{\mu_n}\sqrt w)$ in $L^2(\R)$. Its inverse Fourier
transform consists of translates of the function $g$ with
$\widehat g=\sqrt w$, since
$\widehat{T_{-\mu}g}=e_\mu\sqrt w$.
An even weight makes this generator real and even. It is therefore
enough to prove the following theorem.

\begin{theorem}\label{h2:thm:weighted}
 For every $\varepsilon>0$, there is an even integrable weight $w>0$
almost everywhere on $\R$, bounded above by a positive Gaussian, such
that $L^2(w)$ has a Schauder basis of exponentials with basis constant
 less than $1+\varepsilon$.
The frequencies can be rational and can have arbitrarily large
pairwise separations outside finite initial portions of the basis.
\end{theorem}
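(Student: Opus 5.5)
I would build the weight and the frequencies together by an inductive scheme in the spirit of Olevskii and Lev--Tselishchev. The exponentials are added one finite block at a time, and at every stage the partial sum projections of the current finite family are tracked explicitly. Begin with a positive even weight $w_0$ bounded by a Gaussian, for instance $w_0(t)=c\,e^{-\pi t^2}$. Take a first frequency $\mu_0=0$, so that $e_0=\1$ has a norm-one coordinate projection in $L^2(w_0)$ after normalization.

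At stage $k$ we have a weight $w_k$ and finitely many frequencies $\mu_1,\dots,\mu_{n_k}$. Their exponentials form a basic sequence in $L^2(w_k)$ with partial sum projections of norm at most $1+\varepsilon_k$, where $\sum\varepsilon_k$ is small. We then do two things.
\begin{enumerate}
\item Enlarge the span towards a prescribed dense sequence of targets, such as exponentials with rational frequencies restricted to larger and larger windows. For each target $e_\nu$ we add a new frequency $\mu=\nu+M$ with $M$ rational and huge. We simultaneously modify the weight by $w_{k+1}=w_k+\eta\,\chi$, where $\chi$ is a smooth even bump located near frequencies where it couples $e_\mu$ to $e_\nu$.
\item Use the Riemann--Lebesgue lemma to make the new exponential nearly orthogonal, in $L^2(w_{k+1})$, to the old span. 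Large separations $M$ make all inner products $\int e_{\mu-\mu_j}w$ tiny, because $\widehat w$ decays.
\end{enumerate}
Near-orthogonality forces the new projections to be perturbations of orthogonal ones, which gives the constant $1+\varepsilon$. Picking the separations to grow also yields the claim about pairwise separations and rationality.

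Density, as opposed to mere near-orthogonality, is the real issue, because exponentials with widely separated frequencies are almost orthogonal and cannot approximate $e_\nu$ by themselves. I would handle it as in Olevskii's weighted exponential approach. The weight is made small on a set $S_k$ where $e_\mu$ and $e_\nu$ differ substantially, and $e_\mu=e_\nu$ up to a small error off a set of small $w$-measure. This is achieved by choosing $w$ tiny on most of a large frequency window and supported mainly on a set where $e^{2\pi i M t}\approx1$; a union of small intervals around $\frac1M\Z$ serves here, so that $\|e_\mu-e_\nu\|_{L^2(w)}$ is small. Because earlier exponentials must keep nearly the same Gram matrix, every change to $w$ has to be tiny in $L^1$ relative to the previous stage. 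It also has to be supported where earlier inner products are insensitive. Both requirements can be met by pushing modifications to high $|t|$ while keeping $w\le$ a fixed Gaussian, which makes $w$ integrable.

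The main obstacle is reconciling the two demands at stage $k$. The new vector must approximate a target, which requires substantial $w$-mass where $e_M\approx1$. Yet it must interact with the previous span only through a small perturbation of the partial sum projections, including the projections that stop midway through a block. I would first try to control the norm of $P_{\text{old}}$ applied to the new exponentials by estimating the Gram matrix entries $\widehat w(\mu_i-\mu_j)$, choosing $M$ after $w$ is fixed on a compact window. A Neumann-series argument, as in the passage to $I+E$ in \Cref{tr:sec:limit}, then shows that the limiting system is a small invertible perturbation of an orthonormal-like system. Positivity $w>0$ almost everywhere follows from adding $2^{-k}$ times Gaussians along the way. Evenness is imposed by symmetrizing every modification and every pair of frequencies.
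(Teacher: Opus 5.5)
Your overall architecture (finite stages, large frequency shifts with Riemann--Lebesgue, small weight changes, rational frequencies) is compatible with the paper's. The step that is supposed to produce density, however, does not work, and it is the heart of the proof. You add one exponential $e_\mu$, $\mu=\nu+M$, per target and try to make $e_\mu\approx e_\nu$ in $L^2(w)$. But $\norm{e_\mu-e_\nu}_w^2=\int_\R|e^{2\pi iMt}-1|^2w(t)\,dt$ is small only if most of the $w$-mass lies in a thin neighbourhood of $\frac1M\Z$. Since $\int_\R e^{2\pi iMt}w_k(t)\,dt\to0$ for the previous weight, this needs an $L^1$ change of order $\int w$, on the bulk of the mass. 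That contradicts your requirement that the changes be tiny. It also cannot be done at large $|t|$, where $w$ lies below a Gaussian and carries almost no mass. Even if it could be done, $e_\nu$ is the wrong vector to approximate. The partial sum that keeps the old span $E$ and discards the new vector $u$ has norm exactly $\norm{u}/\dist(u,E)$. If $u$ approximates $e_\nu$ to within a fraction of $\dist(e_\nu,E)$, this is about $\norm{e_\nu}/\dist(e_\nu,E)$. That ratio blows up as the target is revisited, since completeness forces $\dist(e_\nu,E_n)\to0$. What must be approximated, by vectors nearly orthogonal to $E$, is the residual $r=f-P_Ef$. This is a combination of many characters whose coefficients may be huge, because the old characters can be almost linearly dependent.

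Nor can a single new exponential capture a fixed fraction of $r$ while the old Gram data are essentially preserved, as your scheme requires. Write $w'=w+\Delta$ and use $\Delta_+\le w'$ and $\Delta_-\le w$. Cauchy--Schwarz gives $|\ip{r}{e_\mu}_{w'}|\le|\ip{r}{e_\mu}_w|+(\norm{r}_w^2+\norm{r}_{w'}^2)^{1/2}\norm{\Delta}_{L^1}^{1/2}$. The first term is $o(1)$ as $|\mu|\to\infty$, by the same Riemann--Lebesgue argument you use for near-orthogonality. So a stage removes only an $O(\norm{\Delta}_{L^1}/\int w)$ fraction of the squared distance, and tiny, summable changes cannot drive the distances to zero. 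This is exactly the obstruction $\norm{b}^2\le\int(1-w)^2\le\epsilon$ in the paper's scalar construction.

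The missing idea is the genuine Olevskii mechanism. One needs a polynomial in many new characters, with large coefficients, that matches the target off a set where the weight is then reduced. In Gram terms the new block is ill-conditioned, so $b^*G^{-1}b$ (the quantity $Z$ in \eqref{h2:eq:scalar-gain}) stays bounded below while the mass loss tends to zero. The paper's real work is in three places. First, it keeps such a block almost orthogonal to $E$ while capturing a fixed fraction of $e_r$, via carrier polynomials, independent copies and \Cref{h2:lem:selective,h2:prop:extension}, in auxiliary torus variables later replaced by $y=A\omega t$. Second, it controls partial sums stopping inside ill-conditioned blocks (\Cref{h2:lem:split,h2:lem:copies}). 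Third, it uses weights $W\le1$ of constant conditional mass, so that the old Gram matrices are merely rescaled.

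Finally, your closing Neumann-series step cannot work as stated. A small perturbation of an orthogonal or Riesz system would be a Riesz basis of exponentials in $L^2(w)$. Via $\widehat g=\sqrt w$ it would give a Riesz basis of translates of $L^2(\R)$, which does not exist \cite{OZ}. The basis must be conditional. Near-orthogonality holds only between the old span and each new block at the moment of insertion. The perturbation argument of \Cref{tr:sec:limit} only transfers a conditional basis that has already been constructed.
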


\subsection{Inner products and finite matrices}

We work over $\C$ in the weighted exponential construction and
pass to real translates at the end. In this part, we use the Hermitian inner product
$\ip{f}{h}_\rho=\int f\overline h\,d\rho$, linear in its first
entry, and Hilbert space adjoints. We write
$\|f\|_\rho^2=\ip{f}{f}_\rho$. For a closed subspace $V$, let
$P_V$ be its orthogonal projection; if $e$ is a unit vector, put
$P_e=P_{\spanop\{e\}}$.

For an ordered family $(u_j)_{j=1}^d$, define its Gram matrix by
$G_{ij}=\ip{u_j}{u_i}$. Thus
$$
\left\|\sum_{j=1}^d c_ju_j\right\|^2=c^*Gc.
$$
We use $A\le B$ for Hermitian matrices when $B-A$ is positive
semidefinite. For a linearly independent family, $G$ is positive
definite and defines a norm on the coefficient space.
If $D_k$ is the diagonal projection onto the first $k$ coordinates,
the basis constant is at most $K$ precisely when
\begin{equation}\label{h2:eq:gram-prefix}
D_kGD_k\le K^2G\quad(0\le k\le d).
\end{equation}
These inequalities are preserved by convex combinations of Gram
matrices and by multiplication of the whole matrix by a positive
scalar. In particular, normalizing a measure does not change a
basis constant.

We give $\T^d=(\R/\Z)^d$ probability Haar measure. Its characters
are $e^{2\pi i k\cdot y}$, $k\in\Z^d$; on $\R\times\T^d$,
they have the form $e_\mu(t)e^{2\pi i k\cdot y}$. All tori used
below are finite-dimensional. When new variables are added,
conditional expectation means integration in those variables with
the stated conditional density. The \emph{centered part} of a
function is the function minus this expectation. We will use
orthonormal coordinates for matrix estimates, while retaining
individual characters for the eventual basis.

\subsection{Outline of the construction}

The use of decreasing weights and separated groups of frequencies
also appears in the construction of Lev and Tselishchev~\cite[Section~3]{LT2}.
In their weighted formulation, a Riesz basis of trigonometric
polynomials is expanded into individual exponentials to obtain a
Schauder frame. Here, we require the inequalities in
\eqref{h2:eq:gram-prefix} for every coefficient vector at each finite
stage, including partial sums ending inside the newly added family.
These uniform bounds, together with completeness, give the Schauder
basis property for the individual exponentials.

We start with the even Gaussian weight
$w_0(t)=\pi^{-1/2}e^{-t^2}$ and the constant character $1$.
At each stage, we append finitely many characters and decrease the
weight. The characters already chosen remain unchanged. Suppose
that their span is $E$, the current weight is $w$, and the character
$f$ is the next function we want to approximate. We write
$$
r=f-P_Ef,
$$
where $P_E$ is orthogonal projection in $L^2(w)$. Thus, $r$ is
the part of $f$ that still needs to be approximated. We will choose
new characters and a positive trigonometric polynomial $W\le1$
such that, for $w'=wW$ and the enlarged span $E'$, we have
$$
\dist_{w'}(f,E')^2\le\frac{1}{2}\dist_w(f,E)^2.
$$
The loss $\int w_0(1-W)$ can be arbitrarily small. If the old
basis constant is $K$, the enlarged family has basis constant at
most $\max\{K,K_*\}+\tau$, where $K_*=1+\varepsilon/10$ and $\tau>0$ can also be arbitrarily
small. These are the assertions of \Cref{h2:prop:extension}.

We first construct the new characters on the line together with
finitely many auxiliary circle variables. Averaging over these
variables will give prescribed functions of the original variable
$t$. We will remove the auxiliary variables at the end of the stage.
The finite construction in \Cref{h2:sec:scalar} is the first
ingredient. It gives a finite family of nonconstant torus characters and
a weight of mass arbitrarily close to one such that the distance
from $1$ to their span is bounded above by a fixed constant
less than one. Both their partial sums
and the partial sums after subtracting their means are uniformly
bounded. We use independent copies of these finite systems to
treat the characters occurring in $r$. For these finite calculations,
we divide $w$ by its integral and use the resulting probability
measure. This does not change orthogonal projections or basis constants.

To see why this is useful, write $r=\sum_{j=1}^d c_j\phi_j$
with distinct characters $\phi_j$. The coefficients $c_j$ can
be large compared with $\norm{r}$, because the $\phi_j$ can be
almost linearly dependent in $L^2(w)$. We choose polynomials
$g_j$ of mean one in separate auxiliary variables and form
$$
h=\sum_{j=1}^d c_j\phi_jg_j.
$$
Averaging $h$ in the new variables gives $r$. The parts of the
$g_j$ with mean zero are mutually orthogonal. We choose their
variances so that these orthogonal parts control the partial sums
of the individual characters occurring in $h$. Their basis
constant can then be made arbitrarily close to one, regardless of the size of the
$c_j$; see \eqref{h2:eq:carrier-gram}.

The unit vector $e=h/\norm{h}$ has average $\alpha e_r$, where
$e_r=r/\norm{r}$ and $\alpha>0$. This number may be small, so
we take several independent copies. They have the form
$$
e_i=\alpha e_r+\xi_i,
$$
where the $\xi_i$ are mutually orthogonal, orthogonal to functions
of $t$, and satisfy $\norm{\xi_i}^2=1-\alpha^2$. Consequently,
$$
\left\|\frac{1}{N\alpha}\sum_{i=1}^N e_i-e_r\right\|^2
=\frac{1-\alpha^2}{N\alpha^2}.
$$
 We use sufficiently many copies to reduce the squared approximation
error by a fixed positive proportion, while keeping the norm
of the associated operator $J$, defined in \eqref{h2:eq:info},
small.
 Repeating this step finitely many times gives the halving estimate.

The $e_i$ are polynomials, so we cannot append them to a basis
of individual characters. Let $V_i$ be the span of the characters
occurring in $e_i$. We choose another finite character family
whose span approximates $e_i$ much more effectively than the
directions in $V_i\cap e_i^\perp$. The identity in \eqref{h2:eq:info} allows us to compute squared
projection norms from the conditional averages of these characters
and the Gram matrix of their parts with mean zero. In \Cref{h2:lem:selective},
we use this identity to make the contribution in
$V_i\cap e_i^\perp$ and the loss of weight small relative to
the contribution in the direction $e_i$.

We then repeat this new construction in independent variables.
The contributions in the direction $e_i$ add, while the total
contribution in the other directions and the total loss of weight
remain small; see \eqref{h2:eq:selective-repetition}. Combining these families for all $i$ reduces the squared approximation
error for $r$ by a fixed factor, while keeping the new span almost
orthogonal to $E$. This controls
partial sums ending among either the old or the new characters.

Once the finite families have been chosen, we replace all auxiliary
variables $y$ by $A\omega t$, where $\omega$ is a suitable fixed
integer vector and $A$ is a sufficiently large positive integer.
Each product character becomes an ordinary character on the line:
$$
e_\nu(t)e^{2\pi i k\cdot y}
\quad\longmapsto\quad e_{\nu+A k\cdot\omega}(t).
$$
The Riemann--Lebesgue lemma shows that the finitely many inner
products converge to those of the product construction. The strict
approximation and partial sum estimates therefore persist for
large $A$. The same choice allows rational frequencies with large
separations. We keep every weight invariant under simultaneous
inversion of its variables, so the resulting weight on the line
is even.

We repeat the extension, returning to every rational-frequency
character infinitely often. Their distances from the constructed
spans tend to zero. We choose the losses of weight and the increases
in the partial sum bounds to be summable. The weights then decrease
to a weight $w_\infty$ that is positive almost everywhere, and the
partial sum bounds pass to the limit. Density of the rational-frequency
characters gives completeness. Finally, taking
$\widehat g=\sqrt{w_\infty}$ yields the required real generator.

\section{A finite construction for the constant function}\label{h2:sec:scalar}

We begin with the finite systems used in the outline. For Haar
measure, every nonconstant torus character is orthogonal to $1$.
We will modify the measure so that the orthogonal projection
of $1$ onto the span of a finite family of these characters
has norm bounded below by a fixed positive constant, while
the loss of mass is arbitrarily small.
The family must also have uniformly bounded partial sums. We
express these requirements in terms of its means and Gram matrix.

\subsection{Means and centered parts}\label{h2:ss:scalar-notation}

Let $0<w\le1$ be a weight on a finite torus, bounded below by a
positive constant, and let $\chi=(\chi_1,\ldots,\chi_d)$ be an
ordered list of distinct nonconstant characters. The Gram matrix of
the augmented list $(1,\chi_1,\ldots,\chi_d)$, before normalizing
the measure, has the form
\begin{equation}\label{h2:eq:scalar-gram}
B=\begin{pmatrix}m&b^*\\b&G\end{pmatrix},\quad
m=\int w,\quad A=G-\frac{bb^*}{m},\quad C=\frac{A}{m},
\quad a=\frac{b}{m}.
\end{equation}
The matrix $C$ is the Gram matrix of the centered characters in the
probability measure $w/m$. Here $b_k=\int\overline{\chi_k}w$, so
the mean of $\chi_k$ is $\overline{a_k}$. This convention agrees
with the order of the entries in our Gram matrices.
For a polynomial $u=\sum_kc_k\chi_k$,
its mean is $a^*c$. Its variance, the squared norm after subtracting
its mean, is $c^*Cc$. We set
\begin{equation}\label{h2:eq:scalar-gain}
Z=a^*C^{-1}a
 =\sup_{c\ne0}\frac{|a^*c|^2}{c^*Cc}.
\end{equation}
Thus, $Z$ is the largest possible ratio of the squared mean to
the variance. Equivalently, if $Z>0$, the smallest variance among
polynomials in the character span with mean one is $1/Z$. This
will let us prescribe the variance of the polynomials $g_j$ in
the outline.

The full normalized Gram matrix of $\chi$ is $C+aa^*$. Consequently, the
squared norm of the orthogonal projection of $1$ onto its span is
\begin{equation}\label{h2:eq:scalar-capture}
a^*(C+aa^*)^{-1}a=\frac{Z}{1+Z}.
\end{equation}
The block inverse of $B$ also gives
\begin{equation}\label{h2:eq:constant-inverse}
m(B^{-1})_{00}=1+Z,
\end{equation}
where the index $0$ denotes the constant character. These two
identities relate $Z$ to approximation of the constant and to the
inverse Gram matrix.

\begin{proposition}\label{h2:lem:scalar}
 For every $0<d\le1$, there is $c_d>0$ and a sequence of even
positive trigonometric polynomial weights $w\le1$, with finite
ordered lists of nonconstant characters, for which
$\epsilon=1-\int w\to0$ and the following hold:
\begin{enumerate}
\item the number $Z$ in \eqref{h2:eq:scalar-gain} is at least $c_d$ ;
\item the centered and augmented character families have basis
constant less than $1+d$ ;
\item the corresponding unnormalized augmented Gram matrix satisfies
\begin{equation}\label{h2:eq:scalar-extra}
B\ge\epsilon^2I,\quad
(B^{-1})_{kk}\le1+2\epsilon\quad(k\ne0),\quad
(B^{-1})_{00}<2.
\end{equation}
\end{enumerate}
\end{proposition}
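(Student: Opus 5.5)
The plan is to take a single character on a one-dimensional circle factor, repeated on many independent coordinates, and choose a weight that is a small perturbation of Haar measure. Let $y=(y_1,\ldots,y_n)\in\T^n$ and $\chi_k(y)=e^{2\pi i y_k}$, $k=1,\ldots,n$. As weight I would take
$$
w(y)=\prod_{k=1}^n\bigl(1-\gamma+\gamma\cos(2\pi y_k)\bigr)
$$
with small $\gamma>0$, or, to keep the mass close to one, an average of Haar measure with such a product. Either choice is positive, bounded by one, and even under simultaneous inversion. Under the normalized product measure the coordinates are independent. The centered characters are therefore mutually orthogonal with a common variance $v=1-|\mu|^2$, where $\mu=\gamma/2$ is the mean of $\chi_k$. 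This makes $C=vI$ and $a=\mu\1$, so that
$$
Z=\frac{n|\mu|^2}{v}.
$$
Choosing $n\approx c/\gamma^2$ makes $Z\ge c_d$. The mass, however, is $(1-\gamma)^n$, which is not close to one. The fix is to use $w=(1-\epsilon)+\epsilon\,w_{\mathrm{prod}}$ type mixtures, or better to replace $\cos$ by a sparse-in-frequency positive polynomial so that $\int w$ stays near one. This needs care, and I would first try the mixture $w=1-\epsilon+\epsilon\prod_k(1+\cos 2\pi y_k)$ with $n$ large.

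For the mixture, the means are $a_k=\epsilon/(2m)$ and the centered Gram matrix is $C=\lambda I-\beta\1\1^*$ up to small corrections. The reason is that pairwise correlations of the $\chi_k$ under the product part equal $1/4$ times $\epsilon$, while the means also contribute. I would compute $C$ explicitly as a combination of $I$ and $\1\1^*$. In this form $Z$ is $n|a|^2$ divided by the eigenvalue of $C$ on $\1$. I would tune $n\sim\epsilon^{-2}$, or whatever exponent makes that eigenvalue comparable to $n|a|^2$, to get $Z\ge c_d$ while $\epsilon\to0$.

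The partial sum bound in (ii) then reduces to \eqref{h2:eq:gram-prefix} for matrices of the form $\lambda I+\beta\1\1^*$, bordered by the constant in the augmented case. For such matrices, $D_kGD_k\le K^2G$ can be checked by hand: the one-dimensional rank perturbation has a prefix effect that is controlled when $|\beta| n/\lambda$ is small relative to $d$, or when $\beta\ge0$ (a positive rank-one part has nearly monotone prefixes). I would enforce the smallness through the choice of $n$ and of the parameter $c_d$; this is why $c_d$ depends on $d$.

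Property (iii) follows from the explicit inverse of the bordered matrix via \eqref{h2:eq:constant-inverse}, namely $(B^{-1})_{00}=(1+Z)/m<2$ when $Z$ is small and $m$ is near one. The diagonal entries of $B^{-1}$ on the characters are $1/\lambda$ plus small corrections, which gives $1+2\epsilon$. The eigenvalue lower bound comes from the same formulas.

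The main obstacle is reconciling the three requirements at once: mass loss $\epsilon\to0$, $Z$ bounded below, and a basis constant below $1+d$. These demand that the $\1$-direction eigenvalue be small while the prefix compressions of the rank-one structure remain controlled. If this fails for the mixture weight, I would instead take characters on disjoint blocks, each carrying its own small mixture, so that $C$ becomes block diagonal.
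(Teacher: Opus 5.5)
There is a genuine gap. None of your candidate weights achieves (i) with $\epsilon\to0$, and tuning $n$ cannot fix this.

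\textbf{The obstruction.} For nonconstant $\chi_k$ we have $b_k=-\int\overline{\chi_k}(1-w)$, so Bessel's inequality gives $\norm{a}^2\le\epsilon/m^2$ and hence $Z=a^*C^{-1}a\le\epsilon/(m^2\lambda_{\min}(C))$. Equivalently, \eqref{h2:eq:constant-inverse}, the variational formula for $(B^{-1})_{00}$ and Cauchy--Schwarz give $1+Z\le m\int w^{-1}$, so $Z\le\epsilon/\min w$. Thus (i) forces $C$ to have an eigenvalue of size $O(\epsilon/c_d)$, even though every diagonal entry $1-|a_k|^2$ of $C$ is close to one. It also forces $w$ to dip to height $O(\epsilon/c_d)$. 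This is why your product weight, with $C=vI$ and $v\approx1$, loses its mass.

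\textbf{Your mixture.} The weight $1-\epsilon+\epsilon\prod_k(1+\cos2\pi y_k)$ is bounded below by $1-\epsilon$. It also violates $w\le1$ and has mass one. Hence $Z\le\epsilon/(1-\epsilon)$ for every $n$ and every character list. Explicitly, its centered Gram matrix is $(1-\epsilon/4)I+\frac{\epsilon(1-\epsilon)}{4}\1\1^*$, a \emph{positive} rank-one perturbation. Then $Z=\frac{n\epsilon^2/4}{1-\epsilon/4+n\epsilon(1-\epsilon)/4}$ increases only to $\epsilon/(1-\epsilon)$.

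\textbf{Your block fallback.} Independent blocks add their values of $Z$ and multiply their masses. So if each block has $Z_b=O(\epsilon_b)$, reaching $\sum_bZ_b\ge c_d$ keeps the total mass $\prod_b(1-\epsilon_b)$ bounded away from one.

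\textbf{Your route to (ii).} It is inconsistent with (i). If $|\beta|n/\lambda\le d$, then $C\ge(1-d)\lambda I$ and $Z=O(\epsilon)$. If instead the $\1$-eigenvalue $\delta\lambda$ of an exchangeable $C=\lambda I-\beta\1\1^*$ is small, test \eqref{h2:eq:gram-prefix} on $x=\1$ at $k=n/2$. This gives squared basis constant at least $(1+\delta)/(4\delta)$, while (i) forces $\delta=O(\epsilon)$. Nor does a positive rank-one part give nearly monotone prefixes: the vector with entries $1$ on the first half and $-1$ on the second gives the ratio $1/2+\beta n/(4\lambda)$.

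\textbf{What is missing.} You need a mechanism that makes $Z/\epsilon$ grow while keeping the near-null directions of $C$ in a hierarchical position. Your single factor $1-\gamma+\gamma\cos2\pi y$ is exactly the paper's seed $w_0$. The paper then iterates a two-step move.
\begin{enumerate}
\item It transplants the current weight onto a set $D$ of small measure $\vartheta$ in a fresh circle variable, $W(x,y)=1-\1_D(y)(1-w(x))$, and tensors the characters with a finite Fourier set in $y$. By \eqref{h2:eq:raregain} and \eqref{h2:eq:choose-p}, this multiplies the mass loss by $\vartheta$ but multiplies $Z$ by at least about $\vartheta/(1-\epsilon)$.
\item It takes $L=\lfloor Z/z\rfloor$ independent copies. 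This restores $Z$ (the construction keeps $Z_n\ge3Z_0/4$), while \eqref{h2:eq:epsrec} gives $\epsilon'\le\epsilon-3\epsilon^2/4$.
\end{enumerate}

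Each copy carries only a small share $\eta_n$ of $Z$, and this is what makes the partial sums controllable.
\begin{itemize}
\item Cuts inside a row are bounded by \Cref{h2:lem:split}. Its excess $\frac14(1-B_{jj})((B^{-1})_{jj}-1)\le\epsilon^2/2$ is why the inverse-diagonal bound in (iii) must be carried through the induction.
\item Cuts across copies are bounded by \Cref{h2:lem:copies}. Together with the previous item, this gives the summable recursion \eqref{h2:eq:qrec}.
\item The bound $B\ge\epsilon^2I$ is propagated through $\norm{B^{-1}}\le\epsilon^{-2}$.
\item The measurable weights are finally replaced by Fej\'er means using \eqref{h2:eq:schur-smoothing}.
\end{itemize}
Your plan has no counterpart of the rare-set transplant or of these two partial-sum lemmas. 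Your derivation of (iii) from explicit inverses also presupposes the near-identity Gram structure that (i) rules out.
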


By \eqref{h2:eq:scalar-capture}, the first assertion means that
the squared norm of the projection of $1$ is at least $c_d/(1+c_d)$.
It therefore stays bounded away from zero as the loss of mass
tends to zero. The extra Gram estimates will be needed when we
modify the weight again in \Cref{h2:sec:selective}.

To prove the proposition, we first allow bounded measurable weights
and replace them by polynomials at the end. Restricting the change of weight to a small
set decreases both the loss of mass and $Z$. Taking independent
copies adds their values of $Z$. We will show that these two
operations decrease the loss of mass while keeping $Z$ bounded
away from zero. All weights will be invariant under simultaneous
inversion of their variables.

\subsection{Applying the weight on a small set}\label{h2:ss:rare}

Suppose that $w$ and $\chi$ are given, with the notation of
\eqref{h2:eq:scalar-gram}. We assume that $w$ is even under simultaneous
inversion of its variables. Its character Gram matrix is then real,
and $0<B\le I$, since the characters are orthonormal for Haar measure
and $w\le1$.

Let $D$ be a symmetric measurable set of measure $\vartheta$ in a additional circle,
and define
\begin{equation}\label{h2:eq:rare-weight}
W(x,y)=1-\1_D(y)(1-w(x)),\quad M=\int W=1-\vartheta\epsilon.
\end{equation}
For $y\notin D$, the weight is one. For $y\in D$, it is
$w(x)$. Thus, the old change of weight is used only on a
proportion $\vartheta$ of the new circle.
We choose a finite symmetric set $Y=(1,\psi_1,\ldots,\psi_s)$
of Fourier characters in $y$ and add the products in the order
$$
1,\psi_1,\ldots,\psi_s,\quad
\chi_1,\chi_1\psi_1,\ldots,\chi_1\psi_s,\quad\ldots.
$$
Each row uses one old character and all the characters in $Y$;
the constant function is first in the entire list.

On the full Fourier space in the new variable, multiplication by
$\1_D$ is an orthogonal projection, denoted by $M_D$. The augmented
Gram operator and its inverse are
\begin{equation}\label{h2:eq:rare-operators}
\mathcal G=I+(B-I)\otimes M_D,\quad
\mathcal G^{-1}=I+(B^{-1}-I)\otimes M_D.
\end{equation}
Indeed, on the range of $M_D$ the old Gram is $B$, while on its
kernel the old Gram is the identity. Both operators in
\eqref{h2:eq:rare-operators} are therefore bounded and invertible.

A partial sum can stop before all the new-variable characters
in one row have been retained. We use the following matrix estimate
to control such a cut. Its application to \eqref{h2:eq:rare-operators}
is given immediately after the proof.

Let $0<B\le I$ be a real symmetric positive definite matrix and put
$H=B^{-1}$. The matrix $B$ will be the Gram matrix of an ordered
system, and $H$ the Gram matrix of its biorthogonal system. Fix a
row $j$, and let $P$ be the set of preceding row indices.
For $0\le t\le1$, define
\begin{equation}\label{h2:eq:split-matrices}
\begin{split}
C_t&=\begin{pmatrix}
B_{PP}&\sqrt t\,B_{Pj}\\
\sqrt t\,B_{jP}&1-t+tB_{jj}
\end{pmatrix},\\
D_t&=\begin{pmatrix}
H_{PP}&\sqrt t\,H_{Pj}\\
\sqrt t\,H_{jP}&1-t+tH_{jj}
\end{pmatrix}.
\end{split}
\end{equation}
At the endpoints, these matrices describe the partial sums before
and after row $j$, with an extra orthogonal coordinate at $t=0$.
The lemma bounds the intermediate projections in terms of those
endpoints and two diagonal entries.

\begin{lemma}\label{h2:lem:split}
Set $m=B_{jj}$, $h=H_{jj}$, and
$F(t)=\norm{C_t^{1/2}D_tC_t^{1/2}}$. Then
\begin{equation}\label{h2:eq:split-bound}
F(t)\le\max\{F(0),F(1)\}
 +t(1-t)(1-m)(h-1)
\quad(0\le t\le1).
\end{equation}
\end{lemma}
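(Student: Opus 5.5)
The plan is to recast $F(t)$ as a generalized eigenvalue and track how it moves with $t$. Since $C_t,D_t$ are positive definite, $C_t^{1/2}D_tC_t^{1/2}$ is similar to $C_tD_t$, so $F(t)$ is the largest eigenvalue of $C_tD_t$. Equivalently,
$$
F(t)=\sup_{a,b\ne0}\frac{|a^{\mathsf T}C_tD_tb|^2}{(a^{\mathsf T}C_ta)(b^{\mathsf T}D_tb)}.
$$
Both matrices differ from the endpoint data only in the coordinate $j$. Write $\Pi$ for the coordinate projection onto $P$, $E$ for the rank-one projection onto $e_j$, and $S_t=\Pi+\sqrt t\,E$. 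Let $B'$ and $H'$ denote the principal submatrices of $B$ and $H$ on $P\cup\{j\}$. Then
$$
C_t=S_tB'S_t+(1-t)E,\quad D_t=S_tH'S_t+(1-t)E,
$$
and $C_0=\Pi B'\Pi+E$, $C_1=B'$, with the same formulas for $D$. The first step is therefore to expand $C_tD_t$ using $S_t^2=\Pi+tE=tI+(1-t)\Pi$.

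In this expansion, $C_tD_t$ is a \emph{convex} combination $(1-t)X_0+tX_1$ of operators similar to $C_0D_0$ and $C_1D_1$, conjugated by $S_t$, plus a cross term of size $t(1-t)$. The cross term is built from $E(B'-I)E$ and $E(H'-I)E$, that is, from $m-1$ and $h-1$, together with the off-diagonal blocks $B_{Pj}$ and $H_{Pj}$. My concrete plan is to test the Rayleigh-type quotient above on a fixed pair $(a,b)$. I will split each vector into its $P$ part and its $j$ coordinate, and rescale the $j$ coordinate by $\sqrt t$ so that the $\sqrt t$ off-diagonal entries become linear in $t$. The numerator and both denominators then become affine in $t$, up to a single quadratic correction in the $j$ coordinates. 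For quotients of this shape, the value at $t$ is at most the maximum of the values at $t=0$ and $t=1$, plus that correction.

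The second step is to identify the correction and show it equals $t(1-t)(1-m)(h-1)$ times the normalization. It comes from the product of the $(j,j)$ defect of $C_t$, namely $-t(1-m)$ relative to $1$, and that of $D_t$, namely $t(h-1)$. Only the part of this product not already present in the endpoint expressions survives, which supplies the extra factor $(1-t)$. Here I will use $0<B\le I$, which gives $m\le1$ and $h=(B^{-1})_{jj}\ge1$, so the correction term is nonnegative. I will also use the relation $BH=I$ on the full index set to cancel the mixed terms involving $B_{Pj}H_{jP}$ that would otherwise appear. Where the restriction to $P\cup\{j\}$ breaks that identity, I will bound the remaining mixed terms using Cauchy--Schwarz in the $C$- and $D$-norms, absorbing them into the endpoint quotients.

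The main obstacle is that $\sqrt t$ enters the off-diagonal entries, so the quotient is not jointly convex in $t$, and a naive interpolation fails. I would first try the rescaling $b_j\mapsto b_j/\sqrt t$ together with an optimization over the free scalar in the $j$ coordinate, reducing everything to an explicit $2\times2$ problem in that coordinate. If that does not close, the fallback is to compute $\frac{d^2}{dt^2}$ of the top eigenvalue via Hellmann--Feynman. The $\sqrt t$ terms contribute concavely, and the only convex contribution is bounded by $2(1-m)(h-1)$, which integrates to the stated $t(1-t)$ bound.
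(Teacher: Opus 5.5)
There is a genuine gap: the proposal has no working mechanism that produces the correction $t(1-t)(1-m)(h-1)$ beyond the trivial case. Your heuristic is right when $P=\emptyset$. Then
\[
F(t)=(1-t+tm)(1-t+th)=(1-t)F(0)+tF(1)+t(1-t)(1-m)(h-1),
\]
and the correction is exactly the product of the two diagonal defects. The general steps you describe, however, do not hold.

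\begin{itemize}
\item \textbf{The quotient does not have the claimed shape.} With $a_j=\sqrt t\,\alpha$ and $b_j=\sqrt t\,\beta$, the two denominators are affine up to $-t^2(1-m)\alpha^2$ and $t^2(h-1)\beta^2$. The numerator $a^{\mathsf T}C_tD_tb$, however, contains $t(1-t+tm)(1-t+th)\alpha\beta$. It is a genuine cubic in $t$, with top term $-t^3(1-m)(h-1)\alpha\beta$, not ``affine up to a single quadratic correction''.
\item \textbf{The interpolation principle is the whole lemma.} Your claim that ``for quotients of this shape the value at $t$ is at most the larger endpoint value plus the correction'' is asserted, not proved. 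The inequality has to be compared with the suprema $F(0)$ and $F(1)$, which are attained at different vectors. Producing that comparison is the entire content of the statement.
\item \textbf{The fallback has the sign reversed.} The bound follows from convexity of $F(t)-ct(1-t)$, i.e.\ from $F''\ge-2c$. That is a bound on the \emph{concave} contributions. In the scalar case the correction arises precisely from $F''=-2(1-m)(h-1)$. Saying that the $\sqrt t$ terms contribute concavely while only the convex part is bounded is the opposite of what is needed.
\end{itemize}

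The missing idea is to realize $F(t)$ as the squared norm of an explicit projection. Put $x=B^{1/2}e_j$, $y=B^{-1/2}e_j$, $P_0=B^{1/2}E_PB^{-1/2}$, $P_1=P_0+xy^{\mathsf T}$ and $s=\sqrt{t(1-t)}$. Then $R_t=\begin{pmatrix}P_0+txy^{\mathsf T}&sx\\ sy^{\mathsf T}&1-t\end{pmatrix}$ is a projection. This is where the full identity $BH=I$ enters, not just its restriction to $P\cup\{j\}$: through $y^{\mathsf T}x=1$, $P_0x=0$ and $y^{\mathsf T}P_0=0$. Factoring $R_t$ through the range vectors and their biorthogonal vectors gives $F(t)=\norm{R_t}^2$.

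\begin{itemize}
\item \textbf{Why your absorption step fails.} One has $R_t=(1-t)R_0+tR_1+s\begin{pmatrix}0&x\\ y^{\mathsf T}&0\end{pmatrix}$. The coupling is therefore of order $\sqrt{t(1-t)}$, not $t(1-t)$. Triangle-inequality or Cauchy--Schwarz absorption of the mixed terms, as you propose, loses a term of that order.
\item \textbf{The paper's route.} It uses $\norm{R}^2=1+\norm{R-R^*}^2$ for projections and bounds the skew part. That skew part is an arrowhead matrix with corner $(1-t)J_0+tJ_1$, where $J_i=P_i-P_i^{\mathsf T}$, and border $sv$ with $v=x-y$.
\item \textbf{Splitting the border.} One has $\norm{v}^2=(mh-1)+(1-m)(h-1)$. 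The $mh-1$ portion is absorbed into $\max\{\norm{J_0},\norm{J_1}\}$. This is done by rotating between two copies of $\diag(J_0,J_1)$ with weights $\sqrt{1-t},\sqrt t$ and compressing to the direction $-Av/(\sqrt{mh-1}\,\norm{v})$, where $A=J_1-J_0=xy^{\mathsf T}-yx^{\mathsf T}$. This step effectively tests the two endpoints on different vectors.
\item \textbf{What remains.} Only the $(1-m)(h-1)$ portion costs anything, namely $s^2(1-m)(h-1)$, by the monotonicity estimate for arrowhead norms in the border length.
\end{itemize}

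Without this splitting of the border, or an equivalent device, none of the steps you list yields a correction of order $t(1-t)$.
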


\begin{proof}
We first record an estimate for real skew-symmetric matrices. If $J$
is skew-symmetric and $b$ is a real vector, write
$$
T(s)=\begin{pmatrix}J&sb\\-sb^T&0\end{pmatrix}.
$$
Then
\begin{equation}\label{h2:eq:arrowhead}
\norm{T(s)}^2\le\norm{T(r)}^2+(s^2-r^2)\norm{b}^2
\quad(0\le r\le s).
\end{equation}
Indeed, put $J$ into real two-dimensional rotation blocks. Within
each block, rotate the coordinates so that the component of $b$ lies
on the first axis. A similar choice can be made in the kernel of $J$.
Grouping the first axes separately from the second axes and the new
coordinate gives
$$
T(s)=\begin{pmatrix}0&R_s\\-R_s^T&0\end{pmatrix},
\quad R_sR_s^T=D+s^2aa^T,
$$
where $D\ge0$ is fixed and $\norm{a}=\norm{b}$. Taking norms proves
\eqref{h2:eq:arrowhead}.

Let $E_P$ be the coordinate projection onto $P$, and set
$$
x=B^{1/2}e_j,\quad y=B^{-1/2}e_j,\quad
P_0=B^{1/2}E_PB^{-1/2},\quad P_1=P_0+xy^T.
$$
The identities $y^Tx=1$, $P_0x=0$, and $y^TP_0=0$ show that
\begin{equation}\label{h2:eq:split-projection}
R_t=\begin{pmatrix}
P_0+txy^T&sx\\ sy^T&1-t
\end{pmatrix},\quad s=\sqrt{t(1-t)},
\end{equation}
is a projection. To identify its norm, take the range vectors
$$
(B^{1/2}e_i,0)\quad(i\in P),\quad
(\sqrt t\,x,\sqrt{1-t}),
$$
and the biorthogonal vectors obtained by replacing $B^{1/2}$
with $B^{-1/2}$ and $x$ with $y$. Their Gram matrices are
$C_t$ and $D_t$. Factoring $R_t$ into coefficient extraction
and synthesis gives $\norm{R_t}^2=F(t)$.
For a nonzero Hilbert space projection $R$, we have
\begin{equation}\label{h2:eq:projection-skew}
\norm{R}^2=1+\norm{R-R^*}^2.
\end{equation}
To see this, write $R=\left(\begin{smallmatrix}I&Z\\0&0\end{smallmatrix}\right)$
relative to its range and its orthogonal complement. Both sides of
\eqref{h2:eq:projection-skew} equal $1+\norm{Z}^2$.

Put $J_i=P_i-P_i^T$, $A=xy^T-yx^T$, and $v=x-y$. Then
\begin{equation}\label{h2:eq:split-skew}
R_t-R_t^T=
\begin{pmatrix}(1-t)J_0+tJ_1&sv\\-sv^T&0\end{pmatrix},
\quad
\norm{v}^2=(mh-1)+(1-m)(h-1).
\end{equation}
Suppose first that $a=\sqrt{mh-1}>0$. On $\spanop\{x,y\}$,
$A^2=-a^2I$. Consequently,
$$
u=-\frac{Av}{a\norm{v}}
$$
is a unit vector and $Au=av/\norm{v}$. Conjugate
$\diag(J_0,J_1)$ by the rotation between its two copies with
coefficients $\sqrt{1-t}$ and $\sqrt t$. Compress the second copy
to $\spanop\{u\}$. Up to the sign of its last coordinate, the
result is
$$
\begin{pmatrix}
(1-t)J_0+tJ_1&sa\,v/\norm{v}\\
-sa\,v^T/\norm{v}&0
\end{pmatrix}.
$$
Its norm is at most $\max\{\norm{J_0},\norm{J_1}\}$, because
orthogonal conjugation preserves norm and compression cannot
increase it. By \eqref{h2:eq:arrowhead}, increasing the last column
from $sa\,v/\norm{v}$ to $sv$ increases the squared norm by at most
$$
s^2(\norm{v}^2-a^2)=s^2(1-m)(h-1).
$$
Now apply \eqref{h2:eq:projection-skew} to \eqref{h2:eq:split-skew} and
to the endpoint projections. If $a=0$, then $A=0$ and $J_0=J_1$;
the same argument starts with a zero last column. This proves
\eqref{h2:eq:split-bound} in all cases.
\end{proof}

To bound a partial sum ending inside old row $j$, we first retain
all Fourier modes in each completed row. Let $F$ be the Fourier
projection onto the selected part of row $j$. Since $F$ has finite
rank, diagonalizing $FM_DF$ on the range of $F$ decomposes $F$ and $M_D$
into common eigenspaces and two-dimensional pieces. More explicitly,
if $v$ is a unit vector
with $Fv=v$ and $FM_DFv=tv$ for $0<t<1$, then
$$
\frac{M_Dv-tv}{\sqrt{t(1-t)}}
$$
is a companion to $v$ in $\ker F$ of unit norm. In this ordered pair,
the matrices of $F$ and $M_D$ are
$$
\begin{pmatrix}1&0\\0&0\end{pmatrix},\quad
\begin{pmatrix}t&\sqrt{t(1-t)}\\
\sqrt{t(1-t)}&1-t\end{pmatrix}.
$$
Diagonalize $M_D$ in each completed row and retain $v$ in row
$j$. The completed coordinates in $\ker M_D$ split off as
identity summands. The remaining range and dual Gram matrices
are exactly $C_t$ and $D_t$. The common eigenspaces correspond
to the endpoint cases. Thus \Cref{h2:lem:split} controls every
partial sum inside a row.
Restriction to the finite Fourier set $Y$ preserves the estimate,
because the finite character span is invariant under these coordinate
projections. Every partial sum fixes the first constant function,
so it induces an operator of no larger norm on the quotient by
the constants. Identifying this quotient with the functions of
mean zero replaces each character by the character minus its mean.
This gives the same bound for the centered family.

We will also need inverse estimates. If $A_W$ is the unnormalized centered
Gram matrix of the finite family, its inverse is the nonconstant
principal block of the augmented inverse. Compression of the inverse of
\eqref{h2:eq:rare-operators} gives
\begin{equation}\label{h2:eq:rare-inverse}
(A_W^{-1})_{kk}\le1+\vartheta\bigl((B^{-1})_{jj}-1\bigr)
\end{equation}
for a retained nonconstant coordinate in old row $j$. Here we used
that the inverse of a compression of a positive operator is at most
the corresponding compression of its inverse, and that
$\ip{M_De_k}{e_k}=\vartheta$ for every Fourier character.

As $Y$ increases to all Fourier modes, the value of $Z$ for the finite family
increases to
\begin{align}
z_\vartheta
&=M\bigl[1+\vartheta((B^{-1})_{00}-1)\bigr]-1\notag\\
&=\frac{\vartheta}{1-\epsilon}
 \bigl\{Z+\epsilon^2-\vartheta\epsilon(Z+\epsilon)\bigr\}.
\label{h2:eq:raregain}
\end{align}
Indeed, \eqref{h2:eq:constant-inverse} expresses the finite value of $Z$ in
terms of the constant entry of the inverse Gram. These entries
increase to the corresponding entry of $\mathcal G^{-1}$ by the
variational formula for inverse quadratic forms. In particular,
every strict lower bound below $z_\vartheta$ holds for a sufficiently large
finite Fourier set.

\subsection{Partial sums for independent copies}

The preceding operation decreases both the loss of mass and
the quantity $Z$ in \eqref{h2:eq:scalar-gain}. We take sufficiently
many independent copies to keep $Z$ bounded away from zero.
The centered parts of different copies are orthogonal, while
their means belong to the same one-dimensional space of
constant functions. The next lemma controls the partial sums in this situation. It distinguishes the sum of the contributions from all copies from the contribution of the one copy where the partial sum stops.

\begin{lemma}\label{h2:lem:copies}
Let $\mathcal H=V_0\oplus\bigoplus_iH_i$ be an orthogonal Hilbert
sum, with finitely many $H_i$. Suppose that each $H_i$ has an ordered
finite basis $(x_{ik})_k$ with basis constant at most $\sqrt Q$.
Let $T_i:H_i\to V_0$, and suppose
$$
\left\|\sum_iT_iT_i^*\right\|\le Z,\quad
\norm{T_iT_i^*}\le\eta,\quad Q>1+Z.
$$
Concatenate the families $(x_{ik}+T_ix_{ik})_k$ after $V_0$.
Every coordinate projection retaining $V_0$ and an initial segment
of this list has squared norm at most
\begin{equation}\label{h2:eq:copies-bound}
Q+\frac{Q^2\eta}{Q-1-Z}.
\end{equation}
The same bound holds on the span of the concatenated families alone.
\end{lemma}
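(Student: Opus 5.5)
We will reduce the bound to a Gram matrix estimate of the form \eqref{h2:eq:gram-prefix}. Work with the operator $T=\sum_iT_i$ from $\bigoplus_iH_i$ to $V_0$. The concatenated family is the image of the family $(x_{ik})$ under the map $\Phi=I+T$ on $\bigoplus_iH_i$, into $\mathcal H$. Here we let $I$ act as the inclusion into $\bigoplus_iH_i$, and $T$ takes values in $V_0$. A coordinate projection retaining $V_0$ and an initial segment can be written explicitly. Suppose the segment consists of the complete families for $i<i_0$ and an initial part $S$ of family $i_0$. Let $\Pi$ be the corresponding coordinate projection for the unperturbed orthogonal sum, so $\Pi=\sum_{i<i_0}I_{H_i}+S$, where $S$ is an initial basis projection in $H_{i_0}$ with $\norm S^2\le Q$. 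Then the projection for the perturbed family is
$$
\mathcal R(v_0+\textstyle\sum_i y_i)=v_0+(I+T)\Pi\,\Psi^{-1}(\ldots),
$$
and we determine the coefficients by solving for the $H$-components. Since $T$ maps into $V_0$, the $H$-part of any vector $v_0+\sum_i(y_i+T_iy_i)$ is just $\sum_iy_i$. Hence the projection acts by $y\mapsto \Pi y$ on the $H$-part, and on $V_0$ it acts by $v\mapsto v-T(I-\Pi)y$, where $y$ is the $H$-component of the input. So, for an input $(v,y)$, we get $\mathcal R(v,y)=\bigl(v-T(I-\Pi)y,\ \Pi y\bigr)$.

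The norm of this block operator is what we must control. Write $\mathcal R=\left(\begin{smallmatrix}I&-T(I-\Pi)\\0&\Pi\end{smallmatrix}\right)$. Split $T(I-\Pi)=\sum_{i>i_0}T_i+T_{i_0}(I-S)$. For the completed tail and the unfinished copy, estimate
$$
\norm{\mathcal R(v,y)}^2\le \bigl(\norm v+\norm{T(I-\Pi)y}\bigr)^2+\norm{\Pi y}^2 .
$$
However, this naive triangle estimate loses the constant structure $Q>1+Z$. The better route is to exploit the fact that $v$ and $y$ are orthogonal, together with a Schur-complement/Cauchy--Schwarz weighting. For $\lambda>0$, use
$$
\norm{v-w}^2\le(1+\lambda)\norm v^2+(1+\lambda^{-1})\norm w^2.
$$
Then bound $\norm{\sum_{i>i_0}T_iy_i}^2\le Z\sum\norm{y_i}^2$ and $\norm{T_{i_0}(I-S)y_{i_0}}^2\le\eta\norm{(I-S)y_{i_0}}^2\le\eta(1+\sqrt Q)^2\norm{y_{i_0}}^2$.

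The main obstacle is getting exactly \eqref{h2:eq:copies-bound} rather than a cruder constant. In particular, we need the term $Z$ to be absorbed into $Q$ with only $\eta$ appearing in the excess. I would first use the identity \eqref{h2:eq:projection-skew} from the proof of \Cref{h2:lem:split}, namely $\norm{\mathcal R}^2=1+\norm{\mathcal R-\mathcal R^*}^2$. An alternative is to compute $\norm{\mathcal R}^2$ as the largest $\mu$ with $\mathcal R^*\mathcal R\le\mu I$ and form the Schur complement in the $V_0$ block. This yields the condition
$$
\Pi^*\Pi+(I-\Pi)^*T^*T(I-\Pi)+\tfrac{1}{\mu-1}(I-\Pi)^*T^*T(I-\Pi)\ \text{-type term}\le\mu .
$$
Taking $\mu=Q+\varepsilon$, the completed-copy part of $T^*T$ contributes a term of size $\frac{\mu}{\mu-1}Z$, which is $\le\mu-Q$ only asymptotically. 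So we must use that completed copies have $(I-\Pi)y_i=y_i$ and $\Pi y_i=0$, while copies not yet started are untouched. This reduces the inequality on those coordinates to $\frac{\mu}{\mu-1}\norm{T_\cdot}^2$ against $\mu$, which holds when $\mu\ge1+Z$. The unfinished copy then contributes a rank-controlled perturbation of size $\eta Q$, divided by the gap $\mu-1-Z\ge Q-1-Z$. Solving the resulting scalar inequality $\mu\ge Q+\frac{Q\cdot Q\eta}{\mu-1-Z}$ at $\mu=Q+\frac{Q^2\eta}{Q-1-Z}$ gives \eqref{h2:eq:copies-bound}. The final assertion follows by restricting $\mathcal R$ to the span of the families, since restriction does not increase norms.
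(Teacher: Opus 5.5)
Your block formula $\mathcal R(v,y)=\bigl(v-T(I-\Pi)y,\ \Pi y\bigr)$ is correct and is the structure the paper uses. However, the estimate that turns it into \eqref{h2:eq:copies-bound} is never carried out, so as written the proposal has a genuine gap. There are two concrete problems.

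\textbf{The bound on the unfinished family.} Your explicit bound $\norm{T_{i_0}(I-S)}^2\le\eta(1+\sqrt Q)^2$ is too weak to give \eqref{h2:eq:copies-bound}. At the end you replace it by $\eta Q$, calling it a ``rank-controlled perturbation,'' which is not a justification. What you need is that a projection $S\ne0,I$ on a Hilbert space satisfies $\norm{I-S}=\norm{S}$. This follows from \eqref{h2:eq:projection-skew}, since $(I-S)-(I-S)^*=-(S-S^*)$. The cases $S=0$ and $S=I$ are trivial because $Q>1$.

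\textbf{The Schur complement step.} This step is left as a ``-type term,'' and the scalar inequality $\mu\ge Q+Q^2\eta/(\mu-1-Z)$ is asserted rather than derived. The exact condition is $\Pi^*\Pi+\frac{\mu}{\mu-1}(I-\Pi)^*T^*T(I-\Pi)\le\mu I$. On $H_{i_0}\oplus\bigoplus_{i>i_0}H_i$, the vector $T_{i_0}(I-S)y_{i_0}+\sum_{i>i_0}T_iy_i$ couples the unfinished family with the later ones. You cannot bound the two contributions separately and then ``divide by the gap'' without a weighted Cauchy--Schwarz inequality. Also, your labels are swapped: the families with $\Pi y_i=0$ and $(I-\Pi)y_i=y_i$ are those \emph{not yet begun} ($i>i_0$). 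The completed ones have $\Pi y_i=y_i$ and split off as the identity.

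\textbf{The fix.} The missing step is simpler than a Schur complement, and you were wrong to dismiss the triangle estimate; it suffices once the terms are grouped and weighted correctly. Split off the completed families as identity summands, and group $V_0$ with the families $i>i_0$. On $X=V_0\oplus\bigoplus_{i>i_0}H_i$ and $H_{i_0}$, the projection is $\left(\begin{smallmatrix}A&-U\\0&S\end{smallmatrix}\right)$, where $A(v,(y_i))=v-\sum_{i>i_0}T_iy_i$ and $U=T_{i_0}(I-S)$. Then $AA^*=I+\sum_{i>i_0}T_iT_i^*$, so $\norm{A}^2\le1+Z$. Also $\norm{U}^2\le Q\eta$ and $\norm{S}^2\le Q$. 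Hence
$$
\norm{M(x,y)}^2\le\bigl(\sqrt{1+Z}\,\norm{x}+\sqrt{Q\eta}\,\norm{y}\bigr)^2+Q\norm{y}^2 .
$$
Cauchy--Schwarz with weights $1+Z$ and $Q-1-Z$, which sum to $Q$, bounds this by $Q\norm{x}^2+\bigl(Q+\frac{Q^2\eta}{Q-1-Z}\bigr)\norm{y}^2$. This is the paper's proof.

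If you keep the Schur-complement route, apply the same weighting to the cross term, with weight $\mu-1-Z$ against $Z$. This gives the sufficient condition $\mu\ge Q+\mu Q\eta/(\mu-1-Z)$. It holds at $\mu=Q+Q^2\eta/(Q-1-Z)$ because $x\mapsto x/(x-1-Z)$ is decreasing for $x>1+Z$, but that argument has to be written out. Your final step, restricting to the span of the families, is fine.
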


\begin{proof}
At a partial sum ending inside family $i$, let $P$ be its centered
coordinate projection on $H_i$. The preceding centered spaces are
retained in full and split off as identity operators. On the remaining
spaces, the partial sum has the form
$$
M=\begin{pmatrix}A&-U\\0&P\end{pmatrix}.
$$
Here $A$ maps the old component and all later centered components
to $V_0$, and $U=T_i(I-P)$. Therefore
$$
\norm{A}^2\le1+Z,\quad
\norm{U}^2\le Q\eta,\quad \norm{P}^2\le Q.
$$
For the bound on $U$, we used that a nontrivial projection on a
Hilbert space has the same norm as its complementary projection.
The endpoint cases $P=0,I$ satisfy the same bound directly.

For vectors $x,y$ in the two domain summands, these bounds give
\begin{align*}
\norm{M(x,y)}^2
&\le\bigl(\sqrt{1+Z}\,\norm{x}+\sqrt{Q\eta}\,\norm{y}\bigr)^2
 +Q\norm{y}^2\\
&\le Q\norm{x}^2+
 \left(Q+\frac{Q^2\eta}{Q-1-Z}\right)\norm{y}^2.
\end{align*}
The second inequality is Cauchy--Schwarz with weights $1+Z$ and
$Q-1-Z$, whose sum is $Q$. This proves \eqref{h2:eq:copies-bound}.
Finally, the span of the concatenated families is invariant under
these coordinate projections, so restriction to this span cannot
increase their norms.
\end{proof}

\subsection{Independent copies and the induction}\label{h2:ss:scalar-induction}

 Fix $0<d\le1$, and put $t=d/4$ and $a_0=t/100$.
 We begin with the circle weight and single character
$$
w_0(x)= 1-a_0 + a_0 \cos(2\pi x),\quad
\chi_1(x)=e^{2\pi ix}.
$$
The augmented Gram is
$B_0=\left(\begin{smallmatrix}1-a_0&a_0/2\\a_0/2&1-a_0\end{smallmatrix}\right)$
,
so a direct calculation gives
\begin{equation}\label{h2:eq:scalar-initial}
\epsilon_0= a_0,\quad Z_0= \frac{a_0^2}{4-8a_0+3a_0^2},\quad
(B_0^{-1})_{11}-1 \le2a _0.
\end{equation}
 Since $a_0\le1/400$, we also have
$a_0^2/4\le Z_0\le a_0^2/3$ and $B_0\ge a_0^2I$.
 Its centered family has one vector and hence basis constant one.
We use $Q_0=1+t$ as an upper bound for its squared basis constant,
leaving room for the increases at later steps.

At stage $n$, $Q_n$ will bound the squared basis constant of the
centered family. We maintain
$$
Z_0/2\le Z_n\le Z_0,\quad
(B_n^{-1})_{kk}-1\le2\epsilon_n\quad(k\ne0),
\quad 1+t \le Q_n< 1+2t.
 $$
We also retain the description of the family as a concatenation of
independent copies, each with the quantity in \eqref{h2:eq:scalar-gain} at most $\eta_n$. Initially,
there is only one copy and $\eta_0=Z_0$. Below we suppress the
index $n$ on $\epsilon,Z,B$.

Put $C_0=\max_j((B^{-1})_{jj}-1)$, including the constant index.
Choose $\vartheta>0$ so small that
\begin{equation}\label{h2:eq:choose-p}
\vartheta\le\frac{1}{100},\quad
\vartheta\le\frac{\epsilon}{Z+\epsilon},\quad
\vartheta C_0\le\frac{\epsilon}{100},\quad
z_\vartheta\le Z_0\,2^{-n-3}.
\end{equation}
Choose a finite Fourier set in \Cref{h2:ss:rare} whose quantity $z$ in \eqref{h2:eq:scalar-gain} satisfies
$$
(1-\epsilon/4)z_\vartheta\le z\le z_\vartheta.
$$
We now take $L=\lfloor Z/z\rfloor$ copies of that finite system,
each in a separate set of variables. We multiply the weights and
list the nonconstant characters of the first copy, followed by those
of the second copy, and so on. After subtracting their means,
functions from different copies are orthogonal. Their centered
Gram matrix is therefore a direct sum, and the quantities in
\eqref{h2:eq:scalar-gain} add. The new parameters are
\begin{equation}\label{h2:eq:new-parameters}
1-\epsilon'=M^L,\quad Z'=Lz,\quad \eta_{n+1}=z.
\end{equation}
The decrease from $Z$ to $Z'$ is less than $z$. Summing the last bound in
\eqref{h2:eq:choose-p} shows that the total decrease in $Z$ is at most
$Z_0/4$. In fact, the construction maintains $Z_n\ge3Z_0/4$.

We next check that the mass loss decreases to zero. From
\eqref{h2:eq:raregain}, the condition $\vartheta(Z+\epsilon)\le\epsilon$,
and the choice of $z$, we obtain
\begin{equation}\label{h2:eq:Lp}
L\vartheta\le\frac{1-\epsilon}{1-\epsilon/4}
\le1-\frac{3\epsilon}{4},\quad
L\vartheta\ge\frac{(1-\epsilon)Z}{Z+\epsilon^2}-\vartheta>\frac{1}{10}.
\end{equation}
For the last strict inequality, use $Z\ge3Z_0/4\ge3a_0^2/16$,
 $\epsilon\le a_0$, and $\vartheta\le1/100$ to get
$$
\frac{(1-\epsilon)Z}{Z+\epsilon^2}-\vartheta
\ge (1-a_0)\frac{3}{19} -\frac{1}{100} >\frac{1}{10}.
$$
Bernoulli's inequality and
$1-(1-a)^L\ge La/(1+La)$ now imply
\begin{equation}\label{h2:eq:epsrec}
\frac{\epsilon}{12}\le\epsilon'
=1-(1-\vartheta\epsilon)^L
\le\epsilon-\frac{3\epsilon^2}{4}.
\end{equation}
The upper bound forces $\epsilon_n\to0$ and gives
 $\sum_n\epsilon_n^2\le4a_0/3$.

The new nonconstant inverse diagonal entries are bounded, by
\eqref{h2:eq:rare-inverse} and the normalization in
\eqref{h2:eq:new-parameters}, by
$$
\frac{M(1+\vartheta C_0)}{1-\epsilon'}.
$$
Their excess over one is at most
$$
\frac{\epsilon'+\epsilon/100}{9/10}<2\epsilon',
$$
where we used the lower bound in \eqref{h2:eq:epsrec}. The inverse
diagonal condition is therefore preserved.

It remains to bound partial sums. The old family is a concatenation
of independent copies, each with centered squared basis constant
at most $Q_n$. Apply \Cref{h2:lem:copies} with $V_0$ the constants,
total contribution $Z_n$, and individual contribution at most
$\eta_n$. Its hypothesis $Q_n>1+Z_n$ holds because $Q_n\ge1+t$
 and $Z_n\le Z_0<t/2$. The old augmented squared basis constant is at
most $Q_n+(5/t)\eta_n$ ; indeed,
 $$
\frac{Q_n^2}{Q_n-1-Z_n}
\le\frac{(1+2t)^2}{t/2}<\frac{5}{t}
\quad(0<t\le1/4).
$$
 For a nonconstant old row, \Cref{h2:lem:split} adds at most
$$
\frac{1}{4}(1-B_{jj})\bigl((B^{-1})_{jj}-1\bigr)
\le\frac{\epsilon^2}{2}.
$$
For the constant row, the endpoint squared bound is $1+Z$, and
the whole row has squared bound at most
$$
1+Z+\frac{\epsilon(Z+\epsilon)}{4(1-\epsilon)} \le1+a_0^2 < 1+t.
 $$
After quotienting by the constant and taking independent copies,
we may therefore set
\begin{equation}\label{h2:eq:qrec}
Q_{n+1}=Q_n+ \frac{5}{t} \eta_n+\frac{\epsilon_n^2}{2}.
\end{equation}
Since $\sum_n\eta_n<3Z_0/2$, this yields
\begin{equation}\label{h2:eq:qbound}
Q_n \le1 + t + \frac{15Z_0}{2t}+\frac{2a_0}{3}\le1+t+\frac{t}{4000}+\frac{t}{150} < 1+2t.
 \end{equation}
All the inductive conditions are now verified.

\subsection{Gram matrix estimates and polynomial weights}

We complete the proof of \Cref{h2:lem:scalar} by checking the remaining
matrix estimates and replacing the measurable weights by polynomials.
Let $h=\norm{B^{-1}}$. By \eqref{h2:eq:rare-operators} and
the inequality for the inverse of a compression, the inverse
of the augmented Gram matrix of the finite system from
\Cref{h2:ss:rare} has norm at most $h$.
The inverse of its unnormalized centered Gram matrix is a
principal block of this inverse. Consequently, the inverse
of its normalized centered Gram matrix is bounded above by $MhI$.

After taking the $L$ independent copies, let $D_0$ be their normalized
centered inverse and $a_0$ their mean vector. Then
$$
D_0\le MhI,\quad a_0^*D_0a_0=Z'\le Z_0.
$$
The new augmented inverse is
\begin{equation}\label{h2:eq:inverse-new}
\frac{1}{1-\epsilon'}
\begin{pmatrix}
1+Z'&-a_0^*D_0\\-D_0a_0&D_0
\end{pmatrix}.
\end{equation}
The Cauchy--Schwarz inequality in the $D_0$ inner product bounds its
quadratic form at $(x,y)$ by the quadratic form at $(|x|,\norm{y})$
of the two-by-two matrix with diagonal entries $1+Z',Mh$ and
off-diagonal entry $\sqrt{MhZ'}$, divided by $1-\epsilon'$.
The largest eigenvalue of a positive matrix is at most its trace,
so
$$
h'\le\frac{h+1+Z_0}{1-\epsilon'}.
$$
Since $(1-t)/t^2$ is decreasing on $(0,1)$, the upper bound in
\eqref{h2:eq:epsrec} gives
$$
\frac{1-\epsilon'}{(\epsilon')^2}-\frac{1}{\epsilon^2}
\ge\frac{1/2+3\epsilon/16}{\epsilon(1-3\epsilon/4)^2}
\ge\frac{1}{2\epsilon}>1+Z_0.
$$
Hence
$h\le\epsilon^{-2}$ implies $h'\le(\epsilon')^{-2}$.
Initially $B_0\ge a_0^2I$, so $B_n\ge\epsilon_n^2I$ for every $n$.
Also, \eqref{h2:eq:constant-inverse} gives
$(B_n^{-1})_{00}=(1+Z_n)/(1-\epsilon_n)<2$.

The augmented basis constant is less than
\begin{equation}\label{h2:eq:augmented-bound}
 \sqrt{1+2t} (1+\sqrt{Z_0}) \le(1+t)(1+a_0) < 1+d.
 \end{equation}
Indeed, in coordinates given by the constant and the centered
functions, an augmented partial sum acts as
$$
(a,x)\longmapsto\bigl(a-T(I-P)x,Px\bigr),
$$
where $\norm{T}=\sqrt Z$ and $P$ is the centered partial sum.
Its norm is at most $\norm{\diag(1,P)}+\sqrt Z\norm{I-P}$,
which gives \eqref{h2:eq:augmented-bound}, including the endpoint cases.

Finally, take sufficiently accurate product Fej\'er means of each
finite weight. Convolution with these positive kernels preserves the
mass, the upper bound one, and the positive lower bound. For a fixed
character list, the new Gram matrix is $B\circ\Phi$, where $\circ$
denotes entrywise multiplication and $\Phi$ is a positive semidefinite
matrix with diagonal entries one. Thus the lower bound
$B\ge\epsilon^2I$ and the diagonal projection inequalities
\eqref{h2:eq:gram-prefix} are preserved.

The inverse diagonal estimates are also preserved. To see this,
take the entrywise product of the two positive block matrices
$$
\begin{pmatrix}B&I\\I&B^{-1}\end{pmatrix},\quad
\begin{pmatrix}\Phi&\Phi\\\Phi&\Phi\end{pmatrix}.
$$
Their entrywise product is positive and has off-diagonal block $I$. Its Schur
complement gives
\begin{equation}\label{h2:eq:schur-smoothing}
(B\circ\Phi)^{-1}\le B^{-1}\circ\Phi.
\end{equation}
In particular, no inverse diagonal entry increases. The finite Gram
matrices converge to the original ones as the Fej\'er order increases.
For each fixed family, its centered Gram and $Z$ depend continuously
on these entries. We choose the Fej\'er order separately for each
family. By continuity and the strict inequality in \eqref{h2:eq:qbound},
the centered basis constant remains less than $1+d$ for
sufficiently high Fej\'er order. Likewise, the bound
$Z_n\ge3Z_0/4$ ensures that $Z$ remains at least $c_d:=Z_0/2$.

We will also need to prescribe a small value of $Z$ while making
the loss of mass arbitrarily small.

\begin{corollary}\label{h2:cor:tune}
For each fixed $0<d\le1$ and $0<z<c_d$, there are positive trigonometric polynomial
weights $w\le1$, invariant under simultaneous inversion, and
ordered nonconstant character lists with $Z=z$, augmented and centered basis
 constants less than $1+d$, and arbitrarily
small mass loss $1-\int w$.
\end{corollary}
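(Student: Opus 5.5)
We obtain the corollary from \Cref{h2:lem:scalar} by a one-parameter deformation that lowers $Z$ continuously without touching the other bounds. Fix $d$ and $0<z<c_d$. Take a weight $w$ and character list $\chi$ from \Cref{h2:lem:scalar} with mass loss $\epsilon$ as small as desired, so that $Z(w)\ge c_d>z$ and both basis constants are less than $1+d$. Since the weight is a positive trigonometric polynomial and the list is finite, every quantity involved, namely the Gram matrix, $C$, $a$, $Z$, and the finitely many prefix inequalities \eqref{h2:eq:gram-prefix}, depends continuously on the weight in the uniform norm.

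For $0\le s\le1$, interpolate by setting $w_s=(1-s)w+s$. Each $w_s$ is a positive trigonometric polynomial with $w_s\le1$, invariant under simultaneous inversion, with mass loss $(1-s)\epsilon\le\epsilon$. Its normalized augmented Gram matrix is a convex combination of the Gram matrix for $w$ and the identity, the Gram matrix for Haar measure. By the remark after \eqref{h2:eq:gram-prefix}, the prefix inequalities are preserved under convex combinations and positive rescaling. Write $K=1+d'$ with $d'<d$ for the strict bound satisfied by $w$. The identity satisfies \eqref{h2:eq:gram-prefix} with constant one, so the augmented family for $w_s$ has basis constant at most $K<1+d$. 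For the centered family, I would instead use that the unnormalized centered Gram $A_s$ is concave in the weight. Since $A_s\ge(1-s)A_0+sI$ and the numerators $D_kA_sD_k$ are not obviously controlled by this inequality, I would argue via the quotient by constants exactly as in \Cref{h2:ss:rare}. Every augmented partial sum fixes the constant, so the centered partial sums have norm no larger than the augmented ones. This gives a centered bound less than $1+d$ for every $s$ directly from the augmented one.

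Now $s\mapsto Z(w_s)$ is continuous on $[0,1]$. At $s=0$ we have $Z\ge c_d>z$. At $s=1$ the means $a$ vanish because the characters are nonconstant, so $Z=0$. By the intermediate value theorem some $s_*\in(0,1)$ satisfies $Z(w_{s_*})=z$. Continuity on $[0,1)$ holds because $C_s$ stays positive definite there, as $w_s$ is bounded below. At $s=1$ we have $C=I$ and $a=0$, so continuity holds there as well. The resulting weight has mass loss at most $\epsilon$, which is arbitrarily small, and it has all the required properties.

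The main obstacle is the centered-constant step: confirming that the quotient argument really transfers the augmented bound to the centered family for a general weight, and not only in the product setting of \Cref{h2:ss:rare}. If it fails, the fallback is to rely on continuity alone. Since $Z$ can be taken strictly decreasing, one would only need the centered strict bound at the parameter $s_*$, and this might require rechoosing the path, for instance by shrinking $w$ toward $1$ on a small set as in \eqref{h2:eq:rare-weight} and using \Cref{h2:lem:split}.
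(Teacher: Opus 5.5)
Your proof is correct and is essentially the paper's argument: interpolate the weight toward Haar measure, use convexity of \eqref{h2:eq:gram-prefix} to bound the augmented constant, pass to the centered family by quotienting out the constant, and apply the intermediate value theorem to $Z$. The worry in your last paragraph is unfounded, and the fallback is unnecessary. The quotient step uses only that the constant is the first vector of the augmented list. On the mean-zero span this gives $Q_j=\Pi P_{j+1}$, where $\Pi$ is orthogonal projection off the constants, so it holds for every weight.
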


\begin{proof}
For a system from \Cref{h2:lem:scalar}, interpolate with Haar measure:
$$
w_\theta=1-\theta(1-w),\quad0\le\theta\le1.
$$
The augmented Gram is $(1-\theta)I+\theta B$. Its basis constant
is less than $1+d$ by \eqref{h2:eq:gram-prefix}, and quotienting by the
first constant gives the same bound for the centered system. The quantity $Z$
depends continuously on the finite Gram matrix. It is zero at
$\theta=0$ and at least $c_d$ at $\theta=1$, so it equals $z$
at some intermediate value. The mass loss is at most that of $w$.
\end{proof}

\section{Approximation of a prescribed vector}\label{h2:sec:selective}

The preceding construction approximates the constant function.
We now need to approximate a prescribed unit vector $e$ in a
finite character span $V$. The new character span should approximate
$e$ much more effectively than the directions in $V$ orthogonal
to $e$. This distinction will keep the new span nearly orthogonal
to the characters already chosen in the basis.

We again add variables and change the weight. Averaging the new
characters over these variables gives functions in $V$. We first
write the projection onto their span in terms of these averages.
This identifies the estimate that the new weight must satisfy.

\subsection{An identity for projection norms}\label{h2:ss:information}

Let $\rho$ be a probability measure on $\R$ times a finite torus,
and write $s$ for these variables. We add torus variables $y$
and a positive weight $W(s,y)$ whose integral in $y$ is independent
of $s$:
$$
\int W(s,y)\,dy=M>0.
$$
The probability measure for the enlarged space is
$\rho(ds)W(s,y)\,dy/M$. Its marginal in $s$ is still $\rho$.
For a finite character family $U_1,\ldots,U_d$, let
$T\colon\C^d\to L^2(\rho)$ take a coefficient vector to the
corresponding sum of conditional averages:
$$
Tc=\sum_kc_k\E[U_k | s],
$$
and let $C$ be the Gram matrix of the centered functions
$U_k-\E[U_k |  s]$ in this probability measure. We assume $C>0$.
The full normalized Gram of $(U_k)$ is $C+T^*T$.

For a function $v$ depending only on $s$, its squared projection norm onto
$\spanop\{U_k\}$ is
$$
\ip{T(C+T^*T)^{-1}T^*v}{v}.
$$
Set
$J=TC^{-1}T^*$. Writing $S=TC^{-1/2}$ gives
\begin{equation}\label{h2:eq:info}
T(C+T^*T)^{-1}T^*
=S(I+S^*S)^{-1}S^*
=J(I+J)^{-1}.
\end{equation}
For the last identity, use
$(I+SS^*)S=S(I+S^*S)$. Thus, \eqref{h2:eq:info} gives the squared
projection norm of any function of the variables $s$. In particular,
if $F$ is the span of the new characters, then
$$
\dist(v,F)^2=\norm{v}_\rho^2-
\ip{J(I+J)^{-1}v}{v}_\rho
\quad(v\in L^2(\rho)).
$$
For conditionally independent families, $C$ is a direct sum. It follows
that the operator $J=TC^{-1}T^*$ for their union is the sum of
the corresponding operators for the individual families.

\begin{proposition}\label{h2:lem:selective}
Let $V$ have a finite ordered basis of distinct characters with basis
constant $K_V$ in $L^2(\rho)$, and let $e\in V$ have norm one.
 Fix $K_s>1$. There is a sequence of positive trigonometric polynomial weights
$W\le1$ with constant conditional mass $M\to1$, and ordered lists
of characters with nonzero frequency in the new variables, such that
their conditional means lie in $V$ and
\begin{equation}\label{h2:eq:selective-J}
J=cP_{V\cap e^\perp}+jP_e,\quad
j\to0,\quad \frac{c}{j}\to0,\quad\frac{1-M}{j}\to0.
\end{equation}
 If $\epsilon$ is the auxiliary scalar mass loss, their centered basis
constants satisfy
\begin{equation}\label{h2:eq:selective-bound}
K_{\rm sel}^2\le1+
\left(\sqrt{K_s^2-1}+\sqrt{(1+2\epsilon)K_V^2-1}\right)^2.
\end{equation} If $\rho$ is invariant under
simultaneous inversion and $e(-s)=\overline{e(s)}$, then the
weights may also be chosen invariant under simultaneous inversion.
\end{proposition}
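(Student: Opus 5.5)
The plan is to build the new family from the outline's polynomials $h=\sum_k c_k\phi_k g_k$ and then apply the rare-set and copying mechanism of \Cref{h2:sec:scalar}, now with $J$ playing the role of the scalar $Z$. Write $e=\sum_{k=1}^{n}c_k\phi_k$ in the given basis of distinct characters $\phi_k$ of $V$. For each $k$ we take an independent copy of the scalar system of \Cref{h2:cor:tune}, in its own torus variables $y_k$, with weight $w_k$ and nonconstant character list $(\chi_{k,l})_l$. The new characters are $\phi_k\chi_{k,l}$, listed row by row in the order of the basis of $V$. Their conditional means are $\overline{a_{k,l}}\phi_k\in V$. The centered parts of different rows are orthogonal, since they live in independent variables and $|\phi_k|=1$. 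Within row $k$ the centered Gram is $C_k$, the same as in the scalar system, because multiplication by the unimodular $\phi_k$ is unitary.

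Consequently $J=\sum_k Z_k\, \phi_k\otimes\overline{\phi_k}$, a sum of rank-one operators with weights $Z_k=a_k^*C_k^{-1}a_k$. This is not yet of the form \eqref{h2:eq:selective-J}. To obtain it, I would first precompose the scalar systems so that the row means are correlated: use a single common carrier polynomial. Concretely, replace the product over rows by one scalar system whose mean-one polynomial $g$ multiplies all of $e$, that is, use characters $\phi_k\chi_l$ with the same $\chi_l$ across rows. A block computation should then show that $T^*$ restricted to the relevant directions sees only $e$, so that $J=Z\,P_e$ exactly on $V$ up to a small part $cP_{V\cap e^\perp}$. That small part comes from the variance-mixing between rows and is controlled by the inverse-diagonal bounds \eqref{h2:eq:scalar-extra}.

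The key ratios are then produced by the rare-set operation of \Cref{h2:ss:rare} followed by copying. Applying the weight only on a set of measure $\vartheta$ scales $j$ and $1-M$ linearly in $\vartheta$, by \eqref{h2:eq:raregain}. The off-direction part $c$, however, scales like $\vartheta$ times the leakage, and that leakage can be made $o(1)$ relative to $Z$ by choosing the scalar mass loss $\epsilon$ small. Hence $c/j\to0$ and $(1-M)/j\asymp\epsilon/Z\to0$, using $Z\ge c_d$ while $\epsilon\to0$. Finally, taking $\vartheta\to0$ gives $j\to0$. Evenness is inherited: if each scalar weight is inversion invariant and $e(-s)=\overline{e(s)}$, the combined weight can be symmetrized without changing the Gram matrices.

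For the basis constant \eqref{h2:eq:selective-bound}, a centered partial sum ending in row $k$ factors as the centered scalar partial sum in the new variables (squared norm at most $K_s^2$, via \Cref{h2:lem:split}) composed with a coordinate cut in $V$ (constant $K_V$). The inflation factor $1+2\epsilon$ comes from the inverse diagonal entries in \eqref{h2:eq:scalar-extra}. Writing each projection as identity plus skew part, as in \eqref{h2:eq:projection-skew}, and adding the two skew contributions by the triangle inequality gives the stated bound. I expect the main obstacle to be the exact diagonalization $J=cP_{V\cap e^\perp}+jP_e$: making the off-$e$ leakage both exactly a multiple of $P_{V\cap e^\perp}$ and small relative to $j$. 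If exactness fails, I would add a compensating independent family tuned by \Cref{h2:cor:tune} to isotropize the complement.
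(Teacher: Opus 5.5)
There is a genuine gap, and it is the point you flag as the main obstacle: nothing in your construction distinguishes $e$ from the other directions of $V$.

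Take your common-carrier version, with characters $\phi_k\chi_l$ and one scalar system whose weight does not depend on $s$. Its mean map is $T=\Sigma_V\otimes a^*$, where $\Sigma_V$ synthesizes the $\phi_k$ and $a$ is the scalar mean vector. Its centered Gram is the tensor product $S\otimes C_{\rm sc}$ of the Gram $S$ of $(\phi_k)$ in $L^2(\rho)$ and the centered scalar Gram $C_{\rm sc}$. Hence
\[
J=T\,(S\otimes C_{\rm sc})^{-1}\,T^*=\bigl(\Sigma_VS^{-1}\Sigma_V^*\bigr)\bigl(a^*C_{\rm sc}^{-1}a\bigr)=Z\,P_V
\]
exactly, so $c=j$. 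There is no ``variance-mixing'' leakage to make small. The means of the individual characters $\phi_k\chi_l$ span all of $V$, and an $s$-independent weight gives the Gram no preference for $e$. The rare-set operation and independent copying keep this tensor structure and only rescale $Z$. Letting $\epsilon\to0$ does not move the ratio $c/j=1$ either.

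Your fallback also fails. For conditionally independent families the operators $J$ add, so appending a family can only increase $J$ on $V\cap e^\perp$, never compensate it, unless that family is already selective for $e$. Note that $j\to0$ and $(1-M)/j\to0$ are already available in the isotropic setting from \Cref{h2:cor:tune}. The whole content of the proposition is $c/j\to0$.

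The missing idea is an auxiliary weight that depends on the old variables through $e$ itself. The paper adds circle variables $z_1,\ldots,z_L$, writes $\zeta_\ell=e^{2\pi iz_\ell}$, and uses
\[
V_1(s,z)=m2^L\prod_{\ell=1}^L\tfrac12\bigl[1+2\kappa\operatorname{Re}\bigl(e(s)e^{-2\pi iz_\ell}\bigr)\bigr],\qquad\kappa=\frac{1}{4\norm{e}_\infty},
\]
so that $\E[\zeta_\ell\mid s]=\kappa e(s)$. Take the row family consisting of the basis of $V$ followed by $\zeta_1,\ldots,\zeta_L$. Its unnormalized Gram $B_s$ equals $m$ on $V\cap e^\perp$, but has an eigenvalue $m\gamma$, with $\gamma\asymp1/L$, whose eigenvector carries almost all of $e$.

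The combined weight $W=w_{\rm aux}+(1-w_{\rm aux})V_1$ replaces $w_{\rm aux}$ by $\beta+(1-\beta)w_{\rm aux}$ at each eigenvalue $\beta$ of $B_s$. With the new characters $\phi_j\chi_k$ and $\zeta_\ell\chi_k$, the unnormalized squared-projection operator on $V$ is therefore $P_V\Phi(B_s)P_V$, where $\Phi$ is the decreasing response function in \eqref{h2:eq:response}. This gives the exact form $J=cP_{V\cap e^\perp}+jP_e$, with $c=\Phi(m)/(M-\Phi(m))$ and $j$ comparable to $\Phi(m\gamma)$.

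The ratios then come from a scale-selection argument. Take $L\approx\log(1/\epsilon)/(4\log2)$ and $m$ on the geometric grid $\epsilon^2\gamma^{-k}$ between $\epsilon^2$ and $\sqrt\epsilon$. The bounds \eqref{h2:eq:response-endpoints}--\eqref{h2:eq:response-ratio} produce an $m$ with $\Phi(m)\le\gamma^{1/16}\Phi(m\gamma)$ and $\sqrt\epsilon<\Phi(m\gamma)\le\epsilon^{1/4}$. Hence $c/j\to0$, $j\to0$, and $(1-M)/j\to0$ for $M=1-(1-m)\epsilon$.

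Your skew-part argument for \eqref{h2:eq:selective-bound} is in the right spirit. In the paper, though, it is applied to this $s$-dependent family in column order: auxiliary character outside, rows $\phi_1,\ldots,\phi_d,\zeta_1,\ldots,\zeta_L$ inside. Complete columns are bounded by $K_s$, and a row prefix in the current column is bounded by $\sqrt{1+2\epsilon}\,K_V$ via $(B^{-1})_{kk}\le1+2\epsilon$.
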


Fix $\sigma>0$. For each system in the proposition, take
$n=\lceil\sigma/j\rceil$ copies using separate new variables
and the same original variables $s$. Equip these variables with
the product of the conditional densities, so that the copies
are independent conditional on $s$. Then
\begin{equation}\label{h2:eq:selective-repetition}
nj\longrightarrow\sigma,\quad
nc\longrightarrow0,\quad n(1-M)\longrightarrow0.
\end{equation}
The centered spaces of different copies are orthogonal, so
their operators $J$ add. The operator for the combined family
is therefore $nJ$, and $nJ\to\sigma P_e$ in operator norm.
If $F_n$ denotes the combined character span, then
\eqref{h2:eq:info} gives
$$
\norm{P_{F_n}e}^2\longrightarrow\frac{\sigma}{1+\sigma},
\qquad
\norm{P_{F_n}v}^2\longrightarrow0
\quad(v\in V\cap e^\perp).
$$
Moreover, the total loss of mass is
$1-M^n\le n(1-M)\to0$.
These are the properties used in \Cref{h2:sec:extension}.
The effect of repetition is illustrated in
\Cref{fig:selective-repetition}.

\begin{figure}[htbp]
\color{black}
\centering
\begin{tikzpicture}[x=1cm,y=1cm,font=\small,line cap=round,
 every node/.style={text=black,inner sep=1pt}]
\node at (2.9,4.0) {One family};
\node at (11.55,4.0) {$n=\lceil\sigma/j\rceil$ independent copies};
\foreach \x in {0.4,8.9}{
 \draw[draw=black,line width=0.6pt,-{Stealth[length=4pt,width=3pt]}]
 (\x,0.55) -- (\x+5.1,0.55);
 \draw[draw=black,line width=0.6pt,-{Stealth[length=4pt,width=3pt]}]
 (\x,0.55) -- (\x,3.7);
}
\path[draw=black,fill=black!18,line width=0.6pt] (1.15,0.55) rectangle (2.35,1.27);
\path[draw=black,fill=black!4,line width=0.6pt] (3.65,0.55) rectangle (4.85,0.60);
\node at (1.75,1.55) {$j$};
\node at (4.25,0.88) {$c$};
\node at (1.75,0.15) {$e$};
\node at (4.25,0.15) {$V\cap e^\perp$};
\path[draw=black,fill=black!18,line width=0.6pt] (9.65,0.55) rectangle (10.85,3.43);
\path[draw=black,fill=black!4,line width=0.6pt] (12.15,0.55) rectangle (13.35,0.75);
\draw[draw=black,line width=0.6pt,densely dashed] (8.9,3.2) -- (11.25,3.2);
\node[anchor=east] at (8.72,3.2) {$\sigma$};
\node at (10.25,3.68) {$nj\to\sigma$};
\node at (12.75,1.03) {$nc\to0$};
\node at (10.25,0.15) {$e$};
\node at (12.75,0.15) {$V\cap e^\perp$};
\draw[draw=black,line width=0.6pt,-{Stealth[length=4pt,width=3pt]}]
 (5.65,2.15) -- (8.2,2.15);
\node at (6.925,2.55) {Repeat};
\node at (2.9,-0.55) {$c/j\to0$};
\node at (11.4,-0.55) {$n(1-M)\to0$};
\end{tikzpicture}
\caption{Effect of repetition on $J$. The bars show
the eigenvalues of $J$ on $\spanop\{e\}$ and $V\cap e^\perp$,
and those of the sum $nJ$ for conditionally independent copies. Repetition raises the
coefficient on $e$ to a quantity tending to $\sigma$, while the
coefficient on the orthogonal directions and the total loss of
weight tend to zero. Hence $nJ\to\sigma P_e$. The bar heights are
schematic; squared projection norms are given by $J(I+J)^{-1}$,
as in \eqref{h2:eq:info}.}
\label{fig:selective-repetition}
\end{figure}

\subsection{Adding a constant to the scalar weight}

 Choose $0<d\le1$ with $1+d<K_s$ and take scalar systems
from \Cref{h2:lem:scalar} for this fixed $d$, writing $w_{\rm aux}$
for their weights, $\epsilon$ for their mass losses, and $G,b$ for
the blocks in \eqref{h2:eq:scalar-gram}. Define
\begin{equation}\label{h2:eq:response}
\Phi(t)=(1-t)^2b^*[tI+(1-t)G]^{-1}b,
\quad0\le t\le1.
\end{equation}
This is the squared norm, before normalizing the measure, of the
projection of the constant onto the nonconstant character span
when $w_{\rm aux}$ is replaced by
$t+(1-t)w_{\rm aux}$. The modified weight is bounded below by $t$. The function $\Phi$ is decreasing: after diagonalizing $G$,
it is a sum of nonnegative multiples of
$(1-t)^2/(t+(1-t)\lambda)$ with $0<\lambda\le1$.

There is a constant $c_0>0$, depending on the fixed scalar bound, such that, for all sufficiently small
$\epsilon$ in our sequence,
\begin{align}
\Phi(\epsilon^2)&\ge c_0/4,&
\Phi(\sqrt\epsilon)&\le\sqrt\epsilon,\label{h2:eq:response-endpoints}\\
\Phi(Rt)&\ge\frac{\Phi(t)}{4R}
&&\text{if }R\ge1\text{ and }Rt\le1/2.
\label{h2:eq:response-ratio}
\end{align}
Indeed, \eqref{h2:eq:scalar-capture} gives
$\Phi(0)=(1-\epsilon)Z/(1+Z)\ge c_0$.
Since $G\ge\epsilon^2I$, inserting $t=\epsilon^2$ in
\eqref{h2:eq:response} decreases this by at most a factor four for
small $\epsilon$. Also, the nonconstant Fourier coefficients of
$w_{\rm aux}$ are the negatives of those of $1-w_{\rm aux}$.
Bessel's inequality for Haar measure and $0\le1-w_{\rm aux}\le1$ give
$$
\norm{b}^2\le\int(1-w_{\rm aux})^2\le\epsilon.
$$
This proves the upper bound at $\sqrt\epsilon$. Finally,
$$
RtI+(1-Rt)G\le R[tI+(1-t)G]
$$
and $(1-Rt)^2/(1-t)^2\ge1/4$ imply
\eqref{h2:eq:response-ratio}.

\subsection{Construction of an auxiliary weight}\label{h2:ss:seed}

We next construct a weight that distinguishes $e$ from the
other directions in $V$. Its Gram matrix will have a small
eigenvalue whose eigenvector is close to $e$. Adding several
characters with the same conditional average produces this
matrix. We write $\norm{e}_\infty$ for the supremum over all
the variables $s$; it is finite because $e$ is a finite character
polynomial. We set
$$
\kappa=\frac{1}{4\norm{e}_\infty},\quad
\theta=\kappa^2,\quad a=1-\theta.
$$
Since $\rho$ is a probability measure and $\norm{e}_\rho=1$,
we have $\norm{e}_\infty\ge1$. Hence $0<\theta\le1/16$ and
$a\ge15/16$.
For $L$ additional circle variables $z_1,\ldots,z_L$, define
\begin{equation}\label{h2:eq:seed}
V_0(s,z)=\prod_{\ell=1}^L
\frac{1}{2}\bigl[1+2\kappa\operatorname{Re}
 (e(s)e^{-2\pi iz_\ell})\bigr].
\end{equation}
Every factor lies between $1/4$ and $3/4$. Thus $V_0$ is strictly
positive, bounded above by one, and has conditional mass $2^{-L}$.
Multiplying it by $m2^L$ gives a weight $V_1$ of any mass
$0<m\le2^{-L}$, still with $0<V_1\le1$.

Write $\zeta_\ell=e^{2\pi iz_\ell}$. Under the probability measure with density $V_1/m$, direct integration gives
\begin{equation}\label{h2:eq:seed-moments}
\E[\zeta_\ell |  s]=\kappa e(s),\quad
\E[\zeta_\ell\overline{\zeta_k} |  s]
=\kappa^2|e(s)|^2\quad(\ell\ne k).
\end{equation}
For the second identity, the variables $z_\ell$ are conditionally
independent. Consider the row family consisting of the given basis
of $V$ followed by $\zeta_1,\ldots,\zeta_L$. Under the reference
measure $\rho\otimes dz$, the characters $\zeta_\ell$ are orthonormal and
orthogonal to $V$. In reference orthonormal coordinates, its unnormalized
Gram $B_s$ has eigenvalue $m$ on $V\cap e^\perp$ and
eigenvalue $ma$ on the differences $\zeta_\ell-\zeta_k$. On the remaining two
dimensions, spanned by $e$ and $L^{-1/2}\sum_\ell\zeta_\ell$, it is
\begin{equation}\label{h2:eq:seed-matrix}
mH_L=m\begin{pmatrix}1&s_L\\s_L&a+s_L^2\end{pmatrix},
\quad s_L^2=L\theta.
\end{equation}
These entries follow from \eqref{h2:eq:seed-moments} and $\norm{e}_\rho=1$.
Moreover, $0<B_s\le I$, because the weight $V_1$ is a positive
contraction relative to the reference measure.

Let $\gamma<\Gamma$ be the eigenvalues of $H_L$. Its determinant
is $a$, and its trace is $1+a+L\theta$. Hence, as $L\to\infty$,
$$
\gamma\Gamma=a,\quad \Gamma\asymp L,\quad
R:=\gamma^{-1}\asymp L.
$$
The squared component of $e$ in the $\gamma$ eigenspace is
\begin{equation}\label{h2:eq:spectral-mass}
\omega=\frac{\Gamma-1}{\Gamma-\gamma}\longrightarrow1.
\end{equation}
For example, this follows by writing
$1=\ip{H_Le}{e}=\omega\gamma+(1-\omega)\Gamma$.
Thus the projection of $e$ onto the eigenspace corresponding to
$m\gamma$ has squared norm tending to one, whereas $B_s=mI$ on
$V\cap e^\perp$.

\subsection{Choosing the parameters}

We now choose $L$ and $m$ so that $\Phi(m\gamma)$ tends to
zero but is much larger than both $\Phi(m)$ and $\epsilon$.
The comparison with $\Phi(m)$ separates the direction $e$
from $V\cap e^\perp$. The comparison with $\epsilon$ will
make the loss of weight small enough after repetition.
We keep $e$, and hence $\theta>0$, fixed and let the scalar
mass loss $\epsilon$ tend to zero. We choose
$$
L=\left\lfloor\frac{\log(1/\epsilon)}{4\log2}\right\rfloor.
$$
For small $\epsilon$, \eqref{h2:eq:spectral-mass} gives $\omega\ge3/4$.
Consider the grid $t_k=\epsilon^2R^k$, ending at its first value
at least $\sqrt\epsilon$. Every grid point is at most
$R\sqrt\epsilon=o(\epsilon^{1/4})\le2^{-L}$.

Let $k_A$ and $k_B$ be the first indices where $\Phi(t_k)$ is at most
$\epsilon^{1/4}$ and $\epsilon^{1/2}$, respectively. They exist by
\eqref{h2:eq:response-endpoints}. By \eqref{h2:eq:response-ratio},
for sufficiently small $\epsilon$ we have $k_A<k_B$ and
\begin{equation}\label{h2:eq:crossings}
\frac{\Phi(t_{k_A})}{\Phi(t_{k_B})}
>\frac{\epsilon^{-1/4}}{4R}.
\end{equation}
There are at most $3\log(1/\epsilon)/(2\log R)+1$ grid steps.
It follows that some $k_A<k\le k_B$ satisfies
$$
\frac{\Phi(t_k)}{\Phi(t_{k-1})}\le R^{-1/16}.
$$
Indeed, otherwise the logarithm of the ratio in
\eqref{h2:eq:crossings} would be less than
$3\log(1/\epsilon)/32+O(\log R)$, whereas
\eqref{h2:eq:crossings} makes it greater than
$\log(1/\epsilon)/4-\log(4R)$.

Set $m=t_k$ and $u=\Phi(m\gamma)=\Phi(t_{k-1})$. Then
\begin{equation}\label{h2:eq:window}
\sqrt\epsilon<u\le\epsilon^{1/4},\quad
\frac{\Phi(m)}{u}\to0,\quad \frac{\epsilon}{u}\to0.
\end{equation}
The bound $m\le2^{-L}$ permits the weight $V_1$ of mass $m$ from
\Cref{h2:ss:seed}. Thus the weight is available at the chosen
$m$, and \eqref{h2:eq:window} gives the two comparisons we need.

\subsection{The combined weight and its means}

Let $d=1-w_{\rm aux}$. We combine the two weights by setting
\begin{equation}\label{h2:eq:combined-kernel}
W=w_{\rm aux}+dV_1.
\end{equation}
The auxiliary scalar variables are independent of the variables $z_\ell$.
The weight is a positive trigonometric polynomial, $0<W\le1$, and
its conditional mass relative to the original variables $s$ is
\begin{equation}\label{h2:eq:combined-mass}
M=1-(1-m)\epsilon.
\end{equation}
For each nonconstant auxiliary character $\chi_k$, retain every
product $\phi_j\chi_k$ and $\zeta_\ell\chi_k$, where $(\phi_j)$
is the given character basis of $V$. Order the list by auxiliary
column $k$, and within each column by the row order
$(\phi_1,\ldots,\phi_d,\zeta_1,\ldots,\zeta_L)$.

Put $\mu_k=\int\chi_kw_{\rm aux}$. These moments are real by the
evenness in the scalar construction. Integrating
\eqref{h2:eq:combined-kernel} gives
\begin{align}
\E_W[\phi_j\chi_k |  s]
&=\frac{(1-m)\mu_k}{M}\phi_j(s),\label{h2:eq:carrier-mean}\\
\E_W[\zeta_\ell\chi_k |  s]
&=-\frac{m\kappa\mu_k}{M}e(s).
\label{h2:eq:seed-mean}
\end{align}
The minus sign in the second formula comes from
$\int\chi_kd=-\mu_k$. In particular, the means of \emph{all}
retained characters lie in $V$, including the rows indexed by $\zeta_\ell$.

Identify the coefficient space with the tensor product of the
reference row space and the auxiliary coefficient space. After
orthonormalizing only the row coordinates, the unnormalized Gram
of the new characters is
$$
B_s\otimes I+(I-B_s)\otimes G.
$$
The cross-moment map from a vector of $V$ is given in the same
coordinates by $(I-B_s)$ and the auxiliary moment vector $b$.
At an eigenvalue $\beta$ of $B_s$, the Gram is therefore
$\beta I+(1-\beta)G$, with cross-moment vector $(1-\beta)b$.
By \eqref{h2:eq:response}, the operator giving squared projection norms before normalization on $V$ is
$$
P_V\Phi(B_s)P_V.
$$
Here, $\Phi(B_s)$ is obtained by applying $\Phi$ to the
eigenvalues of the positive matrix $B_s$. The compression $P_V\Phi(B_s)P_V$ acts as $\Phi(m)$
on $V\cap e^\perp$ and as
\begin{equation}\label{h2:eq:v-capture}
v=\omega \Phi(m\gamma)+(1-\omega)\Phi(m\Gamma)
\end{equation}
on $e$. Since $\Phi$ is decreasing and $\omega\ge3/4$,
$3u/4\le v\le u$.

Normalizing the measure divides this operator by
$M$. The centered Gram is positive definite: under a strictly
positive conditional density, a nonzero combination of distinct
characters with nonzero new-variable frequencies cannot be a function of $s$ alone. Since the entire range of the mean map lies in $V$,
\eqref{h2:eq:info} now gives
$$
J=cP_{V\cap e^\perp}+jP_e,
\quad c=\frac{\Phi(m)}{M-\Phi(m)},\quad j=\frac{v}{M-v}.
$$
The denominators are positive, either by positive definiteness of
the centered Gram or directly from $\Phi(m),v\to0$ and $M\to1$.
All the limits in \eqref{h2:eq:selective-J} follow from
\eqref{h2:eq:window} and \eqref{h2:eq:combined-mass}.

\subsection{Partial sum estimates in the prescribed order}

It remains to control partial sums in the column order specified above.
 Temporarily include the auxiliary constant column with all rows.
At each eigenvalue $0<\beta\le1$ of $B_s$, the augmented auxiliary
Gram is $\beta I+(1-\beta)B$. By
\eqref{h2:eq:gram-prefix}, initial projections retaining only complete columns have norm
at most $K_s$. Inverse convexity gives
 $$
 (\beta I+(1-\beta)B)^{-1}
 \le\beta I+(1-\beta)B^{-1}.
 $$
 For a nonconstant auxiliary column $k$, retain the sharper
nonconstant diagonal bound in \eqref{h2:eq:scalar-extra}.
 If $S_V$ is the reference Gram of the original row basis,
 $\Omega=\operatorname{diag}(S_V,I_L)$, $H$ is the full augmented
 Gram, and $I_k$ embeds the row coefficient space in column $k$, then
 \begin{equation}\label{h2:eq:column-estimates}
 I_k^*HI_k\le\Omega,\quad
 I_k^*H^{-1}I_k \le(1+2\epsilon) \Omega^{-1}.
\end{equation}
The first inequality also follows from $W\le1$.

For a cut inside this column, let $P$ retain all preceding complete
columns and let $R$ retain the chosen initial rows of column $k$,
 annihilating all other coordinates. These are coordinate projections,
so $PR=RP=0$ and $P+R$ is a projection. We have
$$
 \norm{P}\le K_s,\quad
 \norm{R}\le\sqrt{1+2\epsilon}\,K_V.
$$
Indeed, coefficient extraction into the $\Omega$ metric has norm
at most $\sqrt{1+2\epsilon}$ by \eqref{h2:eq:column-estimates},
 the row prefix has norm at most $K_V$, and synthesis has norm at
most one. The characters $\zeta_1,\ldots,\zeta_L$ are orthonormal and
orthogonal to $V$ with respect to the reference measure.
Thus, including them does not increase the bound $K_V\ge1$
for the initial projections in the row order.

By \eqref{h2:eq:projection-skew}, every nonzero projection satisfies $\norm{Q}^2=1+\norm{Q-Q^*}^2$. Applying this identity to $P+R$ and using
the triangle inequality for skew-adjoint parts yields
 \begin{equation}\label{h2:eq:selective-prefix}
 \norm{P+R}^2
 \le1 + \left(\sqrt{K_s^2-1}
 +\sqrt{(1+2\epsilon)K_V^2-1}\right)^2.
\end{equation}
 If $P=0$ or $R=0$, the same estimate follows directly
from the bound for the other projection.

Delete the rows indexed by $\zeta_1,\ldots,\zeta_L$
from the auxiliary constant column. The remaining coordinate span
is invariant under the corresponding partial sum projections,
so restriction does not increase their norms. Its initial block
is precisely $V$. Each partial sum projection associated with the
new family fixes $V$ pointwise and therefore induces an operator
of no larger norm on the quotient by $V$.
Since all conditional means lie in $V$, conditional expectation
is the orthogonal projection of each new character onto $V$.
The induced operators are therefore the partial sum projections
of the centered family.

 This proves \eqref{h2:eq:selective-bound}. Cuts inside the
 auxiliary constant column are not needed, so the weaker estimate
for $(B^{-1})_{00}$ causes no loss.

Finally, suppose that $e(-s)=\overline{e(s)}$. Each factor in
\eqref{h2:eq:seed} is unchanged by $(s,z)\mapsto(-s,-z)$, so the
same is true of $V_1$. The scalar weight $w_{\rm aux}$ is even
by construction. Formula \eqref{h2:eq:combined-kernel} therefore
gives a weight invariant under simultaneous inversion. This
completes the proof of \Cref{h2:lem:selective}.

\section{One stage of the construction}\label{h2:sec:extension}

We now combine the finite systems to extend an arbitrary finite
character basis on the line. The extension reduces the distance to
one prescribed character, with a basis bound arbitrarily close to one
 for the new family. The loss of weight is measured against a fixed reference
density, so the same quantity can be summed over the infinite
construction.

 Fix $0<\varepsilon\le1$ and put
\begin{equation}\label{h2:eq:extension-parameters}
u=\varepsilon/100,\quad K_*=1+10u,\quad
c_*=\frac{u}{1+2u},\quad \vartheta_\varepsilon=1-c_*/4<1.
\end{equation}

 \begin{proposition}\label{h2:prop:extension}
Let $w>0$ almost everywhere be even and integrable on $\R$, and let a finite
ordered list of distinct characters span $E$ with basis constant $K$
in $L^2(w)$. Let $f$ be a character outside $E$, and let $w_0$ be
a fixed probability density on $\R$. For every $\delta,\tau>0$,
there are an even positive trigonometric polynomial $0<W\le1$ on $\R$
and a finite appended list of distinct new characters such that,
with $w'=wW$ and $E'$ the enlarged span,
\begin{equation}\label{h2:eq:extension-conclusions}
\int_\R w_0(1-W)<\delta,\quad
\dist_{w'}(f,E')^2\le\frac{1}{2}\dist_w(f,E)^2.
\end{equation}
The basis constant of the concatenated list in $L^2(w')$ is at most
 $\max\{K,K_*\}+\tau$. If the old frequencies and the frequency
of $f$ are rational, the new frequencies can also be rational.
Moreover, given $B>0$, they can have distance greater than $B$
from zero, from all old frequencies, and from each other.
\end{proposition}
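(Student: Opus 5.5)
We follow the outline of \Cref{h2:sec:weighted}. We first work on $\R\times\T^D$ with auxiliary variables, and only at the end substitute $y=A\omega t$ to return to the line. Normalize $w$ to a probability measure $\rho$; this changes neither projections nor basis constants. Set $r=f-P_Ef$, $e_r=r/\norm{r}$, and expand $r=\sum_{j=1}^dc_j\phi_j$ in distinct characters.

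\emph{Carrier step.} For each $j$ we use \Cref{h2:cor:tune}, in separate variables, to pick a polynomial $g_j=\sum_k a_{jk}\chi_{jk}$ of mean one. The value of $Z$ is prescribed so that the variance of $g_j$ is a chosen multiple of $|c_j|^{-2}$. We then form $h=\sum_jc_j\phi_jg_j$. Its conditional mean is $r$, and the centered parts of different $j$ are orthogonal. Choosing the variances large makes the centered part dominate, so the individual characters $\phi_j\chi_{jk}$ have basis constant close to one, uniformly in the size of the $c_j$. This is the role of \eqref{h2:eq:carrier-gram}, and it relies on the centered basis constants $<1+d$ from \Cref{h2:cor:tune}.

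\emph{Averaging step.} Put $e=h/\norm{h}$, so its mean is $\alpha e_r$. We take $N$ conditionally independent copies $e_i=\alpha e_r+\xi_i$, with $N$ large enough that $\frac{1}{N\alpha}\sum e_i$ approximates $e_r$ with squared error at most a fixed fraction.

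\emph{Selective step.} For each $i$, apply \Cref{h2:lem:selective} with $V=V_i$ (the character span of $e_i$), taking $K_s$ close to one, and repeat it $n=\lceil\sigma/j\rceil$ times as in \eqref{h2:eq:selective-repetition}. Because the $J$ operators of conditionally independent families add, and the conditional means live in $V_i$, identity \eqref{h2:eq:info} gives the following. The combined span $F$ captures each $e_i$ with squared norm close to $\sigma/(1+\sigma)$, captures the directions $V_i\cap e_i^\perp$ negligibly, and loses mass $1-M^n\to0$. Taking $\sigma$ large and iterating the whole carrier/averaging/selective block finitely many times on the normalized residual yields $\dist(f,E')^2\le\frac12\dist(f,E)^2$. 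By Fubini, the mass loss against $w_0$ stays below $\delta$.

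\emph{Partial sums.} A partial sum ending among the old characters is the old projection composed with an almost-orthogonal splitting. Since the new span is nearly orthogonal to $E$ (the $J$ on $E$-directions is small), its norm is at most $K+\tau$. A partial sum ending inside a new family is controlled by \eqref{h2:eq:selective-bound}, with $K_V$ close to one from the carrier step and $\epsilon$ small, together with the copy estimate \Cref{h2:lem:copies} for concatenating independent copies whose cross terms into $E$ are small. Both contributions stay below $K_*$. To combine the two, I would use $\norm{Q}^2=1+\norm{Q-Q^*}^2$ from \eqref{h2:eq:projection-skew}, which adds the skew parts. This is the main obstacle: it requires the total projection of the new span onto $E$, and onto the other carriers, to be $O(\tau)$, uniformly over all cuts.

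\emph{Transfer to the line.} Replace $y$ by $A\omega t$, where $\omega$ is an integer vector separating all the finitely many frequency combinations. By Riemann--Lebesgue, the finitely many Gram entries converge as $A\to\infty$. The strict inequalities therefore persist, distinctness is preserved, the new frequencies $\nu+Ak\cdot\omega$ are rational whenever $\nu$ is, and they are separated by more than $B$. Evenness of $W$ follows from the invariance of all weights under simultaneous inversion, which holds because $e(-s)=\overline{e(s)}$ for the real-even data.
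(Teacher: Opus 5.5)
\textbf{The gap is the choice ``take $\sigma$ large''}, together with taking $N\alpha^2$ large so that $\frac{1}{N\alpha}\sum_ie_i$ nearly reproduces $e_r$. The near-one bounds you cite, \eqref{h2:eq:selective-bound} and \Cref{h2:lem:copies}, apply to the \emph{centered} families. The appended characters have Gram matrix $C+T^*T$, and the only way back to them is $C\le C+T^*T\le(1+\norm{J})C$. Equivalently, \Cref{h2:lem:copies} needs $Q>1+Z$, where $Z$ is the total capture. Here $J\to A=\sigma\sum_iP_{e_i}$ with $\norm{A}=\sigma(1+(N-1)\alpha^2)$, which your choices make large.

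This is a real obstruction, not just a lossy estimate. Fix $i$ and the $n=\lceil\sigma/j\rceil$ repetitions of its selective family, each with $J_\ell=jP_{e_i}+cP_{V_i\cap e_i^\perp}$. In repetition $\ell$, let $y_\ell$ be the combination of its characters whose centered part is a unit vector and whose mean is $\sqrt{j}\,e_i$. Centered parts of different repetitions are orthogonal, so $\norm{y_\ell}^2=1+j$ and $\ip{y_\ell}{y_{\ell'}}=j$ for $\ell\ne\ell'$. Take the partial sum stopping after the first $m=\lfloor n/2\rfloor$ repetitions, and test it on $\sum_{\ell\le m}y_\ell+b\sum_{m<\ell\le 2m}y_\ell$ with $s=mj\approx\sigma/2$ and $b=-s/(1+s)$. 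Its squared norm is at least $(1+s)^2/(1+2s)=1+s^2/(1+2s)$. This is about $\sigma/4$ for large $\sigma$, and it already exceeds $K_*^2=(1+\varepsilon/10)^2$ for $\sigma$ of order one when $\varepsilon$ is small. The proposition must hold with $K=1$, as at the start of the assembly, so the bound $\max\{K,K_*\}+\tau$ is lost.

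The missing idea is to keep each block's capture small and let the iteration do the work. The paper takes $\sigma=u=\varepsilon/100$ and $N=\lceil\alpha^{-2}\rceil$. Then $\beta=N\sigma\alpha^2\in[u,u(1+\alpha^2))$ and $\norm{A}<2u$. From $A(I+A)^{-1}\ge A/(1+\norm{A})$ one gets $\ip{A(I+A)^{-1}e_r}{e_r}\ge c_*=u/(1+2u)$, independently of $\alpha$. Meanwhile $\norm{J}<3u$ gives $K_{\rm new}^2<(1+16u)(1+3u)<K_*^2$. Your averaging heuristic is the $\sigma\to\infty$ limit of the exact capture $\beta/(1+\sigma(1-\alpha^2)+\beta)$, which is precisely the regime to avoid. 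One application, including the transfer to the line, reduces the squared distance only by the factor $\vartheta_\varepsilon=1-c_*/4$. The whole construction is therefore repeated $m$ times with $\vartheta_\varepsilon^m\le1/2$, using budgets $\delta/m$ and $\tau/m$.

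The combination step you flag as the main obstacle is in fact easy. Since $J\to A$ and $AP_E=0$, we get $\rho_*^2=\norm{P_EJ(I+J)^{-1}P_E}\to0$. By \eqref{h2:eq:angle-comparison}, the norm on $E+F_{\rm new}$ is then equivalent to the orthogonal-sum norm, in which every partial sum is $P^{\rm old}\oplus0$ or $I\oplus P^{\rm new}$. A single $\rho_*$ handles all cuts and costs only the factor $\sqrt{(1+\rho_*)/(1-\rho_*)}$, so no skew-part bookkeeping is needed. The carrier characters $\phi_j\psi_{jk}$ are never appended, so nothing ``onto the other carriers'' needs control.
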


We first prove the proposition with the factor $1/2$ replaced by
$\vartheta_\varepsilon$. Repeating the construction finitely many
times then gives the stated estimate. The construction follows
the outline in \Cref{h2:sec:weighted}.

\subsection{An auxiliary character family for the residual}\label{h2:ss:carriers}

Normalize $\rho=w/\int w$ and set
$$
r=f-P_E^\rho f,\quad F=\norm{r}_\rho>0,\quad e_r=r/F.
$$
Write $r=\sum_{j=1}^d c_j\phi_j$ with distinct characters
$\phi_j$, and let $S$ be their Gram matrix in $L^2(\rho)$.
We choose
\begin{equation}\label{h2:eq:tag-z}
0<z< c_u, \quad z\norm{S}<u.
\end{equation}
For each $j$, take a scalar system from \Cref{h2:cor:tune} with
 $d=u$ and $Z=z$, using separate auxiliary variables for different $j$.
Denote its nonconstant characters by $(\psi_{jk})_k$.
Equip the product space with $\rho$ times these normalized scalar
measures. We consider the characters
\begin{equation}\label{h2:eq:intermediate-characters}
(\phi_j\psi_{jk})_{j,k},
\end{equation}
ordered by $j$ and then by the scalar character order. The
centered part of $\phi_j\psi_{jk}$ is
$\phi_j(\psi_{jk}-\mathbb E\psi_{jk})$. Since $|\phi_j|=1$,
its inner products within one $j$ are those of the scalar
centered characters. For different $j$, these inner products
vanish by independence and zero means. Thus, their centered
Gram matrix $C_\oplus$ is a direct sum.
If $T$ is their conditional mean map, \eqref{h2:eq:scalar-gain} gives
$$
TC_\oplus^{-1}T^*=z\sum_{j=1}^dP_{\phi_j},
\quad
\norm{TC_\oplus^{-1/2}}^2=z\norm{S}< u.
 $$
Here every $\phi_j$ has norm one, and the nonzero eigenvalues of
$\sum_jP_{\phi_j}$ are those of $S$. The full Gram matrix of this family is
therefore
\begin{equation}\label{h2:eq:carrier-gram}
H=C_\oplus+T^*T,\quad C_\oplus\le H \le(1+u)C _\oplus.
\end{equation}
In the $C_\oplus$ norm, every partial sum has norm less than $1+u$.
The norm comparison in \eqref{h2:eq:carrier-gram} therefore
gives a basis constant $K_V$ with $K_V^2\le(1+u)^3$. Although the
choice of $z$ depends on the old Gram matrix $S$, this final
bound does not. In particular, it remains valid when the old
characters are almost linearly dependent.

Choose in the $j$-th scalar system a polynomial $g_j$ of mean one and minimum
variance. If $a_j$ and $C_j$ are its scalar mean vector and
centered Gram matrix, its coefficient vector is
$C_j^{-1}a_j/z$. Indeed, its mean is one and its variance is
$a_j^*C_j^{-1}a_j/z^2=1/z$. This attains the minimum in
\eqref{h2:eq:scalar-gain}. We set
\begin{equation}\label{h2:eq:carrier-h}
h=\sum_{j=1}^d c_j\phi_jg_j,\quad
\E[h |  t]=r,\quad
\norm{h}^2=F^2+\frac{1}{z}\sum_{j=1}^d|c_j|^2=:D.
\end{equation}
The variance terms add because their conditional means are zero
and they use independent auxiliary variables. In particular,
\begin{equation}\label{h2:eq:alpha}
e=h/\sqrt D,\quad \E[e |  t]=\alpha e_r,\quad
\alpha=F/\sqrt D>0.
\end{equation}
The polynomial $e$ is a unit vector in the span of \eqref{h2:eq:intermediate-characters} and is
orthogonal to $E$. Although its character expansion depends on the
chosen scalar systems, the number $\alpha$ depends only on $r$ and
$z$. We can therefore make the scalar mass losses arbitrarily small
without changing $\alpha$.

The symmetry needed for a real generator is preserved here.
Because $w$ is even, all inner products of characters in $L^2(\rho)$
are real. The coefficients $c_j$ of $r=f-P_E^\rho f$ are therefore
real, so $r(-t)=\overline{r(t)}$. The even scalar weights also
have real means and real Gram matrices. The mnimizing polynomial
$g_j$ can consequently be chosen with real character coefficients.
It follows from \eqref{h2:eq:carrier-h} that
$h(-t,-y)=\overline{h(t,y)}$, and the same identity holds for $e$.

\subsection{Independent copies of the auxiliary system}\label{h2:ss:replicas}

Set $\sigma=u$ and choose
\begin{equation}\label{h2:eq:N-replicas}
N= \lceil\alpha^{-2}\rceil.
\end{equation}
Take $N$ independent copies of all the scalar systems in \Cref{h2:ss:carriers}.
Their normalized polynomials can be written as
\begin{equation}\label{h2:eq:replica-decomposition}
e_i=\alpha e_r+\xi_i,\quad
\xi_i\perp L^2(\rho),\quad
\xi_i\perp\xi_j\ (i\ne j),\quad
\norm{\xi_i}^2=1-\alpha^2.
\end{equation}
Thus the different copies have a common component in the direction
$e_r$ and mutually orthogonal remaining components.

Consider the positive operator
\begin{equation}\label{h2:eq:replica-A}
A=\sigma\sum_{i=1}^NP_{e_i},\quad \beta=N\sigma\alpha^2.
\end{equation}
Every $e_i$ is orthogonal to $E$, so $AP_E=0$. By
\eqref{h2:eq:replica-decomposition}, the Gram matrix of $(e_i)$
has diagonal entries $1$ and off-diagonal entries $\alpha^2$.
Its largest eigenvalue is $1+(N-1)\alpha^2$, so
\begin{equation}\label{h2:eq:A-norm}
\norm{A}=\sigma\bigl(1+(N-1)\alpha^2\bigr)
=\beta+ u (1-\alpha^2)< 2u,
\end{equation}
where we used $u\le\beta<u(1+\alpha^2)$. Since $A$ is positive,
$$
A(I+A)^{-1}\ge\frac{A}{1+\norm{A}}.
$$
Pairing with $e_r$ and using $\ip{Ae_r}{e_r}=\beta\ge u$ gives
\begin{equation}\label{h2:eq:tagcapture}
\ip{A(I+A)^{-1}e_r}{e_r}
\ge\frac{\beta}{1+\norm{A}}\ge \frac{u}{1+2u}=c_*.
\end{equation}

Equation \eqref{h2:eq:tagcapture} gives a positive lower bound
for $\ip{A(I+A)^{-1}e_r}{e_r}$ depending only on $\varepsilon$,
even when $\alpha$ is very small. This is why we use independent
copies. The number $N$
may depend on the residual, but the operator norm in
\eqref{h2:eq:A-norm} does not.

\subsection{Replacing the polynomials by new characters}

Let $V_i$ be the character span of the $i$-th copy of \eqref{h2:eq:intermediate-characters}. Its
basis constant satisfies $K_V^2\le(1+u)^3$. Apply \Cref{h2:lem:selective} to
$V_i$ and $e_i$ with $K_s=1+u$, using the product of $\rho$ and all the scalar measures for the variables already present.
Write $c_i,j_i,M_i$ for its parameters. Repeat its new character
family
$$
n_i=\lceil \sigma/j_i\rceil
$$
times in conditionally independent new variables, and concatenate
all these lists. The parts with conditional mean zero are orthogonal between
different copies. Their operators $J$ therefore add, as explained
after \eqref{h2:eq:info}, giving
\begin{equation}\label{h2:eq:J-limit}
J=\sum_{i=1}^N n_i
\bigl(c_iP_{V_i\cap e_i^\perp}+j_iP_{e_i}\bigr)
\longrightarrow A
\end{equation}
in operator norm as, for each $i$, we take successive systems
from the sequence in \Cref{h2:lem:selective}.

For each $i$, this is the repetition in
\eqref{h2:eq:selective-repetition}. There are only finitely many
$i$, and the total loss of weight satisfies
\begin{equation}\label{h2:eq:engine-cost}
\sum_{i=1}^N n_i(1-M_i)
\le\sum_{i=1}^N(\sigma+j_i)\frac{1-M_i}{j_i}\longrightarrow0.
\end{equation}
The number $N$ is fixed before these limits are taken.

 Choose the auxiliary mass losses so small that $2\epsilon\le u$.
Since $u\le1/100$, we have
$$
K_s^2-1\le3u,\quad
(1+2\epsilon)K_V^2-1\le(1+u)^4-1\le5u.
$$
 By \eqref{h2:eq:selective-bound}, the centered squared basis
constant of each new family is less than
$1+(\sqrt3+\sqrt5)^2u<1+16u$.
Orthogonality of their centered spaces gives the same bound for
the concatenation. By \eqref{h2:eq:J-limit}, we may choose the finite systems
so that $\norm{J}<3u$.
If $C$ is the full centered Gram and $T$ the full mean map, then
\begin{equation}\label{h2:eq:new-norm-comparison}
C\le C+T^*T \le(1+3u) C.
\end{equation}
Thus the actual new character family has basis constant satisfying
 \begin{equation}\label{h2:eq:new-constant}
 K_{\rm new}^2 < (1+16u)(1+3u)
=1+19u+48u^2\le1+20u<K_*^2.
 \end{equation}
We append the new characters from \Cref{h2:lem:selective}. The polynomials $e_i$ and the characters
in \eqref{h2:eq:intermediate-characters} served only to construct them.

\subsection{Combining the old and new spans}

Let $F_{\rm new}$ be the span of the new characters in the normalized
product measure. For unit vectors in $E$ and $F_{\rm new}$, let
$\rho_*$ be the supremum of the absolute inner product. By
\eqref{h2:eq:info},
\begin{equation}\label{h2:eq:angle}
\rho_*^2=\norm{P_EJ(I+J)^{-1}P_E}\longrightarrow0.
\end{equation}
The limit is zero because $J\to A$ and $AP_E=0$.
If $\rho_*<1$, then for $u\in E$ and $v\in F_{\rm new}$,
\begin{equation}\label{h2:eq:angle-comparison}
(1-\rho_*)(\norm{u}^2+\norm{v}^2)
\le\norm{u+v}^2
\le(1+\rho_*)(\norm{u}^2+\norm{v}^2).
\end{equation}
In the orthogonal sum norm, listing the old characters before the
new ones gives a basis constant at most $\max\{K,K_*\}$. Therefore
\eqref{h2:eq:angle-comparison} bounds its basis constant by
$$
\sqrt{\frac{1+\rho_*}{1-\rho_*}}\max\{K, K_* \}.
$$
We choose the finite systems so that this is less than
 $\max\{K,K_*\}+\tau/2$.

The approximation estimate follows from the same operator $J$. By \eqref{h2:eq:tagcapture} and \eqref{h2:eq:J-limit},
we may choose the finite systems so that the squared norm
of the orthogonal projection of $r$ onto $F_{\rm new}$
is greater than $(c_*/2)F^2$. Adjoining $E$ can only
improve approximation, and $f=P_E^\rho f+r$. Hence, in the normalized
product measure,
\begin{equation}\label{h2:eq:product-error}
\dist(f,E+F_{\rm new})^2< (1-c_*/2) F^2.
\end{equation}
The old marginal is still $\rho$, so the residual used here is
exactly the one fixed in \Cref{h2:ss:carriers}.

\subsection{The order of the choices and the loss of weight}

We check that the preceding choices give both the required
approximation and an arbitrarily small loss of weight. We first
fix $r$ and $z$. These determine $\alpha$ in
\eqref{h2:eq:alpha} and the number $N$ in
\eqref{h2:eq:N-replicas}. We then choose the scalar systems from
\Cref{h2:cor:tune} with sufficiently small total loss of mass.
Their variances remain $1/z$, so $\alpha$ and $N$ are unchanged.
This fixes every $V_i$ and $e_i$. Finally, we choose the systems
from \Cref{h2:lem:selective} and repeat them as above to obtain
\eqref{h2:eq:J-limit}, \eqref{h2:eq:product-error}, and
\eqref{h2:eq:engine-cost}. Thus, the spaces $V_i$, the vectors $e_i$,
and the number of copies $N$ are fixed before we choose the systems
used in \eqref{h2:eq:J-limit}.

Let $\mathcal W(t,y)$ denote the product of all the scalar weights
and all the subsequent conditional weights, with $y$ collecting
every auxiliary variable. It is a positive trigonometric polynomial
with $\mathcal W\le1$. The scalar weights are even and each
$e_i$ satisfies $e_i(-t,-y)=\overline{e_i(t,y)}$. The last
assertion of \Cref{h2:lem:selective} therefore makes $\mathcal W$
invariant under simultaneous inversion of all its variables. Each subsequent weight has constant
conditional mass relative to the product of the scalar spaces, and each scalar weight is
independent of $t$. Therefore
\begin{equation}\label{h2:eq:total-mass}
\int\mathcal W(t,y)\,dy=\mathfrak m
\end{equation}
is independent of $t$. More explicitly, if $m_j$ is the mass
of the $j$-th scalar weight in one copy, then
$$
\mathfrak m=\left(\prod_{j=1}^d m_j\right)^N
\prod_{i=1}^N M_i^{n_i},\quad
1-\mathfrak m\le N\sum_{j=1}^d(1-m_j)
 +\sum_{i=1}^N n_i(1-M_i).
$$
The first sum can be made arbitrarily small after $N$ is fixed,
and the second tends to zero by \eqref{h2:eq:engine-cost}.
In particular,
$$
\int_\R\int w_0(t)(1-\mathcal W(t,y))\,dy\,dt
=1-\mathfrak m
$$
can be made less than $\delta/2$.

The normalized product measure used above is
$\rho(dt)\mathcal W(t,y)\,dy/\mathfrak m$. Returning to the unnormalized
measure $w(t)\mathcal W(t,y)\,dt\,dy$ multiplies squared norms by
$(\int w)\mathfrak m$. Thus \eqref{h2:eq:product-error} becomes
\begin{equation}\label{h2:eq:raw-product-error}
\dist_{w\mathcal W}(f,E+F_{\rm new})^2
< (1-c_*/2) \mathfrak m\,\dist_w(f,E)^2
 \le(1-c_*/2) \dist_w(f,E)^2.
\end{equation}
This leaves a strict margin for the transfer to the line.

\subsection{Choosing the real frequencies}\label{h2:ss:transfer}

We now replace the finitely many auxiliary variables by functions
of $t$. Write each character as $e_\nu(t)e^{2\pi i k\cdot y}$,
where $k\in\Z^D$. We first check that the appended characters
have distinct nonzero vectors $k$. This will allow us to separate their frequencies on the line
while approximating the inner products of the product-space
construction arbitrarily closely.

Within one application of \Cref{h2:lem:selective}, the new
characters are $\phi_j\chi_k$ and $\zeta_\ell\chi_k$.
Here the $\phi_j$ belong to one copy of
\eqref{h2:eq:intermediate-characters}, so they already have distinct nonzero frequency vectors in their scalar variables. The $\zeta_\ell$ use separate variables, and the $\chi_k$ are distinct nonconstantt characters in a further set of variables. Different applications use disjoint sets of these last variables. Thus every new vector $k$ is nonzero, and the difference between any two new vectors is also nonzero.

Collect these vectors and their differences, together with every nonzero auxiliary frequency appearing when we expand the Gram entries, cross-moments with $f$, and reference loss integral. This is a finite subset of $\Z^D\setminus\{0\}$. Choose $\omega\in\Z^D$ whose inner product with each of these vectors is nonzero. For example, $\omega=(1,M,\ldots,M^{D-1})$ works for all but finitely many positive integers $M$, since each excluded equality is a nonzero polynomial equation in $M$. We substitute
\begin{equation}\label{h2:eq:transfer}
y=A\omega t,
\end{equation}
where $A$ tends to infinity through positive integers.
For $a\in L^1(\R)$ and any nonzero auxiliary frequency under
consideration, the Riemann--Lebesgue lemma gives
$$
\int_\R a(t)e^{2\pi i(\nu+A k\cdot\omega)t}\,dt
\longrightarrow0.
$$
The terms with $k=0$ are unchanged. Consequently, every required
moment converges to its product-space value.

At the same time, the absolute values of the new frequencies
$\nu+A k\cdot\omega$, their distances from the old list, and
their mutual distances tend to infinity. If the old frequencies and the target frequency are rational, then all the frequencies $\nu$ used in the finite construction are rational. Taking $A$ and $\omega$ integral therefore preserves rationality.

Set $W(t)=\mathcal W(t,A\omega t)$. We have $0<W\le1$, and
simultaneous inversion gives $W(-t)=W(t)$. The limiting Gram
matrix of the old and new characters is positive definite.
Its inverse, coordinate projection norms, and approximation error
depend continuously on the finitely many moments. We therefore
choose $A$ large enough that the reference loss is less than
$\delta$, the basis constant is at most $\max\{K,K_*\}+\tau$,
and the squared approximation error is at most
 $\vartheta_\varepsilon\dist_w(f,E)^2$. The strict margins in
\eqref{h2:eq:raw-product-error} and the preceding projection
estimate permit all three choices simultaneously. Increasing $A$
further gives the prescribed frequency separations.

Finally, choose an integer $m$ with $\vartheta_\varepsilon^m\le1/2$
and repeat this construction for the same target $m$ times,
using the parameters $\delta/m$ and $\tau/m$ in place of
$\delta$ and $\tau$ at each step.
If the target enters the span earlier, leave the remaining steps
unchanged. The product of the weights is positive, even, and bounded
by one. Its loss against $w_0$ is at most the sum of the individual
losses, by $1-\prod_iW_i\le\sum_i(1-W_i)$.
The partial sum bound is at most $\max\{K,K_*\}+\tau$,
and the squared distance decreases by a factor at least two.
Each step separates its new frequencies from all earlier ones.
 This proves \Cref{h2:prop:extension}.

\section{Assembling the basis}\label{h2:sec:assembly}

We now iterate \Cref{h2:prop:extension}. The finite projection bound
will pass to the limiting weight. To obtain completeness, we use a
countable total family of target characters and reduce each of their
approximation errors infinitely often.

\subsection{The stages and the positive limiting weight}

 It suffices to consider $0<\varepsilon\le1$. Use $K_*=1+\varepsilon/10$ as in
\eqref{h2:eq:extension-parameters}. Start with the even Gaussian probability density $w_0(t)=\pi^{-1/2}e^{-t^2}$ and the
single character $1$. Choose positive sequences $(\delta_n)$ and
$(\tau_n)$ such that
\begin{equation}\label{h2:eq:budgets}
\sum_{n=1}^\infty\delta_n<\frac{1}{2},\quad
\sum_{n=1}^\infty\tau_n< \varepsilon/4.
 \end{equation}
Enumerate the rational-frequency characters as $(f_j)_{j\ge1}$.
Choose the character to be approximated according to the order
$$
f_1;\quad f_1,f_2;\quad f_1,f_2,f_3;\quad\ldots.
$$
Thus each character is selected infinitely often.

Suppose that the current ordered list spans $E_{n-1}$ under the
weight $w_{n-1}$. If the selected character already belongs to this
span, set $W_n=1$ and leave the list unchanged. Otherwise apply
\Cref{h2:prop:extension} with parameters $\delta_n,\tau_n$ and the
fixed reference density $w_0$, choosing the new frequencies at
distance greater than $n$ from zero, the old frequencies, and each
other. Append its new characters, denote
the enlarged span by $E_n$, and set
\begin{equation}\label{h2:eq:weights}
w_n=w_{n-1}W_n=w_0\prod_{k=1}^nW_k.
\end{equation}
The finite list at stage $n$ has basis constant at most
\begin{equation}\label{h2:eq:finite-bound}
 K_* +\sum_{k=1}^n\tau_k \le K_\infty:=K_*+\sum_{k=1}^\infty\tau_k < 1+\varepsilon.
 \end{equation}
No earlier character is changed by the extension. All frequencies
are rational, and all weights are even. Outside the characters
chosen in the first $n$ stages, every pair of frequencies has
distance greater than $n$. This also proves the zero-density
assertion in \Cref{rem:translation-sets}: after removing finitely
many frequencies, an interval of length $R$ contains at most
$1+R/n$ of the remaining ones. First let $R\to\infty$ and then
$n\to\infty$.

The weights decrease pointwise. Moreover,
$$
\sum_{n=1}^\infty\int_\R w_0(1-W_n)
<\sum_{n=1}^\infty\delta_n<\infty.
$$
Tonelli's theorem gives $\sum_n(1-W_n(t))<\infty$ for almost
every $t$, since $w_0>0$ everywhere. At such a point all factors
are positive, and eventually $W_n(t)\ge1/2$. Using
$-\log x\le2(1-x)$ for $1/2\le x\le1$, the infinite product is
strictly positive. Hence
\begin{equation}\label{h2:eq:limit-weight}
w_\infty=w_0\prod_{n=1}^\infty W_n>0
\quad\text{almost everywhere}.
\end{equation}
Also, $0<w_\infty\le w_n\le w_0$ and dominated convergence gives
$w_n\to w_\infty$ in $L^1$. The elementary inequality
$1-\prod_{k=1}^nW_k\le\sum_{k=1}^n(1-W_k)$ yields the useful
mass bound
\begin{equation}\label{h2:eq:limit-mass}
\int_\R w_\infty\ge1-\sum_{n=1}^\infty\delta_n>\frac{1}{2}.
\end{equation}

\subsection{Passing the projection bounds to the limit}

For each fixed finite combination of constructed characters, its
squared norm under $w_n$ converges to its squared norm under
$w_\infty$, because characters are bounded and $w_n\to w_\infty$
in $L^1$. Therefore every finite coordinate inequality in
\eqref{h2:eq:finite-bound} passes to the limit. More explicitly, if
the constructed order is $(e_{\mu_j})$ and $m\le l$, then
\begin{equation}\label{h2:eq:limit-prefix}
\left\|\sum_{j=1}^m a_je_{\mu_j}\right\|_{w_\infty}
 \le K_\infty \left\|\sum_{j=1}^l a_je_{\mu_j}\right\|_{w_\infty}.
\end{equation}
To prove this, choose a stage containing the first $l$ characters,
apply the bound at every later stage, and pass to the limit.

Characters with distinct real frequencies remain linearly independent under
$w_\infty$, since the weight is positive almost everywhere.
Indeed, a finite exponential polynomial that vanishes in this norm
vanishes almost everywhere on $\R$, hence everywhere by continuity;
the usual linear independence of distinct exponentials then makes
all its coefficients zero. Thus the coordinate maps on the finite
span are well defined and satisfy \eqref{h2:eq:limit-prefix}.

\subsection{Completeness and convergence of the partial sums}

For a fixed rational-frequency character $f$, set
$$
D_n(f)=\dist_{w_n}(f,E_n)^2.
$$
These errors decrease at every stage, since $w_n\le w_{n-1}$ and
$E_{n-1}\subseteq E_n$. Whenever $f$ is selected, its squared approximation error
is either zero or decreases by a factor of at least two.
Since $f$ is selected infinitely often, $D_n(f)\to0$.
Let $M$ be the closed span of all constructed characters in
$L^2(w_\infty)$. From $w_\infty\le w_n$, we get
$$
\dist_{w_\infty}(f,M)^2\le D_n(f)\longrightarrow0.
$$
Every rational-frequency character therefore belongs to $M$.

These characters are total in $L^2(w_\infty)$. To see this, suppose
$h$ is orthogonal to all of them. The Cauchy--Schwarz inequality gives $h w_\infty\in L^1(\R)$. Its Fourier transform is continuous and vanishes at every rational number, hence everywhere. Uniqueness of the Fourier tranform on $L^1$ implies $h w_\infty=0$ almost
everywhere. By positivity of the weight, $h=0$. Thus $M=L^2(w_\infty)$. In particular, the constructed character list is infinite.

We now use completeness to show that the sequence of partial sums converges in norm for every function in $L^2(w_\infty)$. By \eqref{h2:eq:limit-prefix}, the partial sum maps on the finite span extend to operators $P_m$ on $L^2(w_\infty)$ with $\norm{P_m}\le K_\infty$. On a finite linear combination of the characters, $P_m$ eventually equals the identity. Given $f$ and a finite combination $h$, for all sufficiently large $m$ we have
$$
\norm{f-P_mf}_{w_\infty}
\le(1+ K_\infty )\norm{f-h}_{w_\infty}.
$$
Density therefore proves $P_mf\to f$ for every $f$.
Differences of consecutive partial sums are bounded coordinate
projections. Applying one of them to a convergent expansion recovers
its coefficient, proving uniqueness. This proves
\Cref{h2:thm:weighted}.

\subsection{Returning to translates}

 \begin{proof}[Proof of \Cref{thm:translates} and \Cref{thm:translate-properties} for $p=2$]
 Take $w_\infty$ and $(\mu_n)$ from \Cref{h2:thm:weighted}, and define
$g\in L^2(\R)$ by $\widehat g=\sqrt{w_\infty}$. Multiplication
by $\sqrt{w_\infty}$ is unitary from $L^2(w_\infty)$ onto
$L^2(\R)$: its inverse is division by $\sqrt{w_\infty}$, which
is defined almost everywhere by \eqref{h2:eq:limit-weight}.
Consequently, $(e_{\mu_n}\sqrt{w_\infty})$ is a Schauder basis
of $L^2(\R)$ with the same basis constant. Inverse Fourier transform
sends its $n$-th vector to $g(\,\cdot+\mu_n)$. Taking
$\lambda_n=-\mu_n$ gives the required basis.

The weight $w_\infty$ is even and bounded by the even Gaussian
$w_0$. Hence $\sqrt{w_\infty}$ is real, even, and integrable, so
its inverse Fourier transform $g$ is real and even. It is nonzero,
since Plancherel's theorem and \eqref{h2:eq:limit-mass} give
$\norm{g}_2^2=\int w_\infty>1/2$.

 For every $z\in\C$, the integral
$$
g(z)=\int_\R\sqrt{w_\infty(t)}e^{2\pi izt}\,dt
$$
converges absolutely and locally uniformly in $z$, as do the
integrals obtained by differentiating any number of times.
Thus $g$ extends to an entire function. For every integer $k\ge0$, the function $(2\pi it)^k\sqrt{w_\infty(t)}$ belongs to both $L^1$ and $L^2$. The Riemann--Lebesgue lemma and Plancherel's theorem therefore give $g^{(k)}\in C_0(\R)\cap L^2(\R)$.

All translates of $g$ are real. If a real function is expanded in this complex basis, conjugating its expansion and using uniqueness shows that every coefficient is real. Thus the same sequence is a basis over the real scalar field as well.
\end{proof}

\begin{remark}
The basis constant one cannot occur for a basis of translates in $L^2(\R)$.
Suppose a Schauder basis of a Hilbert space has basis constant one.
Every nonzero initial projection is contractive, and
\eqref{h2:eq:projection-skew} makes it orthogonal. Each later basis vector lies in the kernel of every earlier initial projection, so the basis is orthogonal. If the vectors are translates of one
function, they all have the same nonzero norm; after normalizing
the generator, they form an orthonormal basis.

Assume therefore that $(T_{\lambda_n}g)$ is an orthonormal basis of $L^2(\R)$ and $\norm{g}_2=1$, where $T_\lambda g(x)=g(x-\lambda)$. Strong continuity of translations gives $\langle g,T_tg\rangle\to1$ as $t\to0$. Orthogonality therefore forces the parameters to be uniformly separated by some $d>0$. Choose a bounded interval $I$ of positive length less than $d$. The intervals $I-\lambda_n$ are pairwise disjoint up to null
sets, and hence
\begin{equation}\label{h2:eq:local-trace}
 \sum_n\norm{\mathbf1_I T_{\lambda_n}g}_2^2
 =\sum_n\int_{I-\lambda_n}|g(x)|^2\,dx\le1.
\end{equation}
Choose two orthonormal functions $h_1,h_2$ supported in $I$.
Parseval’s identity for this basis of translates and Bessel's inequality
for this pair imply
$$
 2=\sum_n\sum_{j=1}^2
 |\langle h_j,T_{\lambda_n}g\rangle|^2
 \le\sum_n\norm{\mathbf1_I T_{\lambda_n}g}_2^2\le1,
$$
a contradiction. The argument applies over either scalar field
and proves that the infimum one is not attained.

\end{remark}

\section*{Acknowledgments}
L.L.~is grateful to the Azrieli Foundation for the award of an Azrieli Fellowship and acknowledges the support of this research by ISF Grant No.~854/25.


\begin{thebibliography}{99}

\bibitem{AO}
A.~Atzmon and A.~Olevskii,
\emph{Completeness of integer translates in function spaces on $\R$},
J. Approx. Theory \textbf{87} (1996), no.~3, 291--327.
\href{https://doi.org/10.1006/jath.1996.0106}{doi:10.1006/jath.1996.0106}.

\bibitem{BVV}
\'A.~Baricz, J.~Vesti, and M.~Vuorinen,
\emph{On Kaluza's sign criterion for reciprocal power series},
Ann. Univ. Mariae Curie-Sk{\l}odowska Sect. A
\textbf{65} (2011), no.~2, 1--16.

\bibitem{BHW}
J.~J.~Benedetto, C.~Heil, and D.~F.~Walnut,
\emph{Differentiation and the Balian-Low theorem},
J. Fourier Anal. Appl. \textbf{1} (1995), no.~4, 355--402.

\bibitem{BC}
M.~Berasategui and D.~Carando,
\emph{Unconditional Schauder frames of translates in $L_p(\R^d)$},
Israel J. Math. \textbf{238} (2020), no.~2, 687--713.
\href{https://doi.org/10.1007/s11856-020-2041-9}{doi:10.1007/s11856-020-2041-9}.

\bibitem{BOU}
J.~Bruna, A.~Olevskii, and A.~Ulanovskii,
\emph{Completeness in $L^1(\R)$ of discrete translates},
Rev. Mat. Iberoam. \textbf{22} (2006), no.~1, 1--16.
\href{https://doi.org/10.4171/RMI/447}{doi:10.4171/RMI/447}.

\bibitem{Dau}
I.~Daubechies,
\emph{Orthonormal bases of compactly supported wavelets},
Comm. Pure Appl. Math. \textbf{41} (1988), no.~7, 909--996.

\bibitem{DH}
B.~Deng and C.~Heil,
\emph{Density of Gabor Schauder bases},
in \emph{Wavelet Applications in Signal and Image Processing VIII},
Proc. SPIE \textbf{4119} (2000), 153--164.
\href{https://doi.org/10.1117/12.408600}{doi:10.1117/12.408600}.

\bibitem{DS}
B.~Dharra and S.~Sivananthan,
\emph{Schauder frames and completeness of translates in the Orlicz space},
Banach J. Math. Anal. \textbf{17} (2023), no.~3, Paper No.~52.
\href{https://doi.org/10.1007/s43037-023-00275-0}{doi:10.1007/s43037-023-00275-0}.

\bibitem{DO}
L.~E.~Dor and E.~Odell,
\emph{Monotone bases in $L_p$},
Pacific J. Math. \textbf{60} (1975), no.~2, 51--61.

\bibitem{FOSZ}
D.~Freeman, E.~Odell, Th.~Schlumprecht, and A.~Zs\'ak,
\emph{Unconditional structures of translates for $L_p(\R^d)$},
Israel J. Math. \textbf{203} (2014), no.~1, 189--209.

\bibitem{FPT}
D.~Freeman, A.~M.~Powell, and M.~A.~Taylor,
\emph{A Schauder basis for $L_2$ consisting of non-negative functions},
Math. Ann. \textbf{381} (2021), no.~1-2, 181--208.

\bibitem{JS}
W.~B.~Johnson and G.~Schechtman,
\emph{A Schauder basis for $L_1(0,\infty)$ consisting of non-negative functions},
Illinois J. Math. \textbf{59} (2015), no.~2, 337--344.

\bibitem{LO}
N.~Lev and A.~Olevskii,
\emph{Wiener's `closure of translates' problem and Piatetski-Shapiro's
uniqueness phenomenon},
Ann. of Math. (2) \textbf{174} (2011), no.~1, 519--541.
\href{https://doi.org/10.4007/annals.2011.174.1.15}{doi:10.4007/annals.2011.174.1.15}.

\bibitem{LT-no}
N.~Lev and A.~Tselishchev,
\emph{There are no unconditional Schauder frames of translates in
$L^p(\R)$, $1\le p\le2$},
Adv. Math. \textbf{460} (2025), Paper No.~110036, 11~pp.
\href{https://doi.org/10.1016/j.aim.2024.110036}{doi:10.1016/j.aim.2024.110036}.

\bibitem{LT2}
N.~Lev and A.~Tselishchev,
\emph{Schauder frames of discrete translates in $L^2(\R)$},
J. Funct. Anal. \textbf{290} (2026), no.~7, Paper No.~111318, 15~pp.
\href{https://doi.org/10.1016/j.jfa.2025.111318}{doi:10.1016/j.jfa.2025.111318}.

\bibitem{LTp}
N.~Lev and A.~Tselishchev,
\emph{Schauder frames of discrete translates in $L^p(\R)$},
Rev. Mat. Iberoam. \textbf{42} (2026), no.~2, 573--590.
\href{https://doi.org/10.4171/RMI/1602}{doi:10.4171/RMI/1602}.

\bibitem{LT-sparse}
N.~Lev and A.~Tselishchev,
\emph{Completeness of sparse, almost integer, and finite local complexity
sequences of translates in $L^p(\R)$},
J. Lond. Math. Soc. \textbf{113} (2026), no.~5, Paper No.~e70562.
\href{https://doi.org/10.1112/jlms.70562}{doi:10.1112/jlms.70562}.

\bibitem{LTclassic}
J.~Lindenstrauss and L.~Tzafriri,
\emph{Classical Banach Spaces I and II: Sequence Spaces and Function Spaces},
Classics in Mathematics, Springer-Verlag, Berlin, 1996.

\bibitem{LW}
Y.~Liu and Y.~Wang,
\emph{The uniformity of non-uniform Gabor bases},
Adv. Comput. Math. \textbf{18} (2003), no.~2-4, 345--355.

\bibitem{Mallat}
S.~G.~Mallat,
\emph{Multiresolution approximations and wavelet orthonormal bases of $L^2(\R)$},
Trans. Amer. Math. Soc. \textbf{315} (1989), no.~1, 69--87.

\bibitem{NV}
N.~Nikolski and A.~Volberg,
\emph{On the sign distributions of Hilbert space frames},
Anal. Math. Phys. \textbf{9} (2019), no.~3, 1115--1132.
\href{https://doi.org/10.1007/s13324-019-00304-y}{doi:10.1007/s13324-019-00304-y}.

\bibitem{NO07}
S.~Nitzan-Hahamov and A.~Olevskii,
\emph{Sparse exponential systems: completeness with estimates},
Israel J. Math. \textbf{158} (2007), 205--215.
\href{https://doi.org/10.1007/s11856-007-0010-1}{doi:10.1007/s11856-007-0010-1}.

\bibitem{NO}
S.~Nitzan and A.~Olevskii,
\emph{Quasi-frames of translates},
C. R. Math. Acad. Sci. Paris \textbf{347} (2009), no.~13-14, 739--742.
\href{https://doi.org/10.1016/j.crma.2009.04.001}{doi:10.1016/j.crma.2009.04.001}.

\bibitem{OSSZ}
E.~Odell, B.~Sari, Th.~Schlumprecht, and B.~Zheng,
\emph{Systems formed by translates of one element in $L_p(\R)$},
Trans. Amer. Math. Soc. \textbf{363} (2011), no.~12, 6505--6529.

\bibitem{Ol97}
A.~Olevskii,
\emph{Completeness in $L^2(\R)$ of almost integer translates},
C. R. Acad. Sci. Paris S\'er. I Math. \textbf{324} (1997), no.~9, 987--991.
\href{https://doi.org/10.1016/S0764-4442(97)87873-1}{doi:10.1016/S0764-4442(97)87873-1}.

\bibitem{OU}
A.~Olevskii and A.~Ulanovskii,
\emph{Almost integer translates. Do nice generators exist?},
J. Fourier Anal. Appl. \textbf{10} (2004), no.~1, 93--104.
\href{https://doi.org/10.1007/s00041-004-8006-2}{doi:10.1007/s00041-004-8006-2}.

\bibitem{OZ}
T.~E.~Olson and R.~A.~Zalik,
\emph{Nonexistence of a Riesz basis of translates},
in \emph{Approximation Theory (Memphis, TN, 1991)},
Lecture Notes in Pure and Appl. Math., vol.~138,
Marcel Dekker, New York, 1992, 401--408.

\bibitem{PS}
A.~M.~Powell and A.~H.~Spaeth,
\emph{Nonnegativity constraints for structured complete systems},
Trans. Amer. Math. Soc. \textbf{368} (2016), no.~8, 5783--5806.

\bibitem{Yu}
P.-T.~Yu,
\emph{Existence of unconditional Schauder frames of translates in modulation spaces},
J. Geom. Anal. \textbf{35} (2025), no.~8, Paper No.~239.
\href{https://doi.org/10.1007/s12220-025-02070-9}{doi:10.1007/s12220-025-02070-9}.

\end{thebibliography}
\end{document}